\title[Higher Hochschild homology]{Higher Hochschild homology and exponential functors}
\author{Geoffrey Powell}
\address{Univ Angers, CNRS, LAREMA, SFR MATHSTIC, F-49000 Angers, France}
\email{Geoffrey.Powell@math.cnrs.fr}
\urladdr{https://math.univ-angers.fr/~powell/}
\author{Christine Vespa}
\address{Aix-Marseille Université/Centrale Marseille, Institut de Mathématiques de Marseille, Marseille, France.}
\email{christine.vespa@univ-amu.fr}
\urladdr{https://www.i2m.univ-amu.fr/perso/christine.vespa/}
\keywords{Functors -- outer functors -- exponential functors -- higher Hochschild homology -- free groups - outer automorphisms -- Schur functor\\
Mots-clefs : Foncteurs -- outre-foncteurs -- foncteurs exponentiels -- homologie de Hochschild sup\'erieure -- groupes libres -- automorphismes ext\'erieurs -- foncteur de Schur}
\subjclass{55N99, 18A25, 13D03,  20J05}
\newtheorem{THM}{Theorem}
\newtheorem{PROP}[THM]{Proposition}
\newtheorem{PROB}[THM]{Problem}
\newtheorem{thm}{Theorem}[section]
\newtheorem{prop}[thm]{Proposition}
\newtheorem{cor}[thm]{Corollary}
\newtheorem{lem}[thm]{Lemma}
\theoremstyle{definition}
\newtheorem{defn}[thm]{Definition}
\newtheorem{exam}[thm]{Example}
\theoremstyle{remark}
\newtheorem{rem}[thm]{Remark}
\newtheorem{nota}[thm]{Notation}
\newtheorem{hyp}[thm]{Hypothesis}
\renewcommand{\phi}{\varphi}
\renewcommand{\epsilon}{\varepsilon}
\renewcommand{\hom}{\mathrm{Hom}}
\newcommand{\cre}{\mathrm{cr}}
\newcommand{\f}{\mathcal{F}}
\newcommand{\cala}{\mathscr{A}}
\newcommand{\calb}{\mathscr{B}}
\newcommand{\calc}{\mathscr{C}}
\newcommand{\cald}{\mathscr{D}}
\newcommand{\cale}{\mathscr{E}}
\newcommand{\calm}{\mathscr{M}}
\newcommand{\fr}[1][r]{\zed^{\star #1}}
\newcommand{\fcatk}[1][\calc]{\f (#1; \kring)}
\newcommand{\fcat}[2]{\f (#1 ; #2)}
\newcommand{\fcatkfd}[1][\calc]{\f^{\mathsf{fd}}(#1; \kring)}
\newcommand{\foutk}[1][\calc]{\f^{\out} (#1; \kring)}
\newcommand{\fout}[2]{\f^{\out}(#1; #2)}
\newcommand{\fpoly}[2]{\f_{#1}(#2; \kring)}
\newcommand{\gr}{\mathbf{gr}}
\newcommand{\nat}{\mathbb{N}}
\newcommand{\ab}{\mathbf{ab}}
\newcommand{\zed}{\mathbb{Z}}
\newcommand{\A}{\mathfrak{a}}
\newcommand{\ak}{\A_\kring}
\newcommand{\ext}{\mathrm{Ext}}
\newcommand{\rat}{\mathbb{Q}}
\newcommand{\out}{\mathrm{Out}}
\newcommand{\op}{^\mathrm{op}}
\newcommand{\ob}{\mathsf{Ob}\hspace{3pt}}
\newcommand{\kring}{\mathbbm{k}}
\newcommand{\End}{\mathrm{End}}
\newcommand{\aut}{\mathrm{Aut}}
\newcommand{\ad}{\mathrm{ad}}
\newcommand{\inn}{\mathrm{Inn}}
\newcommand{\taubar}{\overline{\tau_\zed}}
\newcommand{\tauabbar}{\overline{\tauab}}
\newcommand{\tauab}{\tau^{\ab}_\zed}
\newcommand{\fin}{\mathbf{Fin}}
\newcommand{\sets}{\mathbf{Set}}
\newcommand{\setspt}{\sets_*}
\newcommand{\n}{\mathbf{n}}
\newcommand{\sset}{\Delta\op \sets}
\newcommand{\ssetpt}{\sset_*}
\newcommand{\filt}{\mathfrak{F}}
\newcommand{\dash}{\hspace{-1pt}-\hspace{-1pt}}
\newcommand{\modules}{\mathrm{mod}}
\newcommand{\inj}{\mathrm{Inj}}
\newcommand{\eval}{\mathrm{ev}}
\newcommand{\tor}{\mathrm{Tor}}
\newcommand{\loday}{\mathcal{L}}
\newcommand{\fb}{{\bm{\Sigma}}}
\newcommand{\smod}[1][\kring]{\fcatk[\fb]} 
\newcommand{\tenfb}{\odot}
\newcommand{\orient}{\mathrm{Or}}
\newcommand{\lie}{\mathbb{L}\mathrm{ie}}
\newcommand{\liemod}{\mathrm{Lie}}
\newcommand{\lieschur}{\mathbf{Lie}}
\newcommand{\gsym}{\mathbb{S}}
\newcommand{\schur}{\mathbf{S}}
\newcommand{\hq}{/\hspace{-3pt}/}
\newcommand{\talg}{\mathbb{T}}
\newcommand{\palg}{{\mathbb{P}^\fb}}
\newcommand{\pcoalg}{{\mathbb{P}^\fb_{\mathrm{coalg}}}}
\newcommand{\neck}{{P^\fb}}
\newcommand{\prodshuff}{\mu_{\mathrm{shuff}}}
\newcommand{\prodconcat}{\mu_{\mathrm{concat}}}
\newcommand{\coprodshuff}{\Delta_{\mathrm{shuff}}}
\newcommand{\coproddecon}{\Delta_{\mathrm{deconcat}}}
\newcommand{\fmod}{\mathbf{mod}_\kring}
\newcommand{\boldbeta}{\bm{\beta}}
\newcommand{\unit}{\mathbbm{1}}
\newcommand{\cmon}{\mathbf{CMon}}
\newcommand{\ccomon}{\mathbf{CComon}}
\newcommand{\lsoc}{\mathrm{length}_{\mathrm{soc}}}
\newcommand{\qhat}[1]{\widehat{q_{#1}^\gr}}
\newcommand{\soc}{\mathrm{soc}}
\renewcommand{\theta}{\vartheta}
\newcommand{\adbar}{\overline{\mathrm{ad}}}
\newcommand{\fexp}{\f^{\mathrm{exp}}}
\newcommand{\hopf}{\mathbf{Hopf}}
\newcommand{\hcocom}{{\hopf^{\mathrm{cocom}}}}
\newcommand{\hcom}{{\hopf^{\mathrm{com}}}}
\newcommand{\hbicom}{{\hopf^{\mathrm{bicom}}}}
\newcommand{\fmodq}{\mathbf{mod}_\rat}
\newcommand{\shuff}{\mathrm{shuff}}
\newcommand{\coshuff}{\mathrm{coshuff}}
\newcommand{\id}{\mathrm{Id}}
\newcommand{\fbcr}{\mathsf{grad}^\gr_\fb}
\newcommand{\foutQ}[1][\calc]{\f^{\out} (#1; \rat)}
\newcommand{\fpoutgrQ}[1][d]{{\f^{\out}_{ #1} (\gr ;  \rat)}}
\newcommand{\fcatQ}[1][\calc]{\f (#1; \rat)}
\newcommand{\fpolyQ}[2]{\f_{#1}(#2; \rat)}
\newcommand{\aQ}{\A_\rat}
\newcommand{\fpiQ}[1]{\fpolyQ{<\infty}{#1}}
\newcommand{\coad}{\mathrm{coad}}
\newcommand{\coadbar}{\overline{\coad}}
\newcommand{\coker}{\mathrm{coker\ }}
\newcommand{\fs}{\bm{\Omega}}
\newcommand{\cog}{\mathrm{coalg}}
\newcommand{\sym}{\mathfrak{S}}
\numberwithin{equation}{section}
\begin{document}

\begin{altabstract}
We study higher Hochschild homology evaluated on wedges of circles, viewed as a functor on the category of free groups. The main  results use   coefficients arising from square-zero extensions; this is motivated by work of Turchin and Willwacher in relation to hairy graph cohomology. 

The functorial point of view allows us to exploit  tools such as the theory of polynomial functors and exponential functors. We also introduce and make essential use of the category of outer functors, the full subcategory of functors on free groups on which inner automorphisms act trivially. 

We give a description of higher Hochschild homology in terms of intrinsically defined  polynomial outer functors;   we also obtain several explicit computations of these outer functors, working over a field of characteristic zero. In particular, higher Hochschild homology gives a natural source of non-trivial polynomial outer functors.\\
\end{altabstract}

\begin{abstract}
\textbf{(Homologie de Hochschild supérieure et foncteurs exponentiels)}
On étudie l’homologie de Hochschild supérieure évaluée sur des bouquets de cercles en tant que foncteur sur la catégorie des groupes libres. Les résultats principaux utilisent des coefficients provenant des extensions à carré nul. Ceci est motivé par le travail de Turchin et Willwacher en lien avec la cohomologie des graphes chevelus.

Le point de vue fonctoriel nous permet d’exploiter des outils tels que la théorie des foncteurs polynomiaux et celle des foncteurs exponentiels. On introduit et on utilise de manière essentielle la catégorie des outre-foncteurs, qui est la sous-catégorie pleine des foncteurs sur les groupes libres sur lesquels les automorphismes intérieurs agissent trivialement.

Nous donnons une description de l’homologie de Hochschild supérieure en termes d’outre-foncteurs polynomiaux définis intrinsèquement. Nous obtenons également plusieurs calculs explicites de ces outre-foncteurs lorsqu’on travaille sur un corps de caractéristique nulle. En particulier, l’homologie de Hochschild supérieure est une source naturelle d’outre-foncteurs polynomiaux non-triviaux.
\end{abstract}

\maketitle


\section{Introduction}
\label{sec:intro}

Higher Hochschild homology, defined by Pirashvili \cite{Phh}, is a generalization of the classical Hochschild homology for commutative rings.  It is a (non-additive) homology theory for pointed topological spaces. We denote by $HH_* (X; L)$ higher Hochschild homology of a  pointed space $X$ with coefficients in a  $\Gamma$-module $L$, where $\Gamma$ is the category of finite pointed sets. A fundamental example of a $\Gamma$-module is given by the Loday construction $\loday (A, M)$ for a commutative algebra $A$ and an $A$-module $M$. Taking $X$ to be the circle $S^1$, $HH_* (S^1; \loday (A,M))$ identifies with the classical Hochschild homology $HH_* (A,M)$.

In this paper, we focus on the case where $X$ is a finite wedge  of circles $(S^1)^{\bigvee r}$, for $r \in \nat$. This is inspired by the work of Turchin and Willwacher \cite{TW} in connection with the calculation of hairy graph homology \cite{TWtop}.  The homology $HH_* ((S^1)^{\bigvee r}; \loday (A,M))$ is particularly interesting because it can be seen as a natural representation of the group $\aut(\zed^{\star r})$, where $\zed^{\star r}$ is the free group. Turchin and Willwacher observed in \cite{TW} that when the coefficients are of the specific type $\loday (A,A)$, $HH_* ((S^1)^{\bigvee r}; \loday (A,A))$ is a representation of the outer automorphism group $\out (\zed^{\star r})$. Since the representation theory of  $\out (\zed^{\star r})$ is not yet well-understood, having concrete examples of such representations is particularly valuable. 

Instead of examining these representations individually for a fixed value of $r$, we introduce a novel approach by studying them through a functorial perspective. Specifically, by interpreting the finite wedge $(S^1)^{\bigvee r}$ as the classifying space $B \zed^{\star r}$ of the free group $\zed^{\star r}$, we obtain a functor from the category $\gr$ of finitely-generated free groups to the pointed homotopy category. This allows us to view higher Hochschild homology $\zed^{\star r} \mapsto HH_* (B \zed^{\star r}; L)$  as an  $\nat$-graded object of $\fcatQ[\gr]$, the category of functors from $\gr$  to $\rat$-vector spaces. 

Our key strategy in this paper is to take into account the full structure as a functor on $\gr$, rather than just the underlying family of representations of the automorphism groups $\aut (\zed^{\star r})$, for $r \in \nat$. Functors on the category of finitely-generated free groups $\gr$ are relatively rigid objects; this  simplifies their analysis compared to that of arbitrary  representations of  $\aut(\zed^{\star r})$. Considering functors on $\gr$ allows us to highlight properties and study relationships in a more organized and systematic manner. 

The first functorial tool used in this paper is the notion of  {\em polynomial functors}. This notion, initially introduced by Eilenberg and Mac Lane \cite{EM} for functors on categories of modules, has been extended to a wider context in \cite{HPV}, which includes functors on the category of finitely-generated free groups $\gr$. Polynomiality  gives  a  measure of the complexity of a functor, namely its polynomial degree.  A functor $M$ on $\gr$ being polynomial of degree $d$ roughly means that all values of $M$ are determined by the restrictions $M(\zed^{\star r})$ for $r \leq d$.

The significance of polynomiality for higher Hochschild homology  is exhibited by the following, proved as part of Theorem \ref{thm:Hodge_filt_groups}:

\begin{THM}
\label{THM:poly}
For a $\Gamma$-module $L$ and $d \in \nat$, the functor $HH_d (B(-); L)$ on $\gr$ has polynomial degree $d$.
\end{THM}

A polynomial functor comes  equipped with a natural polynomial filtration, the form of which is completely understood for functors on $\gr$ (see Proposition \ref{prop:subquotients}); this plays a crucial role.

In this paper we introduce a new  functorial tool,  the notion of an {\em outer functor} on free groups. The category of  outer functors on $\gr$ is the full subcategory $\foutQ[\gr] \subset \fcatQ[\gr]$ of functors on which the subgroup of inner automorphisms $\mathrm{Inn}(\zed^{\star r})\subseteq \aut(\zed^{\star r})$ acts trivially, for each $r \in \nat$. Higher Hochschild homology (for certain coefficients) gives a source of highly non-trivial  outer functors; for example, as a particular case of Proposition \ref{prop:HH_group_case} we obtain:

\begin{PROP} \label{Prop-Intro}
For a commutative $\rat$-algebra $A$ and $d \in \nat$,  $HH_d (B(-); \loday (A,A))$  is an outer functor on $\gr$.
\end{PROP}

Indeed, whenever the coefficients $L$ arise from a functor defined on {\em unpointed}  finite sets (for example, $L = \loday (A,A)$),  higher Hochschild homology $X \mapsto HH_* (X; L)$ is a functor on the unpointed homotopy category. In this situation, it follows that  the action of the automorphism group $\aut(\zed^{\star r}) $ on $HH_* (B \zed^{\star r}; L)$  factors  across the outer automorphism group $\out (\zed^{\star r})$, as observed by Turchin and Willwacher \cite{TW}. Proposition \ref{prop:HH_group_case} is the functorial analogue of this observation. 

As above, higher Hochschild homology gives examples of objects in $\foutQ[\gr]$, but this is not the only context in which such functors arise naturally (see the end of the introduction) and an in-depth study of them is of independent interest.

The category $\foutQ[\gr]$ of outer functors  is crucial in our work, as is the right adjoint 
\[
\omega :
\fcatQ[\gr]
\rightarrow 
\foutQ[\gr] 
\]
 to the inclusion $\foutQ[\gr] \subset \fcatQ[\gr]$. This arises naturally in the study of higher Hochschild homology, as explained  below. 
 
 The third functorial tool used in this paper is provided by {\em exponential functors}; i.e., symmetric monoidal functors on $\gr$. 
 We recall, in Theorem \ref{thm:expo_Hopf_general}, the equivalence of categories between the category of exponential functors on $\gr$, $\fexp (\gr ; \rat)$,  and the category of commutative Hopf algebras on $\rat$,  $ \hcom (\rat\dash \modules),$ induced by the functor 
  $\Psi :  \hcom (\rat\dash \modules) \rightarrow \fexp(\gr ; \rat)$ given on objects by $\Psi H (\zed ^{\star n}) = H^{\otimes n}$. (The result is more general: the category $\rat\dash\modules$ of $\rat$-vector spaces can be replaced by any suitable symmetric monoidal category.)
 
 \bigskip
To state our results on higher Hochschild homology, we must first introduce the appropriate coefficients. As in the work of Turchin and Willwacher \cite{TWtop}, we consider the square-zero extension $A_V:= \rat \oplus V$ of $\rat$ by a finite-dimensional $\rat$-vector space $V$. For $V=\rat$, this corresponds to the dual numbers $\rat [\epsilon]$. 

The main problem  that we address is:

\begin{PROB}
\label{Pb-intro}
Identify the following functors on $\gr$:
\begin{eqnarray*}
& & HH_* (B(-) ; \loday (A_V, \rat) ) \\
& & HH_* (B(-) ; \loday (A_V, A_V) ).
\end{eqnarray*}
\end{PROB}

A  key observation is that the association $V \mapsto A_V$ is a functor of $V \in \ob \fmodq$, where $\fmodq$ is the category of finite-dimensional $\rat$-vector spaces. This naturality is a crucial ingredient in studying these structures; it also allows us to bring into play the Schur correspondence between representations of the symmetric groups and functors on $\fmodq$. This leads us to study the functors $HH_* (B(-) ; \loday (A_V, \rat) )$ and $HH_* (B(-) ; \loday (A_V, A_V) )$ on $\gr$ naturally with respect to $V$. 

The functor $HH_* (B(-) ; \loday (A_V, \rat) )$ is exponential (see Proposition \ref{prop:shuffle_vee}) and we identify the corresponding commutative Hopf algebra as follows. Denoting by $sV$ the homological suspension of $V\in \ob \fmodq$,  we consider the tensor coalgebra $T_\mathrm{coalg}(sV)$ as a graded-commutative Hopf algebra with respect to the  shuffle product. This then yields the exponential functor $\Psi(T_\mathrm{coalg}(sV))$. In Theorem \ref{thm:hhh_Loday_AV_k}, we obtain the following description:

\begin{THM}
For  $V \in \ob \fmodq$, there is a natural isomorphism of functors with values in graded-commutative $\rat$-algebras:
\[
HH_* (B (-); \loday (A_{V} , \rat) ) 
\cong 
\Psi (T_\mathrm{coalg}(sV)),
\]
where $HH_* (B (-); \loday (A_{V} , \rat) ) $ is equipped with the shuffle product. This is natural with respect to $V$.
\end{THM}

The functor $HH_* (B(-) ; \loday (A_V, A_V) )$ is then studied by using the long exact sequence associated to the short exact sequence of $\Gamma$-module coefficients:
\[
0 
\rightarrow 
\loday (A_V, \rat) \otimes V 
\rightarrow 
\loday (A_V, A_V) 
\rightarrow 
\loday (A_V, \rat)
\rightarrow 
0,
\]
in which the surjection $\loday (A_V, A_V) 
\rightarrow 
\loday (A_V, \rat)$ is induced by the augmentation $A_V \twoheadrightarrow \rat$.
The connecting morphism of this long exact sequence is identified in Proposition \ref{prop:connecting} as being the coadjoint coaction
\[
\coadbar : \Psi (T_\mathrm{coalg}(sV)) \rightarrow sV \otimes \Psi ( T_\mathrm{coalg}(sV))
\]
which fits into the following exact sequence:
\begin{equation}
\label{es}
0
\rightarrow 
\omega 
 \Psi T_\cog (sV)
 \rightarrow 
 \Psi T_\cog (sV) \stackrel{\coadbar}{\longrightarrow}\Psi T_\cog (sV) \otimes sV
 \rightarrow 
 \coker (\coadbar)
 \rightarrow 
 0.
\end{equation}

 This leads to Theorem \ref{thm:HH_lodayAVAV_car0}, in which the $\nat$ accounts for the homological grading:

\begin{THM} 
\label{THM4-intro}
For  $V \in \ob \fmodq$, there is 
a natural isomorphism in $\fcatQ[\nat \times \gr]$:
\begin{eqnarray*}
HH_* \big(B (-); \loday (A_V, A_V))
\cong 
\omega \Psi (T_{\mathrm{coalg}}(sV) )
\oplus 
\coker( \coadbar_{T_{\mathrm{coalg}}(sV)})[-1],
\end{eqnarray*}
where $[-1]$ denotes the shift in homological degree. These identifications are natural with respect to $V$.
\end{THM}

It follows that, to understand these higher Hochschild homology functors, it suffices to understand the functor $V \mapsto  \Psi T_\cog (sV)$ and the outer functor $V \mapsto \omega \Psi T_\cog (sV)$, since the remaining term can be derived from these using the exact sequence (\ref{es}).

For $V=\rat$, Theorem \ref{THM4-intro} extends and gives a functorial interpretation of results of \cite{TW} for the dual numbers (cf.  \cite[Theorem 1]{TW} and the direct sum decomposition given in \cite[Section 2.2]{TW}).  See Remark \ref{rem:TW_question} for more details.

\subsection{Combinatorial coefficients}
To prove Theorem \ref{THM4-intro}, we use the functoriality on $V$ evoked above. The Schur correspondence gives
\begin{eqnarray*}
\inj ^\Gamma & \leftrightarrow & \loday (A_V; \rat) \\
\theta^* \inj ^\fin & \leftrightarrow & \loday (A_V, A_V) 
\end{eqnarray*}
where $\inj ^\Gamma$ and $\theta^* \inj ^\fin$ are $\Gamma$-modules introduced in Section \ref{sec:loday}, 
 taking values in $\fb$-modules, where $\fb$ is the category of finite sets and bijections (which serves to encode the family of symmetric groups $\{ \sym_d \ | \ d \in \nat \}$).

Problem \ref{Pb-intro} is then equivalent to that of calculating the following functors on $\gr$ 
 \begin{eqnarray*}
& & HH_* (B(-) ; \inj ^\Gamma ) \\
& & HH_* (B(-) ; \theta^* \inj ^\fin )
\end{eqnarray*}
taking values in $\fb$-modules.

This reformulation  has two advantages.
\begin{enumerate}
\item Since the functors $HH_* (B(-) ; \inj ^\Gamma ) $ and $HH_* (B(-) ; \theta^* \inj ^\fin )$ take values in $\fb$-modules, this allows us to consider the isotypical components directly. Explicitly, for a partition $\lambda\vdash n$, we consider the associated simple $\rat [\sym_n]$-module, $S_\lambda$, viewed as an $\fb$-module supported on $\mathbf{n}:= \{1, \ldots, n \}$. Applying the functor $- \otimes _\fb S_\lambda$ gives the respective isotypical components indexed by $\lambda$:
\begin{eqnarray*}
&& HH_* \big(B (-);\inj^\Gamma)\otimes_\fb S_\lambda
\\
&& HH_* \big(B (-);\theta^* \inj^\fin) \otimes_\fb S_\lambda.
\end{eqnarray*}
These  are identified in Corollary \ref{cor:isotypical_components}.
\item The functors $HH_* (B(-) ; \inj ^\Gamma ) $ and $HH_* (B(-) ; \theta^* \inj ^\fin )$ on $\gr$ are equivalent to \textit{intrinsic} polynomial functors obtained from the functors $\beta_d$ defined below.

To explain this, we first need to give more details on polynomial functors, notably introducing the functors $\beta_d$.
\end{enumerate}

\subsection{The functors $\beta_d$}
The full subcategory of functors of polynomial degree $d$ is written $\f_d (\gr; \rat)$. Using  abelianization,  one has the functor $\aQ \in \ob \f (\gr; \rat)$ given by $G \mapsto G/[G,G]\otimes \rat$. This is the basic example of a polynomial functor of degree one. More generally, for $d \in \nat$, the $d$-fold tensor product $\aQ^{\otimes d}$ is an example of a   polynomial functor of degree $d$ in  $\f (\gr; \rat)$. 

The theory of polynomial functors provides the cross-effect functor $\cre_d$, which is an exact functor $\cre_d: \f_d (\gr; \rat) \rightarrow \rat [\sym_d] \dash\modules$. This admits a left adjoint $\alpha_d$ that has a simple description: for a $\sym_d$-module $M$,  
\[
\alpha _d M = (\aQ^{\otimes d}) \otimes_{\sym_d} M,
\]
where $\sym_d$ acts on $\aQ^{\otimes d}$ by place permutations. 

These allow us to describe the simple polynomial functors on $\gr$. Namely, for any such simple functor $F$, there exists a unique pair $(d, \lambda)$, where $\lambda$ is a partition of $d$, such that $F \cong \alpha_d (S_\lambda)$. Here $d$ is the (precise) polynomial degree of $F$.

Also of importance here is the {\em right} adjoint $\beta_d : \rat [\sym_d ]\dash\modules \rightarrow \f_d (\gr; \rat)$  to the cross-effect functor; this arises explicitly in our study of higher Hochschild homology. Unfortunately, unlike the case of $\alpha_d$, the functor $\beta_d$ does not admit an elementary description. 
 
In Theorem \ref{thm:beta_hhh} we describe  the  isotypical components of $HH_* \big(B (-);\theta^* \inj^\fin)$ in terms of the functors $\beta_d$ and their associated outer functors, $\omega \beta_d $:

\begin{THM}
\label{Thm5-Intro}
For $d \in \nat^*$ and $\lambda \vdash d$, there  are isomorphisms in $\foutQ[\gr]$:
\[
HH_* \big(B (-);\theta^* \inj^\fin) \otimes_\fb S_\lambda
\cong 
\left\{
\begin{array}{ll}
\omega \beta_d S_{\lambda^\dagger} & *=d \\
\coker (\coadbar_{\lambda^\dagger}) & *= d-1\\
0  & \mbox{otherwise,}
\end{array}
\right.
\]
where $\lambda^\dagger$ is the conjugate of the partition $\lambda$ and $\coadbar_{\lambda^\dagger}$ is shorthand for $\coadbar \otimes_\fb S_{\lambda^\dagger}$. 
\end{THM}

This Theorem is one of our motivations for studying the functors $\beta_d$ and  $\omega \beta_d$; as we will see below, these functors also have intrinsic interest.

First, we note that the functors $\beta_d$  are highly non-trivial. For example, taking $d=2$ and  the signature representation $\mathrm{sgn}_2$ of $\sym_2$, $\beta_2 \mathrm{sgn}_2$ occurs in a non-split short exact sequence:
\[
0 
\rightarrow 
\Lambda^2 \circ \aQ 
\rightarrow 
\beta_2 \mathrm{sgn}_2
\rightarrow 
\aQ
\rightarrow 
0,
\]
where $\Lambda^2 \circ \aQ$ identifies with $\alpha_2 \mathrm{sgn}_2 $. This shows that $\f_2 (\gr; \rat)$ is not semi-simple. However, $\beta_2 \mathrm{sgn}_2$ is not an outer functor, and one has $\omega \beta_2 \mathrm{sgn}_2 \cong \alpha_2 \mathrm{sgn}_2 \cong \Lambda^2 \circ \aQ  $.

The significance of the functors $\beta_d$ is established by the following (see Corollary \ref{cor:poly_enough_injectives} and Corollary \ref{cor:omega_boldbeta_inj_cogen}), where $\f_{< \infty} (\gr; \rat)=\bigcup_{d \in \nat} \f _d (\gr; \rat)$  is the full  category of all polynomial functors and $\fpoutgrQ[<\infty]$ the corresponding category of  polynomial outer functors:

\begin{THM}
The families $\{ \beta_d \rat [\sym_d ] \ | \ d \in \nat \} $ and 
$ \{ \omega \beta_d \rat [\sym_d]  \ | \ d \in \nat \}$ 
are  sets of injective cogenerators of $
\fpiQ{\gr}$ and $\fpoutgrQ[<\infty]$ respectively.
\end{THM}

One of the keys to proving Theorem \ref{Thm5-Intro} is to work {\em globally}, by  assembling the family of functors $\{ \beta_d \rat [\sym_d] \ | \ d \in \nat \} $ to give the functor $\boldbeta$ from $\gr$ to $\f (\fb; \rat)$, the category of $\fb$-modules (see Section \ref{subsect:boldbeta}).  This approach also applies to describe the functors $\omega \beta_d$, which assemble to the functor $\omega \boldbeta$ with values in $\fb$-modules.

\subsection{Explicit computations and application}
Motivated by Theorem \ref{Thm5-Intro}, we give the complete structure of the functors  $\beta_d S_{\lambda}$ and  $\omega \beta_d S_{\lambda}$ for several explicit partitions $\lambda$:  
\begin{itemize}
\item for $\lambda = ((n-1),1)$ the result is given in Section \ref{subsect:(n1)_omega};
\item for $\lambda=(1^n)$ the result is given in Section \ref{subsect:omega_beta_1^n};
\item for $\lambda=(211)$ the result is given in Example \ref{exam:S211}.
\end{itemize}
(See Section \ref{sec:omega_psi} for further computations.)
More precisely we obtain the socle filtration of these functors. We stress that the category $
\fpiQ{\gr}$ is not semi-simple and, apart from exceptional cases, the functors $\beta_d S_\lambda$ are not semi-simple, so this socle filtration is non-trivial.

Our strategy to obtain the complete structure of the functors  $\beta_d S_{\lambda}$  has the following steps:
 \begin{enumerate}
\item 
We first identify the composition factors of the functor $\beta_d S_{\lambda}$ in Corollary \ref{cor:Grothendieck_ring_Psi_pcoalg}. As noted above, since $\beta_dS_\lambda$ is usually not semi-simple, this is not sufficient to  describe its structure.
\item  
In Proposition \ref{prop:beta_socle}, we identify the socle of the functor $ \beta_d M$:
\begin{PROP}
For $M \in \ob \rat[\sym_d]\dash\modules$, the functor $\alpha_d M $ is the socle of $\beta_d M$.
\end{PROP}

\item 
To analyse the socle filtration, we use the main result of \cite{V_ext} which gives, in particular,  that
$\ext^1_{\f (\gr; \rat)} (\A^{\otimes s}, \A^{\otimes t})=0$ if $t-s \neq 1$ (see Corollary \ref{cor:d-1_beta_S_mu} for a more precise statement). This allows us to compare the socle filtration and the polynomial filtration (see  Corollary \ref{exam:beta_M_socle_filt}):

\begin{PROP}
For any $d \in \nat$ and $M \neq 0$ a  $\rat [\sym_d]$-module, up to reindexing, the socle filtration of $\beta_d M$ coincides with the polynomial filtration of $\beta_d M$.
\end{PROP}
\item 
Where possible, we seek to identify the extensions occurring between the layers of the socle filtration. This is feasible in some cases, notably when the functors are multiplicity free (i.e., each composition factor has multiplicity at most one) or the structure is particularly simple. 

For instance, for $n \geq 1$ we show that the functors $\omega \beta_{(1^n)}$ are uniserial (see Theorem \ref{thm:omega_omega1_sign}) and that it has socle length $[\frac{n+1}{2}]$.
\end{enumerate}

Then, to obtain the structure of  the functors $\omega \beta_d S_{\lambda}$, we apply  Proposition \ref{prop:coadbar_es_isotypical}:

\begin{PROP}
For $\lambda \vdash d$, we have the exact sequence:
\begin{eqnarray*}
0
\rightarrow 
\omega \beta_d S_\lambda 
\rightarrow 
\beta_d S_\lambda
\stackrel{\coadbar_\lambda}{\longrightarrow} 
\bigoplus_{\substack{\mu \preceq \lambda \\ |\mu | = d-1} }
\beta_{d-1} S_\mu
\rightarrow 
\coker \big (\coadbar_\lambda \big)
\rightarrow 
0.
\end{eqnarray*}
\end{PROP}

Computing $\omega \beta_d S_\lambda $ is thus equivalent to determining the kernel of the map $\coadbar_\lambda$. This is a difficult question in general.  However, in Theorem \ref{thm:coadbar} we show that the morphism $\coadbar_\lambda$ is almost always non-zero. More precisely we show: 

\begin{THM}
For $\lambda \vdash d$, the morphism 
$$
\beta_d S_\lambda
\stackrel{\coadbar_\lambda}{\longrightarrow} 
\bigoplus_{\substack{\mu \preceq \lambda \\ |\mu | = d-1} }
\beta_{d-1} S_\mu
$$
 is zero if and only if $\lambda = (d)$.
 \end{THM}
 
Moreover, Theorem \ref{thm:coadbar} explains the behaviour of $\coadbar_\lambda$ on the layer of polynomial degree $d-1$. In the examples given above, these results are enough to deduce the  structure of the functors $\omega \beta_d S_\lambda $.

Beyond explicit computations, we give below another consequence of the understanding of $HH_* \big(B (-); \loday (A_V, A_V))$ as a functor on $\gr$. By Proposition \ref{prop:abelianization_dual_numbers}, we obtain the following result, where $\A: \gr \to \ab$ is the functor with values in the category of finitely-generated free abelian groups provided by the abelianization:

\begin{PROP}
For $n \in \nat^*$, the kernel of the natural transformation of functors on $\gr$
\[
HH_n (B \zed^{\star r}; \loday (\rat [\epsilon], \rat [\epsilon]))
\rightarrow 
HH_n (B \A (\zed^{\star r}); \loday (\rat [\epsilon], \rat [\epsilon]))
\]
is non-trivial. 
\end{PROP}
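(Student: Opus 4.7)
The plan is to use Theorem~\ref{THM:0} to isolate a direct summand that must die under abelianization. In each homological degree $n$ the theorem provides a direct sum decomposition in $\foutk[\gr]$,
\[
HH_n\bigl(B(-);\loday(\kring[\epsilon],\kring[\epsilon])\bigr) \;\cong\; \bigl(\omega\Psi\tcoalgdag(\kring)\bigr)_n \;\oplus\; \bigl(\oop(\tcoalgdag(\kring))\bigr)_{n-1},
\]
together with the moreover clause asserting that $\oop(\tcoalgdag(\kring))$ is isomorphic to $\omega\Psi\tcoalgdag(\kring)$ up to a shift of degree. Combined with the explicit description coming from Corollary~\ref{cor:omega_omega1_sign}, this gives a non-zero contribution of the $\oop(\tcoalgdag(\kring))[-1]$ summand in every degree $n\geq 3$.

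Next, I plan to prove that the natural transformation vanishes on this $\oop(\tcoalgdag(\kring))[-1]$ summand, which then provides it as a non-trivial sub-object of the kernel. The target functor factors through abelianization: the coefficient $\loday(\kring[\epsilon],\kring[\epsilon])$ extends to a functor on the unpointed category $\fin$, so $HH_*(X;L)$ depends only on the unpointed homotopy type of $X$; since $B\A(-)$ factors through $\ab$, it follows that $HH_n(B\A(-);L)$ is of the form $F\circ\A$ for some $F\in\fcatk[\ab]$.

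The key step is then the vanishing of the composite $\bigl(\oop(\tcoalgdag(\kring))\bigr)_{n-1}\hookrightarrow HH_n(B(-);L)\to HH_n(B\A(-);L)$. The $\oop$ summand arises, via the four-term exact sequence $0\to\omega\Psi\tcoalgdag(\kring)\to\Psi\tcoalgdag(\kring)\xrightarrow{\adbar}\Psi\tcoalgdag(\kring)\to\oop(\tcoalgdag(\kring))\to 0$, as the cokernel of the adjoint coaction $\adbar$; in the decomposition above this corresponds, up to the connecting morphism of the long exact sequence associated with the short exact sequence of coefficients $0\to\loday(\kring[\epsilon],\kring)\otimes\kring.\epsilon\to\loday(\kring[\epsilon],\kring[\epsilon])\to\loday(\kring[\epsilon],\kring)\to 0$, to a class built from the inner automorphism action. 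Since inner automorphisms act trivially on $\A(G)$, naturality of the long exact sequence with respect to $BG\to B\A(G)$ forces the image of this connecting morphism to vanish on the $B\A$-side, establishing the claim.

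The hard part will be the last compatibility: verifying that the $\oop(\tcoalgdag(\kring))[-1]$ direct summand of Theorem~\ref{THM:0} is precisely the image of the connecting morphism in question, and that this image is killed on the $B\A$-side. This is a matter of inspecting the splitting produced in the proof of Theorem~\ref{THM:0} together with the naturality in the space argument of the long exact sequence for higher Hochschild homology; no further non-trivial computation beyond that should be required.
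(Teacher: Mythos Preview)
Your Step~B is where the argument breaks down, and it is not a matter of a missing compatibility check: the claim that the $\oop(\tcoalgdag(\kring))[-1]$ summand maps to zero under abelianization is simply false. Evaluate at $G=\zed$: here $\A(\zed)=\zed$, so the natural transformation $HH_n(B\zed;L)\to HH_n(B\A(\zed);L)$ is the identity. Via Corollary~\ref{cor:omega_omega1_sign}, the $\oop$ summand in degree $n$ identifies with $\omega\beta_{n-1}S_{(1^{n-1})}$, whose top composition factor is $\alpha S_{(p1^r)}$ with $n=2p+r$; for $n$ even this is $\alpha S_{(n/2)}$, which is nonzero on $\zed$. Hence for even $n\geq 4$ the $\oop$ summand is nonzero at $\zed$ and certainly not killed. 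Your heuristic that ``the $\oop$ summand is built from the inner automorphism action, and inner automorphisms are trivial on $\A(G)$'' conflates the connecting morphism $\partial$ (which is $\adbar$) with the summand itself (which is the \emph{cokernel} of $\adbar$, embedded via the map $HH_n(X;L_1)\to HH_n(X;L')$); there is no mechanism here forcing that embedding to die after abelianization.

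The paper's argument (Proposition~\ref{prop:abelianization_dual_numbers}) is completely different and does not attempt to kill a direct summand. It uses instead the structural input: the target $HH_n(B\A(-);L)$ lies in $\fpi{\ab}$, which is \emph{semi-simple} over $\rat$ (Theorem~\ref{thm:fcatk_ab_semisimple}), while the source contains the uniserial functor $\omega\beta_n S_{(1^n)}$ of socle length $[\frac{n+1}{2}]\geq 2$ for $n\geq 3$ (Corollary~\ref{cor:omega_omega1_sign}). Any morphism from a uniserial object of socle length $\ell$ to a semi-simple object kills $\soc_{\ell-1}$, so the kernel contains $\soc_{[\frac{n+1}{2}]-1}\,\omega\beta_n S_{(1^n)}\neq 0$. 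The key idea you are missing is this interplay between the non-split polynomial filtration on the $\gr$-side and the semi-simplicity on the $\ab$-side; the direct-sum decomposition of Theorem~\ref{THM:0} alone does not carry enough information.
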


The proof of this result relies upon  our calculations and the non-semisimplicity of the category of polynomial functors on $\gr$, giving an illustration of the interest of taking into account the full functorial structure.  

As a Corollary, we recover the Theorem  of Dundas and Tenti \cite{DT} that higher Hochschild homology $X \mapsto HH_* (X; \loday (\rat [\epsilon], \rat [\epsilon]))$ is not a stable invariant of $X$ (see Remark \ref{rem:Dundas_Tenti}).

\subsection{The Hodge filtration}
In Section \ref{sect:hodge_filt},  we explain how Pirashvili's Hodge filtration of higher Hochschild homology \cite{Phh} is related to the functorial picture  considered here. 

For $X$ a pointed simplicial set and $L\in \ob \fcatQ[\Gamma]$, Pirashvili \cite[Theorem 2.4]{Phh} constructed the Hodge filtration spectral sequence 
\begin{eqnarray*}
E_{pq}^2 
= 
\tor^\Gamma _p (\mathcal{T}_q H_* (X; \rat)  , L) 
\Rightarrow 
HH_* (X;L). 
\end{eqnarray*}
The associated filtration of $HH_* (X;L)$ is Pirashvili's Hodge filtration.

We take $X$ to be the classifying space $B \zed^{\star r}$, considered as a functor of the free group $\zed^{\star r} $.
Then, for any $\Gamma$-module, Pirashvili's Hodge filtration for $HH_* (B(-); L)$ gives an increasing filtration. In a fixed homological degree $d$, we reindex this as:
\[
\ldots \subset  
 \filt^{q+1} HH_d (B(-); L)
\subset  
 \filt^q HH_d (B(-); L)
 \subset  
 \ldots 
 \subset
 \filt^0 HH_d (B(-); L)
\]
where $
\filt^0 HH_d (B(-); L)
 =HH_d(B(-); L)$ and  $\filt^{d+1} HH_d (B(-); L)=0$.

The filtration quotients  are described explicitly in Theorem \ref{thm:Hodge_filt_groups}, which shows that (up to indexing) the Hodge filtration coincides with the polynomial filtration. In particular, this yields Theorem \ref{THM:poly} above.

These results also give access to calculations of $\ext$ groups over $\f(\Gamma; \rat)$ (see Theorem \ref{thm:Koszul_duality}).

\subsection{Cohomological properties of functors}
This paper contains a number of intrinsic results on functors and outer functors on $\gr$ which should have further applications.

By construction, there are inclusions of abelian categories:
\[
\fcatQ[\ab]
\subsetneq
\foutQ[\gr]
\subsetneq
\fcatQ[\gr]
\] 
which restrict to the categories of polynomial functors:
 \[
\fpolyQ{d}{\ab}
\subseteq
\fpoutgrQ[d]
\subseteq
\fpolyQ{d}{\gr}
\] 
 for $d\in \nat \cup \{< \infty\}$.  In each case, for $d$ finite, $\mathcal{F}_d$ denotes the  full subcategory of polynomial functors of degree at most $d$;   each of these categories has enough projectives and enough injectives. The category $\mathcal{F}_{<\infty}$ denotes  that of all polynomial functors, $\bigcup _{d\in \nat} \mathcal{F}_d$.

 As a consequence of Theorem \ref{thm:decomp_Psi_pcoalg} one gets, in Corollary \ref{cor:d-1_beta_S_mu}, the  explicit calculation of extensions between simple objects of the category $\fcatQ[\gr]$. 

\begin{THM}
\label{Intro-THM11}
For $\mu \vdash d$, 
\[
\dim 
\ext^1_{\fcatQ[\gr]}
(\alpha_{|\rho|} S_\rho, \alpha_d S_\mu)
= 
\left\{
\begin{array}{ll}
0& |\rho |\neq d-1\\
\sum_{\nu \vdash d-2 } c^\rho_{\nu, 1}c^{\mu}_{\nu,11}
&|\rho |= d -1.
\end{array}
\right.
\]
where $c^*_{*,*}$ denote the Littlewood-Richardson coefficients.
\end{THM}

Several calculations of $\ext$ groups were obtained in \cite{V_ext}, for example when $\rho$ and $\nu$ are the respective signature representations; Theorem \ref{Intro-THM11} gives a complete description of $\ext^1$.

The category $\fpiQ{\ab}$ is  semi-simple, whereas, for $d>1$, $\fpolyQ{d}{\gr}$ has global dimension $d-1$ by results of  \cite{DPV}.  For polynomial outer functors,  we show that the category $\f_d^{\mathrm{Out}} (\gr; \rat)$ differs essentially from $\fpolyQ{d}{\ab}$ and is closer to the category $\fpolyQ{d}{\gr}$. In particular, as part of  Corollary \ref{cor:fpout_semisimple_dleq2}, we show:

\begin{THM}
The category  $\f_d^{\mathrm{Out}} (\gr; \rat)$ is semi-simple if and only if $d \leq 2$. 
\end{THM}

This contrasts with the fact that $\fpolyQ{d}{\ab}$ is semi-simple. Moreover, we show in Corollary \ref{cor:gl_dim_fpoutgr}:

\begin{THM}
For $0<d \in \nat$, the category $\f_d^{\mathrm{Out}} (\gr; \rat)$ has global dimension at most $d-2$, with equality in the cases $d\in\{ 2,3\}$.

 In particular, for $d \geq 2$, 
 \[
 \mathrm{gl.dim} \f_d^{\mathrm{Out}} (\gr; \rat)< \mathrm{gl.dim} \fpolyQ{d}{\gr} = d-1.
 \]
and  $\mathrm{gl.dim} \f_d^{\mathrm{Out}} (\gr; \rat)>0$ for $d >2$.
\end{THM}

In  Corollary \ref{cor:ext1_gr_fout} we describe  $\ext^1_{\foutQ[\gr]}$ between simple functors:

\begin{THM}
\label{COR:9}
For $\lambda \vdash d$, where $d>0$, $\ext^1_{\foutQ[\gr]}
(\alpha_{|\rho|} S_\rho, \alpha_d  S_\lambda)=0$ if $|\rho | \neq d-1$. 

If $\rho \vdash d-1$ and $\lambda \neq (d)$:
\[
\dim 
\ext^1_{\foutQ[\gr]}
(\alpha_{d-1} S_\rho, \alpha_d  S_\lambda)
= 
\left\{
\begin{array}{ll}
\sum_{\nu \vdash d-2 } c^\rho_{\nu, 1}c^{\lambda}_{\nu,11} 
&\rho \not \preceq \lambda 
\\ 
\big(\sum_{\nu \vdash d-2 } c^\rho_{\nu, 1}c^{\lambda}_{\nu,11} \big) - 1 
& \rho \preceq \lambda. 
\end{array}
\right.
\]
\end{THM}

For a simple representation $S_\lambda$ of $\sym_{d}$, the polynomial functor $\beta_d S_\lambda$ is injective in $\fpiQ{\gr}$, by Theorem \ref{thm:injectivity_beta}. This leads to the following (see Corollary \ref{cor:inj_envelope_omega}), which gives an intrinsic characterization of $\omega \beta_d S_\lambda$:

\begin{THM}
\label{THM:10}
For $\lambda \vdash d$, $\omega \beta_d S_\lambda$ is the injective envelope in $\f_{<\infty}^{\mathrm{Out}} (\gr; \rat)$ of the simple functor $\alpha_d S_\lambda$. 
\end{THM}

This gives a conceptual characterization of $\omega \beta_d S_\lambda$ in  $\f_{<\infty}^{\mathrm{Out}} (\gr; \rat)$.
 The difference between $\f_{<\infty}^{\mathrm{Out}} (\gr; \rat)$ and $\f_{<\infty} (\gr; \rat)$ is also underlined by:

\begin{THM}
For $\lambda \vdash d$,  $\beta_{d} S_\lambda$ is an object of $\foutQ[\gr]$ if and only if $\lambda = (d)$.

For $\lambda = (d)$, one has the identifications:
\begin{eqnarray*}
\omega \beta_d S_{(d)} & \cong & \alpha_d S_{(d)} \\
\coker (\coadbar_{(d)}) & \cong &  \alpha_{d-1} S_{(d-1)},
\end{eqnarray*}
where $S_{(d-1)}$ is taken to be zero for $d=0$. In particular, $\alpha_d S_{(d)}$ is injective in $\fpoutgrQ[<\infty]$.
\end{THM}

We observe that there are other examples of injectives in $\fpoutgrQ[<\infty]$ that are simple (hence arise from $\fpiQ{\ab}$). For example, by Remark \ref{rem-12.22} we have:
$$\omega \beta_n S_{((n-1),1)}=\alpha_n S_{((n-1)1)}$$
which is a simple functor, that is injective in $\fpoutgrQ[<\infty]$  (but not in $\fpiQ{\gr}
$).

\subsection{Other references}
Since the first version of this paper was uploaded to {\tt arXiv} in 2018, several related works have appeared. For example:
\begin{enumerate}
\item 
In \cite{2021arXiv211001934P}, the first author relates polynomial functors on $\gr$ to representations of the PROP associated to the Lie operad. In \cite{MR4696223}, he then identifies outer polynomial functors within this framework.  
\item
In \cite{MR4613613,MR4699865}, Katada studies certain functors on $\gr\op$ associated with the category of Jacobi diagrams in handlebodies introduced by Habiro and Massuyeau in \cite{Habiro-Massuyeau}. She proves that these are outer functors and are polynomial. In \cite{2022arXiv220210907V}, the second author extends this study to a family of functors indexed by $n \in \mathbb{N}$; the functors considered by Katada correspond to the case $n=0$.
\item 
In 
\cite{2022arXiv220212494G}, Gadish and Hainaut study the compactly supported rational cohomology of configuration spaces of
points on wedges of spheres. They relate this to higher Hochschild homology and show that their calculations for higher dimension spheres are related to those appearing in our study of outer functors on free groups. 
\item 
In \cite{doi:10.1080/10586458.2023.2209749}, Bibby, Chan, Gadish and Yun study the top weight rational cohomology of the moduli space of curves $\mathcal{M}_{2,n}$. They reduce to calculating the homology of compactified configuration spaces of graphs, 
 which in turn relates their calculations to \cite{2022arXiv220212494G} and the higher Hochschild homology considered here. 
 
Their subsequent work \cite{2023arXiv230701960B} studies the weight zero, compactly supported rational cohomology of the moduli space of curves $\mathcal{M}_{g,n}$ for $g \geq 2$,  using a spectral sequence. As an application, the authors obtain new calculations for $g=3$ and small $n$.
\item 
In \cite{2023arXiv231116881H}, Hainaut studies  the $\ext$-groups $\ext^*_{\fpoutgrQ[< \infty]} (\A^{\otimes s}, \A^{\otimes t})$ calculated in the category of polynomial outer functors. Using calculations from  \cite{2022arXiv220212494G}, he has shown that these groups can be non-zero for $*\neq t-s$. This contrasts with Vespa's result \cite{V_ext} calculating 
$\ext^*_{\f (\gr; \rat)} (\A^{\otimes s}, \A^{\otimes t})$, which is isomorphic to $\ext^*_{\f_{< \infty} (\gr; \rat)} (\A^{\otimes s}, \A^{\otimes t})$ by \cite{DPV}. Calculation of $\ext^1_{\fpoutgrQ[< \infty]} (\A^{\otimes s}, \A^{\otimes s+1})$ can be deduced from the computations obtained in \cite{PV2}.
\end{enumerate}

\bigskip
\noindent 
{\bf Organization of the paper:}

The paper is split into parts corresponding to the major themes:

\begin{enumerate}
\item 
Part \ref{part:background} presents background on functors on $\gr$ and reviews the theory of exponential functors. 
\item 
Part \ref{part:poly} develops the theory of polynomial functors on $\gr$ together with the tools that are required in analysing this, notably the Schur correspondence; the main objective is to describe the functors $\beta_d$ by studying the assembled functor $\boldbeta$. 
\item 
Part \ref{part:outer} introduces outer functors on free groups together with the functor $\omega$. This is one of the innovations of the paper and is of independent interest.
\item 
Part \ref{part:psi_omega_psi} shows how to calculate the functors $\boldbeta$ and $\omega \boldbeta$; this is illustrated by concrete examples.
\item 
Part \ref{part:HHH} introduces higher Hochschild homology and the associated functors on free groups. It is then shown how these functors are described using the functors $\boldbeta$ and $\omega \boldbeta$.
\item 
Part \ref{part:app} provides appendices reviewing basic material on functor categories and on the representation theory of the symmetric groups.
\end{enumerate}

\subsection{Notation and Conventions}

\begin{enumerate}
\item
$\nat$ denotes the non-negative integers and $\nat^*$ the positive integers;
\item 
when used as a category, $\nat$ is given the discrete structure; 
\item 
$\kring$  denotes a  commutative ring, which will always be assumed to be unital; 
\item 
the category of $\kring$-modules is sometimes written $\kring\dash\modules$ and the full subcategory of free, finite-rank $\kring$-modules is denoted $\fmod$;
\item 
if $\kring$ is a field, $^\sharp$ denotes vector space duality;
\item
for $\calc$ a small category, $\fcatk[\calc]$ denotes the category of functors from $\calc$ to $\kring$-modules;
\item
$\gr$ is the category of finitely-generated free groups and $\ab$ that of finitely-generated free abelian groups;
\item 
the symmetric group on $n$ letters is denoted $\sym_n$ and its signature representation by $\mathrm{sgn}_n$;   
\item 
$\sets$ is the category of sets and $\setspt$ that of pointed sets;  
\item 
in diagrams representing adjunctions, the convention used is that, for a pair of functors represented by  $\leftrightarrows$ or $\rightleftarrows$, the top arrow is the left adjoint. 
\end{enumerate}

\begin{nota}
\label{nota:groth_gp}
If the isomorphism classes of simple objects of an abelian category $\calc$ form a set, the Grothendieck group $G_0 (\calc)$ is the free abelian group generated by this set. 
If $S$ is a simple object of $\calc$, the associated element of $G_0 (\calc)$ is written $[S]$.

More generally, for $F$ a finite object of $\calc$ (that is, $F$ has a finite composition series), 
$[F] \in G_0 (\calc)$ is defined recursively by the additivity property for short exact sequences of finite  objects in $\calc$: 
\[ 
0
\rightarrow 
F_1 
\rightarrow 
F_2 
\rightarrow 
F_3
\rightarrow 
0
\]
$[F_2] = [F_1]+ [F_3]$ in $G_0 (\calc)$.
\end{nota}

\subsection{Acknowledgements}

\ 
\medskip

The authors are grateful to  Louis Hainaut for his interest and for his comments on a previous version of the paper.

\medskip
{\bf Financial support:}
This work was partially supported by the ANR Project {\em ChroK}, {\tt ANR-16-CE40-0003}. The first author was also supported by the project {\em Nouvelle Équipe},  convention {\tt 2013-10203/10204} between the Région des Pays de la Loire and the Université d'Angers. The second author was also suppported by the ANR projects {\em AlMaRe} {\tt ANR-19-CE40-0001-01},  {\em HighAGT} {\tt ANR-20-CE40-0016}, and {\em  SHoCoS} {\tt ANR-22-CE40-0008}.

\part{Background}
\label{part:background}
The aim of this part is to fix notation and recall results for later reference. We introduce the basic notions used in the paper, such as the categories $\gr$ and $\ab$ and functors on these categories.  We also introduce exponential functors, which are an important tool, and recall their relationship with Hopf algebras. 

This part contains no new results and the reader familiar with them may  skip it.

\section{The categories $\gr$ and $\ab$ and  functors on them}
\label{sect:fgr}

This section first introduces the categories $\gr$ and $\ab$ of finitely-generated free (respectively free abelian) groups before considering the basic structure for functors on groups that is required throughout the paper.

\subsection{Introducing the categories $\gr$ and $\ab$}
\label{subsect:gr_ab}

\begin{nota}
\label{nota:ab_gr}
\nomenclature{$\gr$}{finitely-generated free groups\nomrefpage}
\nomenclature{$\ab$}{finitely-generated free abelian groups\nomrefpage}
 Let   $\gr$ denote the category of 
finitely-generated free groups and  $\ab $ that of 
finitely-generated free abelian groups, considered as full subcategories of the category of groups. (The categories $\gr$ and $\ab$ are essentially small, with  skeleta  with objects $\zed ^{\star n}$ and $\zed^n$, $n \in \nat$, respectively.)
\end{nota}

\nomenclature{$\A $}{abelianization functor $\gr\rightarrow \ab$\nomrefpage}
The free product of groups $\star$ equips $\gr$ with the symmetric monoidal structure $(\gr, \star, \{e\})$. Likewise, the direct product $\oplus$ of abelian groups equips $\ab$ with the symmetric monoidal structure $(\ab, \oplus, 0)$. 
 Abelianization of groups, $G \mapsto G/[G,G]$ induces a functor $\A : \gr \rightarrow \ab$.
With respect to the above, $ \A : \gr \rightarrow \ab$ is symmetric monoidal. 

For usage in Section \ref{sec:expo}, we analyse morphisms in  $\gr$ by exploiting the symmetric monoidal structure. 

\begin{nota}
\label{nota:generators_Z}
 Write $x$ for a chosen generator of the group $\zed$ and $x_i$ ($1 \leq i \leq 
n$) for the corresponding 
 generators of $\zed^{\star n}$, $n \in \nat$.
\end{nota} 
 
Since $\star$ is the coproduct in $\gr$ (respectively $\oplus$ for $\ab$), one can consider cogroup objects in $\gr$ (resp. $\ab$).  The following is well-known (writing the group structure in $\ab$ additively): 
 
\begin{lem}
\label{lem:zed_cogroup}
\ 
\begin{enumerate}
\item 
The group $\zed$ is a cogroup object in $\gr$, with 
 counit $\epsilon : \zed \rightarrow \{e\}$,  coproduct  $\Delta : \zed \rightarrow 
\zed \star \zed$  given by $x \mapsto x_1 x_2$, and inverse $\chi : \zed 
\rightarrow \zed$ given by  $x \mapsto x^{-1}$. 
 \item 
 The group $\zed$ is an abelian cogroup object in $\ab$, with 
 counit $\epsilon_\ab : \zed \rightarrow 0$,  coproduct  $\Delta_\ab : \zed \rightarrow 
\zed \oplus \zed$ given by $x \mapsto (x,x)$, and inverse $\chi_\ab : \zed 
\rightarrow \zed$ given by 
 $x \mapsto -x$. 
 \end{enumerate}
\end{lem}

\begin{rem}
The cogroup structure on $\zed$ endows $\hom _\gr (\zed, G)$ with a natural group structure, for all $G \in \ob \gr$, by definition of a cogroup object. This coincides with the group structure of $G$, via the natural isomorphism $\hom_\gr (\zed, G) \cong G$.
\end{rem}

Restricting to the skeleton of $\gr$ with objects $\zed^{\star n}$, for $n \in \nat$, the category $\gr$ can be viewed as a PROP with respect to $\star$. This PROP structure has been exploited in \cite{MR3765469}, \cite{P},  \cite{Hab}, for example.  In particular, as a PROP, one has the following  generators for the morphisms of $\gr$:

\begin{prop}
\label{prop:generate_mor_gr}
Considered as a PROP, the morphisms of $\gr$ are generated by the following:
\begin{enumerate}
\item 
the unit $\eta :\{e \} \rightarrow \zed$; 
\item 
the fold map $\nabla : \zed \star \zed \rightarrow \zed$; 
\item 
the cogroup structure morphisms $\epsilon : \zed \rightarrow \{e\}$,   $\Delta : \zed \rightarrow \zed \star \zed$,  and  $\chi : \zed 
\rightarrow \zed$.
\end{enumerate}
\end{prop}

\begin{rem}
\ 
\begin{enumerate}
\item 
The PROP structure includes the information from the symmetric monoidal structure, in particular the action, for each $n \in \nat$, of $\sym_n$ on $\zed^{\star n}$. 
\item 
There is an analogous statement for $\ab$. 
\end{enumerate}
\end{rem}

\subsection{Functors on $\gr$ and $\ab$}
For background on functor categories, the reader is referred to Appendix \ref{sec:background}. In particular,  for $\calc$ an essentially small category, $\f(\calc; \kring)$ denotes the category of functors from $\calc$ to $\kring$-modules, where 
 $\kring$ is a  unital, commutative ring.

\begin{prop}
\label{prop:abelianization}
 The abelianization functor $\A : \gr \rightarrow \ab$ induces  exact functors 
 \begin{eqnarray*}
  \circ \A &:& \fcatk[\ab] \rightarrow \fcatk[\gr]
  \\
    \circ \A &:& \fcatk[\ab\op] \rightarrow \fcatk[\gr\op]
 \end{eqnarray*}
which are symmetric monoidal and fully faithful. The images are strictly full and closed under the formation of  sums and sub-quotients.

Taking $\kring = \rat$, the subcategory $\fcatQ[\ab] \subset \fcatQ[\gr]$ (respectively $\fcatQ[\ab\op]\subset \fcatQ[\gr\op]$) is not thick.
\end{prop}

\begin{proof}
This is an application of a standard result for functor categories (see 
\cite[Proposition C.1.4]{D} or \cite[Proposition A.2]{MR2391661}). That 
$\circ \A$ is fully faithful is a consequence of the fact that $\A$ is 
essentially surjective and full. 

That the subcategories are not thick is exhibited by Example \ref{exam:Passi_not_out}.
\end{proof}

The following Lemma shows that, for the category $\ab$, the contravariant and covariant functor categories are equivalent. This is far from the case for $\gr$, whence the necessity to consider both variances.

\begin{lem}
 \label{lem:duality_ab}
 The  duality functor 
  $
  \hom_{\ab} (-, \zed) : \ab \op \rightarrow \ab
 $ 
 is an equivalence of categories.  By precomposition, this induces an equivalence of categories 
 $
  \fcatk [\ab] \stackrel{\cong}{\rightarrow } \fcatk[\ab \op].
 $
\end{lem}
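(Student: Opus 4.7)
The plan is to verify the two assertions in turn, both being elementary consequences of the self-duality of finitely-generated free abelian groups.

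First I would check that $\hom_{\abgps}(-,\zed)$ sends $\ab\op$ into $\ab$: for $A \cong \zed^n$ a finitely-generated free abelian group, $\hom_{\abgps}(\zed^n, \zed) \cong \zed^n$ is again in $\ab$, and the restriction is obviously a functor. To show it is an equivalence of categories, I would exhibit a pseudo-inverse using the canonical double-duality natural transformation $A \rightarrow \hom_{\abgps}(\hom_{\abgps}(A, \zed), \zed)$, sending $a$ to evaluation at $a$. On $\ab$ this natural transformation is an isomorphism, as is immediate by reduction to the generator $\zed$; equivalently, the composite of $\hom_{\abgps}(-,\zed): \ab\op \to \ab$ with its opposite is naturally isomorphic to the identity on both sides. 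Hence $\hom_{\abgps}(-,\zed)$ is an equivalence of categories $\ab\op \stackrel{\cong}{\to} \ab$.

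For the second assertion, any equivalence of categories $F: \cald \stackrel{\cong}{\to} \calc$ induces by precomposition an equivalence $\fcatk[\calc] \to \fcatk[\cald]$ (a pseudo-inverse is provided by precomposition with a pseudo-inverse of $F$, and the required natural isomorphisms are obtained by whiskering the unit and counit of the equivalence $F$). Applying this to $F = \hom_{\abgps}(-,\zed): \ab\op \stackrel{\cong}{\to} \ab$ yields the desired equivalence $\fcatk[\ab] \stackrel{\cong}{\to} \fcatk[\ab\op]$.

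There is no real obstacle here: the content of the lemma is the classical self-duality of $\ab$ under $\hom(-,\zed)$, and the functor-category statement is formal. The only point requiring a moment's care is to make the restriction to $\ab\op$ (rather than $\abgps\op$) explicit, so that the target lies in $\ab$ and double duality is an isomorphism.
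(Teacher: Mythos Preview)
Your proof is correct. The paper states this lemma without proof (treating it as standard), and your argument via the double-duality natural transformation on finitely-generated free abelian groups, followed by the formal observation that precomposition with an equivalence yields an equivalence, is exactly the expected elementary justification.
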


\begin{nota}
\nomenclature{$\ak $}{abelianization as an object of $\fcatk[\gr]$\nomrefpage}
Denote by $\ak \in \ob \fcatk[\gr]$ the composite functor $\A(-) \otimes_\zed \kring$, where $\A : \gr \rightarrow \ab$ is the abelianization functor.
\end{nota}

The standard projective functors in $\fcatk[\gr]$ considered in the following example are fundamental objects and play an important role  in the current work; for example, their structure is exploited in Section \ref{sec:beta}.
 
\begin{exam}
\label{exam:standards_proj_gr}
\nomenclature{$P^\gr_G$}{standard projective in $\fcatk[\gr]$\nomrefpage}
For $G$ in $\gr$, the standard projective functor  $P^\gr_G$ is the functor $\kring [\hom_\gr(G, -)]$ in $\fcatk[\gr]$ (see Appendix \ref{subsect:functor}). 

For $G= \zed$, the associated standard projective $P^\gr _\zed$ identifies with the group ring functor $H \mapsto \kring [H]$, for $H \in \ob \gr$,  considered as taking values in $\kring$-modules (i.e., forgetting structure). 
 This decomposes as $P^\gr _\zed  \cong \kring \oplus \overline{P^\gr _\zed}$, where $\overline{P^\gr _\zed}$ is the reduced part. Then $\overline{P_\zed^\gr} (H)$ 
 identifies with the $\kring$-module underlying the augmentation ideal $\mathcal{I} (H) \subset \kring 
[H]$. In particular, as a submodule of  $\kring [H]$, it is generated by the elements $[h]-[e]$, where $e \in H$ is the unit and $h \in H$.

There is an important surjective map $\overline{P^\gr_\zed} \twoheadrightarrow \ak$ in $\fcatk[\gr]$; evaluated on $H \in \ob \gr$, this is the $\kring$-linear map sending $[h]-[e]$ to $\underline{h} \otimes 1 \in \A(H) \otimes \kring$, where $\underline{h}$ denotes the image of $h$ in the abelianization $\A(H)$.
\end{exam}

We now introduce the shift functors:

\begin{nota}
\label{nota:shift_functors}
\nomenclature{$\tau_\zed$}{shift functor on $\fcatk[\gr]$\nomrefpage}
\nomenclature{$\taubar$}{reduced shift functor on $\fcatk[\gr]$\nomrefpage}
 Denote by 
\begin{enumerate}
 \item 
 $\tau _\zed : \fcatk[\gr] \rightarrow \fcatk[\gr]$ the 
shift functor  defined on $F \in \ob \fcatk[\gr]$ by $(\tau_\zed F) (G) := F (G 
\star \zed)$, equipped with 
the canonical 
 splitting 
 \[
  1_{\fcatk[\gr]} \hookrightarrow \tau_\zed \twoheadrightarrow 1_{\fcatk[\gr]},
 \]
and let $\taubar : \fcatk[\gr] \rightarrow \fcatk[\gr]$ denote the reduced shift functor, given by the cokernel of 
this split monomorphism, 
 so that there is a natural decomposition $\tau_\zed F \cong F \oplus \taubar F$; 
\item 
$\tauab : \fcatk[\ab] \rightarrow \fcatk[\ab]$ the  shift functor  defined by $(\tauab F) (A) := F (A \oplus \zed)$; the 
reduced functor $\tauabbar$ is defined similarly. 
 \end{enumerate}
 Analogously, denote by $\tau_\zed, \taubar : \fcatk[\gr\op] \rightarrow \fcatk[\gr\op]$ (respectively  $\tauab, \tauabbar : \fcatk[\ab\op] \rightarrow 
\fcatk[\ab\op]$)  the shift and reduced shift functors in the contravariant setting.
 \end{nota}

\begin{prop}
\label{prop:properties_tau}
The functors $\tau_\zed$ satisfy the following properties:
\begin{enumerate}
 \item 
$\tau_\zed : \fcatk[\gr] \rightarrow \fcatk[\gr]$ is exact and  is 
right adjoint to the exact functor 
 $P^\gr_{\zed} \otimes - : \fcatk[\gr] \rightarrow \fcatk[\gr]$;  in particular, 
$\tau_\zed$ preserves injectives;
\item 
$\taubar : \fcatk[\gr] \rightarrow \fcatk[\gr]$ is exact and  is 
right adjoint to the exact functor 
 $\overline{P^\gr_{\zed}} \otimes - : \fcatk[\gr] \rightarrow \fcatk[\gr]$; 
 \item 
$\tau_\zed : \fcatk[\gr\op] \rightarrow \fcatk[\gr\op]$ is exact 
and preserves projectives. 
\end{enumerate}
\end{prop}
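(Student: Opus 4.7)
The three statements share a common starting point: $\tau_\zed$ is precomposition with the endofunctor $-\star\zed$ of $\gr$, so its exactness in all three settings is immediate, as (co)limits in these functor categories are computed pointwise. For (1), I will exhibit the adjunction $P^\gr_\zed \otimes - \dashv \tau_\zed$ by giving explicit unit and counit. Writing $\iota_H: H \to H \star \zed$ and $\iota_\zed: \zed \to H \star \zed$ for the coproduct inclusions, the unit $F \to \tau_\zed(P^\gr_\zed \otimes F)$ at $H$ will send $x \in F(H)$ to $[\iota_\zed] \otimes F(\iota_H)(x)$, and the counit $P^\gr_\zed \otimes \tau_\zed G \to G$ at $H$ will send $[f] \otimes x \in \kring[H] \otimes G(H \star \zed)$ to $G([\mathrm{id}_H, f])(x)$, where $[\mathrm{id}_H, f]: H \star \zed \to H$ is the morphism obtained from $\mathrm{id}_H$ and $f \in \hom_\gr(\zed, H) = H$ by the universal property of the coproduct. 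Naturality in $H$ and the triangle identities then follow from that universal property. The functor $P^\gr_\zed \otimes -$ is exact because $P^\gr_\zed(H) = \kring[H]$ is a free $\kring$-module, and $\tau_\zed$ consequently preserves injectives as the right adjoint of an exact functor.

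For (2), I will deduce the adjunction by splitting the one from (1) along the compatible direct sum decompositions $\tau_\zed = 1_{\fcatk[\gr]} \oplus \taubar$ and $P^\gr_\zed = \kring \oplus \overline{P^\gr_\zed}$: both sides of the adjunction isomorphism $\hom(P^\gr_\zed \otimes F, G) \cong \hom(F, \tau_\zed G)$ split accordingly, and after matching off the trivial summand $\hom(F, G)$ on both sides one obtains the restricted adjunction $\overline{P^\gr_\zed}\otimes - \dashv \taubar$. Exactness of $\taubar$ and of $\overline{P^\gr_\zed}\otimes -$ is inherited, since both are direct summands of exact functors.

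For (3), the adjunction argument of (1) does not transport, since $\gr\op$ does not have coproducts. Exactness is again pointwise. For preservation of projectives, I plan to verify the claim directly on the standard projective generators $\kring[\hom_\gr(-, G)]$, $G \in \gr$, of $\fcatk[\gr\op]$. The universal property of the coproduct in $\gr$ yields
\[
\tau_\zed\bigl(\kring[\hom_\gr(-, G)]\bigr)(H) = \kring[\hom_\gr(H \star \zed, G)] \cong \kring[\hom_\gr(H, G)] \otimes_\kring \kring[\hom_\gr(\zed, G)],
\]
naturally in $H$, with the second factor a constant free $\kring$-module. Hence $\tau_\zed \kring[\hom_\gr(-, G)]$ is a direct sum of copies of the projective $\kring[\hom_\gr(-, G)]$ indexed by the set $G$, and is therefore projective. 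Since every projective in $\fcatk[\gr\op]$ is a retract of a direct sum of standard projectives and $\tau_\zed$ commutes with direct sums and retracts (being exact), it preserves all projectives. The main subtlety is the asymmetry between (1) and (3): in the covariant case $\tau_\zed$ has an explicit left adjoint $P^\gr_\zed \otimes -$ coming from the universal property of the coproduct, whereas in the contravariant case no such adjoint is directly visible, forcing the verification on generators.
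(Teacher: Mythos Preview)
Your proof is correct. For part (3) your argument coincides with the paper's: both compute $\tau_\zed P^{\gr\op}_G \cong \kring[G] \otimes P^{\gr\op}_G$ directly and conclude. For part (1) there is a minor difference of tactic: the paper establishes the adjunction by checking the hom-set isomorphism on the projective generators $P^\gr_G$ via the identification $P^\gr_\zed \otimes P^\gr_G \cong P^\gr_{\zed \star G}$ and Yoneda, then extends by exactness of $P^\gr_\zed \otimes -$; you instead write down the unit and counit explicitly. Both are standard ways of verifying such adjunctions; yours is more hands-on, the paper's slightly more economical. For part (2) the paper simply says the argument is similar to (1), whereas you deduce it from (1) by splitting off the trivial summand on both sides. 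Your deduction is valid, but note that it implicitly uses that the split retractions $P^\gr_\zed \twoheadrightarrow \kring$ and $\tau_\zed \twoheadrightarrow 1$ are mates under the adjunction (equivalently, that the adjunction isomorphism carries the summand $\hom(F,G)$ to $\hom(F,G)$); this is straightforward to check from your explicit unit and counit, but is worth stating.
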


\begin{proof}
These are standard results for functor categories (cf. \cite[Appendice C]{D}). 
The proof is sketched for $\tau_\zed$; the case of $\taubar$ 
is similar. 

It is clear that $\tau_\zed$ is exact. Consider $\fcatk[\gr]$; the functor $P^\gr_\zed$ takes values in 
free $\kring$-modules, hence $P^\gr_\zed \otimes - $ is exact. 
Moreover, there is a canonical isomorphism $P^\gr _\zed \otimes P^\gr _G \cong 
P^\gr_{\zed \star G}$, since $\star$ is the coproduct in $\gr$, which induces 
the natural isomorphism 
\[
 \hom_{\fcatk[\gr]} (P^\gr _\zed \otimes P^\gr _G, F) \cong  \hom_{\fcatk[\gr]} 
(P^\gr _G, \tau_\zed F)
\]
on projective generators. Since $P^\gr_\zed \otimes - $ is exact, this 
establishes the adjunction. It follows formally that $\tau_\zed$ preserves 
injectives.

In the category $\fcatk[\gr\op]$, one has $$\tau_\zed P^{\gr\op}_G (-) \cong 
\kring [\hom_{\gr} (- \star \zed, G)] \cong \kring [G] \otimes P^{\gr \op} _G 
(-).$$ 
The right hand side is projective, as required.
\end{proof}

 \begin{prop}
 \label{prop:shift_abelianization}
  The shift functors $\tau_\zed$ and $\tauab$ are compatible with respect to 
abelianization: namely the following diagrams commute up to natural isomorphism
  \[
   \xymatrix{
   \fcatk[\ab] \ar[r]^{\tauab} 
   \ar[d]_{\circ \A}
   &
   \fcatk[\ab]
   \ar[d]^{\circ \A}
   &
    \fcatk[\ab \op] \ar[r]^{\tauab} 
   \ar[d]_{\circ \A}
   &
   \fcatk[\ab \op]
   \ar[d]^{\circ \A}
   \\
   \fcatk[\gr]
   \ar[r]_{\tau_\zed}
   &
   \fcatk[\gr]
   &
   \fcatk[\gr \op]
   \ar[r]_{\tau_\zed}
   &
   \fcatk[\gr \op].
   }
  \]
 \end{prop}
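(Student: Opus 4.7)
The plan is to reduce the claim to the coproduct-preserving property of abelianization. The functor $\A : \gr \to \ab$ sends the free product $G \star H$ to $\A(G) \oplus \A(H)$ (natural isomorphism), since abelianization is the left adjoint of the inclusion $\ab \hookrightarrow \grps$ and hence preserves the coproduct $\star$; moreover, $\A(\zed) = \zed$ canonically. Combining these two facts, there is a natural isomorphism $\A(G \star \zed) \cong \A(G) \oplus \zed$ for $G \in \ob \gr$.

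For the covariant diagram, I would evaluate both composites on an arbitrary $F \in \ob \fcatk[\ab]$ and $G \in \ob \gr$:
\[
\bigl((\tauab F) \circ \A\bigr)(G) = (\tauab F)(\A(G)) = F(\A(G) \oplus \zed),
\]
\[
\bigl(\tau_\zed (F \circ \A)\bigr)(G) = F(\A(G \star \zed)).
\]
These are identified by applying $F$ to the natural isomorphism $\A(G \star \zed) \cong \A(G) \oplus \zed$ supplied by the preceding paragraph. Naturality in $G$ follows from the naturality of this isomorphism, and naturality in $F$ is immediate. This gives the desired natural isomorphism of functors $\fcatk[\ab] \rightrightarrows \fcatk[\gr]$.

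For the contravariant diagram, the argument is identical on objects: both composites evaluate $F \in \ob \fcatk[\ab\op]$ on $\A(G \star \zed)$ versus $\A(G) \oplus \zed$, so the same natural isomorphism applied by $F$ yields the result; the reversal of arrows does not affect the verification, since one is simply precomposing in the opposite category.

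No serious obstacle is anticipated: the content is entirely formal, resting on the fact that $\A$ preserves coproducts and sends $\zed$ to $\zed$. The only point that deserves a moment of care is checking that the isomorphism $\A(G \star \zed) \cong \A(G) \oplus \zed$ is natural in $G$ with respect to the full morphism sets of $\gr$ (not just the inclusion of $G$ into $G \star \zed$), which follows from functoriality of $\A$ applied to the coproduct structure maps. Once this is in hand, the two diagrams commute up to the stated natural isomorphism.
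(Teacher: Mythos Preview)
Your proof is correct and follows exactly the same approach as the paper, which states in one line that the result is ``a formal consequence of the natural isomorphism $\A(G \star \zed) \cong \A(G) \oplus \zed$.'' You have simply unpacked this formal consequence in full detail.
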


\begin{proof}
 This is a  formal consequence of the natural isomorphism $\A (G \star \zed) \cong \A 
(G) \oplus \zed$. 
\end{proof}

\section{Exponential functors and Hopf algebras}
\label{sec:expo}

In this section we review the theory of exponential functors defined on $\gr$ and on $\ab$ respectively, together with their contravariant counterparts. Exponential functors are defined as symmetric monoidal functors. This implies that the complete structure of an exponential functor can be encoded in a Hopf algebra (see Theorem \ref{thm:expo_Hopf_general}). 

In particular, we introduce the functors $\Phi$ and $\Psi$ that we exploit throughout this work: for $H$ a commutative Hopf algebra, $\Psi H$ is given by the associated exponential functor on $\gr$; for $H$ cocommutative, $\Phi H$ is given by the associated exponential functor on $\gr\op$.

\subsection{Exponential functors on $\gr$ and $\ab$}
\label{subsect:expo_functors}

Throughout this section, $\calm$ is an abelian category that is equipped with a symmetric monoidal structure  $(\calm, \otimes,\unit)$. We shall always assume that $\otimes$ is additive with respect to each variable. 

The following are the key examples that occur in the paper: 
\begin{enumerate}
\item 
The category of graded $\kring$-modules over a commutative ring $\kring$, equipped with the graded tensor product $\otimes_\kring$; there are two standard choices for symmetry, either invoking Koszul signs or not.
\item 
The category $\f (\fmodq ; \rat)$ of functors from finite-dimensional $\rat$-vector spaces to $\rat$-vector spaces, equipped with the objectwise tensor product $\otimes_\rat$.
\item 
The category $\f(\fb; \rat)$ of $\fb$-modules (reviewed in Section \ref{sec:fb}), equipped with the convolution product $\tenfb$; again there are  two possible choices for symmetry, either invoking Koszul signs or not.
\end{enumerate}

\begin{defn}
\label{defn:expo_functor}
For $\calc$ one of $\gr, \gr\op, \ab, \ab\op$, the category $\fexp (\calc; \calm)$  of exponential functors from $\calc$ to $\calm$ is the category of symmetric monoidal functors from $\calc$ to $\calm$; this is equipped with the forgetful functor  $\fexp (\calc; \calm) \rightarrow \f (\calc; \calm)$.
\end{defn}

Thus, an exponential functor from $\gr$ to $\calm$ is a functor $F \in \ob \f (\gr; \calm)$ such that, for  $G_1$, $G_2$ in $\gr$ there is a natural isomorphism 
$$F (G_1 \star G_2) \cong F(G_1) \otimes F(G_2)$$
 and these isomorphisms are compatible with the  symmetric monoidal structure of $\calm$.

Since $\A : \gr \rightarrow \ab$ is symmetric monoidal, one has:

\begin{prop}
\label{prop:abelianize_expo}
The abelianization functor $\gr \rightarrow \ab$ induces natural transformations:
\begin{eqnarray*}
\circ \A : \fexp (\ab; \calm) &\hookrightarrow & \fexp (\gr; \calm) 
\\
\circ \A : \fexp (\ab\op; \calm) &\hookrightarrow & \fexp (\gr\op; \calm) 
\end{eqnarray*}
that are fully faithful.

Moreover, the equivalence of categories $\hom_\ab (-, \zed) : \ab\op \stackrel{\cong}{\rightarrow} \ab$ induces an equivalence of categories $\fexp (\ab; \calm) \cong \fexp (\ab\op; \calm)$.
\end{prop}

\begin{exam}
\label{exam:expo_group_ring}
The following examples are fundamental in the applications considered here.
\begin{enumerate}
\item 
The group ring functor $G \mapsto \rat [G]$ is {\em not} exponential on $\gr$ with respect to $(\gr,\star, \{e\})$.
This can be seen  as follows. Evaluated on $G= \zed$, $\rat[\zed]$ is a {\em bicommutative} Hopf algebra; were $\rat[-]$ exponential, by  Theorem \ref{thm:expo_Hopf_general} below, it would have to factor across abelianization, which is clearly false.
\item 
The group ring functor $A \mapsto \rat [A]$  on $\ab$ {\em is} exponential with respect to $(\ab, \oplus, 0)$, since there are natural isomorphisms $\rat [A_1 \oplus A_2]\cong \rat[A_1] \otimes \rat[A_2]$ for $A_1$, $A_2$ in $\ab$,  exhibiting $\rat[-]$ as  a symmetric monoidal functor.
\item 
For a fixed group $H\in \ob \gr$, the contravariant functor on $\gr$ given by $G \mapsto \rat [\hom_\gr (G, H)]$ is exponential via the natural isomorphisms $\rat [\hom_\gr (G_1 \star G_2, H)]
\cong 
\rat [\hom_\gr (G_1, H)] \otimes \rat [\hom_\gr (G_2, H)]$, using that $\star$ is the coproduct in $\gr$. 
 Moreover, the exponential structure is natural with respect to the group $H$. 
\end{enumerate}
Further basic examples can be given by applying Theorem \ref{thm:expo_Hopf_general} below.
\end{exam}

\subsection{Introducing the functors $\Psi$ and $\Phi$}

The symmetric monoidal structure $(\calm , \otimes , \unit)$ allows us to consider Hopf algebras in $\calm$, by the obvious abstraction of the usual definition in $\kring$-modules. Explicitly, a Hopf algebra structure on $H$ is given by the morphisms $(\epsilon_H, \eta_H, \Delta_H, \mu_H, \chi_H)$, where $\unit \stackrel{\eta_H}{\rightarrow} H \stackrel{\epsilon_H}{\rightarrow} \unit$ are the unit and augmentation respectively, $\mu_H: H\otimes H \rightarrow H$ is the product, $\Delta_H : H \rightarrow H \otimes H$ the coproduct, and $\chi_H : H \rightarrow H$ the conjugation (or antipode); these satisfy the appropriate axioms.

A Hopf algebra $H$ is commutative if $\mu_H$ is commutative (i.e., $\mu_H = \mu_H \tau_{H\otimes H}$, $\tau_{H\otimes H}$ denoting the symmetry); it is cocommutative if $\Delta_H$ is cocommutative (i.e., $\Delta _H = \tau_{H\otimes H} \Delta_H$); it is bicommutative if it is both commutative and cocommutative.

\begin{nota}
\label{nota:hopf}
Write $\hcom(\calm)$, $\hcocom(\calm)$,  $\hbicom(\calm)$  for the categories of Hopf algebras in $\calm$ that are respectively commutative, cocommutative, bicommutative.
\end{nota}

The following Theorem gives the relationship between exponential functors and Hopf algebras. 
 In particular, the functors $\Phi$ and $\Psi$ correspond to the construction of an exponential functor from an appropriate Hopf algebra. 
 
 \nomenclature{$\Phi$}{exponential functor construction $\hcocom(\calm)  \rightarrow \fcat{\gr\op}{\calm}$\nomrefpage}
 \nomenclature{$\Psi$}{exponential functor construction $\hcom(\calm)  \rightarrow \fcat{\gr}{\calm}$\nomrefpage}

\begin{thm}
\label{thm:expo_Hopf_general}
Evaluation on $\zed$ induces equivalences of categories 
 \begin{eqnarray*}
  \eval_\zed &:& \fexp (\gr ; \calm) \stackrel{\cong}{\longrightarrow} 
 \hcom(\calm) \\
   \eval_\zed &:& \fexp (\gr\op ; \calm) 
\stackrel{\cong}{\longrightarrow}
\hcocom(\calm).
 \end{eqnarray*}
These restrict to give the equivalences 
\[
\fexp (\ab ; \calm) \cong \fexp (\ab \op; \calm)  
\stackrel{\cong}{\longrightarrow} \hbicom(\calm).
\]

In particular, there are faithful embeddings:
 \begin{enumerate}
 \item  
  $\Psi :  \hcom (\calm) \rightarrow \fcat{\gr}{\calm}$ such that $\Psi H (\zed ^{\star n}) = H^{\otimes n}$
  \item 
  $\Phi : \hcocom(\calm)  \rightarrow \fcat{\gr\op}{\calm}$ such that $\Phi H (\zed ^{\star n}) = H^{\otimes n}$
\end{enumerate}
and these take values in the respective categories of exponential functors.

Restricted to $\hbicom(\calm)$, up to natural isomorphism, these functors factor across $\fcat{\ab\op}{\calm}$ and $\fcat{\ab}{\calm}$ 
respectively:
\[
 \xymatrix{
\hbicom(\calm) \ar@{^(->}[d]
 \ar[r]^{\Phi}
 &
 \fcat{\ab\op}{\calm}
 \ar[d]^{\circ \A}
&
 \hbicom(\calm) \ar@{^(->}[d]
 \ar[r]^\Psi
 & 
 \fcat{\ab}{\calm}
 \ar[d]^{\circ \A}  
 \\
\hcocom(\calm) 
 \ar[r]_\Phi 
 &
 \fcat{\gr\op}{\calm}
 &
\hcom(\calm) 
 \ar[r]_\Psi 
 &
\fcat{\gr}{\calm}.
 }
\]

\end{thm}

\begin{proof}
For $\calm = \kring\dash\modules$, Theorem \ref{thm:expo_Hopf_general} can be proved as in \cite[Remark 2.1, Theorem 2.2]{MR3765469}, by identifying the PROP associated to the relevant category of Hopf algebras, as in \cite{P}, \cite{Hab}. This argument extends to the general case. For the convenience of the reader, we outline the argument for exponential functors on $\gr$.

Consider $\eval_\zed$ on $\fexp (\gr ; \calm)$. For an exponential functor $F$ we may assume that $F(\{e\})=\unit$ and $F(\zed \star \zed)\cong F(\zed) \otimes F(\zed)$; then the structure morphisms $(\eta, \nabla, \Delta, \epsilon, \chi)$ of $\gr$ given in Proposition \ref{prop:generate_mor_gr} make $F(\zed)$ into a commutative Hopf algebra in $\calm$ and this structure is natural. This gives the evaluation functor $\eval_\zed : \fexp (\gr ; \calm) \rightarrow \hcom(\calm)$. 

To show that this is an equivalence, it suffices to exhibit the quasi-inverse  $\Psi :  \hcom (\calm) \rightarrow \fexp(\gr;\calm) \subset \f(\gr; \calm)$. Consider $H \in \ob \hcom (\calm)$. The functor $\Psi H$ is defined on objects by  $\Psi H (\zed ^{\star n}) = H^{\otimes n}$. 
Using Proposition \ref{prop:generate_mor_gr}, the action of morphisms of $\gr$ is given in terms of  the Hopf algebra structure of $H$. This construction is natural with respect to $H$ and, by construction, yields an exponential functor. Finally one checks that this is a quasi-inverse, as required. 
\end{proof}

\begin{exam}
\label{exam:expo_group_ring+hopf}
As in Example \ref{exam:expo_group_ring}, $\rat [\hom_\gr(-,H)]$ is an exponential functor on $\gr\op$. The underlying cocommutative Hopf algebra is the group ring $\rat[H]$, so that there is an isomorphism of 
functors on $\gr\op$:
\[
\rat [\hom_\gr(-,H)] \cong \Phi \big( \rat[H] \big).
\]
This isomorphism is natural with respect to $H$ in $\gr$, using the naturality of the group ring as a cocommutative Hopf algebra.
\end{exam}

\begin{rem}
 \ 
 \begin{enumerate} 
\item 
There are variants of Theorem \ref{thm:expo_Hopf_general}: for example, Touz\'e has given a classification of the exponential functors from the category of finitely generated projective $R$-modules to $\kring$-modules \cite[Theorem 5.3]{T}; taking $R=\zed$, this corresponds to the case $\ab$  stated in Theorem \ref{thm:expo_Hopf_general}.
  \item 
For $H$ a cocommutative Hopf algebra in $\kring$-modules, evaluating $\Phi H$ on $\zed^{\star n}$ gives a right $\End (\zed^{\star n})$-action on $H^{\otimes n} \cong \Phi H (\zed^{\star n})$ and, by restriction, a right $\aut (\zed^{\star n})$-action.
This recovers the action constructed (in a special case) by Turchin and 
Willwacher in \cite[Section 1]{TW}. Similarly one recovers the right $\aut 
(\zed^{\star n})$-action on $H^{\otimes n}$ given by  Conant and Kassabov \cite[Section 
4]{CK}. 
 \end{enumerate}
\end{rem}

\subsection{(Co)multiplicative structures on exponential functors}
\label{subsect:mult_expo}

An exponential functor in $\fexp (\gr ; \calm)$ does {\em not} in general give a functor from $\gr$ to $\hcom(\calm)$. Nevertheless, it does carry a certain multiplicative structure; this is useful in applications for analysing the underlying functor. Similarly, an exponential functor on $\gr\op$ carries a certain comultiplicative structure. 

To explain this, recall that a unital commutative monoid in $\calm$ is given by a tuple $(M, \epsilon_M, \mu_M)$ where $\epsilon _M : \unit  \rightarrow M$, $\mu_M : M \otimes M \rightarrow M$ satisfy the associativity, commutativity and unit axioms; morphisms are defined in the obvious way.  Similarly, a counital cocommutative comonoid in $\calm$ is given by a tuple $(M, \eta_M, \Delta_M )$, where $\eta_M :M \rightarrow \unit$ and $\Delta_M : M \rightarrow M\otimes M$, satisfying the appropriate axioms; morphisms are again defined in the obvious way.

\begin{nota}
Denote by
\begin{enumerate}
\item 
 $\cmon (\calm)$ the category of unital, commutative monoids in $\calm$, equipped with the forgetful functor $\cmon (\calm) \rightarrow \calm$;
\item 
$\ccomon (\calm)$ the category of counital, cocommutative comonoids in $\calm$, equipped with the forgetful functor $\ccomon (\calm) \rightarrow \calm$.
\end{enumerate}
\end{nota}

The following  generalizes the fact that, working over a unital commutative ring $\kring$, the tensor product defines the coproduct in the category of unital, commutative $\kring$-algebras, with initial object $\kring$.

\begin{prop}
\label{prop:mon_comon_(co)cartesian}
Let $(\calm, \otimes, \unit)$ be a symmetric monoidal category, then 
\begin{enumerate}
 \item 
$\cmon (\calm)$ has finite coproducts given by $\otimes$, with 
initial object $\unit$; 
 \item
$\ccomon(\calm)$ has finite products given by $\otimes$, with final 
object $\unit$.
\end{enumerate}
\end{prop}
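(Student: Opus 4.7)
The plan is to prove part (1) directly, constructing the coproduct on the level of objects and verifying its universal property, and then to deduce part (2) by formal duality applied to the opposite category.

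First I would equip $A \otimes B$, for $A, B \in \ob \cmon(\calm)$, with the monoid structure whose multiplication is the composite
\[
(A \otimes B) \otimes (A \otimes B) \xrightarrow{1 \otimes \tau \otimes 1} (A \otimes A) \otimes (B \otimes B) \xrightarrow{\mu_A \otimes \mu_B} A \otimes B,
\]
where $\tau : B \otimes A \to A \otimes B$ is the symmetry, with unit $\unit \cong \unit \otimes \unit \xrightarrow{\eta_A \otimes \eta_B} A \otimes B$. Associativity, unitality and commutativity of this structure follow from the corresponding properties of $A$ and $B$ together with coherence in the symmetric monoidal structure, by a standard diagram chase. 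I would then observe that the canonical maps $\iota_A : A \cong A \otimes \unit \xrightarrow{1 \otimes \eta_B} A \otimes B$ and the analogously defined $\iota_B$ are morphisms in $\cmon(\calm)$.

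The heart of the argument is the universal property. Given monoid morphisms $f : A \to C$ and $g : B \to C$ into a commutative monoid $C$, I would exhibit the extension $[f,g] := \mu_C \circ (f \otimes g) : A \otimes B \to C$. Checking that $[f,g]$ is itself a monoid morphism uses crucially the commutativity of $C$, in order to absorb the swap $\tau$ occurring in the multiplication on $A \otimes B$, together with associativity of $\mu_C$. For uniqueness, observe that $\mu_{A \otimes B} \circ (\iota_A \otimes \iota_B) = \mathrm{id}_{A \otimes B}$, so any monoid morphism $h : A \otimes B \to C$ satisfies $h = \mu_C \circ (h \otimes h) \circ (\iota_A \otimes \iota_B) = \mu_C \circ (h\iota_A \otimes h\iota_B)$, hence is determined by $h\iota_A$ and $h\iota_B$. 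The initial object is $\unit$ with its canonical self-monoid structure; for any $C \in \ob \cmon(\calm)$, the unit $\eta_C : \unit \to C$ is the unique such morphism.

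For part (2), I would invoke the formal duality $\ccomon(\calm) \cong \cmon(\calm\op)\op$, noting that $(\calm\op, \otimes, \unit)$ inherits a symmetric monoidal structure with the same underlying data. Part (1) applied to $\calm\op$ then gives coproducts in $\cmon(\calm\op)$ by $\otimes$ with initial object $\unit$, which translate back to products in $\ccomon(\calm)$ given by $\otimes$ with final object $\unit$. The verifications in (1) consist of routine diagram chases and present no serious obstacle; the one conceptual point to highlight is that commutativity of both the factors $A, B$ and of the target $C$ is essential, since it is precisely this that allows the swap $\tau$ to be absorbed, and without it $\otimes$ fails to compute the coproduct in the category of (non-commutative) monoids.
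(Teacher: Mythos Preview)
Your proof is correct. The paper does not provide its own proof of this proposition, presenting it instead as a ``standard result'' that generalizes the well-known identification of finite coproducts in $\alg$ and finite products in $\coalg$; your argument fills in exactly the kind of verification the paper omits. One small remark: in your final paragraph, it is more precise to say that the commutativity of $C$ is what is needed for $[f,g]$ to be a monoid morphism (absorbing the swap $\tau$), while the commutativity of $A$ and $B$ is needed separately so that $A \otimes B$ lands in $\cmon(\calm)$ in the first place.
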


Thus one can consider the category  $\mathrm{Func} ( \gr; \cmon (\calm)) $ of functors from $\gr$ to $\cmon (\calm)$,  equipped with the induced forgetful functor $\mathrm{Func} ( \gr; \cmon (\calm)) \rightarrow \f(\gr;\calm)$. Similarly, one considers $\mathrm{Func} ( \gr; \ccomon (\calm)) $.

\begin{prop}
\label{prop:Phi_Psi_(co)mult}
\ 
\begin{enumerate}
\item 
The functor  $\Psi :  \hcom (\calm) \rightarrow \fcat{\gr}{\calm}$ factors across  $$\mathrm{Func} ( \gr; \cmon (\calm))\rightarrow \f(\gr;\calm). $$
\item 
The functor  $\Phi :  \hcocom (\calm) \rightarrow \fcat{\gr\op}{\calm}$ factors across $$\mathrm{Func} ( \gr; \ccomon (\calm)) \rightarrow \f(\gr\op;\calm).$$
\end{enumerate}
\end{prop}

\begin{proof}
The first statement follows from the fact that $\hcom (\calm)$ is equivalent to the category of cogroup objects in $\cmon (\calm)$; in particular, all structure morphisms involved lie in $\cmon(\calm)$. It follows that a functor in the image of $\Psi$ takes values naturally in $\cmon (\calm)$.

The second statement is categorically dual.
\end{proof}

\begin{rem}
\label{rem:comm_exp}
For $H$ a commutative Hopf algebra in $\calm$, one has the associated exponential functor $\Psi H \in \ob \fexp (\gr; \calm) \subset \ob \f(\gr; \calm)$. Proposition \ref{prop:Phi_Psi_(co)mult} gives that, for any $n \in \nat$, $\Psi H (\zed^{\star n})$ is naturally a unital commutative monoid in $\calm$. In particular, there is a commutative multiplication 
\[
\Psi H  \otimes \Psi H \rightarrow \Psi H
\]
in $\f (\gr; \calm)$. This construction is natural with respect to $H$ in $\hcom (\calm)$.
\end{rem}

\subsection{Naturality}

The functors $\Phi$ and $\Psi$ depend naturally upon   $(\calm, \otimes ,\unit)$, as stated in the following Proposition. The relevant symmetric monoidal category is indicated by a subscript for the respective functors $\Phi$, $\Psi$.

\begin{prop}
\label{prop:naturality_Phi_Psi}
Let $(\calm , \otimes_\calm, \unit_\calm) \rightarrow (\cale , \otimes_\cale, \unit_\cale)$ be a symmetric monoidal functor.
 The following diagrams commute up to natural isomorphism:
\[
\xymatrix{
\mathbf{Hopf}^{\mathrm{cocom}}(\calm) 
\ar[d]
\ar[r]^{\Phi_\calm}
&
 \fcat{\gr\op}{\calm}
 \ar[d]
 &
 \mathbf{Hopf}^{\mathrm{com}}(\calm)
 \ar[r]^{\Psi_\calm}
 \ar[d]
 &
 \fcat{\gr}{\calm}
\ar[d]
 \\
\mathbf{Hopf}^{\mathrm{cocom}}(\cale) 
\ar[r]_{\Phi_\cale}
&
 \fcat{\gr\op}{\cale}
&
\mathbf{Hopf}^{\mathrm{com}}(\cale)
\ar[r]_{\Psi_\cale}
&
\fcat{\gr}{\cale},
}
\]
where the vertical arrows are induced by the symmetric monoidal functor $\calm \rightarrow \cale$.
\end{prop}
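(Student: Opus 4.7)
The plan is to derive both naturality squares from the universal characterization of $\Psi$ and $\Phi$ as quasi-inverses to $\eval_\zed$ given by Theorem \ref{thm:expo_Hopf_general}. The key observation is that the corresponding squares involving $\eval_\zed$ rather than $\Psi$ (respectively $\Phi$) commute strictly and tautologically, and passing to quasi-inverses in an equivalence of categories yields the required natural isomorphisms.

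The first step is to verify that the indicated vertical arrows exist. Since $F : \calm \to \cale$ is symmetric monoidal, it preserves (co)monoid structures and antipodes; postcomposing underlying objects and structure morphisms therefore induces functors $F_* : \mathbf{Hopf}^{\mathrm{com}}(\calm) \to \mathbf{Hopf}^{\mathrm{com}}(\cale)$ and $F_* : \mathbf{Hopf}^{\mathrm{cocom}}(\calm) \to \mathbf{Hopf}^{\mathrm{cocom}}(\cale)$. On the functor-category side, postcomposition with $F$ gives $F \circ - : \fcat{\gr}{\calm} \to \fcat{\gr}{\cale}$ and $F \circ - : \fcat{\gr\op}{\calm} \to \fcat{\gr\op}{\cale}$; the symmetric monoidality of $F$ ensures these restrict to $\fexpcoprod$ and $\fexpprod$ respectively, since the natural isomorphism $F(H^{\otimes_\calm n}) \cong (FH)^{\otimes_\cale n}$ is compatible with the symmetry, the (co)diagonals, and the (co)unit coming from the cogroup structure of $\zed \in \ob \gr$.

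The second step is to observe that, with these vertical arrows in place, the square
\[
\xymatrix{
\fexpcoprod(\gr; \calm) \ar[r]^{\eval_\zed} \ar[d]_{F \circ -} & \mathbf{Hopf}^{\mathrm{com}}(\calm) \ar[d]^{F_*} \\
\fexpcoprod(\gr; \cale) \ar[r]_{\eval_\zed} & \mathbf{Hopf}^{\mathrm{com}}(\cale)
}
\]
commutes strictly: for $G \in \ob \fexpcoprod(\gr; \calm)$, both composites send $G$ to $FG(\zed)$, equipped with the Hopf structure induced from the cogroup $\zed$; the symmetric monoidality of $F$ guarantees that these two Hopf structures coincide. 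The analogous diagram for $\fexpprod(\gr\op; \calm)$ commutes for the same reason. Since $\eval_\zed$ is an equivalence of categories with quasi-inverse $\Psi$ (respectively $\Phi$), taking quasi-inverses of the strictly commuting squares yields the required natural isomorphisms $(F \circ -) \circ \Psi_\calm \cong \Psi_\cale \circ F_*$ and $(F \circ -) \circ \Phi_\calm \cong \Phi_\cale \circ F_*$.

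There is no real obstacle here; the only verification that requires any care is the compatibility of $F$ with the cogroup-derived Hopf structure on $H^{\otimes n}$, which is pure coherence for symmetric monoidal functors. Alternatively, one may appeal directly to the explicit formula $\Psi_\calm H (\zed^{\star n}) = H^{\otimes_\calm n}$ from Theorem \ref{thm:expo_Hopf_general}, check that the structure morphisms $\Psi_\calm H (f)$ for $f : \zed^{\star m} \to \zed^{\star n}$ are built functorially from the Hopf structure of $H$ (as in Remark \ref{rem:cogroup_gr_generates}), and then invoke symmetric monoidality of $F$ to conclude that $F$ applied termwise intertwines these structure morphisms with those of $\Psi_\cale (FH)$.
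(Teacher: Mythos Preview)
Your proof is correct and is essentially what the paper has in mind: the paper's own proof reads simply ``Straightforward.'' You have supplied the natural details, namely that the $\eval_\zed$-square commutes strictly and then one passes to the quasi-inverses $\Psi$, $\Phi$ furnished by Theorem \ref{thm:expo_Hopf_general}; your alternative direct verification via the explicit formula is equally acceptable.
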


\begin{proof}
The naturality follows directly from the proof of Theorem \ref{thm:expo_Hopf_general}.
\end{proof}

\begin{rem}
This is a fundamental tool, which is applied in several different ways in the paper.
For example: 
\begin{enumerate}
\item 
In Proposition  \ref{prop:duality_Psi_palg_Phi_pcoalg}, it is used to relate $\Phi$ and $\Psi$ through duality (in a particular case).
\item 
In Section \ref{sec:beta}, it is crucial in analysing the structure of the injective cogenerators of the category of polynomial functors on $\gr$. 
\item 
The Schur functor construction relates $\fb$-modules with $\f(\fmodq;\rat)$ and is symmetric monoidal, where $\fb$ is the category of finite sets and bijections (see Section \ref{subsect:schur_correspond}). This relates exponential functors with values in $\fb$-modules with the more familiar case of functors to $\rat$-vector spaces, i.e., relating 
\[
\fexp (\gr; \f(\fb;\rat)) \mbox{ and } \fexp (\gr; \f(\fmodq; \rat)).
\]
\item 
It allows the introduction of Koszul signs using the symmetric monoidal functor $(-)^\dagger$ of Theorem \ref{thm:equiv_dagger}. This  is exploited in Proposition \ref{prop:omega_pcoalg_dagger}, for example.
\end{enumerate}
\end{rem}

\part{Polynomial functors on free groups}
\label{part:poly}

This part provides information on polynomial functors on free groups, along the way developing some of the central tools of this paper.

Section \ref{sec:recoll} presents the theory of polynomial functors on free groups. Many of the results in Section \ref{sec:recoll} are consequences of the recollement setup for polynomial functors on free groups given in \cite{DV} and recalled here in Theorem \ref{thm:recollement_poly_d}. In particular, in Definition \ref{defn:poly_filt} we introduce the polynomial filtration of a functor (as considered in \cite{DV}) and, for $d \in \nat$,  we introduce the functor $\beta_d$ that associates  a polynomial functor of degree $d$ on $\gr$ to a $\rat [\sym_d]$-module. The functor $\beta_d$ occurs as a {\em right} adjoint in the recollement diagram of Theorem \ref{thm:recollement_poly_d}.

The relationship between the polynomial filtration and the socle filtration given in Section \ref{subsect:poly_filt_soc} is new and fundamental for the explicit description of the structure of the functors appearing in Section \ref{sec:psi}, for example. In Corollary \ref{exam:beta_M_socle_filt} we show that, for $M \neq 0$ a  $\rat [\sym_d]$-module, the socle filtration and the polynomial filtration of $\beta_d M$ coincide (up to reindexing).

The functors $\beta_d$ play a central role in our study of higher Hochschild homology in Part \ref{part:HHH}, explaining their importance.
One of the key techniques that we use to study them is to consider the family of functors $\beta_d$, for $d\in \nat$, as a whole. Namely, we use the category $\fb$ of finite sets and bijections and  assemble the  $\beta_d$ to give a functor $\boldbeta$ from $\fb$ to the category of functors on free groups. The main goal of this part is to describe the functor $\boldbeta$.

This motivates Section \ref{sec:fb}, in which  we recall the relationship between the category of $\fb$-modules and functors on finite-dimensional $\rat$-vector spaces given by the Schur correspondence. (The contents of Section \ref{sec:fb}  is mostly well-known and the reader familiar with Schur functors may wish to skip this section.) We also include  a careful treatment of Koszul signs, since these  appear in our study of higher Hochschild homology in Part \ref{part:HHH}. 

In Section \ref{sec:pcoalg}, we introduce the Hopf  $\fb$-modules associated with the Hopf algebras given by the tensor algebra and cotensor algebra via the Schur correspondence. Using the relationship between Hopf algebras and exponential functors of free groups recalled in Theorem \ref{thm:expo_Hopf_general}, this gives rise to exponential functors on free groups.

In Section \ref{sec:beta} we study the functor $\boldbeta$ in detail. The main result of this part is Theorem \ref{thm:beta_description}, giving the description of $\boldbeta$ using the functors introduced in Section \ref{sec:pcoalg}.

\section{Polynomial functors on groups}
\label{sec:recoll}

This section reviews the theory of polynomial functors on the category $\gr$ as required in the applications.
The notion of polynomial functor initially introduced by Eilenberg and Mac Lane \cite{EM} for functors on categories of modules has been extended to a wider context in \cite{HPV}, which includes functors on the category of finitely-generated free groups $\gr$. Throughout, we restrict to functors taking values in $\rat$-vector spaces, which ensures that the categories of 
$\rat[\sym_d]$ modules, for $d\in \nat$, are all semi-simple. As a result, the theory is both very elegant and powerful. 

In Section \ref{subsect:recoll_poly}, we recall the recollement diagram given in \cite{DV}  (restricted here to functors taking values in $\rat$-vector spaces). In particular, the functors $\beta_d$, $d \in \nat$,  that play an important role here in relation to higher Hochschild homology, are recalled in Theorem \ref{thm:recollement_poly_d}. These are investigated in detail in Section \ref{sec:beta}. 
 
In Section  \ref{subsect:poly_filt}, we recall the polynomial filtration of polynomial functors and show that it satisfies a unicity property (see Proposition \ref{prop:canon_poly_filt}), using the fact that we are working over $\rat$. 

In Section \ref{subsect:poly_filt_soc}, we relate  the polynomial filtration to the socle filtration, again dependent on working over $\rat$, and deduce several consequences.

Finally, in Section \ref{subsect:injectivity_beta_char_0} we recall some homological properties of the categories of polynomial functors.

Most of the content of Sections \ref{subsect:recoll_poly}, \ref{subsect:poly_filt} and \ref{subsect:injectivity_beta_char_0} is not new; the results are recollections or refinements of results given in  \cite{DV} and \cite{DPV}. The relation between the polynomial filtration and the socle filtration given in Section \ref{subsect:poly_filt_soc} is new.

\subsection{Background}

Using the structure introduced in Section \ref{subsect:gr_ab}, by the general theory of Section \ref{sec:poly},  one has the notion of polynomial functor  in both $\fcatQ[\gr]$ and $\fcatQ[\ab]$ (likewise for $\fcatQ[\gr\op]$ and  $\fcatQ[\ab\op]$), defined using the respective coproducts. 

\begin{rem}
We concentrate here on the covariant case, $\fcatQ[\gr]$ and $\fcatQ[\ab]$. There are analogous results  for contravariant functors; these are related to the covariant case by  duality (see Section \ref{sec:poly}).
\end{rem}

\nomenclature{$\fpolyQ{d}{\gr}$}{polynomial functors of degree $d$ on $\gr$\nomrefpage}
\nomenclature{$\fpolyQ{d}{\ab}$}{polynomial functors of degree $d$ on $\ab$\nomrefpage}
\nomenclature{$\fpiQ{\gr}$}{polynomial functors on $\gr$\nomrefpage}
\nomenclature{$\fpiQ{\ab}$}{polynomial functors on $\ab$\nomrefpage}

The full subcategories of polynomial functors of degree at most $d \in \nat$ are denoted $\fpolyQ{d}{\gr}$ and $\fpolyQ{d}{\ab}$. The full subcategories of polynomial functors are denoted  $\fpiQ{\gr}$ and $\fpiQ{\ab}$ so that 
$
\fpiQ{\gr}
= 
\bigcup_{d\in \nat}
\fpolyQ{d}{\gr}
$ and  likewise for $\ab$.

Since $\A : \gr \rightarrow \ab$ preserves coproducts, abelianization restricts to polynomial functors (cf. Proposition \ref{prop:shift_abelianization}):

\begin{prop}
\label{prop:abelianization_poly}
 The exact functor $\circ 
\A
: \fcatQ[\ab]\rightarrow \fcatQ[\gr]$ restricts for $d\in \nat$ to 
$
 \circ \A : \fpolyQ{d}{\ab}\rightarrow \fpolyQ{d}{\gr}.
$
\end{prop}

Below we exploit some basic facts about the representation theory of the symmetric groups (see Section \ref{app:rep_sym_car0} for some recollections). In particular, for $d \in \nat$, the category of $\rat [\sym_d]$-modules is semi-simple. The regular representation decomposes as 
\begin{equation} 
\label{221005-1356}
\rat [\sym_d] \cong \bigoplus_{\lambda \vdash d} S_\lambda^{\oplus \dim (S_\lambda)},
\end{equation}
where $S_\lambda$ is the simple module indexed by the partition $\lambda \vdash d$. Moreover, a simple $\rat[\sym_d]$-module is isomorphic to $S_\lambda$ for a unique partition $\lambda \vdash d$. 

\subsection{Recollement for polynomial functors}
\label{subsect:recoll_poly}

In \cite{DV}, the authors give the  fundamental relationship between polynomial functors and representations of the symmetric groups in terms of a {\em recollement diagram} (we refer the reader to \cite{DV} for the definition of a recollement diagram). In this section, we recall this result for functors taking values in $\rat$-vector spaces.

This allows us to prove a number of results about $\fpiQ{\gr}$, in particular describing the simple functors and their injective envelopes in the following sections.

\begin{nota}
\label{nota:alpha_beta_cr_p_q}
\nomenclature{$\cre_d$}{$d$th cross-effect functor\nomrefpage}
\nomenclature{$\alpha_d$}{left adjoint to $\cre_d$\nomrefpage}
\nomenclature{$\beta_d$}{right adjoint to $\cre_d$\nomrefpage}
\nomenclature{$p_d^\gr$}{right adjoint to $\f_d (\gr; \rat) \subset \f(\gr; \rat)$\nomrefpage}
\nomenclature{$q_d^\gr$}{left adjoint to $\f_d (\gr; \rat) \subset \f(\gr; \rat)$\nomrefpage}
For $d \in \nat$, denote by
\begin{enumerate}
\item 
$p_d^\gr : \f (\gr; \rat) \rightarrow \f_d (\gr; \rat) $ the right adjoint to the inclusion $\f_d (\gr; \rat) \subset \f(\gr; \rat)$ and $q_d^\gr : \f (\gr; \rat) \rightarrow \f_d (\gr; \rat) $ the left adjoint (cf. Notation \ref{nota:p_q_poly}); 
\item 
${\cre_d}: \f_d (\gr; \rat) \rightarrow \rat [\sym_d]\dash\modules$ the $d$th cross-effect functor; 
\item 
$\alpha_d :  \rat [\sym_d] \dash\modules \rightarrow \f_d (\gr; \rat)$ the left adjoint to $\cre_d$ and $\beta_d :  \rat [\sym_d] \dash\modules \rightarrow \f_d (\gr; \rat)$ its right adjoint.
\end{enumerate}
\end{nota}

\begin{thm}
\label{thm:recollement_poly_d}
For $d \in \nat$, there is a  recollement diagram
\[
 \xymatrix{
\fpolyQ{d-1}{\gr} 
\ar@{^(->}[r]
&
\fpolyQ{d}{\gr}
\ar@<-.5ex>@/_1pc/[l]|{q_{d-1}^\gr}
\ar@<.5ex>@/^1pc/[l]|{p_{d-1}^\gr}
\ar[r]|(.4){\cre_d}
&
\rat [\sym_d]\dash \modules
\ar@<-.5ex>@/_1pc/[l]|{\alpha_d}
\ar@<.5ex>@/^1pc/[l]|{\beta_d}. 
}
\]
\begin{enumerate}
\item \label{thm:recollement_poly_d-1}
The $d$th cross effect functor $\cre_d$ is exact
and is naturally 
equivalent to $\hom_{\fpolyQ{d}{\gr}}( \aQ ^{\otimes d}, - )$. 
\item \label{thm:recollement_poly_d-3}
The left adjoint to $\cre_d$,  $\alpha_d$, is given by $\alpha_d : M \mapsto \aQ ^{\otimes d} \otimes_{\sym_d} M$. In particular, $\alpha_d(M)$ is in the image of the functor   $\circ \A : \fpolyQ{d}{\ab} \rightarrow \fpolyQ{d}{\gr}$. The functor $\alpha_d$ preserves projectives and  is exact.
\item \label{thm:recollement_poly_d-4}
The right adjoint $\beta_d$ to $\cre_d$ is exact and preserves injectives. 
\end{enumerate}
\end{thm}

\begin{proof}
Most of this is proved in \cite[Théorème 3.2]{DV}, including the explicit form of $\cre_d$ of (\ref{thm:recollement_poly_d-1}) and, for (\ref{thm:recollement_poly_d-3}), the definition of $\alpha_d$.
These results rely on the fact that $\aQ^{\otimes d}$ is projective in $\f_d (\gr; \rat)$, which follows from the isomorphism $q^\gr_d (\overline{P^\gr_\zed}^{\otimes d}) \cong \aQ^{\otimes d}$.  The exactitude of $\alpha_d$ follows from that of $\otimes_{\sym_d}$ working over $\rat$.

Finally, (\ref{thm:recollement_poly_d-4}) is proved in \cite[Proposition 4.4]{DPV}.
\end{proof}

\begin{cor}
\label{cor:alpha_simple}
For $d \in \nat$ and $F \in \ob \fpolyQ{d}{\gr}$, the following are equivalent:
\begin{enumerate}
\item 
$F$ is simple and has polynomial degree exactly $d$; 
\item 
$\cre_d F$ is a simple $\rat[\sym_d]$-module and $F \cong \alpha_d \cre_d F$.
\end{enumerate}

In particular, for $\lambda \vdash d$, $\alpha_d S_\lambda$ is simple. 
\end{cor}

\begin{proof}
It is straightforward to see that, if $F$ is simple with polynomial degree exactly $d$, then $\cre_d F$ is non-zero and is a simple $\rat[\sym_d]$-module. From this, the result reduces easily to showing that, for $\lambda \vdash d$, $\alpha_d S_\lambda$ is simple.

By construction, $\alpha_d S_\lambda$ is simple modulo composition factors of polynomial degree $<d$, since $\cre_d (\alpha_d S_\lambda)  \cong S_\lambda$ is simple. Now, if $G$ has polynomial degree less than $d$, $\hom_{\fpolyQ{d}{\gr}} (\alpha_d S_\lambda, G)=0$ by adjunction, since $\cre_d G=0$, by hypothesis.

It follows that there is an exact sequence 
\[
0
\rightarrow 
p_{d-1}^\gr \alpha_d S_\lambda 
\rightarrow 
\alpha_d S_\lambda 
\rightarrow 
\overline{\alpha_d S_\lambda }
\rightarrow 
0,
\]
where $\overline{\alpha_d S_\lambda }$ is simple. It remains to show that $p_{d-1}^\gr \alpha_d S_\lambda  =0$. We provide a direct, elementary argument.

Suppose otherwise, then there exists $t<d$ such that $p_{d-1}^\gr \alpha_d S_\lambda$ has polynomial degree $t$ and 
 $\cre_t (p_{d-1}^\gr \alpha_d S_\lambda) \neq 0$. Hence, by adjunction,  there exists a non-trivial map $\aQ^{\otimes t} \rightarrow p_{d-1}^\gr \alpha_d S_\lambda$. Composing with the inclusions gives the non-trivial composite:
\[
\aQ^{\otimes t} \rightarrow p_{d-1}^\gr \alpha_d S_\lambda \hookrightarrow \alpha_d S_\lambda \hookrightarrow \aQ^{\otimes d}.
\]
This is a contradiction, since $\hom_{\fcatQ[\gr]} (\aQ^{\otimes t}, \aQ^{\otimes d})$ is zero unless $t =d$ (for example, this follows from  \cite[Theorem 1]{V_ext}).
\end{proof}

\begin{exam}
\label{exam:aQ_otimes_d}
By the description of $\alpha_d$ given in Theorem \ref{thm:recollement_poly_d}, one has the isomorphism $\alpha_d \rat [\sym_d] \cong \aQ ^{\otimes d}$. Since the functor $\alpha_d$ is exact, applying it to the decomposition (\ref{221005-1356}) gives:
\begin{equation} 
\label{221009-1731}
\aQ ^{\otimes d} \cong \bigoplus_{\lambda \vdash d} (\alpha_d S_\lambda)^{\oplus \dim (S_\lambda)}
\end{equation}
where $\alpha_d S_\lambda$ is simple of polynomial degree exactly $d$. 
Hence, $\aQ ^{\otimes d}$ is semisimple with all composition factors of polynomial degree exactly $d$. 

More generally, for $M$ a $\sym_d$-module, $\alpha_d M$ is semisimple with all composition factors of polynomial degree exactly $d$. 
\end{exam}

Working over $\rat$, using the above arguments,  we can refine Theorem \ref{thm:recollement_poly_d} and related results of 
\cite{DV} as follows.

\begin{nota}
\label{nota:qhat}
\nomenclature{$\qhat{d}F$}{$d$th subquotient of the polynomial filtration of $F$\nomrefpage}
For $F \in \ob \fcatQ[\gr]$ and $d \in \nat$, denote by $\qhat{d}F$  the kernel of the natural surjection $q_d^\gr  F
\twoheadrightarrow 
q_{d-1}^\gr F$.
\end{nota}

\begin{prop}
\label{prop:exact_sequences_recollement} 
For $d \in \nat$ and $F \in \ob \fcatQ[\gr]$, there  is a natural exact sequence:
$$
0
\rightarrow
\alpha_d \cre_d q_d^\gr F
\rightarrow 
q_d^\gr  F
\rightarrow 
q_{d-1}^\gr F
\rightarrow 
0.
$$
Hence
\begin{enumerate}
\item 
there is a natural isomorphism $\qhat{d} F \cong \alpha_d \cre_d q_d^\gr F$;
\item \label{prop:exact_sequences_recollement-2} 
$\qhat{d} F$ is semi-simple, consisting of the composition factors of $q_d^\gr F$ of  polynomial degree exactly $d$;
\item 
\label{item:q_qhat}
the inclusion $\qhat{d} F\subset q_d^\gr F$ induces an isomorphism
$\cre_d  \qhat{d} F\cong  \cre_d q_d^\gr F$.
\end{enumerate}
\end{prop}

\begin{proof}
This proof is closely related to that of \cite[Corollaire 3.7]{DV}. That reference provides the exact sequence 
\[
\alpha_d \cre_d q_d^\gr F
\rightarrow 
q_d^\gr  F
\rightarrow 
q_{d-1}^\gr F
\rightarrow 
0.
\]
Here, the functor $\alpha_d \cre_d q_d^\gr F$ is semi-simple, with all composition factors of polynomial degree exactly $d$, as a consequence of Corollary \ref{cor:alpha_simple}.

The injectivity of the first map follows since $\cre_d (\alpha_d \cre_d q_d^\gr F
\rightarrow 
q_d^\gr  F)$ identifies as the identity morphism of $\cre_d q_d^\gr  F$ and $\cre_d$ detects composition factors of degree exactly $d$, by the recollement framework of Theorem \ref{thm:recollement_poly_d}.

The remaining statements follow easily.
\end{proof}

Proposition \ref{prop:exact_sequences_recollement} implies  that, when restricted to polynomial functors, the functors $q^\gr_d$ are exact:

\begin{prop}
\label{prop:unimodularity_char_0}
For $d \in \nat$, the functor 
$q^\gr_d : \fpiQ{\gr}\rightarrow \fpolyQ{d}{\gr}$ is exact, hence so is the functor 
$\qhat{d} = \alpha_d \cre_d q_d^\gr$.
\end{prop}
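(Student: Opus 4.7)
The plan is to prove exactness of $q_d : \fpi{\gr} \to \fpoly{d}{\gr}$ first, from which exactness of $\qhat{d} = \alpha_d \cre_d q_d$ follows immediately: in characteristic zero, $\alpha_d$ is exact by Proposition~\ref{prop:poly_filt_gr_char_zero} and $\cre_d$ is exact by the recollement (Theorem~\ref{thm:recollement_poly_d}), so the composite of three exact functors is exact.

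Right exactness of $q_d$ is automatic, as $q_d$ is left adjoint to the inclusion $\fpoly{d}{\gr} \hookrightarrow \fpi{\gr}$ and hence preserves cokernels. For left exactness, I would take a short exact sequence $0 \to F' \to F \to F'' \to 0$ in $\fpi{\gr}$, choose $n \in \nat$ with $F \in \fpoly{n}{\gr}$ (so that all three terms lie in $\fpoly{n}{\gr}$), and argue by induction on $n-d$. When $n \leq d$ the functor $q_d$ restricts to the identity on $\fpoly{n}{\gr}$ and the statement is trivial. For $n > d$, I would apply Proposition~\ref{prop:poly_filt_gr_char_zero} termwise to assemble the commutative $3 \times 3$ diagram
\[
\xymatrix@R-1.5pc@C-1.5pc{
0 \ar[r] & \alpha_n \cre_n F' \ar[r] \ar[d] & \alpha_n \cre_n F \ar[r] \ar[d] & \alpha_n \cre_n F'' \ar[r] \ar[d] & 0 \\
0 \ar[r] & F' \ar[r] \ar[d] & F \ar[r] \ar[d] & F'' \ar[r] \ar[d] & 0 \\
0 \ar[r] & q_{n-1} F' \ar[r] & q_{n-1} F \ar[r] & q_{n-1} F'' \ar[r] & 0
}
\]
whose columns are short exact by Proposition~\ref{prop:poly_filt_gr_char_zero}, whose middle row is the given sequence, and whose top row is short exact because $\cre_n$ is exact (Theorem~\ref{thm:recollement_poly_d}) and $\alpha_n$ is exact in characteristic zero. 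The nine lemma then forces the bottom row to be short exact; applying the inductive hypothesis to this sequence (now in $\fpoly{n-1}{\gr}$) shows that $q_d$ carries it to a short exact sequence, and the natural identification $q_d \circ q_{n-1} \cong q_d$, which holds because both functors are left adjoint to the inclusion $\fpoly{d}{\gr} \hookrightarrow \fpi{\gr}$, completes the inductive step.

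The only genuine obstacle is conceptual rather than computational: the exactness of $\alpha_n$ is precisely what promotes the top row of the $3 \times 3$ diagram to a short exact sequence, and this exactness fails outside of characteristic zero. The argument is therefore characteristic-zero specific by design, pinpointing the semisimplicity of the group algebras $\kring[\sym_n]$ as the essential ingredient behind unimodularity.
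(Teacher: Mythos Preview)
Your proof is correct and follows the same inductive strategy as the paper: reduce to showing that $q_{n-1}$ preserves exactness on $\fpoly{n}{\gr}$ via the short exact sequence of Proposition~\ref{prop:poly_filt_gr_char_zero}, with the characteristic-zero exactness of $\alpha_n$ as the key input. The only difference is in the diagram chase: the paper works with a monomorphism $F \hookrightarrow G$ and argues that the square relating $\alpha_D\cre_D F \to \alpha_D\cre_D G$ and $F \to G$ is Cartesian (since $\alpha_D\cre_D F$ is the maximal semisimple degree-$D$ subfunctor of $F$), whereas you apply the nine lemma to the full $3\times 3$ grid; your route is slightly more mechanical and sidesteps the Cartesian-square observation.
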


\begin{proof}
By a straightforward induction, it suffices to show that, if $i : F \hookrightarrow G$ is a monomorphism in $\fpolyQ{D}{\gr}$, for some $D \in \nat$, then $q_{D-1}^\gr F \rightarrow q_{D-1}^\gr G$ is injective.

Consider the morphism (induced by $i$) of the short exact sequences provided by Proposition \ref{prop:exact_sequences_recollement}:
\[
\xymatrix{
0
\ar[r]
&
\alpha_D \cre_D F 
\ar[r]
\ar@{^(->}[d]
&
F
\ar@{^(->}[d]^i
\ar[r]
&
q_{D-1}^\gr F 
\ar[r]
\ar[d]
&
0
\\
0 
\ar[r]
&
\alpha_D \cre_D G 
\ar[r]
&
G
\ar[r]
&
q_{D-1}^\gr G 
\ar[r]
&
0.
}
\]

The left hand square is Cartesian (i.e., $\alpha_D \cre_D F = F \cap (\alpha_D \cre_D G)$), since $\alpha_D \cre_D F$ (respectively $\alpha_D \cre_D G$) is the largest subfunctor of $F$ (resp. $G$) that is semi-simple with composition factors of degree exactly $D$, by Proposition \ref{prop:exact_sequences_recollement}. It follows that the right hand vertical morphism is injective, as required. 
\end{proof}

\subsection{The polynomial filtration}
\label{subsect:poly_filt}
In this section, we recall the polynomial filtration of a functor on $\gr$ as introduced in \cite{DV}. This is given an elegant characterization in Proposition \ref{prop:canon_poly_filt}.

\begin{defn}
\label{defn:poly_filt}
For $F$ in $\f (\gr; \rat)$, the polynomial filtration of $F$ is the descending filtration associated to the  tower under $F$:
\[
\ldots 
\twoheadrightarrow 
 q_t^\gr F 
\twoheadrightarrow 
q_{t-1}^\gr F 
\twoheadrightarrow 
q_{t-2}^\gr F
\twoheadrightarrow 
\ldots 
\twoheadrightarrow 
q_0^\gr F
\twoheadrightarrow 
q_{-1}^\gr F =0. 
\]

If  $F$ has  polynomial degree $d$, this tower reduces to
\[
F= q_d^\gr F 
\twoheadrightarrow 
q_{d-1}^\gr F 
\twoheadrightarrow 
q_{d-2}^\gr F
\twoheadrightarrow 
\ldots 
\twoheadrightarrow 
q_0^\gr F
\twoheadrightarrow 
q_{-1}^\gr F =0. 
\]  
and the polynomial filtration is taken to be the associated increasing filtration $0=F_{-1} \subset F_0 \subset F_1 \subset \ldots \subset F_d = F$ given by $F_t:= \ker \{ F\twoheadrightarrow q_{d-t-1}^\gr F\}$.
\end{defn}

The subquotients of the polynomial filtration are described by:

\begin{prop}
\label{prop:subquotients}
For $F$ a polynomial functor of degree $d$, the subquotient $F_i/F_{i-1}$ of the polynomial filtration is isomorphic to $\qhat{d-i} F\cong \aQ ^{\otimes d-i} \otimes_{\sym_{d-i}} \cre_{d-i} q_{d-i}^\gr F$. In particular, $F_i/F_{i-1} $ is semi-simple with all composition factors having polynomial degree exactly $d-i$. 
\end{prop}

\begin{proof}
The isomorphism between $F_i/F_{i-1}$ and $\qhat{d-i} F$ follows directly from the definitions of the polynomial filtration and  of the functors $\qhat{t}$. The explicit description of $\qhat{d-i} F$ is obtained by combining   Theorem \ref{thm:recollement_poly_d} (\ref{thm:recollement_poly_d-3}) and Proposition \ref{prop:exact_sequences_recollement}. The second statement is then a consequence of Corollary \ref{cor:alpha_simple}.
\end{proof}

The polynomial filtration has the following characterization, that follows from Proposition \ref{prop:exact_sequences_recollement}.

\begin{prop}
\label{prop:canon_poly_filt}
The polynomial filtration of $F \in \ob \fpolyQ{d}{\gr}$, for $d \in \nat$, is the unique increasing filtration 
\[
0=F_{-1} \subset F_0 \subset F_1 \subset \ldots \subset F_d = F,
\]
such that, for each $t\geq 0$,   all composition factors of $F_t/ F_{t-1}$ have polynomial degree exactly  $d-t$.
\end{prop}

\begin{proof}
Suppose given a filtration of $F$ satisfying the given hypotheses. 

By hypothesis, all the composition factors of $F_0$ have polynomial degree exactly $d$. So Proposition  \ref{prop:exact_sequences_recollement} 
(\ref{prop:exact_sequences_recollement-2}) gives the morphism of short exact sequences 
\[
\xymatrix{
0
\ar[r]
&
F_0 
\ar[r]
\ar@{^(->}[d]
&
F
\ar@{=}[d]
\ar[r]
&
F/F_0
\ar[r]
\ar@{->>}[d]
&
0
\\
0
\ar[r]
&
\qhat{d}F
\ar[r]
&
F
\ar[r]
&
q_{d-1}^\gr F
\ar[r]
&
0.
}
\]
The hypotheses on the $F_i/F_{i-1}$,  $i>0$, imply that $F/F_0$ has polynomial degree $\leq d-1$, hence the surjection $F\twoheadrightarrow F/F_0$ factorizes across $q_{d-1}^\gr F \twoheadrightarrow F/F_0$. 

It follows that  the right hand vertical arrow is an isomorphism $F/F_0 \stackrel{\cong}{\rightarrow} q_{d-1}^\gr F$, hence the left hand arrow also, by the five lemma. By an obvious induction upon $d$, one concludes.
\end{proof}

\subsection{The socle filtration and the polynomial filtration}
\label{subsect:poly_filt_soc}

In this section, we prove that the polynomial filtration for functors taking values in $\rat$-vector spaces is closely related to the socle filtration of the functor.

Recall the socle and the socle filtration:

\begin{defn}
\label{defn:socle_length}
\nomenclature{$\soc(X)$}{socle of $X$\nomrefpage}
\nomenclature{$\soc_n(X)$}{$n$th term of the socle filtration of $X$\nomrefpage}
\nomenclature{$\lsoc(X)$}{socle length of $X$\nomrefpage}
For $X$ an object of an abelian category,
\begin{enumerate}
\item
the socle of $X$, $\soc(X)$, is the largest semi-simple subobject of $X$; 
\item 
the (increasing) socle filtration $\soc_n (X)$ of $X$ is defined recursively by 
$\soc_0 X=0$ and $\soc_{n+1} (X)$ is the preimage of $\soc (X/ \soc_n (X))$; 
\item 
the socle length of $X$,  $\lsoc(X)\in \nat \cup \{\infty\}$,  is 
$\inf \{ n | \soc _n  (X) = X  \}$. 
\end{enumerate}
\end{defn}

From the  recollement situation of Theorem \ref{thm:recollement_poly_d}, the counit $\cre_d \beta_d \to \id$ is an isomorphism. Forming  the adjoint to $\id \rightarrow \cre_d \beta_d$ gives the natural transformation $\alpha_d \rightarrow \beta_d$, denoted here by $N$. The recollement framework implies that the natural transformation $\cre_d N : \cre_d \alpha_d \rightarrow \cre_d \beta_d$ identifies as the identity morphism. 

\begin{prop}
\label{prop:beta_socle}
For $M \in \ob \rat[\sym_d]\dash\modules$ and $F \in \ob \fpolyQ{d-1}{\gr}$, where $d \in \nat$,
\[
\hom_{\fcatQ[\gr]}(F, \beta_d M) =0.
\]
Moreover, the morphism $N_M: \alpha_d M \rightarrow \beta_d M$ is injective and identifies $\alpha_d M $ as the socle of $\beta_d M$.
\end{prop}

\begin{proof}
The first statement follows by adjunction, since $\cre_d F=0$, by hypothesis.

The kernel of $N_M : \alpha_d M \rightarrow \beta_d M$ is a subfunctor of $\alpha_d M$, hence is semi-simple with all composition factors of polynomial degree exactly $d$, using Corollary \ref{cor:alpha_simple} for the structure of $\alpha_d M$. As recalled above, applying the exact functor  $\cre_d$ to the morphism $N_M: \alpha_d M \to \beta_d M$ gives the identity morphism of $M$. It follows that $\cre_d (\ker N_M) =0$. Since $\ker (N_M)$ is semi-simple with all composition factors of degree exactly $d$, this implies that $\ker (N_M)=0$.

Consider the short exact sequence $0\rightarrow \alpha_d M \rightarrow \beta_d M \rightarrow C \rightarrow 0$, thus defining $C$. Applying the exact functor $\cre_d$ gives $\cre_d C=0$, hence  $C$ has polynomial degree at most $d-1$. Applying the socle functor $\soc(-)$, which is left exact,  gives the exact sequence
\[
0\rightarrow \soc(\alpha_d M) = \alpha_d M \rightarrow \soc(\beta_d M)  \rightarrow  \soc(C).
\]

Now, by the first statement, if $S$ is simple of polynomial degree strictly less than $d$, $\hom _{\fcatQ[\gr]}(S, \soc (\beta_d M)) = \hom _{\fcatQ[\gr]}(S, \beta_d M)=0$. Since $ \soc(C)$ has polynomial degree at most $d-1$, it follows that $\soc ({\beta_d M})$ has no contribution from $\soc(C)$, whence the result.
\end{proof}

We deduce the following result describing the injective envelope of a simple polynomial functor:

\begin{cor}
\label{cor:inj_envelope}
For $\lambda \vdash d$, the morphism
$N_{S_\lambda} :\alpha_d S_\lambda \rightarrow \beta_d S_\lambda$ exhibits 
$\beta_d S_\lambda$ as the injective envelope of the simple $\alpha_d S_\lambda$ in $\f_d (\gr;\rat)$.
\end{cor} 

\begin{proof}
The functor $\alpha_d S_\lambda$ is simple by Corollary \ref{cor:alpha_simple} and the functor $\beta_d S_\lambda$ is injective in $\f_d (\gr;\rat)$, by Theorem \ref{thm:recollement_poly_d}. Proposition \ref{prop:beta_socle} shows that the morphism $N_{S_\lambda}: \alpha_d S_\lambda \rightarrow \beta_d S_\lambda$ exhibits $\alpha_d S_\lambda$ as the socle of $\beta_d S_\lambda$. It follows that  $\beta_d S_\lambda$ is the injective envelope of $\alpha_d S_\lambda$.
\end{proof}

\begin{rem}
Corollary \ref{cor:inj_envelope} can be strengthened by Theorem \ref{thm:injectivity_beta} below, showing that $\beta_d S_\lambda$ is the injective envelope of $\alpha_d S_\lambda$ in $\f_{< \infty} (\gr; \rat)$.
\end{rem}

The understanding of the polynomial filtration provided by Propositions \ref{prop:canon_poly_filt} can be made even more precise using: 

\begin{prop}
\label{prop:Ext1_poly_char0}
For $S, T$ simple polynomial functors of $\fcatQ[\gr]$ of polynomial degrees $|S|$, $|T|$ respectively, 
\[
\ext^* _{\fcatQ[\gr]} (S, T) =0 \mbox{\ if \ } * \neq |T|-|S|.
\]
In particular $\ext^1 _{\fcatQ[\gr]} (S, T) =0$ if $|S|\neq |T|-1$.
\end{prop}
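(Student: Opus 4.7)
The plan is to split the argument according to the sign of $|T|-|S|$, using the recollement of Theorem \ref{thm:recollement_poly_d} and the injectivity of $\beta_d M$ in $\fpi{\gr}$ (Theorem \ref{thm:injectivity_beta}).

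\emph{Case $|S| \geq |T|$.} In characteristic zero, $S = \alpha_{|S|} S_\lambda$ is projective in $\fpoly{|S|}{\gr}$, because $S_\lambda$ is projective in $\kring[\sym_{|S|}]\dash\modules$ (by semi-simplicity) and $\alpha_{|S|}$ is left adjoint to the exact functor $\cre_{|S|}$. Since $\fpoly{|S|}{\gr}$ is a Serre subcategory of $\fcatk[\gr]$, Yoneda extensions between objects of $\fpoly{|S|}{\gr}$ remain in $\fpoly{|S|}{\gr}$, so Yoneda $\ext^*$ in $\fcatk[\gr]$ (and hence derived-functor $\ext^*$ in the Grothendieck abelian category $\fcatk[\gr]$) agrees with Ext computed in $\fpoly{|S|}{\gr}$. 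Under the assumption $|S| \geq |T|$, both $S$ and $T$ lie in $\fpoly{|S|}{\gr}$, and the projectivity of $S$ yields $\ext^i_{\fcatk[\gr]}(S, T) = 0$ for $i > 0$; the Hom is computed by Proposition \ref{prop:hom_tensor_fmod_gr} and is consistent with the statement, since $|T|-|S|=0$ is the only admissible non-negative value.

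\emph{Case $|S| < |T|$.} Consider the short exact sequence
\[
0 \to T \to \beta_{|T|} S_\mu \to Q_\mu \to 0
\]
with $Q_\mu := \beta_{|T|} S_\mu / T$. The middle term is injective in $\fpi{\gr}$ (Theorem \ref{thm:injectivity_beta}), and $Q_\mu$ is polynomial of degree at most $|T|-1$: indeed $\cre_{|T|} \beta_{|T|} S_\mu = S_\mu$ by the unit of the adjunction $\cre_{|T|} \dashv \beta_{|T|}$, so the top-degree piece of the polynomial filtration of $\beta_{|T|} S_\mu$ (Proposition \ref{prop:poly_filt_gr_char_zero}) is $\alpha_{|T|} \cre_{|T|} \beta_{|T|} S_\mu = T$. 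Applying $\hom(S, -)$ and using that $\cre_{|T|} \alpha_{|S|} S_\lambda = 0$ (since $\alpha_{|S|} S_\lambda$ has polynomial degree strictly less than $|T|$), we obtain natural isomorphisms
\[
\ext^i(S,T) \cong \ext^{i-1}(S, Q_\mu) \qquad (i \geq 1).
\]

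An induction on $|T|$ then yields the upper vanishing $\ext^i(S,T)=0$ for $i > |T|-|S|$: the inductive hypothesis concentrates $\ext^{i-1}(S, T')$ at $i-1 = |T'|-|S|$ for every simple composition factor $T'$ of $Q_\mu$, and the bound $|T'| \leq |T|-1$ forces $i \leq |T|-|S|$. The main obstacle will be the complementary lower range $0 < i < |T|-|S|$: the Jordan--Hölder composition factors of $Q_\mu$ at intermediate polynomial degrees in $(|S|, |T|-1)$ are permitted a priori and could, individually, contribute non-trivially to $\ext^{i-1}(S, Q_\mu)$. Closing this gap will require finer structural information about $\beta_{|T|} S_\mu$ than the injectivity alone—for instance, the explicit Grothendieck-class computation of Corollary \ref{cor:psi_pcoalg_isotypical} via Littlewood--Richardson and plethysm coefficients—combined with a careful analysis of the connecting maps in the iterated long exact sequences for a Jordan--Hölder filtration of $Q_\mu$, showing that the contributions from intermediate degrees cancel (a Koszul-type acyclicity) and that the surviving $\ext$ is concentrated in degree $|T|-|S|$.
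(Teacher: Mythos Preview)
Your proposal has a fatal circularity: in the case $|S|<|T|$ you invoke Theorem~\ref{thm:injectivity_beta} (injectivity of $\beta_{|T|}S_\mu$ in $\fpi{\gr}$), but the paper's proof of that theorem explicitly uses Proposition~\ref{prop:Ext1_poly_char0} to handle $\ext^1(S,\beta_d M)$ for $S$ of polynomial degree $>d$. So you are assuming the proposition you are trying to prove. The same applies to your appeal to Corollary~\ref{cor:psi_pcoalg_isotypical}, which lies far downstream of this proposition in the paper's logical order. Moreover, you yourself flag that the lower range $0<i<|T|-|S|$ is not actually established; the sketched ``Koszul-type acyclicity'' is exactly the nontrivial content that needs proof.

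The paper's argument is entirely different and much shorter: it simply cites the main theorem of \cite{V_ext}, which computes $\ext^*_{\fcatk[\gr]}(\ak^{\otimes n},\ak^{\otimes m})$ directly and shows it vanishes off degree $m-n$. In characteristic zero every simple polynomial functor is a direct summand of some $\ak^{\otimes d}$ (Proposition~\ref{prop:poly_filt_gr_char_zero}), so the statement for simples follows at once. A minor additional point: in your first case, the claim that ``Serre subcategory'' forces agreement of Yoneda $\ext^*$ is not justified by thickness alone for $*>1$; the correct input is \cite[Th\'eor\`eme~1]{DPV}, as used in the proof of Theorem~\ref{thm:injectivity_beta}.
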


\begin{proof}
The result follows from the main Theorem of \cite{V_ext} which implies that 
$
\ext^* _{\fcatQ[\gr]} (\aQ^{\otimes n}, \aQ ^{\otimes m}) =0
$ 
if $* \neq m-n$ and using (\ref{221009-1731}).
\end{proof}

We first note the following immediate consequence of the form of the polynomial filtration given by Proposition \ref{prop:canon_poly_filt}.

\begin{lem}
\label{lem:poly_deg_F_socF}
For $F$ a polynomial functor, the exact polynomial degree of $F$ is equal to the exact polynomial degree of $\soc (F)$.
\end{lem}

\begin{proof}
The polynomial degree of $\soc (F)$ is clearly at most that of $F$. 

To establish the reverse inequality,  suppose that $F$ has polynomial degree exactly $d$. The polynomial filtration shows that $\qhat{d}F \neq 0$ and is a subfunctor of $F$. Since $\qhat{d}F$ is semi-simple, it is contained in $\soc (F)$, which therefore has polynomial degree at least $d$.
\end{proof}

Moreover, one has:

\begin{lem}
\label{lem:soc_degree}
For $0 \neq F \in \ob \f_d (\gr; \rat)$, for $d \in \nat$. The following conditions are equivalent:
\begin{enumerate}
\item 
$\soc (F) \neq 0$ and each composition factor of $\soc (F)$ has polynomial degree exactly $d$; 
\item 
$\cre_d F \neq 0$ and $\soc (F) \cong \alpha_d \cre_d F $; 
\item 
the adjunction unit $F \hookrightarrow \beta_d \cre_d F$ is injective.
\end{enumerate}
In particular, $F$ has polynomial degree exactly $d$.
\end{lem}

\begin{proof}
The equivalence of the first two conditions is clear. 

By Proposition \ref{prop:beta_socle},  $N_{\cre_d F} : \alpha_d \cre_d F \rightarrow \beta_d \cre_d F$ is injective. Hence, if $\soc (F) \cong \alpha_d \cre_d F$ one deduces that the composite $\soc (F) \subset F \rightarrow \beta_d \cre_d F$ is injective. This implies  the third condition. 

Now suppose the third condition holds; this implies that   $ \soc (F) \rightarrow \soc (\beta_d \cre_d F)$ is  injective. By Proposition \ref{prop:beta_socle}, $\soc (\beta_d \cre_d F)$ is equal to $\alpha_d \cre_d F$, which is semi-simple with each composition factor of degree exactly $d$. It follows that $\soc (F)$ satisfies the first condition.
\end{proof}

\begin{prop}
\label{prop:socle_filtration_char_0}
Suppose that  $0 \neq F \in \ob \f_d(\gr; \rat)$ satisfies the equivalent conditions of Lemma \ref{lem:soc_degree}. Then 
$$
\soc _n (F)
= 
\ker \big(
F \rightarrow q_{d-n}^\gr F 
\big)
$$
 for each $n \in \nat$. Thus, up to reindexing, the socle filtration of $F$  coincides with the polynomial filtration of $F$. 
\end{prop}

\begin{proof}
By hypothesis,  $F$ has polynomial degree exactly $d$ and $\soc (F) \cong \alpha_d \cre_d F$. Proposition \ref{prop:exact_sequences_recollement}  therefore implies that $F/\soc (F)$ is isomorphic to $q_{d-1}^\gr F$. If this is $0$, there is nothing to prove, so we suppose otherwise.

Then, by an evident induction upon $d$, it suffices to show that $q_{d-1}F$ satisfies the equivalent conditions of Lemma \ref{lem:soc_degree} with respect to $d-1$. In particular, this will imply that $\soc (q_{d-1}^\gr F) = \qhat{d-1} (F)$ consists of simple functors of polynomial degree exactly $d-1$.

Suppose otherwise, i.e., that $\soc (q_{d-1}^\gr F)$ has a composition factor $S$  of polynomial degree $< d-1$. This would imply the existence of a non-split short exact sequence of the form
\[
0
\rightarrow \soc (F)
\rightarrow \mathscr{E}
\rightarrow S 
\rightarrow 0.
\]
This  contradicts  Proposition \ref{prop:Ext1_poly_char0}, thus  completing the proof of the inductive step.
\end{proof}

\begin{cor}
\label{exam:beta_M_socle_filt}
For any $d \in \nat$ and $M \neq 0$ a  $\rat [\sym_d]$-module, up to reindexing, the socle filtration of $\beta_d M$ coincides with the polynomial filtration of $\beta_d M$.
\end{cor}

\begin{proof}
By Proposition \ref{prop:beta_socle}, $\beta_d M$ satisfies the second condition of  Lemma \ref{lem:soc_degree} with respect to $d \in \nat$, hence the result follows from Proposition \ref{prop:socle_filtration_char_0}.
\end{proof}

\subsection{(Co)homological properties for polynomial functors}
\label{subsect:injectivity_beta_char_0}

Here we recall some cohomological properties of the category of polynomial functors. 

By Theorem \ref{thm:recollement_poly_d} and since $\rat[\sym_d]$ is semi-simple, for any $M \in \ob \rat [\sym_d]\dash\modules$, $\beta_dM$ is  injective in $\fpolyQ{d}{\gr}$. Much more is true: 

\begin{thm}
\label{thm:injectivity_beta}
For any $d \in \nat$ and $M \in \ob \rat [\sym_d]\dash\modules$, $\beta_d M$ is injective in $\fpiQ{\gr}$. 
\end{thm}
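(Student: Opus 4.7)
The plan is to reduce the injectivity statement in $\fpi{\gr}$ to the already-known injectivity of $\beta_d M$ in $\fpoly{d}{\gr}$, using the exactness of the truncation functors $q_d^\gr$ established in Proposition \ref{prop:unimodularity_char_0}. Concretely, I would check the extension property: given a monomorphism $i : F \hookrightarrow G$ in $\fpi{\gr}$ and a morphism $\varphi : F \to \beta_d M$, I want to produce an extension $\widetilde{\varphi} : G \to \beta_d M$ such that $\widetilde{\varphi} \circ i = \varphi$.

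The first key observation is that $\beta_d M$ lies in $\fpoly{d}{\gr}$ by construction in Theorem \ref{thm:recollement_poly_d}, so the morphism $\varphi$ factors uniquely through the universal polynomial quotient as
\[
F \twoheadrightarrow q_d^\gr F \xrightarrow{\ \overline{\varphi}\ } \beta_d M.
\]
Next, I would apply $q_d^\gr$ to the monomorphism $i$. By Proposition \ref{prop:unimodularity_char_0} the functor $q_d^\gr : \fpi{\gr} \to \fpoly{d}{\gr}$ is exact, so $q_d^\gr i : q_d^\gr F \hookrightarrow q_d^\gr G$ is again a monomorphism, now living entirely in $\fpoly{d}{\gr}$.

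At this point the problem has been translated into $\fpoly{d}{\gr}$, where the semi-simplicity of $\kring[\sym_d]\dash\modules$ (via the adjunction $\cre_d \dashv \beta_d$ and the exactness of $\cre_d$) guarantees that $\beta_d M$ is injective. Thus $\overline{\varphi}$ extends to some $\Psi : q_d^\gr G \to \beta_d M$ with $\Psi \circ q_d^\gr i = \overline{\varphi}$. Defining $\widetilde{\varphi}$ as the composite $G \twoheadrightarrow q_d^\gr G \xrightarrow{\Psi} \beta_d M$, naturality of the quotient map $(-) \twoheadrightarrow q_d^\gr(-)$ applied to $i$ gives the commutativity of the square witnessing $\widetilde{\varphi} \circ i = \varphi$, completing the extension.

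Essentially no step here is delicate once the exactness of $q_d^\gr$ on $\fpi{\gr}$ is in hand; the potential obstacle would have been the behaviour of $q_d^\gr$ on monomorphisms of polynomial degree exceeding $d$, but this is precisely the content of Proposition \ref{prop:unimodularity_char_0}, which itself relies on the characteristic-zero hypothesis through the semi-simplicity statement of Proposition \ref{prop:poly_filt_gr_char_zero}. The argument therefore uses the characteristic-zero assumption in exactly one essential place, namely the exactness of $q_d^\gr$.
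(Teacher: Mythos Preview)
Your proof is correct, but it takes a different route from the paper's. The paper argues via $\ext^1$: it invokes \cite[Théorème 1]{DPV} to identify $\ext^*_{\fpoly{D}{\gr}}$ with $\ext^*_{\fcatk[\gr]}$ for polynomial functors, reduces to showing $\ext^1_{\fcatk[\gr]}(S,\beta_d M)=0$ for simple polynomial $S$, and then handles the case $\deg S > d$ using the $\ext$-vanishing of Proposition~\ref{prop:Ext1_poly_char0}. Your argument instead bypasses $\ext$ entirely and works directly with the lifting criterion, using the exactness of $q_d^\gr$ on $\fpi{\gr}$ (Proposition~\ref{prop:unimodularity_char_0}) to transport the extension problem into $\fpoly{d}{\gr}$, where injectivity is already known. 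Your approach is more self-contained within the section (it does not require the external comparison result from \cite{DPV} or the $\ext$-computation of \cite{V_ext}), while the paper's approach ties the result into the broader $\ext$-grading pattern that is used elsewhere in the paper.
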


\begin{proof}
By \cite[Théorème 1]{DPV}, if $F, G \in \ob \fpolyQ{D}{\gr}$ for some $D \in \nat$, then 
the canonical morphism $\ext^* _{\fpolyQ{D}{\gr}}(F, G) \rightarrow \ext^* _{\fcatQ[\gr]}(F, G)$ is an isomorphism. 

Hence, to prove the result,  it suffices to prove that 
$
\ext^1_{\fcatQ[\gr]} (S, \beta_d M) = 0
$ 
for any simple polynomial functor $S \in \fpiQ{\gr}$. Since, $\beta_d M$ is injective in $\fpoly{d}{\gr}$, we may suppose that $S$ has polynomial degree strictly greater than $d$. The result now follows from Proposition \ref{prop:Ext1_poly_char0}.
\end{proof}

The following is then clear:

\begin{cor}
\label{cor:poly_enough_injectives}
The family $\{\beta_d \rat[\sym_d] \ | \ d\in \nat\}$ is a family of injective cogenerators for 
 $\fpiQ{\gr}$.
\end{cor}

Recall that the global (injective) dimension of an abelian category with enough injectives is the supremum of the injective dimensions of its objects.

The following result underlines the essential homological difference between polynomial functors on $\ab$ and on $\gr$.

\begin{thm}
\cite[Section 4]{DPV}
\label{thm:fcatk_ab_semisimple}
\begin{enumerate}
\item 
The category $\fpiQ{\ab}$ is semi-simple; 
\item 
For $0<d\in \nat$, the category $\fpolyQ{d}{\gr}$ has global dimension $d-1$.
\item 
The category $\fpiQ{\gr}$ does not have finite global dimension.
\end{enumerate}
\end{thm}

Thus, although the categories $\fpiQ{\gr}$ and $\fpiQ{\ab}$ have the same simple objects, the structure of  $\fpiQ{\gr}$  is much richer than that of $\fpiQ{\ab}$. A similar statement holds comparing $\fpolyQ{d}{\gr}$ and
$\fpolyQ{d}{\ab}$, for $d>1$.

\section{$\fb$-modules and Schur functors}
\label{sec:fb}

\nomenclature{$\fmodq$}{finite-dimensional $\rat$-vector spaces\nomrefpage}
The purpose of this section is to review the category of $\fb$-modules and the relationship through Schur functors with the functor category $\f(\fmodq;\rat)$, where $\fmodq$ is the category of finite-dimensional $\rat$-vector spaces. No claim to originality is made.
 
This theory is necessary so as to introduce the Hopf algebra structures (see Section \ref{sec:pcoalg}) that underlie the exponential functors that are used in Section \ref{sec:beta} to describe the functors $\beta_d$ introduced in Section \ref{sec:recoll}.
 
A further important ingredient is the functor $(-)^\dagger$ that allows Koszul signs to be treated 
 
\subsection{$\fb$-modules and their basic structure}

\nomenclature{$\fb$}{finite sets and bijections\nomrefpage}
The category of finite sets and bijections is denoted $\fb$. This has a small skeleton with objects $\mathbf{n}= \{1, \ldots , n\}$, $n \in \nat$ (by convention $\mathbf{0} = \emptyset$). 
Hence there is an equivalence of categories 
\begin{eqnarray}
\label{eqn:equiv_fb}
\f(\fb;\rat)
\cong 
 \prod _{d \geq 0} \rat [\sym_d]\dash\modules. 
\end{eqnarray}
In particular, a $\rat [\sym_n]$-module can be considered as an object of 
$\fcatk[\fb]$ that is supported on $\mathbf{n}$.

Since $\fb$ is a groupoid, the inverse induces an isomorphism of categories $\fb \op \cong \fb$, whence 
\begin{eqnarray}
\f(\fb; \rat)\cong \f(\fb \op; \rat).
\end{eqnarray}
 
The category $\fb$ is symmetric monoidal for  $\amalg$ the disjoint union of 
sets.  

\begin{rem}
\label{rem:dist_iso_fb}
For $m, n \in \nat$, there is the following standard choice of isomorphism
 $ 
  \mathbf{m} \amalg \mathbf{n} \rightarrow \mathbf{m+n},
 $ 
sending $i \in \mathbf{m}$ to $i \in \mathbf{m+n}$ and $j \in \mathbf{n}$ to 
$m+j \in \mathbf{m+n}$. The object $\mathbf{0}$ plays the role of a left and right unit for this construction, in the obvious sense.

This construction is associative in the following sense: for $m,n,q \in \nat$ the composites 
$(\mathbf{m} \amalg \mathbf{n}) \amalg \mathbf{q}  \rightarrow \mathbf{m+n} \amalg \mathbf{q} 
\rightarrow \mathbf{m+n+q}$ and $\mathbf{m} \amalg (\mathbf{n} \amalg \mathbf{q})  \rightarrow \mathbf{m} \amalg  \mathbf{n+q} \rightarrow \mathbf{m+n+q}$ identify. However, it is clearly not commutative.
\end{rem}

\begin{exam}
 For $n \in \nat$, the associated standard projective is $P^\fb _{\mathbf{n}} : S 
\mapsto \rat [\hom_\fb (\mathbf{n}, S)]$.
 Hence $P^\fb _{\mathbf{n}}$ is non-zero only if $|S|=n$, when the 
corresponding 
representation of the symmetric group is 
 the regular representation.
\end{exam}

\begin{nota}
\label{nota:unit}
\nomenclature{$\unit$}{unit in $\f (\fb; \rat)$\nomrefpage}
Write $\unit $ for $P^\fb _{\mathbf{0}}$; this is the functor $\mathbf{0} \mapsto \rat$ and $\mathbf{n} \mapsto 0$, $n >0$. 
\end{nota}

The category $\f(\fb; \rat)$ is tensor abelian for  the Day convolution product:

\begin{defn}
\label{defn:tenfb}
\nomenclature{$\tenfb$}{Day convolution product for $\f (\fb;\rat)$\nomrefpage}
 Let $(\f(\fb;\rat) , \tenfb,\unit)$ denote the monoidal
structure given, for $V , W \in \ob \f(\fb;\rat)$, by
\[
 (V \tenfb W) (S) := \bigoplus_{S = S_1 \amalg S_2} V(S_1) \otimes_\rat W 
(S_2),
\]
for $S \in \ob \fb$, where the sum is over ordered decompositions of $S$ into two 
subsets (possibly empty).
\end{defn}

\begin{rem}
\label{rem:smod_reps}
 Under the equivalence of categories (\ref{eqn:equiv_fb}), an object $V$ of $\f(\fb;\rat)$ is considered as the sequence 
$V(d):= V(\mathbf{d})$, $d \in \nat$, of representations of the symmetric groups. 
Under this interpretation, one has 
\[
 (V \tenfb W) (n) 
 = 
 \bigoplus _{n_1 + n_2 = n}
 V (n_1) \otimes W (n_2) \uparrow _{\sym_{n_1} \times \sym_{n_2}}^{\sym_n},
\]
where the induction uses the inclusion $\sym_{n_1} \times \sym_{n_2} \subset 
\sym_n$
induced by $\mathbf{n_1} \amalg \mathbf{n_2} \cong \mathbf{n}$.
 \end{rem}
 
The Day convolution product can be interpreted as being induced by $\amalg$  on $\fb$, by the following:

\begin{prop}
\label{prop:proj_smod_tenfb}
For finite sets $T_1, T_2$, $\amalg$ induces an isomorphism
 \[
 P^\fb _{T_1}
 \tenfb
 P^\fb _{T_2}
 \stackrel{\cong}{\rightarrow}
 P^\fb _{T_1 \amalg T_2}.
 \]
This makes $T \mapsto P^\fb_T$ a monoidal functor from $(\fb \op , \amalg , \mathbf{0})$ to $\f(\fb; \rat)$.  
\end{prop}

\begin{proof}
Giving a bijection $T_1 \amalg T_2 \cong S$ is equivalent to giving a pair of subsets $S_1, S_2 \subset S$ and two bijections $T_1 \cong S_1$ and $T_2 \cong S_2$. One checks that this correspondence induces the given natural isomorphism. 

The monoidal property is then established by a direct verification.
\end{proof}

\begin{exam}\label{exam:tenfb_P1}
The functor $-\tenfb P^\fb_{\mathbf{1}}$ is an exact endofunctor of $\f(\fb;\rat)$. Under the equivalence (\ref{eqn:equiv_fb}), this has components given by the induction functors 
\[
(-)\uparrow_{\sym_d}^{\sym_{d+1}}
 : 
 \rat[\sym_d]\dash\modules 
 \rightarrow 
 \rat[\sym_{d+1}]\dash\modules.
 \] 
\end{exam}

\subsection{Symmetries and twisting}
\label{subsect:dagger}

The category $(\f(\fb;\rat) , \tenfb, \unit)$ has two possible 
symmetries, according to whether the symmetry introduces Koszul signs. These exploit  
the usual symmetric monoidal  structure of $\fmodq$, which provides the 
isomorphism
\begin{eqnarray}
\label{eqn:twist_vs}
 V(S_1) \otimes W (S_2) \stackrel{\cong}{\rightarrow} W(S_2) \otimes V(S_1).
\end{eqnarray}

\begin{defn}
\nomenclature{$\tau$}{symmetry for $\tenfb$\nomrefpage}
\nomenclature{$\sigma$}{Koszul-signed symmetry for $\tenfb$\nomrefpage}
\label{defn:tenfb_symms}
 Let
 \begin{enumerate}
  \item 
 $(\f(\fb;\rat), \tenfb,\unit, \tau)$ be the symmetric monoidal structure, with $\tau$ 
induced by  (\ref{eqn:twist_vs}); 
 \item 
 $(\f(\fb;\rat), \tenfb,\unit, \sigma)$ be the symmetric monoidal structure, with $\sigma$ 
induced by  (\ref{eqn:twist_vs}) multiplied by the Koszul sign $(-1)^{|S_1||S_2|}$.  
 \end{enumerate}
\end{defn}

These symmetries are related using the following orientation $\fb$-module:

\begin{defn}
\label{defn:orientation_module}
\nomenclature{$\orient$}{orientation $\fb$-module\nomrefpage}
 Let $\orient \in \ob \f(\fb;\rat)$ denote the orientation $\fb$-module  
 $\orient (S) := \Lambda^{|S|} (\rat [S])$, 
where $\rat [S]$ is the free $\rat$-vector space on the finite set $S$ and $\Lambda^{|S|}$ is the top exterior power.
\end{defn}

When $S$ is equipped with a total order, $\orient (S) \cong \rat$ has a canonical generator:

\begin{nota}
\label{nota:iota}
For $n \in \nat$,  denote by $\iota_n$ the generator of $\orient (\mathbf{n}) \cong \Lambda^n(\rat [\mathbf{n}]) \cong \rat$ given by $[1] \wedge [2] \wedge \ldots \wedge [n]$, where $[i]$ is the  generator of $\rat [\mathbf{n}]$ corresponding to $i \in \mathbf{n}$.
\end{nota}

We will require compatibility between the orientation modules $\orient (\mathbf{n})$. The required information is encoded in  a bicommutative Hopf algebra structure on $\orient$. In the statement,  we use the  distinguished isomorphism $\mathbf{m}\amalg \mathbf{n} \cong \mathbf{m+n}$ of Remark \ref{rem:dist_iso_fb}:

\begin{prop}
\label{prop:orient_(co)mult}
The $\fb$-module $\orient$ has the structure of a bicommutative Hopf algebra in $(\f(\fb;\rat),\tenfb, \unit, \sigma)$ that is uniquely determined by the following conditions for each $m,n  \in \nat^2$:
\begin{enumerate}
\item 
the map $\orient (\mathbf{m} ) \otimes \orient(\mathbf{n}) \rightarrow \orient (\mathbf{m}\amalg \mathbf{n}) \cong \orient (\mathbf{m+n})$ given by the product $\orient \tenfb \orient \rightarrow \orient $ sends $\iota_m \otimes \iota_n \mapsto \iota_{m+n}$;
\item 
the map $ \orient (\mathbf{m+n}) \cong  \orient (\mathbf{m}\amalg \mathbf{n}) \rightarrow \orient (\mathbf{m} ) \otimes \orient(\mathbf{n})$ given by the coproduct $\orient \rightarrow \orient \tenfb \orient$  sends $\iota_{m+n}\mapsto \iota_m \otimes \iota_n $. 
\end{enumerate}
\end{prop}

\begin{rem}
This result can be understood using the Schur functor associated to  $\orient$ (see Section \ref{subsect:schur_correspond} for Schur functors). The above Hopf algebra induces  the usual bicommutative Hopf algebra structure on the exterior algebra $\Lambda^* (V)$, considered as a functor of $V$. 
\end{rem}

The desired relation between the two symmetric monoidal structures on $\f (\fb;\rat)$  is given by using the objectwise tensor product (as in Section \ref{subsect:functor}) with $\orient$:

\begin{thm}
\label{thm:equiv_dagger}
\cite[Proposition 7.4.3]{SS}
\nomenclature{$(-)^\dagger$}{twisting functor for $\f(\fb;\rat)$\nomrefpage}
 The functor $- \otimes \orient : \f(\fb; \rat) \rightarrow \f(\fb; \rat)$ induces an 
equivalence of symmetric monoidal 
 categories 
 \[
 (-)^\dagger :  (\f(\fb; \rat) , \tenfb,\unit, \tau ) \stackrel{\cong}{\rightarrow} (\f(\fb; \rat) , 
\tenfb,\unit, \sigma).
 \]
 \end{thm}

\begin{exam}
\label{exam:dagger_const}
Consider the constant functor $\rat$ in $\f(\fb; \rat)$. This has an obvious bicommutative Hopf algebra structure in $(\f(\fb;\rat),\tenfb, \unit, \tau)$. On applying  $(-)^\dagger$ to $\rat$, one recovers $\orient$  as a bicommutative Hopf algebra in $(\f(\fb;\rat),\tenfb, \unit, \sigma)$, as in Proposition \ref{prop:orient_(co)mult}.
\end{exam}

 The following Example explains the interpretation of the functor $(-)^\dagger$ in terms of representations of the symmetric groups and also motivates the notation.

\begin{exam}
\label{exam:dagger_partitions}
Let $S_\lambda$ denote the 
simple $\rat [\sym_n]$-module indexed by the 
partition $\lambda$, considered as an object of $\f(\fb; \rat)$. Then 
$(S_\lambda)^\dagger \cong S_{\lambda^\dagger}$, where $\lambda^\dagger$ 
denotes the conjugate partition.  (See Appendix  \ref{app:rep_sym_car0} for background.)
\end{exam}

\subsection{Duality for $\fb$-modules}

Our constructions use Hopf algebras in  the symmetric monoidal categories 
$(\f(\fb; \rat),\tenfb , \unit, \tau)$ 
(respectively with $\sigma $ in place of $\tau$), in particular the categories $\hcom(\f(\fb;\rat))$ of commutative Hopf algebras and $\hcocom (\f(\fb;\rat))$ of cocommutative Hopf algebras (with respect to the specified symmetric monoidal structure). Duality for $\fb$-modules allows us to relate these and also to relate the associated exponential functors.

We use the duality functor $D_{\fb}$ of Notation \ref{nota:duality}; by definition, this is a functor 
$ 
\f(\fb;\rat) \op \rightarrow \f(\fb \op;\rat)$.
However, using the isomorphism of categories $\f(\fb;\rat) \cong \f(\fb\op;\rat)$, this is considered here as a functor 
$$
D_{\fb}
:
\f(\fb;\rat) \op \rightarrow \f(\fb;\rat)
.
$$
Explicitly, for $F$ a $\fb$-module, the dual module $D_{\fb}F$ is given on $\mathbf{n}$ by 
$ D_{\fb}F (\mathbf{n}) = \hom_{\fmodq} (F(\mathbf{n}), \rat)$, considered as a {\em left} $\sym_n$-module.

The following Proposition follows from the properties of vector space duality with respect to the tensor product.

\begin{prop}
\label{prop:duality_smod_tenfb}
Suppose that $V, W \in \ob \f(\fb;\rat)$ take finite-dimensional values. Then the duality adjunction of Proposition \ref{prop:duality} induces a natural isomorphism 
\[
(D_{\fb} V) \tenfb (D_{\fb} W) 
\cong 
D_{\fb} (V \tenfb W) 
\]
that is compatible with the symmetry $\tau$ of $(\f(\fb:\rat), \tenfb, \unit)$. 

Moreover, the functor $(-)^\dagger$ commutes with $D_{\fb}$.
\end{prop}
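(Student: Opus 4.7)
Both assertions are proved by working pointwise at each $S \in \ob \fb$ and reducing everything to the symmetric monoidality of $\kring$-linear duality on $\fmod$.

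First I would unpack both sides. By Definition \ref{defn:tenfb},
\[
(V \tenfb W)(S) = \bigoplus_{S = S_1 \amalg S_2} V(S_1) \otimes_\kring W(S_2),
\]
so applying $\hom_\kring(-,\kring)$ componentwise gives
\[
D_{\fb;\kring}(V \tenfb W)(S) \cong \bigoplus_{S = S_1 \amalg S_2} \hom_\kring\bigl(V(S_1) \otimes_\kring W(S_2),\kring\bigr),
\]
whereas the definition of $\tenfb$ applied to the duals yields
\[
\bigl((D_{\fb;\kring} V)\tenfb(D_{\fb;\kring}W)\bigr)(S) \cong \bigoplus_{S = S_1 \amalg S_2} \hom_\kring(V(S_1),\kring)\otimes_\kring \hom_\kring(W(S_2),\kring).
\]
Since $V, W$ land in $\fmod$, each $V(S_1)$ and $W(S_2)$ is finite-rank free, so the classical comparison map from the tensor product of duals to the dual of the tensor product is an isomorphism on each summand, giving a canonical natural transformation.

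Second, I would verify naturality in $\fb$ and compatibility with $\tau$. For any bijection $\phi : S \xrightarrow{\sim} S'$, the induced map on decompositions sends $S = S_1 \amalg S_2$ to $S' = \phi(S_1) \amalg \phi(S_2)$, and the summand-wise isomorphism is natural in its arguments, so naturality is automatic. Compatibility with the symmetry $\tau$ reduces, summand by summand, to the fact that for finite-rank free $\kring$-modules $A, B$ the canonical isomorphism $\hom_\kring(A,\kring) \otimes_\kring \hom_\kring(B,\kring) \cong \hom_\kring(A \otimes_\kring B,\kring)$ intertwines the swap of $A$ and $B$ with the swap of their duals. This is a standard property of duality on $(\fmod, \otimes_\kring, \kring)$.

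For the commutation of $(-)^\dagger$ with $D_{\fb;\kring}$, write $V^\dagger = V \otimes \orient$ as in Theorem \ref{thm:equiv_dagger}. Since $\orient(S) = \Lambda^{|S|}(\kring[S])$ is free of rank one over $\kring$ for every $S$, the duality pairing provides a canonical $\aut(S)$-equivariant isomorphism $D_{\fb;\kring}\orient \cong \orient$ (the signature character cancels against itself under the pairing $\orient(S) \otimes_\kring \orient(S) \cong \kring$). Combining with the first assertion,
\[
D_{\fb;\kring}(V^\dagger) \cong (D_{\fb;\kring}V) \tenfb (D_{\fb;\kring}\orient) \cong (D_{\fb;\kring}V) \tenfb \orient = (D_{\fb;\kring}V)^\dagger.
\]

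There is no real obstacle here; the whole statement is formal once the finite-rank freeness hypothesis is respected. The only point demanding a moment's care is the self-duality of $\orient$, which relies on its rank-one freeness, and tracking the Koszul signs that enter via $(-)^\dagger$ — but these signs appear identically on both sides of the comparison.
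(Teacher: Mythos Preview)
Your argument for the first isomorphism is correct and is exactly what the paper has in mind: it gives no detailed proof, only the sentence ``follows from the properties of duality on the symmetric monoidal category $(\fmod, \otimes_\kring, \kring)$'', which is precisely your summand-by-summand reduction.

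There is, however, a genuine slip in your treatment of the second assertion. The functor $(-)^\dagger$ is defined using the \emph{pointwise} tensor product, $V^\dagger(S) = V(S) \otimes_\kring \orient(S)$, not the convolution product $\tenfb$. So when you write ``Combining with the first assertion, $D_{\fb;\kring}(V^\dagger) \cong (D_{\fb;\kring}V)\tenfb(D_{\fb;\kring}\orient)$'', you are invoking a result about $\tenfb$ to analyse an expression built from the pointwise $\otimes$; these are different operations and the first part does not apply. (Concretely, $(V \tenfb \orient)(S)$ is a direct sum over decompositions $S = S_1 \amalg S_2$, which is nothing like $V(S)\otimes_\kring \orient(S)$.)

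The fix is immediate and uses the same ingredients you already identified. Working pointwise, for each $S$ one has
\[
D_{\fb;\kring}(V^\dagger)(S) = \hom_\kring\bigl(V(S)\otimes_\kring \orient(S),\kring\bigr) \cong \hom_\kring(V(S),\kring)\otimes_\kring \hom_\kring(\orient(S),\kring),
\]
the isomorphism holding because $\orient(S)$ is free of rank one. Since $\orient(S)$ is canonically self-dual (your pairing $\orient(S)\otimes_\kring\orient(S)\cong\kring$), this identifies with $(D_{\fb;\kring}V)(S)\otimes_\kring\orient(S) = (D_{\fb;\kring}V)^\dagger(S)$, naturally in $S$. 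So your diagnosis of \emph{why} the commutation holds is correct; only the citation of the first part was misplaced.
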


\subsection{The Schur functor construction}
\label{subsect:schur_correspond}

This section reviews the Schur functor construction that underlies the classical Schur-Weyl correspondence. Recall that $\fmodq$ denotes the category of finite-dimensional $\rat$-vector spaces.  

 The Schur construction is the functor
$
\f(\fb; \rat) \rightarrow \f(\fmodq; \rat)$ that sends a $\fb$-module $F$  to 
\[
V \mapsto \bigoplus _{n \in \nat} V^{\otimes n} \otimes_{\sym_n}
 F(\mathbf{n}).
\]
A functor in the image of this construction is termed a {\em Schur functor} here.

\begin{defn}
\label{defn:homog_poly_functor}
A Schur functor associated to a non-zero $\fb$-module supported on $\mathbf{n}$ will be termed a {\em homogeneous polynomial functor of degree $n$}.
\end{defn}

\begin{rem}
\label{rem:homog_poly_fmodq}
Using the general theory of Section \ref{sec:poly}, one has the notion of polynomial functor on $\fmodq$. A homogeneous polynomial functor of degree $n$ is polynomial of degree exactly $n$,  justifying the terminology. Moreover, polynomial functors in $\fcatQ[\fmodq]$ admit a particularly simple description given by \cite[Appendix I.A]{MacD} and \cite[Chapitre 4]{MR927763}: a polynomial functor of degree $n$ is a direct sum of homogeneous polynomial  functors of degree $\leq n$.

The analogous statement for $\fpiQ{\gr}$ is not true: Example \ref{exam:Passi_not_out} gives an example of a polynomial functor which is not the direct sum of homogeneous functors. Proposition \ref{prop:canon_poly_filt} gives the correct general statement for $\fpiQ{\gr}$.
\end{rem}

The Schur functor construction can be encoded using  the functor $\otimes _\fb$ (see Section \ref{subsect:tensor_over_C}) and the following bifunctor:

\begin{nota}
\label{nota:talg*_bifunctor}
\nomenclature{$\talg$}{tensor bifunctor\nomrefpage}
Let  $ \talg \in \ob \fcatQ[\fmodq \times \fb\op]$ denote the bifunctor defined by 
\[
(V, \mathbf{d}) \mapsto \talg^d (V):= V^{\otimes d},
\]
equipped with the place permutation action of $\sym_d$ on the right.
\end{nota}

\nomenclature{$\talg\otimes_\fb -$}{Schur functor construction\nomrefpage}
The Schur functor construction then identifies with the functor 
 $$\talg \otimes_\fb - : \f(\fb; \rat) \rightarrow \f(\fmodq; \rat).$$
One also has the functor $\hom_{\f(\fmodq;\rat)} (\talg, -) : \f (\fmodq; \rat ) \rightarrow \f(\fb;\rat)$. 

Recall that the objectwise tensor product provides the symmetric monoidal structure $(\f(\fmodq;\rat) , \otimes , \rat)$. Then the main properties of these constructions are resumed in:

\begin{prop}
\label{prop:schur_functor}
The functor $\talg \otimes_\fb - : \f(\fb; \rat) \rightarrow \f(\fmodq; \rat)$ is:
\begin{enumerate}
\item 
exact;
\item 
a fully-faithful embedding; 
\item 
symmetric monoidal with respect to $(\f(\fb; \rat) , \tenfb, \unit, \tau)$.
\end{enumerate}

The functor $\hom_{\f(\fmodq;\rat)} (\talg, -)$ is a retract of $\talg \otimes_\fb -$. In particular, for $d\in \nat$ and a $\sym_d$-module $M$, there is a natural isomorphism of $\sym_d$-modules
\[
M \cong \hom_{\f(\fmodq;\rat)} (\talg^d, \talg \otimes_\fb M).
\]
\end{prop}
 
 \begin{proof}
 These standard properties are established in \cite[Appendix I.A]{MacD} and \cite[Chapitre 4]{MR927763}, for example.
 \end{proof}

The endofunctor $(-)^\dagger$ of the category $\f(\fb;\rat)$ can be transported to Schur functors, as explained below. This gives an alternative (and possibly more illuminating) way of understanding $(-)^\dagger$ and the equivalence of symmetric monoidal categories given in Theorem \ref{thm:equiv_dagger}. In particular, it explains why $(-)^\dagger$ can be viewed as introducing Koszul signs.
 
\begin{nota}
\label{nota:dagger_schur}
For $G \in \ob \f(\fmodq; \rat)$ in the image of $\talg \otimes_\fb -$, say $G = \talg \otimes_\fb F$ for a $\fb$-module $F$, write $G^\dagger$ for the functor $\talg \otimes_\fb (F^\dagger)$.
\end{nota}

\begin{rem}
\label{rem:heuristic_dagger}
Heuristically, the operation $(-)^\dagger$ on Schur functors can be understood by introducing a grading and imposing Koszul signs.  This uses the fact that any Schur functor extends naturally to a functor defined on graded vector spaces.

For example, consider the functor  $F(W) := W^{\otimes n} \otimes_{\sym_n} M$, for a $\sym_n$-module $M$, $n \in \nat$. Here, $W$ can be taken to be a graded vector space. In particular, considering $sV$ the vector space $V$ placed in degree one, one has the functor of $V$
\[
V \mapsto F(sV) = (sV)^{\otimes n} \otimes_{\sym_n} M
\]
which takes values concentrated in degree $n$.

Now $(sV)^{\otimes n}$ identifies as $V^{\otimes n} \otimes \mathrm{sgn}_n$ placed in degree $n$, equipped with the diagonal action of $\sym_n$. Hence the functor $V \mapsto F(sV)$  identifies as the Schur functor associated to the $\sym_n$-module $M^\dagger:= M \otimes \mathrm{sgn}_n$, placed in degree $n$. 
\end{rem}

\begin{exam}
The Schur functor associated to the constant $\fb$-module $\kring$ is the functor $V \mapsto \bigoplus_{n\in 
\nat} S^n (V)$, where $S^n$ is the $n$th symmetric power functor. The $\fb$-module $\rat ^\dagger$ identifies with the orientation $\fb$-module $\orient$ and this has associated Schur functor $V \mapsto \bigoplus_{n \in \nat} \Lambda^n (V)$, where $\Lambda^n$ is the $n$th exterior power functor. 

This gives the equalities in $\f(\fmodq;\rat)$, for $n \in \nat$:
\begin{eqnarray*}
(S^n)^\dagger = \Lambda^n \mbox{ and } (\Lambda^n)^\dagger = S^n .
\end{eqnarray*}
In terms of the heuristic interpretation above, this reflects the natural identifications $S^n(sV) \cong \Lambda^n (V) [n] $ and $\Lambda^n (sV) \cong S^n (V)[n]$, where $[n]$ denotes the appropriate degree shift. 
\end{exam}

For $F, G \in \ob \f(\fmodq;\rat)$, if $G$ takes finite-dimensional values, then one can form the composition $F \circ G$ in $\f(\fmodq; \rat)$. For suitable Schur functors, one has:

\begin{prop}
\label{prop:dagger_compose}
\cite[Section 7.4.8]{SS}
For $F$ a  Schur functor and $G$ a homogeneous polynomial functor of degree $d$   that takes 
finite-dimensional values,
\[
(F \circ G)^\dagger \cong
\left\{
\begin{array}{ll}
F 
\circ (G^\dagger) & \mbox{$d$ even}
\\
(F^\dagger) \circ (G^\dagger) &  \mbox{$d$ odd.}
\end{array}
\right.
\]
\end{prop}

\section{Fundamental Hopf algebras in $\fb$-modules and their exponential functors} 
\label{sec:pcoalg}

This section introduces the Hopf algebras $\palg$ and $\pcoalg$ in $\f(\fb;\rat)$. 
Under the Schur correspondence, we have the dictionary:
\begin{eqnarray*}
\palg &\leftrightarrow & \Big( V \mapsto T (V) \Big)\\
\pcoalg & \leftrightarrow & \Big( V \mapsto T_\cog (V) \Big),
\end{eqnarray*}
where $T(V)$ is the cocommutative Hopf algebra given by the tensor algebra with shuffle coproduct and $T_\cog (V)$ is the commutative Hopf algebra given by the tensor coalgebra with shuffle product.

By Theorem \ref{thm:expo_Hopf_general}, we can form the respective exponential functors, $\Phi \palg$ in $\f(\gr\op; \f(\fb; \rat))$ and $\Psi \pcoalg$ in $\f(\gr; \f(\fb;\rat))$. 
 The functors $\Psi \pcoalg$ and $\Phi \palg$  play an essential role in Section \ref{sec:beta}, where the structure of the functors $\beta_d$  of Section \ref{sec:recoll}, for $d \in \nat$, is analysed.

\subsection{The Hopf algebras $\palg$ and $\pcoalg$}

Recall from Notation \ref{nota:unit} that $\unit$ denotes $P^\fb_{\mathbf{0}}$.

 \begin{nota}
 \label{nota:neck}
 \nomenclature{$\neck$}{standard projectives in $\f(\fb; \rat)$\nomrefpage}
Let  $\neck$ denote the $\fb$-module 
$$\neck : = \bigoplus_{n \in \nat} P^\fb_{\mathbf{n}} =   \bigoplus_{n \in \nat} \rat [\hom_\fb (\mathbf{n}, -)]$$
 with
 the associated split inclusion
  \begin{eqnarray}
\label{eqn:eta_epsilon}
\unit \stackrel{\eta}{\hookrightarrow} \neck \stackrel{\epsilon}{\twoheadrightarrow} \unit
\end{eqnarray}
that defines the morphisms $\eta$ and $\epsilon$.
\end{nota}

We now proceed to construct a cocommutative (respectively commutative) Hopf algebra structure on $\neck$, for which $\eta$ and $\epsilon$ provide the (co)augmentation.

The following Lemma is clear; it provides the finiteness property that is required when considering duality.

\begin{lem}
\label{lem:neck_eval_fd}
For $n\in \nat$, $\neck (\mathbf{n})$ is isomorphic to $\rat[\sym_n]$ as a $\sym_n$-module. In particular, $\neck$ takes finite-dimensional values.
\end{lem}

Proposition \ref{prop:proj_smod_tenfb} implies:

\begin{prop}
\label{prop:iso_neck_tenfb}
There is an isomorphism in $\f(\fb; \rat)$:
\[
\neck  \tenfb \neck \cong 
\bigoplus_{(n_1, n_2 ) \in \nat^{\times 2}} P^\fb _{\mathbf{n_1} \amalg \mathbf{n_2}}
= 
\bigoplus_{(n_1, n_2 ) \in \nat^{\times 2}}  \rat  [\hom_\fb (\mathbf{n_1} \amalg \mathbf{n_2}, -)]. 
\]

With respect to this isomorphism, 
\begin{enumerate}
\item 
 a morphism $\neck \tenfb \neck \rightarrow \neck$ is uniquely determined by its components
 $P^\fb _{\mathbf{n_1} \amalg \mathbf{n_2}} \rightarrow P^\fb _{\mathbf{n_1+n_2}}$, for $(n_1,n_2)\in \nat^{ 2}$, all other components being zero;
 \item 
 a morphism $\neck \rightarrow \neck \tenfb \neck $ is uniquely determined by its components
 $ P^\fb _{\mathbf{n_1+n_2}} \rightarrow P^\fb _{\mathbf{n_1} \amalg \mathbf{n_2}} $,  for $(n_1,n_2)\in \nat^{ 2}$, all other components being zero.
\end{enumerate}
\end{prop}

For $(n_1, n_2 ) \in \nat^{\times 2}$, we use the  distinguished isomorphism $ \mathbf{n_1}\amalg  \mathbf{n_2} \cong \mathbf{n_1+n_2}$, as in Remark \ref{rem:dist_iso_fb}.

\begin{defn}
\label{defn:concat_deconcat}
Using Proposition \ref{prop:iso_neck_tenfb}, define
\begin{enumerate}
\item
the concatenation product  $\prodconcat : \neck \tenfb \neck \rightarrow \neck$ to be the morphism 
 with component $P^\fb_{\mathbf{n_1}\amalg \mathbf{n_2}}
\rightarrow  
 P^\fb_{\mathbf{n_1+n_2}}$, for $(n_1, n_2) \in \nat^2$, induced by the distinguished isomorphism $\mathbf{n_1+n_2} \cong \mathbf{n_1} \amalg \mathbf{n_2}$;
\item 
the deconcatenation coproduct $\coproddecon : \neck \rightarrow \neck \tenfb 
\neck$ to be the morphism with component $P^\fb _{\mathbf{n_1+n_2}} \rightarrow P^\fb _{\mathbf{n_1} \amalg \mathbf{n_2}}$ for $(n_1, n_2) \in \nat$, induced by the distinguished isomorphism $\mathbf{n_1} \amalg \mathbf{n_2} \cong \mathbf{n_1  +n_2}$.
\end{enumerate}
\end{defn}

 Via the Schur functor construction (using the properties given in Proposition \ref{prop:schur_functor}), these correspond to forming the natural tensor algebra $V \mapsto T(V)$ and the natural tensor coalgebra $V \mapsto T_{\mathrm{coalg}}(V)$, with the concatenation product and deconcatenation coproduct respectively.

\begin{lem}
\label{lem:monoid_comonoid_neck}
In the monoidal category $(\f(\fb; \rat), \tenfb, \unit)$:
\begin{enumerate}
\item 
$(\neck, \prodconcat, \eta)$ is an associative, unital monoid;
\item 
$(\neck, \coproddecon, \epsilon)$ is a coassociative, counital comonoid.
\end{enumerate}
Moreover, these structures are dual under  $D_{\fb}$, considered as a functor $\f(\fb; \rat)\op \rightarrow \f(\fb; \rat)$.
\end{lem}

\begin{proof}
The result is proved by exploiting the associativity property in Proposition \ref{prop:proj_smod_tenfb} and using Proposition \ref{prop:duality_smod_tenfb} in the analysis of the duality,  Lemma \ref{lem:neck_eval_fd} providing  the requisite finiteness hypothesis. 
\end{proof}

These structures extend to Hopf algebra structures on $\neck$. For this we use the following structure morphisms, where, for $m  \in \nat$,  $\mathbf{m}$ is equipped with the total order inherited from $\nat$. 

\begin{nota}
For $(n_1, n_2) \in \nat$ with $n:=n_1+n_2$, let
\begin{enumerate}
\item
$\shuff_{n_1,n_2} \in \rat [\hom_\fb(\mathbf{n_1} \amalg \mathbf{n_2}, \mathbf{n})]$ be the sum of the maps $f \in \hom_\fb(\mathbf{n_1} \amalg \mathbf{n_2}, \mathbf{n})$ such that $f|_{\mathbf{n_1}}$ and $f|_{\mathbf{n_2}}$ are order-preserving; 
\item 
$\coshuff_{n_1,n_2} \in \rat [\hom_\fb( \mathbf{n},\mathbf{n_1} \amalg \mathbf{n_2})]$ be the sum of  $g \in \hom_\fb( \mathbf{n},\mathbf{n_1} \amalg \mathbf{n_2})$ such that the restrictions $g^{-1} (\mathbf{n_i}) \rightarrow \mathbf{n_i}$ are order-preserving, for $i\in \{1, 2\}$;
\item
$\mathrm{flip}_n :\mathbf{n} \rightarrow \mathbf{n}$ be the map $j \mapsto n+1 -j$. 
\end{enumerate}
\end{nota}

Analogously to Definition \ref{defn:concat_deconcat}, we have:

\begin{defn}
\ 
\begin{enumerate}
\item 
 The shuffle coproduct  $\coprodshuff : \neck \rightarrow \neck \tenfb \neck$ has component $P^\fb_\mathbf{n_1+n_2} 
\rightarrow 
P^\fb_{\mathbf{n_1} \amalg \mathbf{n_2}}$, for  $(n_1, n_2) \in \nat^2$,  
induced by $\shuff_{n_1,n_2}$;
\item 
the shuffle product $\prodshuff : \neck \tenfb \neck\rightarrow \neck $ has component $P^\fb_{\mathbf{n_1} \amalg \mathbf{n_2}} \rightarrow P^\fb_\mathbf{n_1+n_2}$, for  $(n_1, n_2) \in \nat^2$, 
 induced by $\coshuff_{n_1,n_2}$;
\item 
the $n$th component of the involution $\chi : \neck \rightarrow \neck$ is  $(-1)^n P^\fb _{\mathrm{flip}_n} : P^\fb_{\mathbf{n}} \rightarrow P^\fb_{\mathbf{n}}$.
\end{enumerate}
\end{defn}

\begin{prop}
\label{prop:hopf_alg_fb_modules}
\nomenclature{$\palg$}{cocommutative Hopf algebra in $\f(\fb;\rat)$\nomrefpage}
\nomenclature{$\pcoalg$}{commutative Hopf algebra in $\f (\fb;\rat)$\nomrefpage}
With respect to $(\f(\fb;\rat), \tenfb, \unit , \tau)$,
\begin{enumerate}
\item 
$\palg := (\neck, \prodconcat, \coprodshuff, \chi, \eta, \epsilon)$ 
is a  cocommutative Hopf algebra;
\item 
$\pcoalg := (\neck, \prodshuff, \coproddecon, \chi, \eta, \epsilon)$ is a commutative Hopf algebra. 
\end{enumerate}
These structures are dual under $D_{\fb}$, considered as a functor $\f(\fb; \rat)\op \rightarrow \f(\fb; \rat)$.
\end{prop}

\begin{proof}
The result can be checked directly from the definitions.

Since we are working over $\rat$, an alternative proof is to check that these structures induce Hopf algebra structures on the Schur functor associated to $\neck$, exploiting the properties of the Schur functor construction stated in Proposition \ref{prop:schur_functor}. In both cases, the underlying Schur functor is
\[
V \mapsto  \bigoplus_{n\in \nat} V^{\otimes n}.
\]

One checks that the structure $\palg$ equips $T (V)$ with the tensor algebra Hopf algebra structure, for which the generators are primitive. The result follows in this case. 

For $\pcoalg$, one checks that the corresponding Hopf algebra is $T_{\mathrm{coalg}} (V)$, equipped with the Hopf algebra structure with deconcatenation coproduct and shuffle product. 

These Hopf algebras are dual (for graded duality with respect to the length grading), from which the duality statement follows.
\end{proof}

\subsection{Twisting the Hopf algebras using $(-)^\dagger$}

Applying the functor $(-)^\dagger$ of Theorem \ref{thm:equiv_dagger}, one has the immediate Corollary to Proposition \ref{prop:hopf_alg_fb_modules}:

\begin{cor}
\label{cor:palg_pcoalg_dagger}
With respect to  $(\f(\fb;\rat), \tenfb, \unit , \sigma)$,
\begin{enumerate}
\item 
${\palg}^\dagger := (\neck, \prodconcat, \coprodshuff, \chi, \eta, \epsilon)^{\dagger}$ 
is a  cocommutative Hopf algebra;
\item 
${\pcoalg}^\dagger := (\neck, \prodshuff, \coproddecon, \chi, \eta, \epsilon)^\dagger$ is a commutative Hopf algebra. 
\end{enumerate}
These structures are dual under $D_{\fb}$.
\end{cor}

To further analyse these structures, one uses the isomorphism between ${\neck}^\dagger$ and $\neck$ that is constructed below. First, observe that, by definition:
\[
{\neck}^\dagger \cong \bigoplus_{n\in \nat} \rat [\hom_\fb (\mathbf{n}, X)] \otimes \orient(X).
\]

Recall from Notation \ref{nota:iota} that $\iota_n$ denotes the canonical generator of $\orient (\mathbf{n})$. Then, by Yoneda's lemma, the  element $[\id_{\mathbf{n}}] \otimes \iota_n \in  \rat [\hom_\fb (\mathbf{n}, \mathbf{n})] \otimes \orient(\mathbf{n})$ induces a morphism:
\begin{eqnarray}
\label{eqn:twist}
P^\fb _\mathbf{n} \rightarrow P^\fb_{\mathbf{n}} \otimes \orient (-).
\end{eqnarray}

\begin{lem}
\label{lem:neck_dagger}
The morphisms (\ref{eqn:twist}) assemble to  an isomorphism $
\neck 
\stackrel{\cong}{\rightarrow}
{\neck}^\dagger
$.
\end{lem}

\begin{proof}
It suffices to show that each map $P^\fb _\mathbf{n} \stackrel{\cong}{\rightarrow} P^\fb_{\mathbf{n}} \otimes \orient (-)$ is an isomorphism. This follows from the fact that 
the morphism of $\sym_n$-modules $\rat \sym_n \rightarrow \rat \sym_n \otimes \mathrm{sgn}_n$ sending $[e]$ to $[e] \otimes 1$ (where $e\in \sym_n$ is the identity and $1$ is the generator of $\kring \cong \mathrm{sgn}_n$) is an isomorphism.
\end{proof}

\begin{prop}
\label{prop:palg_pcoalg_dagger_identify}
Via the isomorphism $\neck \cong {\neck}^\dagger$ of Lemma \ref{lem:neck_dagger},
\begin{enumerate}
\item 
${\palg}^\dagger$ is isomorphic as a Hopf algebra to $(\neck, \prodconcat, \coprodshuff^\dagger, \chi, \eta, \epsilon)$, where $\coprodshuff^\dagger$ is derived from $\coprodshuff$ by transport of structure; 
\item 
${\pcoalg}^\dagger$ is isomorphic as a Hopf algebra to $(\neck, \prodshuff^\dagger, \coproddecon, \chi, \eta, \epsilon)$, where $\prodshuff^\dagger$ is derived from $\prodshuff$ by transport of structure. 
\end{enumerate}
\end{prop}

\begin{proof}
The result follows from the fact that the isomorphisms of Proposition \ref{prop:proj_smod_tenfb} are compatible via the functor $(-)^\dagger$ with the isomorphism of Lemma \ref{lem:neck_dagger}. This uses  the compatibility of the family of   generators $(\iota _n | n\in \nat)$ that is encoded in the bicommutative Hopf algebra structure of $\orient$ (see Proposition \ref{prop:orient_(co)mult}). 

An alternative heuristic argument in the case of $\palg$ is as follows, based upon Remark \ref{rem:heuristic_dagger} ($\pcoalg$ can be treated similarly). As in the proof of Proposition \ref{prop:hopf_alg_fb_modules}, applying the Schur functor construction yields the natural Hopf algebra $T (V)$, equipped with the concatenation product and shuffle coproduct. The functor $(-)^\dagger$ is interpreted at the level of Schur functors as replacing $V$ by $sV$ and taking into account the Koszul signs. 

Now $T (sV) \cong \bigoplus_{n\in \nat} (sV)^{\otimes n} \cong \bigoplus_{n\in \nat} s^n V^{\otimes n}$, which, forgetting the grading, is isomorphic to $ \bigoplus_{n\in \nat} V^{\otimes n}$. The only structure morphism which introduces Koszul signs is the shuffle coproduct. This is immediate for $\prodconcat$, $\eta$, and $\epsilon$; for $\chi$, one checks that it is also the case.
\end{proof}

\subsection{The associated exponential functors}
\label{subsect:expo_Phi_palg_Psi_pcoalg}

In this section, we work with the symmetric monoidal structure $(\f(\fb; \rat), \tenfb, \unit, \tau)$. By Proposition \ref{prop:hopf_alg_fb_modules}, with respect to this structure, $\palg$ is a cocommutative Hopf algebra and $\pcoalg$ is a commutative Hopf algebra. Hence, by Theorem \ref{thm:expo_Hopf_general}, we have the corresponding exponential functors, by the constructions of Section \ref{sec:expo}:
\begin{eqnarray*}
\Psi \pcoalg & \in &  \ob \f (\gr; \f(\fb;\rat))\\
\Phi \palg & \in & \ob \f (\gr\op ; \f (\fb; \rat)).
\end{eqnarray*}
 
The values of these as $\fb$-modules are identified by the following:

\begin{prop}
\label{prop:underlying_fb_module_Phi_palg_Psi_pcoalg}
For $t\in \nat$, there are isomorphisms of $\fb$-modules:
\begin{eqnarray*}
\Psi \pcoalg (\zed^{\star t}) 
\cong 
\Phi \palg (\zed^{\star t}) 
&\cong& 
(P^\fb) ^{\tenfb t}
\\
&\cong &
\bigoplus_{(n_i \ | \ 1 \leq i \leq t) \in \nat^t} \rat [\hom_\fb (\amalg_{i=1}^t \mathbf{n_i}, -)], 
\end{eqnarray*}
where $\amalg_{i=1}^t \mathbf{n_i} = \mathbf{n_1} \amalg \ldots \amalg \mathbf{n_t}$.

In particular, for each $n \in \nat$, $\Psi \pcoalg (\zed^{\star t})(\mathbf{n})$ and $  
\Phi \palg (\zed^{\star t}) (\mathbf{n})$ are free $\sym_n$-modules of finite rank.
\end{prop}

\begin{proof}
The first statements follow from the construction of the functors $\Psi$ and $\Phi$ (see Theorem \ref{thm:expo_Hopf_general}). The explicit identification of the $\fb$-module $(P^\fb) ^{\tenfb t}$ follows from Proposition \ref{prop:proj_smod_tenfb}.

The final statement generalizes Lemma \ref{lem:neck_eval_fd} and follows directly from the above identification.
\end{proof}

The functors $\Psi \pcoalg$ and $\Phi \palg$ are related by duality. To express this, it is convenient to use the natural equivalences of categories
\begin{eqnarray*}
\f (\gr; \f(\fb;\rat)) & \cong & \f (\gr \times \fb ; \rat) \\
\f (\gr\op; \f(\fb;\rat)) & \cong & \f (\gr\op \times \fb ; \rat).
\end{eqnarray*}
Then, using the isomorphism of categories $\fb \cong \fb\op$, the duality adjunction of Proposition \ref{prop:duality} becomes:
\[
D_{\gr \times \fb} : \f (\gr \times \fb ; \rat)\op \rightleftarrows \f(\gr\op \times \fb; \rat) : D_{\gr\op \times \fb}\op.
\]

\begin{prop}
\label{prop:duality_Psi_palg_Phi_pcoalg}
There are natural isomorphisms:
\begin{eqnarray*}
\Psi \pcoalg & \cong & D_{\gr\op \times \fb} \big( \Phi \palg \big)  \mbox{\quad  in $\f (\gr \times \fb; \rat)$} \\
\Phi \palg & \cong & D_{\gr \times \fb}\big( \Psi \pcoalg \big) \mbox{\quad in $\f (\gr\op \times \fb; \rat)$.}
\end{eqnarray*}
\end{prop}

\begin{proof}
By Proposition \ref{prop:hopf_alg_fb_modules}, the Hopf algebras $\pcoalg$ and $\palg$ are dual under $D_{\fb}$. In particular, considered as bivariant functors with values in $\rat$-vector spaces, both take finite-dimensional values.

This duality passes to the associated exponential functors by using the naturality given by Proposition \ref{prop:naturality_Phi_Psi}. For this one has to consider the exponential functors as taking values in $\f(\fb; \rat)$; the above discussion implies that they take values in $\fb$-modules taking finite-dimensional values. 

The duality adjunction 
\[
D_\fb : \f (\fb ; \rat) \op \rightleftarrows \f(\fb; \rat) : D_{\fb}\op
\]
(again using the isomorphism $\fb \cong \fb\op$) 
restricts to an equivalence of symmetric monoidal categories between the full subcategories of functors taking finite-dimensional values. 

The result thus follows from Proposition \ref{prop:naturality_Phi_Psi} using $D_\fb$ as the functor $\calm \rightarrow \cale$.
\end{proof}

\begin{rem}
The argument used above can be paraphrased as stating that, under the appropriate finiteness hypotheses, duality  transposes $\Phi$ and $\Psi$, in that $D_\fb \Phi = \Psi D_\fb$ and $D_\fb \Psi = \Phi D_\fb$.
\end{rem}

\section{The functors $\beta_d$}
\label{sec:beta}

The goal of this section is to describe  the fundamental functors 
$$
\beta_d : \rat [\sym_d]\dash\modules \rightarrow \f_d(\gr; \rat) \subset \f (\gr; \rat),
$$ 
for $d\in \nat$,  by using exponential functors. The functors $\beta_d$ were recalled in Section \ref{sec:recoll}, where it is shown that they give rise to a family of injective cogenerators of $\fpiQ{\gr}$, the category of polynomial functors on $\gr$, namely the family $\{  \beta_d \rat [\sym_d] \ | \ d \in \nat \}$.

The main result of the section is Theorem \ref{thm:beta_d}, which gives a description of the functors $\beta_d$ in terms of the exponential functor $\Psi \pcoalg$. This Theorem follows directly from Theorem \ref{thm:beta_description} which identifies the functor encoded by the family $\beta_d \rat [\sym_d]$ as $\Psi \pcoalg$. 

The proof of Theorem \ref{thm:beta_description} in Section \ref{subsect:boldbeta} relies upon analysing the associated graded of the polynomial filtration by passing to $\fb$-modules; the required material is developed in Section \ref{subsect:poly_filt_revisit} building upon the material of Section \ref{sec:recoll}.

\subsection{The polynomial filtration revisited}
\label{subsect:poly_filt_revisit}

Recall the functor  $\qhat{d}$ of Notation \ref{nota:qhat}, defined for $d\in \nat$. By Proposition \ref{prop:exact_sequences_recollement}, the inclusion $\qhat{d} \subset q_d^\gr $ induces  a natural isomorphism for $F \in \ob \f(\gr ; \rat)$:
\begin{eqnarray*}
 \cre_d \qhat{d} F 
 \cong 
 \cre_d q_d ^\gr F.
\end{eqnarray*}

The polynomial filtration of $F$ is defined as in Definition \ref{defn:poly_filt},  so that the associated graded  is:
\[
\bigoplus_{t\in \nat} \qhat{t} F.
\]

By Proposition \ref{prop:exact_sequences_recollement}, there is a natural isomorphism 
$\qhat{t} F \cong \alpha_t \cre_t q_t^\gr F$, hence this associated graded is determined by the sequence of $\sym_t$-modules $\cre_t q_t^\gr F \cong \cre_t \qhat{t} F$, for $t\in \nat$. This motivates the introduction of the following:

\begin{defn}
\label{defn:fbcr}
\nomenclature{$\fbcr$}{associated graded to polynomial filtration as $\fb$-module\nomrefpage}
Let $\fbcr : \f(\gr;\rat) \rightarrow \f (\fb ; \rat)$ be the functor defined by 
\[
\fbcr F (\mathbf{t}) := \cre_t q_t^\gr F \cong  \cre_t \qhat{t} F .
\]
\end{defn}

\begin{rem}
The associated graded to the polynomial filtration of $F$ is determined by $\fbcr F$. 
\end{rem}

In the following statement, we restrict to  $\fpiQ{\gr}\subset \f(\gr; \rat)$, the subcategory of polynomial functors on $\gr$. The category $\fpiQ{\gr}$  is considered as symmetric monoidal with respect to the objectwise tensor product $\otimes$ and the category $\f (\fb;\rat)$  with respect to $\tenfb$ with symmetry $\tau$.

\begin{prop}
\label{prop:fbcr}
The functor $\fbcr : \fpiQ{\gr} \rightarrow \f (\fb ; \rat)$ is:
\begin{enumerate}
\item 
exact;
\item 
symmetric monoidal.
\end{enumerate}
\end{prop}

\begin{proof}
For $t \in \nat$, the functor $\cre_t$ is exact and $q^\gr_t$ is exact by  Proposition \ref{prop:unimodularity_char_0}. The exactness of $\fbcr$ follows. 

To show that $\fbcr$ is symmetric monoidal, one uses the characterization of the polynomial filtration given in Proposition \ref{prop:canon_poly_filt}. From this one deduces that, for polynomial functors $F$ and $G$, there is a natural isomorphism
\[
\qhat{t} (F \otimes G) 
\cong 
\bigoplus_{t_1 +t_2 = t} 
\qhat{t_1} (F) \otimes \qhat{t_2} (G) 
\]
and this is compatible with the symmetric monoidal structure. 
\end{proof}

We will need to apply this working with tensor products of $P^\gr_\zed$, which are not polynomial. 
To do this, we  reduce to the polynomial case, using Proposition \ref{prop:poly_filt_proj} below.
Recall that, for $t \in \nat$, $q^\gr_t : \f (\gr; \rat) \rightarrow \f_t (\gr; \rat)$ is the left adjoint to the inclusion of the full subcategory of polynomial functors of degree $t$.

\begin{prop}
\label{prop:poly_filt_proj}
For $t, n \in \nat$ the $n$-fold tensor product $(P^\gr_\zed) ^{\otimes n} 
\twoheadrightarrow (q^\gr_t P^\gr_\zed) ^{\otimes n}$ of the canonical surjection $ P^\gr_\zed
\twoheadrightarrow q^\gr_t P^\gr_\zed$ induces an isomorphism:
\[
q^\gr_t \big((P^\gr_\zed) ^{\otimes n} \big)
\stackrel{\cong}{\rightarrow}  
q_t^\gr\big( (q^\gr_t P^\gr_\zed) ^{\otimes n}\big).
\]
\end{prop}

\begin{proof}
This result follows from the analysis of the polynomial filtration of the standard projectives given in \cite{DPV}. 
For the convenience of the reader, a direct proof is given here.

Write $K_t$ for the kernel of the natural projection $P^\gr_\zed \twoheadrightarrow q^\gr_t P_\zed^\gr$. Thus there is an exact sequence 
\[
\bigoplus_{i+j = n-1} (P^\gr_\zed)^{\otimes i} \otimes K_t \otimes (P^\gr_\zed) ^{\otimes j}
\rightarrow 
(P^\gr_\zed) ^{\otimes n}
\rightarrow 
(q^\gr_t P^\gr_\zed) ^{\otimes n}
\rightarrow 
0.
\]
To prove the result, since $q_t^\gr$ is right exact, it suffices to show that applying $q_t^\gr$ to the left hand term gives zero. One reduces to showing that $q_t ^\gr (K_t \otimes (P^\gr_\zed) ^{\otimes n-1})$ is zero. This is equivalent to showing that $\hom_{\f(\gr; \rat)} (K_t \otimes (P^\gr_\zed) ^{\otimes n-1}, G)$ is zero, for any functor $G$ of polynomial degree $t$.   

Now, by Proposition \ref{prop:properties_tau}, 
$$\hom_{\f(\gr; \rat)} (K_t \otimes (P^\gr_\zed) ^{\otimes n-1}, G) \cong \hom_{\f(\gr; \rat)} (K_t, \tau_\zed^{n-1} G).$$
 Since $G$ has polynomial degree $t$, $\tau_\zed^{n-1} G$ also has polynomial degree $t$ (this standard fact follows readily from the definition of polynomial degree). Therefore, $\hom_{\f(\gr; \rat)} (K_t, \tau_\zed^{n-1} G)$
 is isomorphic to $\hom_{\f_t(\gr; \rat)} (q_t^\gr K_t, \tau_\zed^{n-1} G)$. 
 
To conclude, it suffices to show that $q_t ^\gr K_t=0$. Now, by construction of the functor $q_t^\gr$ (see \cite[Proposition 3.7]{DPV}, for example), $K_t$ identifies as the image of the iterated product $(\overline{P_\zed^\gr}) ^{\otimes t+1} \rightarrow P_\zed^\gr$. Hence it suffices to show that  $\hom_{\f(\gr; \rat)} ((\overline{P_\zed^\gr}) ^{\otimes t+1} , G')=0$ for any functor $G'$ of polynomial degree $t$. Again, by Proposition \ref{prop:properties_tau}, it suffices to show that $\taubar^{t+1} G' =0$. Once again, this standard fact follows from the definition of polynomial degree. 
\end{proof}

\begin{exam}
\label{exam:fbcr_truncation}
For $t, n \in \nat$, Proposition \ref{prop:poly_filt_proj} implies that 
$$
\fbcr \big((P^\gr_\zed) ^{\otimes n} \big) (\mathbf{d}) =   \fbcr \big((q^\gr_t P^\gr_\zed) ^{\otimes n} \big) (\mathbf{d})$$
 for all $d \leq t$.  Now $(q^\gr_t P^\gr_\zed) ^{\otimes n}$ is polynomial (explicitly, it has polynomial degree $nt$).   Hence, this allows 
$\fbcr \big((P^\gr_\zed) ^{\otimes n} \big) $ to be studied using polynomial functors. In particular,  Proposition \ref{prop:fbcr} can be applied in this context. 

One deduces that there is a $\sym_n$-equivariant isomorphism
\[
\fbcr \big((P^\gr_\zed) ^{\otimes n} \big)
\cong 
\big(\fbcr P^\gr_\zed) \big)^{\odot n}.
\]
\end{exam}

\subsection{The functor $\boldbeta$}
\label{subsect:boldbeta}

For each $d\in \nat$, one has the functor $\beta_d \rat [\sym_d]$  which belongs to $\f_d (\gr; \rat)$ and which is equipped with the $\sym_d$-action induced by the right regular action on $\rat [\sym_d]$. These functors assemble to a functor on $\fb$:

\begin{nota}
\label{nota:boldbeta}
\nomenclature{$\boldbeta$}{the assembled functors $\beta_d$\nomrefpage}
Denote by  $\boldbeta \in \ob \f(\gr \times \fb;\rat)$ the functor 
 $
 \boldbeta : (-,   \mathbf{d}) \mapsto \beta_d \rat [\sym_d](-).
 $
\end{nota}

The following fundamental result identifies this  in terms of $\pcoalg$ (see Section \ref{subsect:expo_Phi_palg_Psi_pcoalg}).

\begin{thm}
\label{thm:beta_description}
There is an isomorphism
 $\boldbeta
\cong
 \Psi \pcoalg$
 in $\f(\gr\times \fb;\rat)$.
\end{thm}

The first ingredient in the proof is the following, in which $(-)^\sharp$ denotes vector space duality.

\begin{lem}
\label{lem:bold_beta_cre_q}
For $d \in \nat$ and $G \in \ob \gr$, there is an isomorphism of $\sym_d$-modules:
\[
\boldbeta (G, \mathbf{d}) \cong 
\Big( \cre_d q_d^\gr P^\gr_G \Big)^\sharp
 = \Big( \fbcr P^\gr_G (\mathbf{d}) \Big)^\sharp
. 
\]
Moreover, this is natural with respect to $G \in \ob \gr$.
\end{lem}

\begin{proof}
There are natural isomorphisms, using the adjunctions of Theorem \ref{thm:recollement_poly_d}:
\begin{eqnarray*}
\boldbeta (G, \mathbf{d}) 
\cong
\hom_{\f (\gr, \rat)} (P^\gr _G , \boldbeta (-, \mathbf{d}) )
&\cong & 
\hom_{\f_d (\gr, \rat)} (q_d^\gr P^\gr _G , \beta_d\rat [\sym_d]  ( -) )
\\
&\cong&
\hom_{\sym_d} (\cre_d q_d^\gr P^\gr _G , \rat [\sym_d]  ),
\end{eqnarray*}
where the first isomorphism is given by Yoneda, the second uses the definition of $\boldbeta$, the fact that $\beta_d\rat [\sym_d]  ( -)$ has polynomial degree $d$, and the definition of $q_d^\gr$ as  a left adjoint. The final isomorphism uses that $\cre_d$ is left adjoint to $\beta_d$.  

Now, for any (left) $\sym_d$-module $M$, there is a natural isomorphism of right $\sym_d$-modules $\hom_{\sym_d} (M, \rat [\sym_d] ) \cong M^\sharp$. (This isomorphism is induced by composing with the $\rat$-linear map $\rat[\sym_d] \twoheadrightarrow \rat$ sending a generator $[g]$ to zero unless $g=e$, and $[e]\mapsto 1$.) This gives the isomorphism of $\sym_d$-modules $\boldbeta (G, \mathbf{d}) \cong 
\Big( \cre_d q_d^\gr P^\gr_G \Big)^\sharp$. The final equality is given by the definition of $\fbcr$.

 Finally, it is clear that this is natural with respect to $G$.
\end{proof}

Lemma \ref{lem:bold_beta_cre_q} shows that $\boldbeta$ is dual to $G \mapsto \fbcr P^\gr_G$. In particular, $\boldbeta$ only depends upon the associated graded to the polynomial filtration of $P^\gr_G$, considered as a functor of $G$.

Hence, we proceed to analyse the structure of the associated graded of the polynomial filtration of $P^\gr_G$. For this we consider the functors $P^\gr_G$, for $G \in \ob \gr\op$, as forming the bivariant functor 
$\rat [\hom_\gr (-, -)]$ in $\f( \gr\op \times \gr; \rat) \cong \f(\gr\op; \f(\gr; \rat))$.  

The following essentially restates  Examples \ref{exam:expo_group_ring} and \ref{exam:expo_group_ring+hopf}:

\begin{prop}
\label{prop:standard_proj_expo}
Considered in $\f(\gr\op; \f(\gr; \rat))$, the functor $\rat [\hom_\gr (-,-)]$  is exponential. In particular, 
\[
\rat[\hom_\gr (-,-)] \cong \Phi P^\gr _{\zed}.
\]
Here, $P^\gr_{\zed}$ is the cocommutative Hopf algebra in $\f(\gr; \rat)$ given by the group ring functor $H\mapsto \rat [H]$.
\end{prop}

The Hopf algebra structure of $P^\gr _\zed$ passes to the associated graded of the polynomial filtration. In the following, write $T(\aQ)$ for the tensor Hopf algebra in $\f (\gr; \rat)$,
\[
T(\aQ) = \bigoplus_{t\in \nat} \aQ^{\otimes t}
\]
with concatenation product and  the shuffle coproduct, interpreted functorially. Thus the coproduct is determined by its restriction to $\aQ$, on which it identifies as the map $\aQ \rightarrow (\rat \otimes \aQ) \oplus (\aQ \otimes \rat) \subset T(\aQ) \otimes T(\aQ)$, given by the diagonal $\aQ \hookrightarrow \aQ^{\oplus 2}$  (after absorbing the respective tensor with $\rat$).

\begin{lem}
\label{lem:assoc_grad_P_zed}
The cocommutative Hopf algebra structure of $P^\gr _\zed$ in $\f (\gr; \rat)$ induces a cocommutative Hopf algebra structure on its associated graded, this identifies as Hopf algebras:
\[
\bigoplus_{t\in \nat} \qhat{t} P^\gr _\zed
\cong 
T(\aQ).
\]
\end{lem}

\begin{proof}
This analysis for the filtration of the group ring functor $H \mapsto \rat [H]$ by powers of the augmentation ideal follows from \cite[Chapter VIII]{Passi}. The relationship with the polynomial filtration is established as follows.

 For $H$ free, \cite[Proposition 3.7]{DPV} shows that the kernel of the natural
surjection
\[
 P_\zed^\gr(H) \twoheadrightarrow q_d P_\zed^\gr (H)
\]
 is isomorphic to $\mathcal{I}^{d+1} (H)$, the $(d+1)$st power of the 
augmentation ideal $\mathcal{I}(H)$. This gives 
an explicit description of the polynomial filtration of $P_\zed^\gr$ (the 
quotients $q_d P_\zed ^\gr$ 
are known as {\em Passi functors}). It follows, as in \cite[Remarque 2.2]{DPV}, that the associated graded of $ P_\zed^\gr $ for this filtration identifies with the tensor algebra $\bigoplus_{t \geq 0}  \aQ^{\otimes t}$ on the functor $\aQ$. 

Finally, the Hopf algebra structure is primitively-generated, as stated, by the results of \cite[Chapter VIII]{Passi}. 
\end{proof}

The following proposition  reformulates this using the functor $\fbcr : \f(\gr; \rat) \rightarrow \f(\fb; \rat)$:

\begin{prop}
\label{prop:fbcr_P_zed}
The cocommutative Hopf algebra structure of $P^\gr _\zed$ in $\f (\gr; \rat)$ induces a cocommutative Hopf algebra structure on $\fbcr P^\gr_\zed$ in $\f (\fb;\rat)$ and 
there is an isomorphism of Hopf algebras:
\[
\fbcr P^\gr_\zed \cong \palg.
\]
\end{prop}

\begin{proof}
The fact that one obtains a Hopf algebra structure follows from the symmetric monoidal property given in Proposition \ref{prop:fbcr}, refined to deal with tensor products of $P^\gr_\zed$ by using Proposition \ref{prop:poly_filt_proj}, as in Example \ref{exam:fbcr_truncation}.

The identification of the Hopf algebra structure then follows from Lemma \ref{lem:assoc_grad_P_zed}.
\end{proof}

\begin{proof}[Proof of Theorem \ref{thm:beta_description}]
Lemma \ref{lem:bold_beta_cre_q} in conjunction with Proposition \ref{prop:standard_proj_expo} implies that there is an isomorphism:
\[
\boldbeta \cong D_{\gr\op \times \fb} \big( \fbcr (\Phi P_\zed^\gr)\big)
\]
in $\f (\gr \times \fb; \rat)$.

Now, extending the symmetric monoidal property of $\fbcr$ given by Proposition \ref{prop:fbcr} as in the proof of Proposition \ref{prop:fbcr_P_zed}, there is an isomorphism of functors in $\f (\gr\op \times \fb; \rat)$:
\[
\fbcr (\Phi P_\zed^\gr)
\cong 
\Phi (\fbcr P_\zed^\gr),
\]
by the naturality of $\Phi$ with respect to symmetric monoidal functors given by Proposition \ref{prop:naturality_Phi_Psi}.

Hence, by Proposition \ref{prop:fbcr_P_zed}, $\fbcr (\Phi P_\zed^\gr) \cong \Phi \palg$. 
 The result follows by the duality isomorphism given in Proposition \ref{prop:duality_Psi_palg_Phi_pcoalg}.
\end{proof}

\subsection{Multiplicative  structure}

By Theorem \ref{thm:beta_description}, we can identify the functor $\boldbeta$ with the exponential functor $\Psi \pcoalg$ in $\f (\gr; \f(\fb;\rat))$. Thus Proposition \ref{prop:Phi_Psi_(co)mult} yields the following, which succinctly encodes the `multiplicative structure' formed by the $\beta_d \rat [\sym_d]$, for $d \in \nat$. 

\begin{cor}
\label{cor:mult_boldbeta}
Considered as a functor in $\f (\gr; \f(\fb;\rat))$, $\boldbeta$ takes values in the category of twisted commutative algebras (aka. unital, commutative monoids in $\f (\fb; \rat)$). 

Explicitly, for each $d_1, d_2 \in \nat$, there is a structure morphism:
\[
\boldbeta (-, \mathbf{d_1}) \otimes \boldbeta (-, \mathbf{d_2}) \rightarrow 
\boldbeta (-, \mathbf{d_1} \amalg \mathbf{d_2}) 
\]
and these satisfy the unit, associativity and commutativity constraints.
\end{cor}

\subsection{The functors $\beta_d$} 
Theorem \ref{thm:beta_description} allows the description of the functors $\beta_d$. In the following, we consider a $\sym_d$-module $M$ as a $\fb$-module supported on $\mathbf{d}$ and we use the isomorphism of categories $\fb\op \cong \fb$ to adjust variance so as to apply the coend $\otimes_\fb$.

\begin{thm}
\label{thm:beta_d}
For $d \in \nat$, the functors $\beta_d : \rat [\sym_d]\dash\modules \rightarrow \f(\gr; \rat)$ is naturally isomorphic to the functor 
\[
M \mapsto \boldbeta \otimes_\fb M \cong \big(\Psi \pcoalg\big) \otimes_\fb M.
\]
\end{thm} 

\begin{proof}
As in the proof of Lemma \ref{lem:bold_beta_cre_q}, there are natural isomorphisms:
\begin{eqnarray*}
\beta_d M (G)
\cong
\hom_{\f (\gr, \rat)} (P^\gr _G , \beta_d M )
&\cong & 
\hom_{\f_d(\gr, \rat)} (q_d^\gr P^\gr _G , \beta_d M )
\\
&\cong &
\hom_{\sym_d} (\cre_d q_d^\gr P^\gr _G , M  ).
\end{eqnarray*}
By Lemma \ref{lem:bold_beta_cre_q}, the last term is isomorphic to 
$$\hom_{\sym_d} (\boldbeta (G, \mathbf{d})^\sharp  , M  )
\cong 
\Big(\boldbeta (G, \mathbf{d})  \otimes  M \Big)^{\sym_d},   
$$
for the diagonal action of $\sym_d$, using that the underlying vector space of $\boldbeta (G, \mathbf{d})$ has finite dimension. Since we are working over $\rat$, the $\sym_d$-invariants can be replaced by $\sym_d$-coinvariants, so that this can be rewritten as $\boldbeta \otimes_\fb M$ by considering $M$ as a $\fb$-module supported on $\mathbf{d}$. 

Finally, Theorem \ref{thm:beta_description} gives the isomorphism $\boldbeta \cong \Psi \pcoalg$, from which the result follows. 
\end{proof}

\part{Outer functors on groups}
\label{part:outer}

In this part, we introduce what we call  \textit{outer functors} on free groups. An outer functor is a functor on free groups on which inner automorphisms act trivially: it thus provides a family of representations of the outer automorphism groups of free groups, which are compatible via the functoriality. 

In Section \ref{sec:outer} we introduce outer functors and establish some  first properties. (In the text we consider  both covariant and contravariant functors;  to simplify the exposition, in the rest of this introduction we  only consider the covariant case.)

By construction, outer functors form a full subcategory of the category of functors on free groups. Section \ref{sec:outer_adjoints} considers the adjoints to the associated  inclusion of categories. The right adjoint, $\omega$, is used in Part \ref{part:HHH} to describe the outer functors associated to higher Hochschild homology. In Section \ref{sec:omega_exponential} we describe the action of $\omega$ on exponential functors; this is used in Part \ref{part:psi_omega_psi} to study the structure of the outer functors that appear in Part \ref{part:HHH}.

\section{The category of outer functors}
\label{sec:outer}

This section introduces the category of outer functors: this is the full subcategory of functors on $\gr$ on which inner automorphisms act trivially. Outer functors are the fundamental structures that arise in this paper; studying the full structure of an outer functor $F$ rather than simply the family of representations $F(\zed^{\star n})$ of the outer automorphism groups $\mathrm{Out}(\zed^{\star n})$ provides essential information.

Section \ref{subsect:poly_outer_functors} specializes to  polynomial outer functors.

\subsection{Introducing outer functors}
\label{subsect:outer_functors}

For $G$ a group, conjugation induces the adjoint action $\ad : G 
\rightarrow \aut (G)$ with image the normal subgroup $\inn (G) \lhd \aut (G)$ 
of inner automorphisms. The group of outer automorphisms of $G$ is the cokernel
$\out (G):= \aut (G) / \inn (G)$, which is  equipped with the canonical surjection 
 $
 \aut (G) 
 \twoheadrightarrow 
 \out (G). 
$ 
 An $\aut (G)$-module $M$ arises from an $\out (G)$-module structure via this 
surjection if and only if every inner automorphism acts trivially upon $M$. 

\begin{rem}
The conjugation action is natural, in the 
following sense: consider  $\ad_G : G \times G \rightarrow 
G $, $\ad_G (g, h) := \ad (g) (h) = g h g^{-1}$. 
 Then, for $\phi : G' \rightarrow G$ a group morphism, the following diagram 
commutes:
 \[
  \xymatrix{
  G' \times G' 
  \ar[r]^(.6){\ad_{G'}} 
  \ar[d]_{\phi \times \phi}
  &
  G' 
  \ar[d]^{\phi}
  \\
  G \times G 
  \ar[r]_(.6){\ad_{G}}
  &
  G.
  }
 \]
\end{rem}

In the following, $\cala$ is an arbitrary abelian category. For the remainder of the section, this will be taken to be the category of $\kring$-modules over an arbitrary unital, commutative ring.

\begin{defn}
\label{def:fout}
The category  $\fout{\gr}{\cala}$ (respectively $\fout{\gr\op}{\cala}$) of outer functors is the full subcategory 
of  $\f(\gr;\cala)$ (resp. $\f(\gr\op;\cala)$) of objects $G$ such that, for each 
$n 
\in \nat$, 
the canonical $\aut (\zed^{\star n})$-action on $G (\zed^{\star n})$  arises from an $\out 
(\zed^{\star n})$-action. 
\end{defn}

\begin{rem}
Belonging to  $\fout{\gr}{\cala}$ (respectively $\fout{\gr\op}{\cala}$) is a property and not an additional structure. 
\end{rem}

The following records basic properties of these categories, specializing to $\cala = \kring \dash \modules$ to simplify the exposition.

\begin{prop}
\label{prop:foutk}
\nomenclature{$\foutk[\gr]$}{outer functors on $\gr$\nomrefpage}
\nomenclature{$\foutk[\gr\op]$}{outer functors on $\gr\op$\nomrefpage}
\ 
\begin{enumerate}
\item 
 The category $\foutk[\gr]$ (resp. $\foutk[\gr \op]$) is 
strictly full in  $\fcatk[\gr]$ (resp. $\fcatk[\gr\op]$)  and closed under formation of  sums and sub-quotients; it is not thick. 
\item 
The respective tensor structures of $\fcatk[\gr]$ and $\fcatk[\gr \op]$  restrict  to 
\begin{eqnarray*}
\otimes_\kring &:& \foutk[\gr]\times \foutk[\gr] \rightarrow \foutk[\gr]\\
\otimes_\kring &:& \foutk[\gr\op]\times \foutk[\gr \op] \rightarrow \foutk[\gr\op].
\end{eqnarray*}
\item 
The abelianization functor $\A$ induces fully-faithful, symmetric monoidal,  
exact functors: 
 \begin{eqnarray*}
  \circ \A &:& \fcatk[\ab] \rightarrow \foutk[\gr]  \\
 \circ \A &:& \fcatk[\ab\op] \rightarrow \foutk[\gr\op] .
 \end{eqnarray*}
\item 
If $\kring$ is a field, the duality adjunction of Proposition \ref{prop:duality} 
restricts to 
\[
  D_{\gr} : \foutk[\gr] \op \rightleftarrows \foutk[\gr \op] : D_{\gr \op} \op
 \]
 and induces an equivalence of categories between the full subcategories of 
functors taking  finite-dimensional values.
\end{enumerate}
\end{prop}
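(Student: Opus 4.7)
The plan is to treat each of the four claims in turn. Each one ultimately reduces to the fact that the outer condition, namely triviality of the $\inn(\zed^{\star n})$-action on $F(\zed^{\star n})$ for every $n$, is a pointwise condition on $\aut(\zed^{\star n})$-modules that is preserved under the relevant categorical operations.

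For (1), I will first observe that the subcategory property reduces, as in Definition \ref{def:serre}, to closure under subobjects, quotients, and finite direct sums. In each case, triviality of $\inn(\zed^{\star n})$ on $F(\zed^{\star n})$ automatically passes to the corresponding $\aut(\zed^{\star n})$-submodule, quotient module, or direct sum. That $\foutk[\gr]$ is abelian then follows from the Serre property inside the ambient abelian category $\fcatk[\gr]$. To show that the inclusion is not thick, I will exhibit a short exact sequence in $\fcatk[\gr]$ whose outer terms are outer functors but whose middle term is not; the natural candidate comes from the augmentation ideal filtration of $P^\gr_\zed$ recalled in Example \ref{exam:aug_ideal_Passi}, where conjugation on the group ring $\kring[\zed^{\star n}]$ fails to factor across $\out(\zed^{\star n})$ on the ideal pieces, paralleling Example \ref{exam:Passi_not_out}.

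For (2), I note that $\aut(\zed^{\star n})$ acts diagonally on $(F_1 \otimes_\kring F_2)(\zed^{\star n}) = F_1(\zed^{\star n}) \otimes_\kring F_2(\zed^{\star n})$. If $\inn(\zed^{\star n})$ acts trivially on each tensor factor, it acts trivially on the tensor product, so the tensor structure restricts. For (3), the essential point is that $\A : \gr \rightarrow \ab$ sends conjugation to the identity, since $\A(g h g^{-1}) = \A(h)$ in an abelian group; equivalently, the induced homomorphism $\aut(\zed^{\star n}) \rightarrow \aut(\zed^n)$ vanishes on $\inn(\zed^{\star n})$. Hence for any $F \in \fcatk[\ab]$, the composite $F \circ \A$ is automatically outer, with the analogous statement in the contravariant setting. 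The remaining assertions (exactness, full faithfulness, and symmetric monoidality) are inherited directly from Proposition \ref{prop:abelianization}, since the property of landing in $\foutk[\gr]$ does not affect these.

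For (4), I will argue that the duality functor $D_\gr$ of Proposition \ref{prop:duality} sends $F$ to the functor $G \mapsto F(G)^\sharp$ with the contragredient action. If $\inn(\zed^{\star n})$ acts trivially on $F(\zed^{\star n})$, then it acts trivially on the dual $F(\zed^{\star n})^\sharp$, so $D_\gr$ restricts to a functor $\foutk[\gr]\op \rightarrow \foutk[\gr\op]$, and symmetrically for $D_{\gr\op}$. The adjunction and the equivalence of the finite-dimensional-valued subcategories then transfer immediately from Proposition \ref{prop:duality}. The only step that requires genuine thought, rather than routine verification, is the construction of the counterexample witnessing that $\foutk[\gr] \subset \fcatk[\gr]$ is not thick; everything else is a direct verification of the defining condition under a categorical operation.
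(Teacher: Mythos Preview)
Your proposal is correct and takes essentially the same approach as the paper: the paper records the proof as ``Straightforward'' and cites Example \ref{exam:Passi_not_out} for the failure of thickness, which is exactly the pointwise verification of the outer condition under each operation that you spell out. Your write-up is more detailed than the paper's, but the content is identical.
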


\begin{proof}
Most of this is immediate. That the respective subcategories $\foutk[\gr]$ and  $\foutk[\gr \op]$ 
are not thick is exhibited by Example \ref{exam:Passi_not_out}.
\end{proof}

\subsection{The polynomial filtration of outer functors}
\label{subsect:poly_outer_functors}

To simplify the exposition and in accordance with Section \ref{sec:recoll}, we take $\kring=\rat$. 
The polynomial filtrations of $\fcatQ[\gr]$ and $\fcatQ[\gr\op]$ pass to $\foutQ[\gr]$ and $\foutQ[\gr\op]$ respectively:

\begin{nota}
\label{nota:fout_poly}
\nomenclature{$\fpoutgrQ[d]$}{polynomial outer functors  on $\gr$ of degree $d$\nomrefpage}
\nomenclature{$\f^{\out}_d (\gr \op; \rat)$}{polynomial outer functors  on $\gr\op$ of degree $d$\nomrefpage}
\nomenclature{$\fpoutgrQ[< \infty]$}{polynomial outer functors  on $\gr$\nomrefpage}
For $d \in \nat \cup \{< \infty \}$, denote by 
\begin{enumerate}
\item 
$\fpoutgrQ[d]$ the full subcategory $\foutQ[\gr] \cap \fpolyQ{d}{\gr}\subset \foutQ[\gr]$;
\item 
$\f^{\out}_d (\gr \op; \rat)$  the full subcategory $$\foutQ[\gr\op] \cap \fpolyQ{d}{\gr\op}\subset\foutQ[\gr\op]. $$
\end{enumerate}
\end{nota}

\begin{rem}
\ 
\begin{enumerate}
\item 
Proposition \ref{prop:foutk} has an evident analogue in the polynomial context. In particular, the duality adjunction restricts to 
\[
  D_{\gr} : \fpoutgrQ[d] \op \rightleftarrows \f^{\out}_d (\gr \op;\rat)  : D_{\gr \op} \op.
 \]
This restricts to an equivalence of categories between the full subcategories of functors taking finite-dimensional values. Hence we  restrict attention mostly to the covariant setting, namely $\foutQ[\gr]$ and $\fpoutgrQ[d]$, for $d \in \nat$.
\item 
We are most interested in polynomial outer functors, namely $\fpoutgrQ[<\infty] = \bigcup_{d\in \nat} \fpoutgrQ[d]$. There are inclusions:
\[
\fpiQ{\ab}
\subsetneq
\fpoutgrQ[<\infty]
\subsetneq 
\fpiQ{\gr}.
\]
We show that, cohomologically, $\fpoutgrQ[<\infty]$ is much closer to $\fpiQ{\gr}$ than to $\fpiQ{\ab}$. The process of fleshing out this assertion begins in the following section.
\end{enumerate}
\end{rem}

\section{Outer adjoints}
\label{sec:outer_adjoints}

The right adjoint to the inclusion $\foutk[\gr]\hookrightarrow \fcatk[\gr]$ turns out to be of significant interest; for example, it arises in Part \ref{part:HHH} considering higher Hochschild homology. This right adjoint is introduced in this section, as well as the corresponding left adjoint in the contravariant setting. There are also adjoints on the other side, but these do not arise directly in relation to higher Hochschild homology.

For most of this section, we work with $\kring$ an arbitrary unital, commutative ring.

\subsection{The functors $\omega$ and $\Omega$}
\label{subsect:omega_Omega}

We start by giving an alternative characterization of the category $\foutk[\gr]$ (respectively $\foutk[\gr\op]$)  using the natural transformation $\kappa$ (resp. $\kappa^*$) introduced below. This then allows us to give an explicit description of the adjoints that interest us.

\begin{nota}
\label{nota:kappa}
 For $G \in \ob \gr$, denote by $\kappa_G : G \rightarrow G \star \zed$ the 
group morphism $g \mapsto x g x^{-1}$, where $x$ denotes a fixed generator of 
$\zed$, so that $\kappa$ is a natural transformation with respect to $G$. 
\end{nota}

\begin{rem}
\label{rem:univ_conj}
For  $G \in \ob \gr$, 
 \begin{enumerate}
  \item 
  $\kappa_G$ is the composite of the canonical inclusion $G 
\subset G \star \zed$ with $\ad (x) : G \star \zed \rightarrow G \star \zed$. 
  \item 
  $\kappa_G$ is the {\em universal conjugation} in the following sense: for $h 
\in G$, $\ad (h)$ is given by the composite of $\kappa_G$ with the group 
morphism $G \star \zed \rightarrow G$ given by the identity on $G$ and sending 
$x$ to $h$. 
 \end{enumerate}
\end{rem}

Below we use the functors $\tau_\zed$ and $\taubar$ introduced in Notation \ref{nota:shift_functors}.

\begin{nota}
\ 
\begin{enumerate}
\item 
For $F \in \ob \f(\gr; \rat)$, let $\kappa : F \rightarrow \tau_\zed F$ denote the natural transformation induced by the morphisms $\kappa_G$. 
\item
For $F \in \ob \f(\gr\op; \rat)$, let $\kappa^* : \tau_\zed F \rightarrow F$ denote the natural transformation induced by the morphisms $\kappa_G$. 
\end{enumerate}
\end{nota}

\begin{lem}
\label{lem:fout_(co)equalizer}
 \ 
 \begin{enumerate}
  \item 
  The subcategory $\foutk[\gr]$ is the full subcategory of functors $F$ for 
which the natural transformations 
  \[
   \xymatrix{
   F 
   \ar@<.5ex>[r]^(.4){\kappa} 
   \ar@<-.5ex>[r]_(.4){\iota}
   &
   \tau_\zed F
   }
  \]
coincide, where $\iota$ is induced by the 
canonical inclusion $G \hookrightarrow G \star \zed$.
\item 
 The subcategory $\foutk[\gr\op]$ is the full subcategory of functors $F$ for 
which the following natural transformations coincide
  \[
   \xymatrix{
   \tau_\zed F 
   \ar@<.5ex>[r]^{\kappa^*} 
   \ar@<-.5ex>[r]_{\iota^*}
   &
   F
   }
  \] where $\iota^*$ is induced by the 
canonical inclusion $G \hookrightarrow G \star \zed$.
\end{enumerate}
\end{lem}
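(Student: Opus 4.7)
The plan is to exploit the universal conjugation property of $\kappa_G$ recorded in Remark \ref{rem:univ_conj}: it supplies the dictionary between the pointwise triviality of inner automorphisms on $F$ and the equality of the natural transformations $\kappa$ and $\iota$. Two identities do all the work:
\[
\kappa_G = \ad(x) \circ \iota_G \quad \text{and} \quad \ad(h) = \phi_h \circ \kappa_G,
\]
where $x$ is the chosen generator of the $\zed$-factor of $G \star \zed$ and, for $h \in G$, $\phi_h : G \star \zed \to G$ is the morphism that is the identity on $G$ and sends $x$ to $h$.

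For the covariant case, suppose first that $F \in \foutk[\gr]$. Then $F(\ad(x)) = \mathrm{id}$ on $F(G \star \zed)$, since $\ad(x)$ is an inner automorphism of $G \star \zed$ (which lies in $\gr$ whenever $G$ does); so the first identity gives $F(\kappa_G) = F(\iota_G)$ at each $G$, i.e., $\kappa = \iota$. Conversely, assume $\kappa = \iota$. For $h \in G$, the second identity yields
\[
F(\ad(h)) = F(\phi_h) \circ F(\kappa_G) = F(\phi_h) \circ F(\iota_G) = F(\phi_h \circ \iota_G) = \mathrm{id}_{F(G)},
\]
since $\phi_h \circ \iota_G = \mathrm{id}_G$. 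Applied at $G = \zed^{\star n}$, this shows every inner automorphism acts trivially on $F(\zed^{\star n})$, so $F \in \foutk[\gr]$.

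The contravariant case is formally dual, built on the same two identities. Now $F(\kappa_G)$ and $F(\iota_G)$ are morphisms $\tau_\zed F(G) \to F(G)$; applying the contravariant $F$ to the two identities gives $F(\kappa_G) = F(\iota_G) \circ F(\ad(x))$ and $F(\ad(h)) = F(\kappa_G) \circ F(\phi_h)$, and an identical two-step argument proves that $F \in \foutk[\gr\op]$ if and only if $\kappa^* = \iota^*$. The argument is essentially formal and there is no serious obstacle; the one subtle point worth underlining is that the inner automorphism $\ad(x)$ appearing in the decomposition of $\kappa_G$ lives on the target $G \star \zed$, which is precisely why $\tau_\zed F$ is the correct (co)domain to detect the outer functor condition.
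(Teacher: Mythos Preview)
Your proof is correct and follows essentially the same strategy as the paper: both directions hinge on the two identities from Remark~\ref{rem:univ_conj}. Your converse is in fact slightly more direct than the paper's, which routes through a map $\tau_\zed F(\zed^{\star n}) \to \prod_{i=1}^n F(\zed^{\star n})$ sending $x \mapsto x_i$ and identifies the equalizer of the composites with the adjoint invariants (thereby checking triviality of $\ad(x_i)$ for the generators $x_i$); you instead apply $\phi_h$ for an arbitrary $h \in G$ and conclude $F(\ad(h)) = \mathrm{id}$ in one line, which bypasses the reduction to generators entirely.
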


\begin{proof}
This is a  consequence of the fact that $\kappa_G$ is the universal 
conjugation. Consider the covariant case. 
Since 
$\kappa$ can be written as $\ad (x) \circ 
\iota$, it is clear that, 
 if $F \in \foutk [\gr]$, then the two morphisms coincide. Conversely, for 
given 
$\zed^{\star n} \in \ob \gr$, consider the 
 natural map 
 \[
  \tau_\zed F (\zed ^{\star n}) 
  \rightarrow 
  \prod_{i=1}^n F (\zed^{\star n}), 
 \]
where the $i$th map is induced by sending $x$ to $x_i$, the generator of the 
$i$th factor of $\zed^{\star n}$, and the identity on $\zed^{\star n}$.
 The equalizer of the two morphisms given by composition:
 \[
   \xymatrix{
   F (\zed^{\star n}) 
   \ar@<.5ex>[r]^{\kappa} 
   \ar@<-.5ex>[r]_{\iota}
   &
   \tau_\zed F (\zed^{\star n}) 
   \ar[r]
   & 
   \prod_{i=1}^n F (\zed^{\star n}) 
   }
 \]
is, by construction, $F(\zed^{\star n}) ^{\ad(\zed^{\star n})}$, the invariants 
for 
the conjugation action. Hence, if $\kappa$ and $\iota$ coincide on $F$, then $F 
\in \ob \foutk[\gr]$, as required. 
 
The proof of the contravariant case is categorically dual.
\end{proof}

\begin{rem}
\label{rem:differ_equalizers}
 In general, the map  $ \tau_\zed F (\zed ^{\star 
n}) 
  \rightarrow 
  \prod_{i=1}^n F (\zed^{\star n})$ in the above proof is not injective, so that the 
equalizer of the natural transformations $\kappa $ and $\iota$ evaluated on 
$\zed ^{\star n}$ is  smaller than the invariants
   $F(\zed^{\star n}) ^{\ad(\zed^{\star n})}$ in general.
\end{rem}

\begin{defn}
\label{def:omega}
\nomenclature{$\omega$}{right adjoint to $\foutk[\gr]\subset \fcatk[\gr]$\nomrefpage}
\nomenclature{$\Omega$}{left adjoint to $\foutk[\gr\op]\subset \fcatk[\gr\op]$\nomrefpage}
Define the functors 
 \begin{enumerate}
  \item 
$\omega : \fcatk[\gr] \rightarrow \foutk [\gr]$   by 
  $
   \omega F := \mathrm{equalizer } \big( \xymatrix{
   F 
   \ar@<.5ex>[r]^{\kappa} 
   \ar@<-.5ex>[r]_{\iota}
   &
   \tau_\zed F
   }
   \big);
  $
\item 
$\Omega : \fcatk[\gr\op] \rightarrow \foutk [\gr\op]$  by 
  $
   \Omega F := \mathrm{coequalizer } \big( \xymatrix{
  \tau_\zed F 
   \ar@<.5ex>[r]^{\kappa^*} 
   \ar@<-.5ex>[r]_{\iota^*}
   &
    F
   }
   \big).
  $
 \end{enumerate}
\end{defn}

\begin{rem}
\label{rem:check_omegas}
To see that Definition \ref{def:omega} gives functors as claimed, one has to 
check that  $\omega$, $\Omega$ take values in the 
respective categories $\foutk[\gr]$ and $\foutk[\gr\op]$. 

Consider the covariant case (the contravariant case is categorically dual). It is clear that 
$\omega 
F$  is a functor in $\fcatk[\gr]$; we show that it lies in 
the subcategory $\foutk[\gr]$ by using the criterion provided by Lemma \ref{lem:fout_(co)equalizer}. 

The canonical inclusion $\omega F \hookrightarrow F$ induces a commutative 
diagram 
 \[
  \xymatrix{
  \omega F 
   \ar@<.5ex>[r]^{\kappa} 
   \ar@<-.5ex>[r]_{\iota}
  \ar@{^(->}[d]
  &
   \tau_\zed \omega F
    \ar@{^(->}[d]
    \\
    F 
   \ar@<.5ex>[r]^{\kappa} 
   \ar@<-.5ex>[r]_{\iota}
   &
   \tau_\zed F,
  }
 \]
where the injectivity of the right hand vertical arrow follows from the 
exactness of $\tau_\zed$. It follows that $\kappa$ and $\iota$ coincide on 
$\omega F$.
\end{rem}

One can also  define  the functors $\omega$ and $\Omega$ by using 
 $\taubar$, as follows.

\begin{nota}
\label{nota:kappabar}
 \ 
 \begin{enumerate}
  \item 
  For $F \in \ob\fcatk[\gr]$, denote by $\overline{\kappa} : F \rightarrow 
\taubar 
F$  the composite
  $
   F \stackrel{\kappa}{\rightarrow} \tau_\zed F \twoheadrightarrow \taubar F$,   
where the surjection is the canonical projection. 
\item 
 For $F \in \ob\fcatk[\gr\op]$, denote by $\overline{\kappa^*} :  \taubar F 
\rightarrow F$  the composite
  $ 
 \taubar F \hookrightarrow \tau_\zed F   \stackrel{\kappa^*}{\rightarrow} F $,
 where the monomorphism is the canonical inclusion.  
 \end{enumerate}
\end{nota}

\begin{prop}
 \label{prop:omega_Omega_taubar}
 \ 
 \begin{enumerate}
  \item 
    For $F \in \ob\fcatk[\gr]$, there is a natural isomorphism 
    $
      \omega F \cong \ker \big( F \stackrel{\overline{\kappa}}{\rightarrow}  
\taubar F \big).
    $
\item 
 For $F \in \ob\fcatk[\gr\op]$, there is a natural isomorphism 
 $
    \Omega F \cong  \mathrm{coker } \big( \taubar F 
\stackrel{\overline{\kappa^*}} {\rightarrow} F\big).
 $
 \end{enumerate}
\end{prop}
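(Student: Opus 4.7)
The plan is straightforward: the natural splitting $\tau_\zed F \cong F \oplus \taubar F$ from Notation \ref{nota:shift_functors} allows us to decompose both $\iota$ and $\kappa$ explicitly, after which the equalizer reduces to the kernel of a single map.

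First I would treat the covariant case. Evaluating on $G \in \ob \gr$, the splitting $1_{\fcatk[\gr]} \hookrightarrow \tau_\zed \twoheadrightarrow 1_{\fcatk[\gr]}$ is induced, respectively, by the inclusion $\iota_G : G \hookrightarrow G \star \zed$ and the retraction $\pi_G : G \star \zed \twoheadrightarrow G$ that sends $x$ to $e$. Under the resulting identification $\tau_\zed F \cong F \oplus \taubar F$, the natural transformation $\iota$ is the canonical inclusion $F \hookrightarrow F \oplus \taubar F$, i.e. $f \mapsto (f, 0)$. The key observation is that $\kappa$ has the same first component: indeed, by Definition \ref{def:kappa}, $\kappa_G (g) = x g x^{-1}$, and applying $\pi_G$ gives $e \cdot g \cdot e^{-1} = g$, so $\pi_G \circ \kappa_G = \mathrm{id}_G$. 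Hence the first component of $\kappa$ is also the identity of $F$, and by definition of $\overline{\kappa}$ (see Notation \ref{nota:kappabar}), the second component is $\overline{\kappa} : F \rightarrow \taubar F$. Thus, under the splitting, $\kappa = (\mathrm{id}_F, \overline{\kappa})$ and $\iota = (\mathrm{id}_F, 0)$.

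It follows that $\kappa - \iota = (0, \overline{\kappa})$, factoring as $\overline{\kappa}$ followed by the inclusion $\taubar F \hookrightarrow F \oplus \taubar F$. Since this inclusion is a monomorphism, the equalizer of $\kappa$ and $\iota$, which is the kernel of $\kappa - \iota$, coincides with the kernel of $\overline{\kappa}$. This gives the required natural isomorphism $\omega F \cong \ker \{ F \xrightarrow{\overline{\kappa}} \taubar F \}$, and naturality in $F$ is automatic since the splitting and the transformations $\kappa, \iota$ are all natural.

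For the contravariant case, the argument is categorically dual. The splitting $\tau_\zed F \cong F \oplus \taubar F$ in $\fcatk[\gr\op]$ comes from the same pair $(\iota_G, \pi_G)$, read contravariantly: now $\iota^*$ is the projection $F \oplus \taubar F \twoheadrightarrow F$ and $\kappa^* = (\mathrm{id}_F, \overline{\kappa^*})$ by the same computation $\pi_G \circ \kappa_G = \mathrm{id}_G$ applied via functoriality. Therefore $\kappa^* - \iota^* = (0, \overline{\kappa^*})$ factors through the epimorphism $F \oplus \taubar F \twoheadrightarrow \taubar F$, and the coequalizer of $\kappa^*$ and $\iota^*$ — i.e.\ the cokernel of $\kappa^* - \iota^*$ — identifies with $\mathrm{coker} \{ \taubar F \xrightarrow{\overline{\kappa^*}} F \}$, as claimed. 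No step presents a genuine obstacle; the only point requiring care is the identification of the first component of $\kappa$ with the identity, which is immediate from $\pi_G \circ \kappa_G = \mathrm{id}_G$.
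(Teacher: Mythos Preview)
Your proof is correct and follows essentially the same approach as the paper: both reduce the equalizer of $\kappa$ and $\iota$ to the kernel of $\overline{\kappa}$ by observing that $\kappa - \iota$ factors through the inclusion $\taubar F \hookrightarrow \tau_\zed F$. Your version is simply more explicit, spelling out the components under the splitting and verifying $\pi_G \circ \kappa_G = \mathrm{id}_G$, whereas the paper states the factorization in a single sentence.
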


\begin{proof}
We prove the first statement; the proof of the second is similar. By definition, $\omega F$ is the kernel of $\kappa - \iota$. This morphism factors canonically across $\overline{\kappa}$ via the inclusion $\taubar F \hookrightarrow \tau_\zed F$ that is given as the kernel of the canonical projection $\tau_\zed F \twoheadrightarrow \tau_0 F = F$. 
\end{proof}

The significance of the functors $\omega$ and $\Omega$ is established by:

\begin{prop}
\label{prop:omega_Omega_adjunction}
 \ 
 \begin{enumerate}
  \item 
  The functor $\omega : \fcatk[\gr] \rightarrow \foutk [\gr]$ is right adjoint 
to the inclusion $\foutk[\gr] \hookrightarrow \fcatk[\gr]$.  In particular,   the functor $\omega$ is left exact.
\item 
  The functor $\Omega : \fcatk[\gr\op] \rightarrow \foutk [\gr\op]$ is left 
adjoint to the inclusion $\foutk[\gr\op] \hookrightarrow \fcatk[\gr\op]$.  In particular,  the functor $\Omega$ is right exact.
\end{enumerate}
\end{prop}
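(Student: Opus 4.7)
The plan is to establish the adjunctions directly from the definitions of $\omega$ and $\Omega$ as (co)equalizers, since the left and right exactness assertions follow formally (right adjoints are left exact, left adjoints are right exact).

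I will treat the covariant case; the contravariant one is strictly dual. Since $\foutk[\gr] \subset \fcatk[\gr]$ is full, I need to show that for any $G \in \ob \foutk[\gr]$ and any $F \in \ob \fcatk[\gr]$, the canonical inclusion $\omega F \hookrightarrow F$ (obtained as the equalizer in Definition \ref{def:omega}) induces a bijection
\[
\hom_{\fcatk[\gr]}(G, \omega F) \stackrel{\cong}{\longrightarrow} \hom_{\fcatk[\gr]}(G, F).
\]
Injectivity is automatic from the fact that $\omega F \hookrightarrow F$ is a monomorphism. For surjectivity, let $\phi : G \to F$ be any morphism in $\fcatk[\gr]$. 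The natural transformations $\kappa, \iota : 1_{\fcatk[\gr]} \to \tau_\zed$ fit into naturality squares
\[
\xymatrix{
G \ar[r]^{\kappa_G} \ar[d]_\phi & \tau_\zed G \ar[d]^{\tau_\zed \phi}
&&
G \ar[r]^{\iota_G} \ar[d]_\phi & \tau_\zed G \ar[d]^{\tau_\zed \phi}
\\
F \ar[r]_{\kappa_F} & \tau_\zed F
&&
F \ar[r]_{\iota_F} & \tau_\zed F.
}
\]
Because $G \in \ob \foutk[\gr]$, Lemma \ref{lem:fout_(co)equalizer} gives $\kappa_G = \iota_G$ on $G$, and chasing through the two squares yields
\[
\kappa_F \circ \phi = (\tau_\zed \phi) \circ \kappa_G = (\tau_\zed \phi) \circ \iota_G = \iota_F \circ \phi.
\]
Hence $\phi$ equalizes $\kappa_F$ and $\iota_F$, and so factors uniquely through the equalizer $\omega F \hookrightarrow F$. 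This establishes the required bijection, proving that $\omega$ is right adjoint to the inclusion. Left exactness of $\omega$ is then automatic.

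For the contravariant statement, one dualizes the argument: a morphism $\phi : F \to G$ in $\fcatk[\gr\op]$ with $G \in \ob \foutk[\gr\op]$ coequalizes $\kappa^*_F$ and $\iota^*_F$ by the analogous naturality diagrams, so factors uniquely through the coequalizer $F \twoheadrightarrow \Omega F$. This gives the adjunction $\Omega \dashv (\foutk[\gr\op] \hookrightarrow \fcatk[\gr\op])$, and right exactness of $\Omega$ follows.

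No serious obstacle is anticipated: the entire argument is a direct exploitation of the universal properties of (co)equalizers together with the characterization of outer functors from Lemma \ref{lem:fout_(co)equalizer}. The only point requiring mild care is the naturality of $\kappa$ with respect to morphisms of $\fcatk[\gr]$; this is a consequence of the naturality of $\kappa_G : G \to G \star \zed$ in the group variable $G$, as recorded in Definition \ref{def:kappa}.
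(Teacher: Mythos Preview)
Your proof is correct and follows exactly the same approach as the paper, which simply states that the result follows from the definitions of $\omega$ and $\Omega$ via their respective universal properties as (co)equalizers. You have written out in detail the argument the paper leaves implicit.
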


\begin{proof}
This follows, using the respective universal 
properties, from the  definitions of $\omega$ and 
$\Omega$.
\end{proof}

Since  $\foutk[\gr]$ and $\foutk[\gr\op]$ are closed under subquotients, one has the formal consequence:

\begin{cor}
 \label{cor:omega_pb}
 \ 
 \begin{enumerate}
  \item 
If 
   $F \hookrightarrow G$ is a monomorphism of $\fcatk[\gr]$, then the following commutative diagram is cartesian:
  \[
   \xymatrix{
   \omega F 
   \ar[r]
      \ar[d]
      &
      \omega G
      \ar@{^(->}[d]
      \\
      F    \ar@{^(->}[r]
      &
      G.   
   }
  \]
  \item 
If  $F \twoheadrightarrow G$ is a surjection of $\fcatk[\gr\op]$, then the following commutative diagram is cocartesian:
  \[
   \xymatrix{
   F 
   \ar@{->>}[r]
   \ar@{->>}[d]
   &
   G 
   \ar[d]
   \\
   \Omega F
   \ar[r]
   &
   \Omega G.
   }
  \]
 \end{enumerate}
\end{cor}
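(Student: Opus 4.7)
The plan is to exploit the explicit descriptions of $\omega$ and $\Omega$ provided by Proposition \ref{prop:omega_Omega_taubar} together with the exactness of the reduced shift functor $\taubar$ established in Proposition \ref{prop:properties_tau}. Both assertions then reduce to elementary diagram chases in an abelian category.

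For the first assertion, given a monomorphism $F \hookrightarrow G$ in $\fcatk[\gr]$, consider the commutative square
\[
\xymatrix{
F \ar[r]^{\overline{\kappa}_F} \ar@{^(->}[d] & \taubar F \ar@{^(->}[d] \\
G \ar[r]_{\overline{\kappa}_G} & \taubar G,
}
\]
in which the right vertical arrow is a monomorphism by exactness of $\taubar$. By Proposition \ref{prop:omega_Omega_taubar}, $\omega F = \ker \overline{\kappa}_F$ and $\omega G = \ker \overline{\kappa}_G$. The pullback $F \times_G \omega G$ inside $G$ is the intersection $F \cap \omega G$, i.e.\ the kernel of the composite $F \hookrightarrow G \to \taubar G$, which coincides with the kernel of $F \to \taubar F \hookrightarrow \taubar G$, and since $\taubar F \hookrightarrow \taubar G$ is monic, this kernel is $\ker \overline{\kappa}_F = \omega F$. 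Hence the square is cartesian.

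For the second assertion, given an epimorphism $F \twoheadrightarrow G$ in $\fcatk[\gr\op]$ with kernel $K$, the exactness of $\taubar$ yields a commutative diagram
\[
\xymatrix{
\taubar F \ar[r]^{\overline{\kappa^*_F}} \ar@{->>}[d] & F \ar@{->>}[d] \\
\taubar G \ar[r]_{\overline{\kappa^*_G}} & G
}
\]
with surjective left vertical arrow. By Proposition \ref{prop:omega_Omega_taubar}, $\Omega F = \mathrm{coker}\, \overline{\kappa^*_F}$ and $\Omega G = \mathrm{coker}\, \overline{\kappa^*_G}$; denote by $q : F \twoheadrightarrow \Omega F$ the canonical projection. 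The pushout of $G \twoheadleftarrow F \to \Omega F$ is $\Omega F / q(K)$. The induced map $\Omega F \to \Omega G$ is surjective, and a straightforward diagram chase (lifting an element of $\taubar G$ through the surjection $\taubar F \twoheadrightarrow \taubar G$) identifies its kernel as precisely $q(K)$, giving the required isomorphism $\Omega G \cong \Omega F / q(K)$ and hence the cocartesian property.

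There is no real obstacle here; the only point requiring care is the identification of the kernel of $\Omega F \to \Omega G$ in the second part, for which the surjectivity of $\taubar F \twoheadrightarrow \taubar G$ (itself a consequence of the exactness of $\taubar$) is essential. Both parts are indeed formal consequences of $\foutk[\gr]$ and $\foutk[\gr\op]$ being Serre, once one unpacks the equalizer/coequalizer descriptions.
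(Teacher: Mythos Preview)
Your proof is correct, but it takes a more computational route than the paper. The paper treats the corollary as an immediate formal consequence of the fact that $\foutk[\gr]$ and $\foutk[\gr\op]$ are Serre subcategories (Proposition~\ref{prop:foutk}) together with the adjunction of Proposition~\ref{prop:omega_Omega_adjunction}. Concretely, for part~(1): the pullback $F\times_G\omega G$ is a subobject of $\omega G$, hence lies in $\foutk[\gr]$ by the Serre property; since $\omega$ is right adjoint to the inclusion, $\omega F$ is the largest subfunctor of $F$ lying in $\foutk[\gr]$, and this forces $F\times_G\omega G=\omega F$. Part~(2) is the dual statement. No explicit reference to $\taubar$ or $\overline{\kappa}$ is needed.

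Your approach instead unpacks the kernel/cokernel descriptions from Proposition~\ref{prop:omega_Omega_taubar} and uses exactness of $\taubar$ for the diagram chase. This is perfectly valid and arguably more transparent for someone who wants to see exactly why the square is cartesian, but it does more work than necessary: the Serre property already encodes what you extract from the exactness of $\taubar$. Your closing remark that both parts are ``formal consequences of $\foutk[\gr]$ and $\foutk[\gr\op]$ being Serre, once one unpacks the equalizer/coequalizer descriptions'' slightly undersells the point---the Serre argument does not require unpacking those descriptions at all.
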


In the next Proposition, we show that $\omega$ always acts non-trivially on polynomial functors (working here over $\rat$). This should be contrasted with the behaviour exhibited in Example \ref{exam:proj_abel_omega} below.

\begin{prop}
\label{prop:non-triviality_omega}
For $0\neq F \in \ob \fpiQ{\gr}$ of polynomial degree exactly $d$, there is a canonical inclusion:
\[
\qhat{d} F \subset \omega F, 
\]
in particular $\omega F \neq 0$.
\end{prop}

\begin{proof}
The hypothesis on $F$ implies, using the polynomial filtration of $F$ (see Section \ref{subsect:poly_filt}), that $\qhat{d}F \neq 0$ and is a subfunctor of $F$. Since $\qhat{d}F$ lies in the image of $\f(\ab;\rat) \hookrightarrow \foutQ[\gr]$, the result follows. 
\end{proof}

\begin{exam}
\label{exam:proj_abel_omega}
Consider the projective functor $P^\gr_\zed \in \ob \fcatQ[\gr]$;  this splits as $P^\gr_\zed \cong \rat \oplus \overline{P^\gr_\zed}$. We claim that $\omega \overline{P^\gr_\zed}=0$,  equivalently  that $
 \omega P^\gr_\zed  = \rat$ (using $\omega \rat = \rat$). 

 That  $
 \omega P^\gr_\zed  = \rat$ is seen as follows. By definition, 
 \[
  \omega P^\gr_\zed  (G) = \mathrm{equalizer } \big( \xymatrix{
   \rat [G] 
   \ar@<.5ex>[r]^(.4){\kappa} 
   \ar@<-.5ex>[r]_(.4){\iota}
   &
   \rat[G \star \zed]
\big)  
 }.
 \]
One has the obvious splitting  $\rat[G \star \zed ] \cong \rat[G] \oplus \rat [(G\star \zed) 
\backslash G]$ of $\rat$-vector spaces, where the inclusion of $\rat[G]$ is given by $\iota$. For $g \not = e \in G$, $\kappa [g]$ lies in $\rat [(G\star \zed) 
\backslash G]$, from which the claimed identification follows.

This is striking, in view of Proposition \ref{prop:non-triviality_omega}. Namely, for any $0<d \in \nat$, one has the canonical  surjection:
\[
\overline{P^\gr_\zed}
\twoheadrightarrow 
q_d ^\gr \overline{P^\gr_\zed}
\] 
and this is non-trivial. By construction, $q_d ^\gr \overline{P^\gr_\zed}$ is polynomial and hence, by Proposition \ref{prop:non-triviality_omega}, $\omega q_d ^\gr \overline{P^\gr_\zed} \neq 0$. However, by the above, on applying $\omega$ to the canonical surjection one has 
\[
0 = \omega \overline{P^\gr_\zed}
\rightarrow 
\omega q_d ^\gr \overline{P^\gr_\zed} \neq 0,
\]
thus exhibiting the dramatic failure of $\omega$ to be exact.
\end{exam}

\begin{rem}
The behaviour exhibited in Example \ref{exam:proj_abel_omega} should be compared with the formation of the functor $P^\gr_\zed / \ad$ that is considered in Section \ref{subsect:inv_coinv_ad}, where $(-)/\ad$ is the {\em left} adjoint to the inclusion of $\foutQ[\gr]$.
\end{rem}

\subsection{Monoidal properties of $\omega$ and $\Omega$}

The behaviour of $\omega$ and $\Omega$ with respect to the  tensor products of $\f(\gr; \kring) $ and $\f(\gr\op; \kring)$ respectively is of interest. 
The proofs (by applying Proposition \ref{prop:foutk}) of the following results are left to the reader.

\begin{prop}
\label{prop:omegas_monoidal}
\ 
\begin{enumerate}
\item 
The functor $\omega$ is lax symmetric monoidal: for $F, G \in \ob \fcatk[\gr]$, 
there is a natural morphism:
\[
(\omega F) \otimes (\omega G) 
\rightarrow 
\omega (F \otimes G) 
\]
in $\foutk[\gr]$; this is a monomorphism if $\omega F$ and $G$   both take values in flat $\kring$-modules (respectively for $\omega G$ and $F$).

Similarly, there is a natural morphism
\[
(\omega F) \otimes (\omega G) 
\rightarrow 
\omega (F \otimes (\omega G)) 
\]
that is an isomorphism
if $\omega G$ takes values in flat $\kring$-modules.
\item 
The functor $\Omega$ is colax symmetric monoidal; for $F, G \in \ob \fcatk[\gr 
\op]$, there is a natural epimorphism:
\[
\Omega (F \otimes G) \twoheadrightarrow 
(\Omega F) \otimes (\Omega G).
\]
Similarly, there is a natural isomorphism
\[
\Omega (F \otimes (\Omega G)) 
\stackrel{\cong}{\rightarrow} 
(\Omega F) \otimes (\Omega G).
\]
\end{enumerate}
\end{prop}

This has the following immediate consequence, in which, for the unit, one exploits the fact that the constant functor $\kring$ is an outer functor (so that $\omega \kring = \kring$): 

\begin{cor}
 \label{cor:algebra_omega}
 Let $A \in \ob \fcatk[\gr]$ be a functor taking values (naturally) in unital, 
 $\kring$-algebras, with product $\mu_A : A\otimes A \rightarrow A$.  Then $\omega A \in \ob \foutk[\gr]$ is naturally a unital sub $\kring$-algebra 
of $A$, with product the composite:
 \[
  (\omega A ) \otimes (\omega A) \rightarrow \omega (A \otimes A) 
\stackrel{\omega(\mu _A)} {\rightarrow} \omega A.
 \]

If $M \in \ob \fcatk[\gr]$ is an $A$-module, then $\omega M$ is naturally an $\omega A$ module.  
\end{cor}

\subsection{Generalizing the target category}
For an arbitrary abelian category $\cala$ we have defined $\fout{\gr}{\cala} \subset \fcat{\gr}{\cala}$. This  inclusion admits a right adjoint:
\[
\omega_\cala : \fcat{\gr}{\cala}
\rightarrow 
\fout{\gr}{\cala} .
\]

Generalizing Proposition \ref{prop:omega_Omega_taubar}, one has the following result, in which $\taubar$ denotes the reduced shift functor, defined for $\f( \gr; \cala)$ as in Notation \ref{nota:shift_functors}.

\begin{prop}
\label{prop:omega_cala}
The functor $\omega_\cala : \fcat{\gr}{\cala}
\rightarrow 
\fout{\gr}{\cala} $ is given on $F \in \ob \fcat{\gr}{\cala}$ by 
   $
      \omega_\cala F :=  \ker \big( F \stackrel{\overline{\kappa}}{\rightarrow}  
\taubar F \big).
    $
\end{prop}

\begin{exam}
\label{exam:omega_calc_fcatkC}
Let $\calc$ be a small category and $\cala$ be the category $\fcatk[\calc]$. Then $\fcat{\gr}{\cala}$ is equivalent to the category of bifunctors $\fcatk[\gr \times \calc]$. The functor 
\[
\omega_\cala : \fcatk[\gr \times \calc] \rightarrow \fout{\gr}{\fcatk[\calc]} \subset \fcatk[\gr \times \calc]
\]
is compatible with $\omega$ on $\fcatk[\gr]$ via evaluation on objects $C$ of $\calc$. Namely, for a bifunctor $F$ and $C \in \ob \calc$, there is an isomorphism in $\fcatk[\gr]$:
\[
\eval_C (\omega_\cala F) \cong \omega (\eval_C F)
\]
and this is natural in $C$. Hence, in this case, $\omega_\cala$ can be understood in terms of $\omega$.
\end{exam}

An exact functor $\Theta : \cala \rightarrow \calb$ between abelian categories induces the exact functor 
$
\Theta_* : \fcat{\gr}{\cala} \rightarrow \fcat{\gr}{\calb} 
$
by post-composition. This clearly restricts to an exact functor 
$$\Theta_* : \fout{\gr}{\cala} \rightarrow \fout{\gr}{\calb}.$$

The following straightforward result gives the compatibility between the respective functors $\omega$:

\begin{prop}
\label{prop:omega_cal_compat}
Let $\Theta : \cala \rightarrow \calb$ be an exact functor between abelian categories. Then there is 
a natural isomorphism
\[
\Theta_* \circ \omega_\cala \cong \omega _\calb \circ \Theta_*
\]
between functors $\fcat{\gr}{\cala} \rightarrow \fout{\gr}{\calb}$.
\end{prop}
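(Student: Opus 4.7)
The plan is to use the explicit description of $\omega_\cala$ as a kernel provided by Proposition \ref{prop:omega_cala}, combined with the fact that $\Theta_*$ (being exact and defined by post-composition) commutes with everything in sight: the shift functor $\tau_\zed$, its reduced version $\taubar$, the natural transformation $\overline{\kappa}$, and kernels.

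First I would observe that post-composition trivially commutes with the shift functor: for $F \in \ob \fcat{\gr}{\cala}$ and $G \in \ob \gr$,
\[
(\tau_\zed \Theta_* F)(G) = \Theta(F(G \star \zed)) = \Theta_*(\tau_\zed F)(G),
\]
and this natural isomorphism is compatible with the canonical split monomorphism $1 \hookrightarrow \tau_\zed$ (since it comes from the inclusion $G \hookrightarrow G \star \zed$, on which $\Theta_*$ acts tautologically). Because $\Theta$ is additive, it preserves the associated direct sum decomposition, giving a natural isomorphism $\Theta_* \circ \taubar \cong \taubar \circ \Theta_*$. Likewise, the natural transformation $\overline{\kappa}$ is built solely from group-theoretic morphisms (the conjugation $\kappa_G : G \to G \star \zed$ and the split projection), so applying $\Theta_*$ to $\overline{\kappa}_F$ yields (under the above identifications) the morphism $\overline{\kappa}_{\Theta_* F}$.

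Next, since $\Theta$ is exact, so is $\Theta_*$; in particular, $\Theta_*$ preserves kernels. Combining these observations with Proposition \ref{prop:omega_cala}, one has natural isomorphisms
\[
\Theta_* (\omega_\cala F) = \Theta_* \bigl( \ker\{F \xrightarrow{\overline{\kappa}} \taubar F\} \bigr) \cong \ker\{\Theta_* F \xrightarrow{\Theta_* \overline{\kappa}} \Theta_* \taubar F\} \cong \ker\{\Theta_* F \xrightarrow{\overline{\kappa}} \taubar \Theta_* F\} = \omega_\calb (\Theta_* F).
\]
The composite isomorphism is natural in $F$, yielding the desired natural equivalence of functors $\fcat{\gr}{\cala} \to \fout{\gr}{\calb}$.

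There is essentially no obstacle here; the only minor point to verify with care is that the identification $\Theta_* \circ \taubar \cong \taubar \circ \Theta_*$ intertwines $\Theta_*(\overline{\kappa}_F)$ with $\overline{\kappa}_{\Theta_* F}$, which is a direct unwinding of definitions using that $\overline{\kappa}$ is induced functorially from morphisms in $\gr$.
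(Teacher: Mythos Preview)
Your proof is correct and is exactly the argument the paper has in mind: the paper does not give an explicit proof of Proposition~\ref{prop:omega_cal_compat}, merely calling it ``straightforward'' after introducing the kernel description in Proposition~\ref{prop:omega_cala}. Your write-up spells out precisely the intended reasoning.
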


\subsection{Recollement for outer polynomial functors}
\label{subsect:recoll_out} 
In this section, we take $\kring=\rat$ so that the results of Section \ref{sec:recoll} can be applied. Theorem \ref{thm:recollement_poly_d} has the following counterpart for  $\foutQ[\gr]$:

\begin{cor}
\label{cor:recollement_pout}
For $d \in \nat$, the recollement diagram of Theorem \ref{thm:recollement_poly_d} induces  
\[
 \xymatrix{
\fpoutgrQ[d-1] 
\ar@{^(->}[r]
&
\fpoutgrQ[d]
\ar@<-.5ex>@/_1pc/[l]|{q_{d-1}^\gr}
\ar@<.5ex>@/^1pc/[l]|{p_{d-1}^\gr}
\ar[r]|(.45){\cre_d}
&
\rat [\sym_d]\dash \modules
\ar@<-.5ex>@/_1pc/[l]|{\alpha_d}
\ar@<.5ex>@/^1pc/[l]|{\omega \beta_d}.
}
\]
\end{cor}

\begin{proof}
By the explicit description given in Theorem \ref{thm:recollement_poly_d}, it is clear that the functor $\alpha_d$ takes values in $\f_d(\ab; \rat)$ and hence in $\fpoutgrQ[d]$, thus gives the left adjoint to the cross-effect functor $\cre_d$. For the right adjoint, it is necessary to compose $\beta_d$ with the functor $\omega$. Since $\foutQ[\gr]$ is closed under subquotients, by Proposition \ref{prop:foutk}, the functors $p_{d-1}^\gr$ and $q_{d-1}^\gr$ restrict as indicated. 
\end{proof}

The following basic example serves both to show that the category $\foutQ[\gr]$ is not thick and that 
 the composite functor $\omega \beta_n$ is, in general, not equal to $\beta_n$.

\begin{exam}
\label{exam:Passi_not_out}
Consider the Passi functor $q_2^\gr P_\zed^\gr$. As in the proof of Lemma \ref{lem:assoc_grad_P_zed} (see also \cite{Passi}), $q_2^\gr P_\zed^\gr$ is the quotient of the group ring functor $G \mapsto P_\zed^\gr (G) = \rat [G]$ by the third power $\mathcal{I}^3 (G)$ of the augmentation ideal; equivalently  this is 
the quotient of $G \mapsto \rat [G]$ by the relation:
\begin{eqnarray*}
&& [ghk] - [e] \equiv \\
&&([g]-[e]) ([h]-[e]) 
+ ([g]-[e]) ([k]-[e]) 
+ ([h]-[e]) ([k]-[e])
\\
&&
+ \big( 
([g]-[e]) + ([h]-[e]) + ([k]-[e])
\big).  
\end{eqnarray*}
Now, direct calculation shows that $([x]-[e])([x^{-1}] -[e]) = - \big ( ([x]-[e]) + 
([x^{-1}] -[e]) \big)$. Hence, taking $h=x$ and $k = x^{-1}$, the above relation reduces to:
\[
([g]-[e]) ([x]- [e]) + ([g]-[e]) ([x^{-1}]- [e]) \equiv 0.
\] 

Combining this with the relation for the triple  $(g=x,h,k= x^{-1})$, one gets:
\[
 ([xhx^{-1}] - [e]) - ([h]-[e] ) 
\equiv 
([x]-[e]) ([h] - [e] ) - ([h]-[e]) ([x]-[e]).  
\]

The left hand side of this expression essentially corresponds to $\overline{\kappa}(h)$, considering 
$x$ as the generator of the copy of $\rat$ corresponding to $\taubar$. In particular, this is non-zero. 
It follows that $q_2^\gr P_\zed^\gr$ does not  lie in $\foutQ[\gr]$.

However, the polynomial filtration shows that 
$q_2^\gr P_\zed^\gr$ is an extension of functors from $\fcatQ[\ab]$:
\[
 0
\rightarrow 
\aQ ^{\otimes 2} 
\rightarrow 
q_2^\gr P_\zed^\gr 
\rightarrow 
\rat \oplus \aQ
\rightarrow 
0. 
\]
The above calculation shows  that $\omega q_2^\gr P_\zed^\gr \cong \rat \oplus \aQ^{\otimes 2}$, showing that $\omega q_2^\gr \neq q_2^\gr$ and that $\foutQ[\gr]$ is not a thick subcategory of $\f(\gr; \rat)$.

The above can be refined using that  $\aQ^{\otimes 2} $ splits as a direct sum of simple functors $\Lambda^2 \circ \aQ \oplus S^2 \circ \aQ$; using the results of Section \ref{subsect:recoll_poly}, this splitting is obtained by applying the functor $\alpha_2$ to the splitting of the regular representation $\rat [\sym_2] \cong \mathrm{sgn}_2 \oplus \mathrm{triv}_2$ as the direct sum of the signature and trivial representations.

It follows from the results of \cite{V_ext}) that $q_2^\gr P_\zed ^\gr \cong \rat \oplus S^2 \circ \aQ \oplus \mathcal{E}$, where 
\[
 0
\rightarrow 
\Lambda^2 \circ \aQ
\rightarrow 
\mathcal{E}
\rightarrow 
 \aQ
\rightarrow 
0
\] 
is the unique non-trivial extension (up to isomorphism). As above, $\mathcal{E}$ does not lie in $\foutQ[\gr]$, whereas $S^2 \circ \aQ$ lies in the image of  $\f(\ab; \rat)$ in $  \f(\gr; \rat)$ and hence in $\foutQ[\gr]$.

Now, $\mathcal{E}$ can be shown to be isomorphic to $\beta_2 \mathrm{sgn}_2$, hence we see that $\omega \beta_2 \mathrm{sgn}_2 \subsetneq \beta_2 \mathrm{sgn}_2$. More generally, for $n>1$, the results of this paper show that 
$
\omega \beta_n \subsetneq \beta_n$.
\end{exam}

The recollement framework for outer polynomial functors leads to a family of injective cogenerators:

\begin{prop}
\label{prop:inj_cogenerators_Out_poly}
\ 
\begin{enumerate}
\item 
For $M$ a $\rat[\sym_d]$-module, $\omega \beta_d M$ is  injective in $\fpoutgrQ[<\infty]$.
\item 
$\big\{ \omega \beta_d \rat[\sym_d] \ | \ d \in \nat\}$ is a family of injective cogenerators of $\fpoutgrQ[<\infty]$.
\end{enumerate}
\end{prop}

\begin{proof}
By Theorem \ref{thm:injectivity_beta}, $\beta_d M$ is injective in $\fpiQ{\gr}$. Since $\omega$ is right adjoint to an exact functor, namely the inclusion $\fpoutgrQ[<\infty] \subset  \fpiQ{\gr}$, it follows that $\omega \beta_d M$ is  injective in $\fpoutgrQ[<\infty]$. The second statement follows directly.
\end{proof}

Hence we have the following consequence of Theorem \ref{thm:beta_description}, which establishes the significance of the functor $\omega \Psi \pcoalg$ when working with polynomial outer functors.

\begin{cor}
\label{cor:omega_boldbeta_inj_cogen}
 There is a natural isomorphism in $\f(\gr \times \fb ; \rat)$:
 $$\omega \boldbeta \cong \omega \Psi \pcoalg.$$
 
In particular, the functor  $\omega \Psi \pcoalg$  encodes a family of injective cogenerators of 
$\fpoutgrQ[<\infty]$.
\end{cor}

Moreover, we have the following counterpart of Corollary \ref{cor:inj_envelope}, which gives an intrinsic characterization of $\omega \beta_d S_\lambda$:

\begin{cor}
\label{cor:inj_envelope_omega}
For $\lambda \vdash d$, 
$\omega \beta_d S_\lambda$ is the injective envelope of the simple $\alpha_d S_\lambda$ in $\fpoutgrQ[<\infty]$.
\end{cor}

\subsection{Invariants and coinvariants for the adjoint action}
\label{subsect:inv_coinv_ad}

The inclusions of the full subcategories of outer functors $\foutk[\gr] \subset \fcatk[\gr]$ and $\foutk[\gr\op]\subset \fcatk[\gr\op]$ also admit a left adjoint and right adjoint respectively:

\begin{prop}
\label{prop:easy_out_adjoints}
\ 
\begin{enumerate}
\item 
The inclusion $\foutk[\gr] \hookrightarrow \fcatk[\gr]$ has left adjoint 
$(-)/\ad
: 
\fcatk[\gr] \rightarrow \foutk[\gr]$ defined by passage to coinvariants under 
the adjoint action; i.e.,  for $F \in \ob \fcatk[\gr]$, 
\[
 (F/\ad) (G) := F (G) /\ad (G).
\]
 \item 
 The inclusion $\foutk[\gr \op] \hookrightarrow \fcatk[\gr \op]$ has right 
adjoint 
 $$(-)^\ad : \fcatk[\gr \op] \rightarrow \foutk [\gr]$$
  given by passage to 
invariants under the adjoint action; i.e., for $F \in \ob \fcatk[\gr\op]$, 
 \[
 F^ \ad (G) := F (G) ^{\ad (G)}. 
\]
\end{enumerate}
\end{prop}

\begin{proof}
 It clearly suffices to prove that passage to invariants (respectively 
coinvariants) is functorial. This follows from the naturality of the adjoint 
action; the proof is given for the contravariant case. 
 
 Consider a group morphism $\phi : G' \rightarrow G $, and the induced morphism 
$F (\phi) : F (G) \rightarrow F (G')$ for $F \in \ob \fcatk[\gr\op]$. By 
definition, $\rho F (G) := F (G) ^{\ad (G)}$; it suffices to show that this maps under $F(\phi)$ to the $\ad 
(G')$-invariants of $F(G')$. Hence, choose $g' \in G'$ and set $g:= \phi (g') 
\in G$. The commutative diagram 
 \[
  \xymatrix{
  G' \ar[r]^{\ad (g')} 
  \ar[d]_\phi 
  &
  G' \ar[d]^\phi 
  \\
  G 
  \ar[r]_{\ad (g)}
  &
  G
  }
 \]
induces the commutative diagram:
\[
  \xymatrix{
 F( G) \ar[r]^{\ad (g)} 
  \ar[d]_{F(\phi)} 
  &
 F( G) \ar[d]^{F(\phi)} 
  \\
  F(G') 
  \ar[r]_{\ad (g')}
  &
  F(G'),
  }
 \]
from which it follows that $\rho F(G)$ maps to the invariants under the 
action of $\ad (g')$ for all such $g ' \in G'$, whence the result.
\end{proof}

The following is an immediate consequence of Propositions \ref{prop:omega_Omega_adjunction} and  
\ref{prop:easy_out_adjoints}:

\begin{cor}
\label{cor:foutk_easy_proj_inj}
The categories   $\foutk[\gr]$ and  $\foutk[\gr\op]$ both have 
 enough injectives and enough projectives.
\end{cor}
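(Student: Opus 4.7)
The plan is to deduce both statements from a standard categorical principle: if $i : \calb \hookrightarrow \cala$ is a fully faithful exact embedding of abelian categories admitting a left adjoint $L : \cala \to \calb$ (so that the counit $L \circ i \stackrel{\cong}{\to} \mathrm{id}_\calb$ is a natural isomorphism, since $i$ is fully faithful), then $L$ sends projectives of $\cala$ to projectives of $\calb$, and if $\cala$ has enough projectives then so does $\calb$. Dually, a right adjoint $R$ to $i$ sends injectives to injectives and, if $\cala$ has enough injectives, then so does $\calb$.

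First I would verify this principle. For $P \in \cala$ projective, the adjunction gives $\hom_\calb(L(P), -) \cong \hom_\cala(P, i(-))$, and this is exact since $i$ is exact (the inclusions $\foutk[\gr] \subset \fcatk[\gr]$ and $\foutk[\gr\op] \subset \fcatk[\gr\op]$ are exact by the Serre subcategory property recorded in Proposition \ref{prop:foutk}); hence $L(P)$ is projective in $\calb$. To produce a projective surjection onto $B \in \calb$, choose an epimorphism $P \twoheadrightarrow i(B)$ with $P$ projective in $\cala$; applying the right exact functor $L$ and identifying $L(i(B)) \cong B$ via the counit yields an epimorphism $L(P) \twoheadrightarrow B$. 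The dual argument applies to a right adjoint $R$: such a functor is left exact and preserves injectives, and produces an injection $B \cong R(i(B)) \hookrightarrow R(I)$ from any embedding $i(B) \hookrightarrow I$ into an injective of $\cala$.

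I would then apply this machinery to the inclusion $\foutk[\gr] \hookrightarrow \fcatk[\gr]$: by Propositions \ref{prop:omega_Omega_adjunction} and \ref{prop:easy_out_adjoints}, the left adjoint $(-)/\ad$ and the right adjoint $\omega$ are both available, and $\fcatk[\gr]$ has enough projectives (the standard projectives $P^\gr_G$) and enough injectives. This yields enough projectives and enough injectives in $\foutk[\gr]$. The same argument, applied to $\foutk[\gr\op] \hookrightarrow \fcatk[\gr\op]$ with left adjoint $\Omega$ and right adjoint $(-)^\ad$, completes the proof. There is no serious obstacle here; the substantive work has already been done in the two cited propositions, so the role of this corollary is simply to package the adjoint pairs together with the standard principle above.
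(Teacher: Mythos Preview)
Your proof is correct and follows exactly the approach the paper intends: the corollary is stated as an immediate consequence of Propositions \ref{prop:omega_Omega_adjunction} and \ref{prop:easy_out_adjoints}, and you have simply spelled out the standard categorical argument that those two propositions make available. There is nothing to add.
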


\begin{rem}
For the applications here to higher Hochschild homology, we are more interested in  {\em polynomial}  outer functors and hence the injectives in $\f_{<\infty}^{\mathrm{Out}} (\gr; \rat)$  (as considered in Section \ref{subsect:recoll_out}), rather than those provided by Corollary \ref{cor:foutk_easy_proj_inj}.
\end{rem}

The adjoint functors $F \mapsto F /\ad$ (respectively $F \mapsto F^\ad$) can be given a treatment that is analogous to that of $\omega$ and $\Omega$ in Section \ref{subsect:omega_Omega}. To explain this, we concentrate on the covariant case. 

Namely, $\kappa$ is adjoint (via Proposition \ref{prop:properties_tau}) to 
the natural transformation 
$$
\mathrm{Ad} :  P^\gr_\zed \otimes F \rightarrow F 
$$
that corresponds to  the adjoint action
$
\kring[G] \otimes F (G) \rightarrow F(G)
$ 
that sends an element $[g] \otimes f$ (for $g \in G$ and $f \in   F (G$) to $\ad (g) f$. Likewise, $\overline{\kappa}$ is adjoint to the reduced action
$$
\overline{\mathrm{Ad}} : 
\overline{P^\gr_\zed} \otimes F \rightarrow F. 
$$

\begin{prop}
\label{prop:/ad}
For $F \in \ob \fcatk[\gr]$, there are natural isomorphisms:
\begin{eqnarray*}
F/ \ad & \cong & \mathrm{coequalizer} \  \Big(
 P^\gr_\zed \otimes F 
\substack{\mathrm{Ad} \\ \rightrightarrows \\ \epsilon \otimes \id_F} 
F 
\Big) 
\\
 & \cong & 
 \coker \Big( 
\overline{P^\gr_\zed} \otimes F \stackrel{\overline{\mathrm{Ad}} }{\longrightarrow} F
\Big),
\end{eqnarray*} 
where $\epsilon : P^\gr_\zed \rightarrow \kring$ is the projection.
\end{prop}

\begin{proof}
The description of $F/\ad$ as the coequalizer of $\mathrm{Ad}$ and $\epsilon \otimes \id_F$ is just a functorial reinterpretation of the definition of the coinvariants for the adjoint action. The fact that this corresponds to the cokernel of $\overline{\mathrm{Ad}}$ follows analogously to Proposition \ref{prop:omega_Omega_taubar}.
\end{proof}

\begin{rem}
\label{rem:other_adjoints}
For $F \in \ob \fcatk[\gr]$ and $G \in \ob \gr$, it is clear that there is a natural inclusion 
$ 
\omega F (G) \subset F (G)^{\ad(G)} . 
$ 
However, the codomain defines a subfunctor of $F$ if and only if these are equalities, for all $G$. 
 Similarly, if $F \in \ob \fcatk[\gr\op]$, there is a natural surjection $F(G)/ \ad(G) \twoheadrightarrow \Omega F(G)$. The domain is a quotient functor of $F$ if and only if all of these surjections are equalities. 

This is related to the phenomenon observed by Conant and Kassabov \cite[Remark 4.18]{CK}, who focus on the action of the groups $\mathrm{Aut}(\zed^{\star r})$ rather than working with functors. 
\end{rem}

\begin{exam}
\label{exam:P/ad}
Consider the functor $F = P^\gr _\zed$. Then $P^\gr_\zed/ \ad $ identifies as 
\[
G  \mapsto |\kring G | := \kring G  / [\kring G , \kring G  ],
\]
where, for typographical simplicity,  here $\kring G $ denotes the group ring and $[\kring G , \kring G  ]$ is the $\kring$-module generated by the elements $[gh]- [hg]$ for $g, h \in G $. 

We note that $|\kring G |$ can be given a topological interpretation, as follows.
 Choose an oriented surface $\Sigma$ with one boundary component such that  $G$ is isomorphic to the fundamental group $\pi_1 (\Sigma)$. Then $|\kring G|$ is the $\kring$-linear span of the homotopy classes of free loops in $\Sigma$. This has additional structure arising from the surface, such as the Goldman bracket; see  \cite{MR3758425}, for example. The naturality of $|\kring G|$ with respect to the group $G$ (as a functor to $\kring$-modules) may be of interest in this context.

By construction, $G  \mapsto |\kring G |$ defines a functor in $\foutk [\gr]$, and there are surjections (not isomorphisms)
\[
P^\gr _\zed
\twoheadrightarrow 
P^\gr_\zed/ \ad 
\twoheadrightarrow 
\kring [-] \circ \A,
\]
where $\kring [-] \circ \A$ lies in $\fcatk[\ab] \subset \fcatk[\gr]$, corresponding to the natural surjections:
\[
\kring G  
\twoheadrightarrow 
|\kring G |
\twoheadrightarrow 
\kring G _\ab.
\]
Moreover, there is a natural splitting $|\kring G | = \kring \oplus \overline{|\kring G |}$, where $G  \mapsto \overline{|\kring G |}$ is the functor $\overline{P^\gr_\zed} /\ad$.

The product in the group ring restricts to $\mu : \overline{P^\gr_\zed} \otimes \overline{P^\gr_\zed} \rightarrow \overline{P^\gr_\zed}$ and the analysis of the Passi filtration (cf. Example \ref{exam:Passi_not_out}) shows that this fits into the exact sequence 
\begin{eqnarray}
\label{eqn:presentations_ak}
\overline{P^\gr_\zed} \otimes \overline{P^\gr_\zed} \stackrel{\mu}{\rightarrow} \overline{P^\gr_\zed}
\rightarrow 
\ak 
\rightarrow 
0.
\end{eqnarray}
(This corresponds to the beginning of the reduced bar construction for the group ring functor.)

Defining $[-,-] := \mu - \mu \circ \tau$, where $\tau$ transposes the tensor factors, gives 
$[,] : \overline{P^\gr_\zed} \otimes \overline{P^\gr_\zed} \rightarrow \overline{P^\gr_\zed}$. Essentially by construction, one has the exact sequence:
\begin{eqnarray}
\label{eqn:pbar/ad}
\overline{P^\gr_\zed} \otimes \overline{P^\gr_\zed} \stackrel{[-,-]}{\longrightarrow} \overline{P^\gr_\zed}
\rightarrow 
\overline{P^\gr_\zed}/\ad
\rightarrow 
0.
\end{eqnarray}
that gives a presentation of $\overline{P^\gr_\zed}/\ad$.

The natural surjection $\overline{P^\gr_\zed}
\twoheadrightarrow 
\ak $ factorizes to give a natural surjection $\overline{P^\gr_\zed}/\ad \twoheadrightarrow \ak$.
\end{exam}

\begin{exam}
\label{exam:P_higher/ad}
More generally, one can consider $P^\gr_{\zed^{\star r}}$ for any $r \in \nat$. This functor sends $G $ to $(\kring G )^{\otimes r}$ and the adjoint action acts {\em diagonally}.  In this case, for $g_1, \ldots , g_r$ and $h$ in $G $, the basic relation is 
\[
[g_1 h] \otimes \ldots \otimes [g_r h] 
= 
[hg_1] \otimes \ldots \otimes [hg_r].
\]
Hence the functor $P^\gr_{\zed^{\star r}}/\ad$ is less familiar in the case $r >1$.
\end{exam}

We conclude this section by analysing the structure of $\overline{P^\gr_\zed} /\ad$ of Example \ref{exam:P/ad}, taking $\kring = \rat$ for simplicity and so as to be able to appeal to Section  \ref{subsect:recoll_out}.

\begin{prop}
\label{prop:pbar/ad_split}
The natural surjection  $\overline{P^\gr_\zed}/\ad \twoheadrightarrow \aQ$ admits a section, so that $\aQ$ is a direct summand of $\overline{P^\gr_\zed}/\ad$. In particular, $\aQ$ is projective in $\foutQ[\gr]$. 
\end{prop}

\begin{proof}
To construct the section $\aQ \rightarrow  \overline{P^\gr_\zed}/\ad$, we first construct a  map $\overline{P^\gr_\zed} \rightarrow 
\overline{P^\gr_\zed}/\ad$ corresponding to the composite with the projection $\overline{P^\gr_\zed} \twoheadrightarrow \aQ$. As in  Example \ref{exam:Passi_not_out}, $q^\gr_2 \overline{P^\gr_\zed}$ splits as $S^2 \circ \aQ \oplus \mathscr{E}$ where $\mathscr{E}$ is not in $\foutQ[\gr]$
 and lies in the (non-split) short exact sequence 
\[
 0
\rightarrow 
\Lambda^2 \circ \aQ 
\rightarrow 
\mathscr{E} 
\rightarrow 
\aQ
\rightarrow 
0. 
\]
Correspondingly, $q^\gr_2 (\overline{P^\gr_\zed}/\ad) \cong  \aQ \oplus S^2 \circ \aQ$, the composition factor $\Lambda^2 \circ \aQ $ lying in the image of $[-,-]$ (using the notation of Example \ref{exam:P/ad}), via the presentation (\ref{eqn:pbar/ad}).

Consider the composite $\overline{P^\gr_\zed} \twoheadrightarrow \mathscr{E} \hookrightarrow q_2^\gr P_\zed^\gr $. This lifts  
across the projection $\overline{P^\gr_\zed} \twoheadrightarrow q_2^\gr \overline{P_\zed^\gr}$ to give  $\overline{P^\gr_\zed} \stackrel{\tilde{\sigma}}{\rightarrow} \overline{P^\gr_\zed}$ so that one can form the composite $\sigma$ given by:
\[
\overline{P^\gr_\zed} \stackrel{\tilde{\sigma}}{\rightarrow} \overline{P^\gr_\zed} \twoheadrightarrow \overline{P^\gr_\zed}/\ad,
\] 
where the second map is the canonical projection.  

By construction, the composite $\sigma \circ \mu$ (again using the notation of Example \ref{exam:P/ad}) is zero, hence $\sigma$ factors to give the map $\aQ \rightarrow \overline{P^\gr_\zed}/\ad$, using the presentation (\ref{eqn:presentations_ak}). The construction of $\sigma$ also ensures that this gives a section of the natural surjection, as required. 

This shows that $\aQ$ is a direct summand of $\overline{P^\gr_\zed}/\ad$. By construction (cf. Corollary \ref{cor:foutk_easy_proj_inj}), the latter is projective in $\foutQ[\gr]$, thus so is $\aQ$.
\end{proof}

\begin{exam}
By Proposition \ref{prop:pbar/ad_split}, there is a splitting in $\foutQ[\gr]$:
\[
\overline{P^\gr_\zed}/\ad 
\cong 
\aQ \oplus
\big(\overline{P^\gr_\zed}/\ad\big)'. 
\]
Moreover, there  is a surjection  $\big(\overline{P^\gr_\zed}/\ad\big)'\twoheadrightarrow S^2 \circ \aQ$ and $\big(\overline{P^\gr_\zed}/\ad\big)'$ is projective in  $\foutQ[\gr]$.

Using the presentation (\ref{eqn:pbar/ad}), it is not difficult to show that this factorizes across a surjection 
\[
\big(\overline{P^\gr_\zed}/\ad\big)'
\twoheadrightarrow 
\mathscr{F},
\]
where $\mathscr{F}$ is a functor in $\foutQ[\gr]$ that occurs in a non-split extension 
\[
0
\rightarrow 
\Lambda^3 \circ \aQ \rightarrow 
\mathscr{F}
\rightarrow 
S^2 \circ \aQ 
\rightarrow 
0.
\] 
(Indeed, the results of Section \ref{sec:omega_psi} show that there exists a {\em unique} such functor, up to isomorphism, and the surjection exists by projectivity of $\big(\overline{P^\gr_\zed}/\ad\big)'$.)

This gives a first, naturally occurring functor in $\foutQ[\gr]$ that does not arise from $\fcatQ[\ab]$, exhibited as a quotient of $\big(\overline{P^\gr_\zed}/\ad\big)'$.
\end{exam}

\section{The functors $\omega$, $\Omega$ on exponential functors}
\label{sec:omega_exponential}

This section addresses  the calculation of $\omega$ (respectively $\Omega$) on exponential functors. This is motivated by Corollary \ref{cor:omega_boldbeta_inj_cogen}, which identifies $\omega \boldbeta$ as $\omega \Psi \pcoalg$, where $\Psi \pcoalg$ is the exponential functor constructed from $\pcoalg$.

Proposition \ref{prop:Omega_Phi_omega_Psi} gives a general description in terms of the underlying Hopf algebra of an exponential functor, as given by Theorem \ref{thm:expo_Hopf_general}; this uses the adjoint action (respectively coadjoint coaction) that is introduced in Section \ref{subsect:adj_coadj}.

We then specialize to the examples that interest us, applying Proposition \ref{prop:Omega_Phi_omega_Psi}. Proposition \ref{prop:Omega_tensor_algebra} considers the case of the exponential functor $\Phi T(V)$ associated to the tensor Hopf algebra $T(V)$ (respectively $\Psi T_{\mathrm{coalg}}(V)$ associated to the tensor coalgebra Hopf algebra $T_{\mathrm{coalg}}(V)$). Analogously, Proposition \ref{prop:Omega_palg} treats the exponential functors associated to $\palg$ and $\pcoalg$. These results are related by the Schur functor construction in Corollary \ref{cor:schur_expo_relate}.

\subsection{The adjoint action and the coadjoint coaction}
\label{subsect:adj_coadj}

As in Section \ref{subsect:expo_functors}, $\calm$ is an abelian category equipped with a symmetric monoidal structure $(\calm , \otimes , \unit)$.

We consider Hopf algebras within this framework; recall that a Hopf algebra structure on $H$ is specified by the morphisms $(\epsilon, \eta, \Delta, \mu, \chi)$, where $\unit \stackrel{\eta}{\rightarrow} H \stackrel{\epsilon}{\rightarrow} \unit$, $\mu: H\otimes H \rightarrow H$ is the product, $\Delta : H \rightarrow H \otimes H$ the coproduct, and $\chi : H \rightarrow H$ the conjugation (or antipode).  We write $\overline{H}$ for the augmentation ideal of a Hopf algebra $H$, so that there is a splitting $H \cong \unit \oplus \overline{H}$.

Recall that the (left) adjoint action of $H$ on itself is $\ad : H \otimes H \rightarrow H$ given by the composite:
\[
\xymatrix{
H \otimes H
\ar[r]^(.4){\tilde{\Delta} \otimes \id_H} 
&
(H \otimes H) \otimes H
\ar[r]_(.6)\cong ^(.6){\id_H \otimes \tau} 
&
H^{\otimes 3}
\ar[r]^(.6){\mu^{(2)}}
&
H,
} 
\]
where $\tilde{\Delta}$ is the composite $(\id_H \otimes \chi ) \Delta$ and $\mu^{(2)}$ is the iterated product.

Dually, the (right) coadjoint coaction of $H$ on itself is $\coad : H \rightarrow H \otimes H$ given by the composite:
\[
\xymatrix{
H
\ar[r]^{\Delta^{(2)}} 
&
H^{\otimes 3} 
\ar[r]_(.4)\cong^(.4){\tau\otimes \id _H } 
&
H \otimes (H \otimes H) 
\ar[r]^(.6){\tilde{\mu}}
&
H  \otimes H,
}
\]
where $\Delta^{(2)}$ is the iterated coproduct and $\tilde{\mu}$ is the composite $\mu (\chi \otimes \id_H)$.

Hence, for any $N \in \nat$, using the respective diagonal structures, one has the (co)actions:
\begin{eqnarray*}
\ad &:& H \otimes H^{\otimes N} \rightarrow H^{\otimes N} \\
\coad &:& H^{\otimes N} \rightarrow H^{\otimes N} \otimes H.
\end{eqnarray*}

Recall that, if $H$ is cocommutative, Theorem \ref{thm:expo_Hopf_general} yields the exponential functor $\Phi H$ on $\gr\op$;  if $H$ is commutative, it yields $\Psi H$ on $\gr$. 

\begin{prop}
\label{prop:adj_coadj}
\nomenclature{$\adbar$}{reduced adjoint action\nomrefpage}
\nomenclature{$\coadbar$}{reduced coadjoint coaction\nomrefpage}
Let $H$ be a Hopf algebra in $(\calm, \otimes ,\unit)$.
\begin{enumerate}
\item 
If $H$ is cocommutative, the adjoint action induces a left  action of $H$ on $\Phi H$ in $\f(\gr\op; \calm)$:
\[
\ad : H \otimes \Phi H \rightarrow \Phi H 
\]
that restricts to $\adbar : \overline{H} \otimes \Phi H \rightarrow \Phi H$ via $\overline{H}\hookrightarrow H$.
\item 
If $H$ is commutative, the coadjoint coaction induces a right coaction of $H$ on $\Psi H$ in $\f (\gr ; \calm)$:
\[
\coad : \Psi H \rightarrow \Psi H \otimes H
\]
that corestricts to $\coadbar : \Psi H \rightarrow \Psi H \otimes \overline{H}$ via $H \twoheadrightarrow \overline{H}$.
\end{enumerate}
\end{prop}

\begin{proof}
In general, the adjoint action makes $H$ into a $H$-module algebra; 
if $H$ is cocommutative, then the adjoint action is compatible
 with $\Delta : H \rightarrow H\otimes H$ and with $\chi$. From this it is straightforward to show that the adjoint action is compatible with the structure of $\Phi H$ as a functor on $\gr\op$.
 
The proof for the commutative case is categorically dual.
\end{proof}

\begin{rem}
The above adjoint action of $H$ on $\Phi H$ is related to the  action of $H$ on the  $\mathrm{Aut}(\zed^{\star N})$-module $H^{\otimes N}$, as studied by Conant and Kassabov in \cite[Section 4]{CK}, via the evaluation $\Phi H (\zed^{\star N}) = H^{\otimes N}$. In working functorially, we consider all $N \in \nat$ and also do not restrict to automorphisms in the category $\gr$.
\end{rem}

\subsection{Calculating $\Omega$ and $\omega$ on exponential functors}

The following is clear:

\begin{lem}
\label{lem:tau_Phi_Psi}
Let $H$ be a Hopf algebra in $(\calm, \otimes ,\unit)$.
\begin{enumerate}
\item 
If $H$ is cocommutative, there is a natural isomorphism $\tau_\zed \Phi H \cong H \otimes \Phi H$ in $\f(\gr\op; \calm)$; this restricts to $\taubar \Phi H \cong \overline{H} \otimes \Phi H$.
\item 
If $H$ is commutative, there is a natural isomorphism $\tau_\zed \Psi H \cong  \Psi H \otimes H$ in $\f(\gr; \calm)$; this corestricts to $\taubar \Psi H \cong  \Psi H \otimes \overline{H} $.
\end{enumerate}
\end{lem}

This leads to the identification of the functors $\Omega \Phi H$ and $\omega \Psi H$ in the respective cases:

\begin{prop}
\label{prop:Omega_Phi_omega_Psi}
Let $H$ be a Hopf algebra in $(\calm, \otimes ,\unit)$.
\begin{enumerate}
\item 
If $H$ is cocommutative, there is a natural isomorphism in $\f(\gr\op; \calm)$:
\[
\Omega \Phi H \cong \mathrm{coker} \  \big( \adbar : \overline{H} \otimes \Phi H \rightarrow \Phi H\big).
\]
\item 
If $H$ is commutative, there is a natural isomorphism  in $\f(\gr; \calm)$:
\[
\omega \Psi H \cong \ker  \big(\coadbar : \Psi H \rightarrow \Psi H \otimes \overline{H} \big).
\]
\end{enumerate}
\end{prop}

\begin{proof}
Consider the first case. By Proposition \ref{prop:omega_Omega_taubar}, $\Omega \Phi H$ is the cokernel of $\overline{\kappa^*} : \taubar \Phi H \rightarrow \Phi H$ and, by Lemma \ref{lem:tau_Phi_Psi}, $\taubar \Phi H$ is  isomorphic to $\overline{H} \otimes \Phi H$. Hence, to prove the assertion, it suffices to show that $\overline{\kappa^*} $ identifies with $\adbar$. This is proved by a direct verification.

The second case is categorically dual.
\end{proof}

\begin{rem}
\label{rem:restrict_gen_coresrict_cogen}
In the cocommutative case, since the adjoint action $H \otimes \Phi H \rightarrow \Phi H$ is an $H$-module structure, to calculate $\Omega \Phi H$ one can restrict to generators of $H$  (cf.  \cite[Lemma 4.13]{CK}). Similarly, in the commutative case, the coadjoint coaction $\Psi H \rightarrow \Psi H \otimes H$ is an $H$-comodule structure, hence to calculate $\omega \Psi H$ one can project to cogenerators of $H$.
\end{rem}

We now apply Proposition \ref{prop:Omega_Phi_omega_Psi} to our principal examples of interest. 
First take $(\calm , \otimes , \unit)$ to be $(\fmodq, \otimes , \rat)$.

\begin{prop}
 \label{prop:Omega_tensor_algebra}
 For  $V$ a finite-dimensional $\rat$-vector space:
\begin{enumerate}
\item 
There is a natural isomorphism in $\fcatk[\gr\op]$:
 \[
   (\Omega \Phi T(-)) (V) 
 \cong 
 \mathrm{coker }
 \big(
 V \otimes \Phi T (V) \stackrel{\adbar}{\rightarrow}  \Phi T(V)
 \big).
 \]
This is natural in $V \in \ob \fmodq$.

Here, $\adbar$ sends $v \otimes (w_1 \otimes \ldots \otimes w_N)$ to 
\[
\sum_{i=1}^N 
w_1 \otimes \ldots \otimes w_{i-1} \otimes [v,w_i] \otimes w_{i+1} \otimes \ldots \otimes w_N,\]
where $v\in V$ and $w_i \in T(V)$.
\item
There is a natural isomorphism in $\f(\gr;\rat)$:
\[
(\omega \Psi T_{\mathrm{coalg}}(-))(V)
\cong 
\ker 
\big(
\Psi T_{\mathrm{coalg}}(V) 
\stackrel{\coadbar}{\rightarrow}
\Psi T_{\mathrm{coalg}}(V) \otimes V
\big).
\]
This is natural in $V$.

Here, $\coadbar$ sends $(w_1 \otimes \ldots \otimes w_N)$, for $w_i \in T_{\mathrm{coalg}}(V)$, to 
\begin{eqnarray*}
\sum_{i=1}^N \Big(
(w_1 \otimes \ldots \otimes w_{i-1} \otimes \delta_1' (w_i) \otimes w_{i+1} \otimes \ldots \otimes w_N) \otimes \delta_1''(w_i)
-
\\
\quad 
(w_1 \otimes \ldots \otimes w_{i-1} \otimes \delta_2'' (w_i) \otimes w_{i+1} \otimes \ldots \otimes w_N) \otimes \delta_2'(w_i) 
\Big)
,\end{eqnarray*}
 where $\delta_1 : T_{\mathrm{coalg}}(V) \rightarrow T_{\mathrm{coalg}}(V) \otimes V$ and $\delta_2 : T_{\mathrm{coalg}}(V) \rightarrow V \otimes T_{\mathrm{coalg}}(V)$ denote the components of the diagonal with linear terms, and $\delta'$, $\delta''$ are given by Sweedler's convention (i.e., writing $\delta w = \delta' (w) \otimes \delta''(w)$ with implicit summation).
 \end{enumerate}
 \end{prop}

\begin{proof}
The first statement follows from Proposition \ref{prop:Omega_Phi_omega_Psi} by applying Remark \ref{rem:restrict_gen_coresrict_cogen}, using that $T(V)$ is generated as an algebra by $V$. Naturality with respect to $V$ is clear. 

The second statement is proved similarly, using that $T_{\mathrm{coalg}}(V)$ is cogenerated as a coalgebra by $V$. 
\end{proof}

We now take $(\calm , \otimes , \unit)$ to be $(\f (\fb;\rat), \tenfb , \unit , \tau)$ and consider the Hopf algebras $\palg$ and $\pcoalg$ that were introduced in Section \ref{sec:pcoalg}.  Recall (see the proof of Proposition \ref{prop:hopf_alg_fb_modules}) that these are related to the Hopf algebras $T(V)$ and $T_{\mathrm{coalg}}(V)$ respectively by the Schur functor construction.

With respect to the Schur functor correspondence, the generators $V$ of the tensor algebra $T(V)$ correspond to the canonical inclusion $P^\fb_{\mathbf{1}}\hookrightarrow \palg$ and the cogenerators $V$ of the tensor coalgebra $T_{\mathrm{coalg}}(V)$ correspond to the canonical projection $\pcoalg \twoheadrightarrow P^\fb_{\mathbf{1}}$. 

The analogue of Proposition \ref{prop:Omega_tensor_algebra} is then:

\begin{prop}
\label{prop:Omega_palg}
\ 
\begin{enumerate}
\item 
There is a natural isomorphism in $\f (\gr\op; \f (\fb; \rat))$:
\[
\Omega \Phi \palg \cong \mathrm{coker} \Big( P^\fb_{\mathbf{1}} \tenfb \Phi \palg \stackrel{\adbar}{\rightarrow} \Phi \palg 
\Big).
\]
\item 
There is a natural isomorphism in $\f (\gr; \f (\fb ; \rat))$:
\[
\omega \Psi \pcoalg \cong \ker 
\Big( 
\Psi \pcoalg \stackrel{\coadbar}{\rightarrow} \Psi \pcoalg \tenfb P^\fb_{\mathbf{1}}
\Big).
\] 
\end{enumerate}
\end{prop}

Corollary \ref{cor:omega_boldbeta_inj_cogen} thus yields:

\begin{cor}
\label{cor:omega_boldbeta}
 There is a natural isomorphism in $\f(\gr \times \fb; \rat)$:
 $$\omega \boldbeta \cong 
 \ker 
\Big( 
\Psi \pcoalg \stackrel{\coadbar}{\rightarrow} \Psi \pcoalg \tenfb P^\fb_{\mathbf{1}}
\Big).
 $$
\end{cor}

We now make the relationship between Propositions \ref{prop:Omega_tensor_algebra} and \ref{prop:Omega_palg} explicit,  using the Schur functor construction 
 $$\talg \otimes_\fb - : \f(\fb; \rat) \rightarrow \f(\fmodq; \rat)$$
 of Section \ref{subsect:schur_correspond}, which is exact and symmetric monoidal, by Proposition \ref{prop:schur_functor}. 
 
 In particular, this induces the isomorphisms of Hopf algebras:
 \begin{eqnarray*}
\talg \otimes_\fb \palg &\cong & T(-)
\\
\talg \otimes_\fb \pcoalg & \cong & T_{\mathrm{coalg}}(-).
\end{eqnarray*}
From this  one deduces the  comparison result:

\begin{cor}
\label{cor:schur_expo_relate}
There are natural isomorphisms:
\begin{eqnarray*}
\talg \otimes_\fb (\Omega \Phi \palg) &\cong & \Omega \Phi T(-) \mbox{\quad\quad\quad in $\f(\gr\op; \f(\fmodq ;\rat))$} 
\\
\talg \otimes_\fb (\omega \Psi \pcoalg) & \cong &\omega \Psi T_{\mathrm{coalg}}(-) \mbox{\quad \quad in $\f(\gr;(\f(\fmodq ;\rat))$.} 
\end{eqnarray*}
\end{cor}

\subsection{Introducing Koszul signs}

Recall from Section \ref{sec:fb} that there is an equivalence of symmetric monoidal categories
\[
(-)^\dagger : (\smod, \tenfb, \unit, \tau) 
\rightarrow 
(\smod , \tenfb, \unit, \sigma). 
\]
Applying this functor to $\palg$ gives the Hopf algebra $\palg^\dagger$,  which is cocommutative in $(\smod , \tenfb, \unit,\sigma)$; similarly, one obtains the commutative Hopf algebra $\pcoalg^\dagger$. These structures are identified in Proposition \ref{prop:palg_pcoalg_dagger_identify}. This gives the canonical inclusion $P^\fb_\mathbf{1} \hookrightarrow \palg^\dagger$ and the canonical surjection $\pcoalg^\dagger \twoheadrightarrow  P^\fb_\mathbf{1}$.

\begin{prop}
\label{prop:omega_pcoalg_dagger}
\ 
\begin{enumerate}
\item 
There are natural isomorphisms in $\f(\gr\op; \f(\fb; \rat))$:
\begin{eqnarray*}
\Phi (\palg^\dagger) &\cong & \big(\Phi \palg )^\dagger 
\\
\Omega \Phi (\palg^\dagger) &\cong & \big( \Omega \Phi \palg )^\dagger,
\end{eqnarray*}
where the functor $\Phi$ on the left is applied with respect to the symmetry $\sigma$ and, on the right, with respect to $\tau$.
\item 
There are natural isomorphisms in $\f(\gr; \f(\fb; \rat))$:
\begin{eqnarray*}
\Psi (\pcoalg^\dagger) &\cong& \big(\Psi \pcoalg )^\dagger
\\
\omega \Psi (\pcoalg^\dagger) &\cong& \big( \omega \Psi \pcoalg )^\dagger,
\end{eqnarray*}
where the functor $\Psi$ on the left is applied with respect to the symmetry $\sigma$ and, on the right, with respect to $\tau$.
\end{enumerate} 
\end{prop}

\begin{proof}
We consider the case of $\palg$, that of $\pcoalg$ being proved by a similar argument. 

The first statement follows from Proposition \ref{prop:naturality_Phi_Psi}, since $(-)^\dagger$ is a symmetric monoidal equivalence of categories. The statement about $\Omega$ follows similarly, since $(-)^\dagger$ is exact and $\adbar$ is defined in terms of the Hopf algebra structure of $\palg$ (respectively $\palg^\dagger$).
\end{proof}

Corollary \ref{cor:omega_boldbeta_inj_cogen} provides the  isomorphism 
 $\omega \boldbeta \cong \omega \Psi \pcoalg$ in $\f(\gr \times \fb; \rat)$; Proposition \ref{prop:omega_pcoalg_dagger} thus gives:

\begin{cor}
\label{cor:omega_boldbeta_dagger}
There is a natural isomorphism in $\f (\gr \times \fb ; \rat)$
\begin{eqnarray*}
 \omega \Psi (\pcoalg ^\dagger)
  & \cong &
 (\omega \boldbeta)^\dagger.
\end{eqnarray*}
\end{cor}

\begin{rem}
Here, we regard $\omega \boldbeta$ as being the fundamental object, with an intrinsic interpretation in $\fpoutgrQ [<\infty]$. The above Corollary relates $\omega \Psi(\pcoalg ^\dagger)$ to this.
\end{rem}

\part{Calculating $\Psi$ and $\omega \Psi$}
\label{part:psi_omega_psi}

This part analyses the structure of the functors $\beta_d$ and $\omega \beta_d$, for $d \in \nat$. By the results of Section \ref{sec:beta}, this is equivalent to analysing the exponential functor   $\Psi \pcoalg$ (abbreviated in the title to $\Psi$) and the associated functor $\omega\Psi \pcoalg$ (abbreviated to $\omega \Psi$). The latter is of particular interest, due to its relationship with higher Hochschild homology, as explained in Part \ref{part:HHH} (see  Corollary \ref{cor:isotypical_components}, for example).

Section \ref{sec:psi} studies the indecomposable polynomial functors of the form $\beta_{|\mu |} S_\mu$, for $\mu \vdash d$. The multiplicity of the composition factors are completely determined in terms of coefficients from the representation theory of the symmetric groups, namely the Littlewood-Richardson coefficients and certain plethysm coefficients (see Corollary \ref{cor:Grothendieck_ring_Psi_pcoalg}). Moreover, the theory is illustrated further by analysing the structure of such functors for certain infinite families of partitions (see Section \ref{subsect:three_infinite_families}).

Section \ref{sec:omega_psi} then addresses the problem of calculating $\omega\beta_{|\mu |} S_\mu$. Here the story is less complete: at time of writing, there is no known general formula describing the multiplicities of the composition factors of  $\omega\beta_{|\mu |} S_\mu$ in terms of the representation theory of the symmetric groups (although there is a lower bound, which is known to be exact in some cases). We do, however, give some useful initial information which sheds significant light on the structure of these functors.

 \section{Calculating $\Psi$} 
\label{sec:psi}

The main aim of this section is to give information on the exponential functor $\Psi \pcoalg$, which belongs to $\fcatQ[\gr]$.  The analysis commences with  that of $\Psi H$, for $H$ a suitable commutative Hopf algebra in Section \ref{subsect:PsiH_graded}. This uses the basic structure theory of Hopf algebras in characteristic zero, which gives Lemma \ref{lem:Hopf_filtration} (related to the Borel structure theorem); this is the key input into Proposition \ref{prop:filter_Psi_H}.

This theory is then applied to the case $H= T_\cog (V)$, the tensor coalgebra equipped with the shuffle product, calculating $\Psi T_\cog (V)$ as a functor of $V \in \ob \fmodq$. This gives the structure of $\Psi \pcoalg$, by the Schur correspondence, in Theorem \ref{thm:decomp_Psi_pcoalg}.

Now, by Theorem \ref{thm:beta_description}, one has the isomorphism $\boldbeta \cong \Psi \pcoalg$, where the functor $\boldbeta$ encodes the injective cogenerators of  $\fpiQ {\gr}$ (see  Theorem \ref{thm:injectivity_beta}), hence  $\Psi \pcoalg$ determines these  injective cogenerators. Using this, Theorem \ref{thm:decomp_Psi_pcoalg} also gives a description  of the composition factors of the injectives $\beta_{|\mu|} S_\mu$, for any partition $\mu$. 
 As an immediate consequence, Corollary \ref{cor:d-1_beta_S_mu} gives an explicit description of the extension groups $\ext^1_{\fcatQ[\gr]}$ between simple polynomial functors.  

Theorem \ref{thm:beta_description} is then made more concrete, first by treating examples for $|\mu|\leq 4$, and then by giving an explicit expression for the multiplicities of the composition factors of $\beta_{|\mu|} S_\mu$ in Corollary \ref{cor:Grothendieck_ring_Psi_pcoalg}. 
 
To illustrate the methods further, the cases of the families of partitions $(n)$, $(1^n)$ and $(n-1, 1)$, for $n \in \nat$, are treated in Section \ref{subsect:three_infinite_families}.

\subsection{Calculating $\Psi H$ for commutative Hopf algebras}
\label{subsect:PsiH_graded}
In this section we work with $\nat$-graded vector spaces, with symmetry for $\otimes$ invoking Koszul signs. There is an evident counterpart when working without Koszul signs (as indicated in Remark \ref{rem:no_Koszul_signs}).

\begin{nota}
\label{nota:gsym}
\nomenclature{$\gsym^*$}{free graded-commutative $\rat$-algebra\nomrefpage}
Denote by $\gsym^*$  the free graded-commutative $\rat$-algebra functor, so 
that, for $V$ an $\nat^*$-graded 
 vector space, there is an isomorphism 
 \[
  \gsym^* (V) \cong 
  S^* (V^{\mathrm{even}}) \otimes \Lambda^* (V^{\mathrm{odd}})
 \]
of the underlying graded $\rat$-vector spaces, with the induced $\nat$-grading. (The symmetric and exterior power functors are defined in Example \ref{exam:schur_symm_ext}).
\end{nota}

\begin{rem}
\label{rem:no_Koszul_signs}
\nomenclature{$S^*$}{free commutative $\rat$-algebra\nomrefpage}
Working in the graded setting {\em without} Koszul signs, one replaces 
$\gsym^*$ by the usual  symmetric power functors, $S^*$, and  modifies the following accordingly, replacing graded-commutative by commutative.
\end{rem}

\begin{nota}
\label{nota:Q_P_Hopf} \cite{MM}
For $A $ (respectively $C$) a graded, connected algebra (respectively coalgebra) over $\rat$, denote by
\begin{enumerate}
 \item 
 $QA$ the graded vector space of indecomposables; 
 \item 
 $PC$  the graded vector space of primitives. 
\end{enumerate}
In particular, for $H$ a graded, connected Hopf algebra over $\rat$, one has the indecomposables  $QH$ and  the primitives $PH$. 
\end{nota}

Recall (see \cite{MS1} for example) that every monomorphism of connected, 
graded-commutative $\rat$-Hopf
algebras is normal. Moreover, if $H_1 \hookrightarrow H_2$ is such an 
inclusion, 
the cokernel $H_2 \hq H_1$ (in Hopf algebras) 
identifies as 
$
 H_2 \otimes _{H_1} \rat.
$

If $\rat = H_0 \subset H_1 \subset H_2 \subset H_3 \subset \ldots \subset H$ 
is an increasing, exhaustive filtration 
of the connected, graded-commutative Hopf algebra $H$, the associated 
graded is the Hopf algebra
$
 \bigotimes_{i\geq 1}
 H_i\hq H_{i-1}.
$
In particular, taking $H_i$ to be the sub-Hopf algebra generated by elements of degree at most $i$, $H_i \hq H_{i-1}$ is the Hopf algebra $\gsym^* ((QH)^i)$ primitively-generated by $(QH)^i$. This gives the following result, related to the Borel structure theorem for Hopf algebras (see \cite{MM}, for example):

\begin{lem}
\label{lem:Hopf_filtration}
 Let $H$ be a graded-commutative, connected $\rat$-Hopf algebra. Then 
$H$ has a natural filtration 
 as Hopf algebras with associated graded 
 $
 \bigotimes_{i \geq 1} \gsym^* ((QH)^i).  
 $
 The underlying graded-commutative algebra of $H$ is non-canonically isomorphic to $ \bigotimes_{i \geq 1} \gsym^* 
((QH)^i)$.
\end{lem}

Lemma \ref{lem:Hopf_filtration} refines to treat the associated  functor $\Psi H \in \fcatk[\nat \times \gr]$, where $\nat$ corresponds to the grading. 

\begin{prop}
\label{prop:filter_Psi_H}
Let $H$ be a graded-commutative, connected
 $\rat$-Hopf algebra. Then  $\Psi H$ has natural 
filtration with associated graded 
$
 \bigotimes_{ n \geq 1} \gsym^* ((QH)^n \otimes \aQ). 
$ 

In particular, the  grading $N=0$ component, $(\Psi H)^0$, is isomorphic to the constant functor $\rat$ and,  in grading  $N\in \nat^*$, $(\Psi H)^N$ has filtration in $\fcatQ[\gr]$ with associated 
graded:
\[
 \bigoplus_{\underline{b}} \big( 
 \bigotimes_{n=1}^N
 \gsym^{b_n} (  \aQ \otimes (QH)^n ) 
 \big),
\]
where the sum is taken over sequences $\underline{b}= (b_1, \ldots , b_N)$ of natural numbers 
such that $\sum_n b_n n =N$. The functor $\bigotimes_{n=1}^N  \gsym^{b_n} (\aQ \otimes  (QH)^n)$ has polynomial degree $ \sum_n a_n \in [1, N]$. 
\end{prop}

\begin{proof}
By Lemma \ref{lem:Hopf_filtration}, after evaluation on any $\zed^{\star t}$, the 
result holds as filtered $\rat$-modules,  hence it suffices to establish naturality with respect to $\gr$.

By Proposition \ref{prop:Phi_Psi_(co)mult}, $\Psi H$ is a functor on $\gr$ to graded-commutative  $\rat$-algebras. In particular, there is a natural augmentation and hence one can form the indecomposables $Q (\Psi H)$, a functor on $\gr$ with values in $\rat$-vector spaces. 

The natural multiplicative structure allows the result to be proved by induction upon the  degree $N$,  as follows.
The case $N=0$ is immediate and, in degree one, $H^1 = (QH)^1$ and it is clear 
that the functor obtained is $\aQ \otimes H^1$. 

For the inductive step in degree $N$, one shows that $\big(Q (\Psi H)\big)^N$  is naturally isomorphic to $\aQ \otimes (QH)^N $ 
 by checking that the functor is additive. This provides the top filtration quotient 
$
 (\Psi H) ^N \twoheadrightarrow \aQ \otimes (QH)^N.
$
The kernel is a quotient of 
\[
 \bigoplus_{i=1}^{N-1} 
 (\Psi H) ^i \otimes (\Psi H)^{N-i}.
\]
Using the inductive hypothesis together with the multiplicative structure gives 
the required result.
\end{proof}

\begin{exam}
\label{exam:TW_thm1}
 Let $H$ be the connected, graded-commutative Hopf algebra $\Lambda (x) \otimes 
\rat [y]$, with $|x|= 1$ and 
 $|y|=2$, with $x$ primitive and reduced diagonal $\overline{\Delta} y = x 
\otimes x$. (Note that, since
 $x$ has odd degree, $H$ is not graded-cocommutative.) Thus $(QH)^1 = 
\kring = (QH)^2$ 
  and $(QH)^n =0$ for $n >2$. 
 
Hence, for $N\in \nat^*$,  the functor $(\Psi H)^N$ has filtration with associated graded 
 \[
  \bigoplus _{2a +b = N} (S^a \otimes \Lambda^b ) \circ \aQ.
 \]
Here $(S^a \otimes \Lambda^b ) \circ \aQ$ has polynomial degree $a+b$ and one can check that the  filtration coincides with the polynomial filtration of $(\Psi 
H)^N$ (cf. Section \ref{subsect:poly_filt}). This filtration is not split, as is exhibited by Example \ref{exam:Theorem_1_TW} below. 
\end{exam}

\subsection{The functors $\Psi T_\cog(V)$ and $\Psi \pcoalg$}

In this section we apply Proposition \ref{prop:filter_Psi_H} to the case $H = T_\cog (V)$, the commutative Hopf algebra given by the tensor coalgebra with the shuffle product, using the length grading and working {\em without} Koszul signs. The constructions are natural with respect to $V \in \ob \fmodq$.

\begin{rem}
There are analogues for the graded setting {\em with} Koszul signs, which corresponds to working with $T_\cog ^\dagger (V)$. The results in this case can be deduced from the ones presented here by using the functor $(-)^\dagger$.
\end{rem}

In the following, $\liemod (n)$ denotes the $n$th Lie module (see Section \ref{subsect:lie}); this is a $\sym_n$-module and can thus be considered as an $\fb$-module supported on $\mathbf{n}$;  $\lieschur (n) (-) \in \ob \fcatQ[\fmodq]$ denotes the associated Schur functor  (see Section \ref{subsect:schur_correspond})
 and $\lie (V)$ the free Lie algebra on $V$.

  \begin{lem}
 \label{lem:indec_Tprime}
  There is a  natural isomorphism with respect to $V \in \ob \fmodq$:
  \begin{eqnarray*}
   Q T_\cog (V) 
   &\cong& 
   \bigoplus _{n \geq 1} 
   \lieschur  (n)(V)  \ \cong \  \lie (V).
    \end{eqnarray*}
 \end{lem}

 \begin{proof}
Since $V$ is finite-dimensional, $T_\cog(V)$ is of finite type  with respect to the length grading. This allows us to work with the dual. 

The dual Hopf algebra  of $T_\cog (V)^\sharp$  is isomorphic  to the universal enveloping  algebra $U \lie (V^\sharp)$, which has primitives $P (U \lie (V^\sharp)) = \lie (V^\sharp)$. 

Hence $Q T_\cog (V) $ (which is dual to the primitives  $P T_\cog (V)^\sharp$) is isomorphic to 
$ \lie (V^\sharp)^\sharp$. This is isomorphic to $\lie (V)$ (using Proposition \ref{prop:self-duality}) which identifies as given in terms of the Lie modules.
\end{proof}
 
\begin{rem}
\label{rem:Psi_length_grading}
For $V \in \ob \fmodq$, $T_\cog  (V)\in \ob \hcom(\fmodq)$ so we have $\Psi T_\cog  (V) \in \ob \fcatQ[\gr]$. We denote by $\Psi T_\cog(-)$ the object of  $\fcatQ[ \gr \times \fmodq]$ given by $(\zed^{\star r}, V) \mapsto  \Psi T_\cog  (V)(\zed^{\star r})$. The functor $\Psi T_\cog(-)$ inherits a  $\nat$-grading  from the length grading of $T_\cog  (V)$, 
hence can be  considered as an object of $\fcatQ[\nat \times \gr \times \fmodq]$.
\end{rem}

Before stating the next result, we note that  a functor of the form $\alpha_d S_\lambda \boxtimes \schur_\mu(-)$ is simple in $\fcatQ[\gr \times \fmodq]$. This follows (by the Schur correspondence) from the fact that $S_\lambda \boxtimes S_\mu$ is a simple $\sym_{|\lambda|} \times \sym_{|\mu|}$-module.

\begin{prop}
\label{prop:psiTV_lie}
The functor $\Psi T_\cog(-) \in \ob \fcatQ[ \gr \times \fmodq]$ has a natural filtration 
 with associated graded 
 \[
 \bigotimes _{n \geq 1} 
  S^* (\aQ \boxtimes  \lieschur(n) (-) ) 
 \]
 where $\aQ \in \ob \fcatQ[\gr]$, $\lieschur (n)(-)  \in \ob \fcatQ[\fmodq]$, and $\boxtimes$ is the external tensor product. 
 
Moreover, this functor lies in the essential image of $\fcatQ[\ab \times \fmodq]$ in $\fcatQ[\gr \times \fmodq]$. The component of polynomial degree $d$ with respect to $\gr$ is given by 
\[
\bigotimes_{\underline{b}} \bigotimes _{n \geq 1} 
  S^{b_n} (\aQ \boxtimes  \lieschur(n) (-) ) 
\]
where $\underline{b}   = (b_n |n \in \nat^*)$ is a sequence of natural numbers with $\sum_n b_n=d$.

The component of polynomial degree $N$ with respect to $\fmodq$ and polynomial degree $d$ with respect to $\gr$  decomposes as a finite direct sum of simple functors of the form 
\[
\alpha_d S_\lambda \boxtimes \schur_\mu(-)
\] 
where $\lambda \vdash d$ and $\mu \vdash N$ are partitions and $\boxtimes$ denotes the exterior tensor product. 
\end{prop}

\begin{proof}
 The indecomposables of $T_\cog (V)$ are identified by Lemma \ref{lem:indec_Tprime}. The analogue without Koszul signs of Proposition 
\ref{prop:filter_Psi_H} gives the first statement. Moreover, it is clear that this functor lies in the essential image of $\fcatQ[\ab \times \fmodq]$.

In particular, this functor can be written as:
 \[
\bigoplus_{\underline{b}} \bigotimes_{n \geq 1} S ^{b_n} (\aQ\boxtimes \lieschur(n) (-) ),
\]
where the sum is over sequences $\underline{b}   = (b_n |n \in \nat^*)$ with finite support (i.e., $b_n =0$ for $n \gg 0$). 

Replacing the symmetric power functors by tensor product functors, since we are working over $\rat$, the term indexed by $\underline{b}$ is a direct summand of 
\[
\bigotimes _n
 (\aQ \boxtimes  \lieschur(n) (-) ) ^{\otimes b_n}
 \cong 
 \big( \aQ^{\otimes \sum_n b_n}\big) \boxtimes  \bigotimes _n \big( \lieschur(n) (-) ^{\otimes b_n} \big),
\]
where the isomorphism is obtained by reordering the tensor product. By inspection, this is semi-simple in $\fcatQ[\ab \times \fmodq]$ and hence in $\fcatQ[\gr \times \fmodq]$, given as a finite direct sum of simple objects. 

Now, $\aQ^{\otimes \sum_n b_n}$ has polynomial degree $\sum_n b_n$ with respect to  $\gr$ and, with respect to $\fmodq$,  $\bigotimes _n  \lieschur(n) (-) ^{\otimes b_n}$ has polynomial degree $\sum_n n b_n$. From this, one deduces the second statement. 

The terms of polynomial degree $N$ with respect to $\fmodq$ arise from the sequences $\underline{b}$ such that  $\sum_n n b_n=N$. Clearly there are only finitely many sequences $\underline{b}$ that satisfy this equality (in particular, $b_n=0$ for $n>N$).  The final statement follows.
\end{proof}

\begin{cor}
\label{cor:isotypical_cpt}
The associated graded to the functor $\Psi T_\cog(-)$ given in Proposition \ref{prop:psiTV_lie}
admits an isotypical decomposition with respect to the functoriality in $\fmodq$:
\begin{eqnarray}
\label{eqn:decompose_isotypical_F_mu}
\bigotimes _{n \geq 1} 
  S^* (\aQ \boxtimes  \lieschur(n) (-) ) 
  \cong 
  \bigoplus_{\mu} F^\mu \boxtimes \schur_\mu (-),
\end{eqnarray}
where $F^\mu \in \ob \fpiQ{\gr}$ is a finite, semisimple polynomial functor, in particular $F^\mu$ is in the essential image of $\circ \A : \fpiQ{\ab} \rightarrow \fpiQ {\gr}$.
\end{cor}

\begin{proof}
The existence of a decomposition $\bigoplus_{\mu} F^\mu \boxtimes \schur_\mu (-)$ with $F^\mu$ a semisimple functor in $\fcatQ[\gr]$ follows immediately from Proposition \ref{prop:psiTV_lie}. The proof that each $F^\mu$ has only finitely-many composition factors follows as in the proof of  Proposition \ref{prop:psiTV_lie}. Namely, one restricts to sequences $\underline{b}$ such that  $\sum_n n b_n=N$ (without also requiring $\sum_n b_n=d$ as in the Proposition). There are only finitely-many such sequences, from which one deduces the result.
\end{proof}

\begin{defn}
We refer to the functor $F^\mu$ appearing in Corollary \ref{cor:isotypical_cpt} as the $\schur_\mu (-)$ isotypical component of $\bigotimes _{n \geq 1} 
  S^* (\aQ \boxtimes  \lieschur(n) (-) ) $.
\end{defn}

Proposition \ref{prop:psiTV_lie} and Corollary \ref{cor:isotypical_cpt} imply the corresponding result for $\Psi \pcoalg$ via the Schur correspondence. In the following, for a partition $\mu \vdash m$, we consider the simple $\sym_{m}$-module $S_\mu$ as an $\fb$-module supported on $\mathbf{m}$.

\begin{thm}
\label{thm:decomp_Psi_pcoalg}
In the Grothendieck group of $\fcatQ[\gr \times \fb]$, 
\[
\big[\Psi \pcoalg\big ]
= 
\sum_{\mu}
[F^\mu \boxtimes S_\mu ],
\]
where $F^\mu$ is the $\schur_\mu (-)$ isotypical component of $\bigotimes _{n \geq 1} 
  S^* (\aQ \boxtimes  \lieschur(n) (-) ) $ determined by (\ref{eqn:decompose_isotypical_F_mu}).

Hence: 
\[
[\beta_{|\mu|} S_\mu] = [F^\mu]
\]
in the Grothendieck group of $\fcatQ[\gr]$.
\end{thm}

\begin{proof}
Using the Schur correspondence, the first statement follows directly  from Proposition \ref{prop:psiTV_lie} and Corollary \ref{cor:isotypical_cpt}.

For the second statement, Theorem \ref{thm:beta_description} identifies 
$
\boldbeta
\cong
 \Psi \pcoalg$ and Theorem \ref{thm:beta_d} gives $\beta_m S_\mu \cong \boldbeta \otimes_\fb S_\mu$. Hence $\beta_m S_\mu \cong \Psi \pcoalg \otimes _\fb S_\mu$. The result follows using that $S_\rho \otimes_{\sym_m} S_\mu$ is zero unless $\rho = \mu$, when it is $\rat$.
\end{proof}

Recall that, for $\mu \vdash d$, $\beta_d S_\mu$ is the injective envelope of the simple functor $\alpha_d S_\mu$ in $\fpiQ{\gr}$ (see Corollary \ref{cor:inj_envelope}).
 Theorem \ref{thm:decomp_Psi_pcoalg} thus allows the calculation of
 the degree $d-1$ part of the polynomial filtration of $\beta_d S_\mu$ (expressed in terms of the functor  $\qhat{d-1}$ of Notation \ref{nota:qhat}) as    $\qhat{d-1} (\beta_dS_\mu)$. This is explained explicitly below.  
 
Recall that $c^*_{*,*}$ denote the Littlewood-Richardson coefficients and that $\nu \preceq \rho$ means that the Young diagram of the partition $\nu$ is contained in that of $\rho$ (see Notation \ref{nota:preceq}). The Littlewood-Richardson coefficients that appear below satisfy the following, for $\nu \vdash d-2$ and $\rho \vdash d-1$:
\begin{enumerate}
\item 
$c^\rho_{\nu,1}$ is $1$ if $\nu \preceq \rho$ and $0$ otherwise (this corresponds to the Pieri rule);
\item 
$c^{\mu}_{\nu,(11)} =1 $  if  $\nu \preceq \mu$ and the Young diagram of the skew partition $\mu /\nu$ does not consist of two boxes in the same row and is $0$ otherwise.
\end{enumerate}

\begin{cor}
\label{cor:d-1_beta_S_mu}
For $\mu \vdash d$, 
\[
\dim 
\ext^1_{\fcatQ[\gr]}
(\alpha_{|\rho|} S_\rho, \alpha_d S_\mu)
= 
\left\{
\begin{array}{ll}
0& |\rho |\neq d-1\\
\sum_{\nu \vdash d-2 } c^\rho_{\nu, 1}c^{\mu}_{\nu,11}
&|\rho |= d -1.
\end{array}
\right.
\]
In particular, this vanishes for $d\leq 1$.

Equivalently,  there is an isomorphism:
\[
\qhat{d-1} \big( \beta_dS_\mu \big) 
\cong 
\bigoplus_{\substack{\nu \vdash d-2 \\ \nu \preceq \mu }}
\bigoplus_{\substack{\rho \vdash d-1 \\ \nu \preceq \rho}}
(\alpha_{d-1} S^\rho)^{\oplus c^{\mu}_{\nu,(11)}}.
\]
\end{cor}

\begin{proof}
We first establish the second statement. By Theorem  \ref{thm:decomp_Psi_pcoalg}, $\qhat{d-1} (\beta_dS_\mu)$ is the degree $d-1$ component of $F^\mu$, using that $[\beta_d S\mu] = [F^\mu]$.

Using the notation of Proposition \ref{prop:psiTV_lie}, the degree $d-1$ component of $F^\mu$ arises from the sequences $\underline{b}= (b_n | n \in \nat^*)$ such that $\sum_n b_n =d-1$ and $\sum_n n b_n = d$, so that $\sum_n (n-1) b_n=1$. It follows that $b_n=0$ for $n>2$, $b_2 =1$ and $b_1 = d-2$. In particular, there is no such solution if $d \leq 1$. 

In the case $d\geq 2$, it follows that  the degree $d-1$ component of $F^\mu$ is given by the $\schur_\mu (-)$-isotypical component of 
\[
S^{d-2} \left(\aQ \boxtimes \schur_{(1)} (-)\right) \otimes \left(\aQ \boxtimes \schur_{(11)} (-)\right),
\]
using that $\lieschur(1) (-)= \schur_{(1)} (-)$ and $\lieschur(2) (-)= \schur_{(11)} (-)$.

The  Cauchy identity (see Proposition \ref{prop:cauchy_identities}) gives:
\[
S^{d-2} (\aQ \boxtimes \schur_{(1)} (-))
\cong 
\bigoplus_{\nu \vdash d-2} \alpha_{d-2} S_\nu \boxtimes \schur_{\nu} (-).
\]

Hence, $\qhat{d-1} (\beta_dS_\mu)$ is the $\schur_\mu(-)$-isotypical component of 
\[
\big(\bigoplus_{\nu \vdash d-2} \alpha_{d-2} S_\nu \boxtimes \schur_{\nu} (-)\big)
\otimes (\aQ \boxtimes \schur_{(11)} (-))
= 
\bigoplus_{\nu \vdash d-2} (\alpha_{d-2} S_\nu \otimes \aQ) \boxtimes (\schur_{\nu} (-) \otimes \schur_{(11)} (-)).
\]
By the Pieri rule, $\alpha_{d-2} S_\nu \otimes \aQ$ is isomorphic to $\bigoplus _{\substack{\rho \vdash d-1 \\ \nu \preceq \rho}}
\alpha_{d-1} S_\rho$. The Littlewood-Richardson rule gives that $\schur_{\nu} (-) \otimes \schur_{(11)} = \bigoplus_{\gamma \vdash d} \schur_{\gamma}(-)^{\oplus c^\gamma_{\nu, 11}}$. 
Picking out the isotypical component corresponding to $\mu$ gives the result.

For the equivalent formulation, we use the results of Section \ref{sec:recoll};  in particular, we use that the socle filtration of $\beta_{d}S_\mu$ coincides with its polynomial filtration, by Proposition \ref{prop:socle_filtration_char_0}. This  implies that the beginning of the minimal injective resolution of $\alpha_{d} S_\mu$ is 
\[
0 
\rightarrow 
\alpha_d S_\mu 
\rightarrow 
\beta_{d} S_\mu \rightarrow 
\beta_{d-1} \cre_{d-1} \big( \qhat{d-1} (\beta_dS_\mu) \big),
\]
where the right hand map induces an isomorphism on applying $\qhat{d-1}$.

It follows  that
 $$\ext^1_{\fcatQ[\gr]}
(\alpha_{|\rho|} S_\rho, \alpha_d S_\mu) \cong \hom _{\fcatQ[\gr]} (\alpha_{|\rho|} S_\rho, \beta_{d-1} \cre_{d-1} ( \qhat{d-1} (\beta_dS_\mu ))).
$$
By Proposition \ref{prop:beta_socle}, the socle of $\beta_{d-1}\cre_{d-1} \big( \qhat{d-1}( \beta_dS_\mu )\big)$ is $\qhat{d-1} (\beta_dS_\mu)$, which is semi-simple with all composition factors of degree $d-1$, by construction. Hence, the right hand side  vanishes unless $|\rho|=d-1$; in the latter case, it counts the multiplicity of the composition factors indexed by $\rho$ in $\qhat{d-1}( \beta_dS_\mu)$.
\end{proof}  
  
\begin{rem}
\ 
\begin{enumerate}
\item 
The result can be paraphrased as follows: the multiplicity of $\alpha_{d-1} S_\rho$ in $\qhat{d-1} \big( \beta_dS_\mu \big) $ is the number of partitions $\nu\vdash d-2$ such that $\nu \preceq \rho$, $\nu \preceq \mu$ and $\mu / \nu$ has skew diagram with boxes in different rows.
\item 
The method used in the proof is developed in the following section, leading to the much more general Corollary \ref{cor:Grothendieck_ring_Psi_pcoalg}.
\item 
A particular case of \cite[Theorem 1]{V_ext} identifies $\ext^1 _{\fcatQ[\gr]} (\aQ^{\otimes d-1}, \aQ^{\otimes d})$ as a representation of $\sym_d \times \sym_{d-1}\op$ and  shows that  $\ext^1 _{\fcatQ[\gr]} (\aQ^{\otimes n}, \aQ^{\otimes d})$ vanishes for $n \neq d-1$. This   information can be used to give an alternative proof of Corollary \ref{cor:d-1_beta_S_mu}.
\end{enumerate}
\end{rem}
  
\begin{exam}
\label{exam:ext1}
To illustrate Corollary \ref{cor:d-1_beta_S_mu}, we give 
$$\dim 
\ext^1_{\fcatQ[\gr]}
(\alpha_{|\rho|} S_\rho, \alpha_d S_\mu)
$$ for the partitions $\mu \in \{ (d), (1^d), (d-1,1), (d-2,2), (d-2, 1,1), (d-k, 1^{k})\}$ and $\rho \vdash d-1$. For this, it is necessary to pay attention to the degenerate cases corresponding to small $d$ (and small $k$ in the final case).
\begin{enumerate}
\item 
For $d \geq 1$ and $\mu = (d)$,
$
\dim 
\ext^1_{\fcatQ[\gr]}
(\alpha_{d-1} S_\rho, \alpha_d S_{(d)}) = 0.
$
\item
For $d \geq 2$ and $\mu = (1^d)$,
\begin{eqnarray*}
\dim 
\ext^1_{\fcatQ[\gr]}
(\alpha_{d-1} S_\rho, \alpha_d S_{(1^d)}) &=& \left\{ 
\begin{array}{lllll}
1 
&
\rho \in \{ (1) \} &  d=2\\
1 
&
\rho \in \{ (1^{d-1}), (2,1^{d-3}) \} &  d\geq 3 \\
0 & \mbox{otherwise.}
\end{array}
\right. 
\end{eqnarray*}
\item 
For $d \geq 3$ and $\mu = (d-1,1)$, 
\begin{eqnarray*}
\dim 
\ext^1_{\fcatQ[\gr]}
(\alpha_{d-1} S_\rho, \alpha_d S_{(d-1,1)}) &=& \left\{ 
\begin{array}{ll}
1 
&
\rho \in \{ (d-1), (d-2,1) \} \\
0 & \mbox{otherwise.}
\end{array}
\right. 
\end{eqnarray*}
\item 
For $d \geq 4$ and $\mu = (d-2,2)$, $\dim 
\ext^1_{\fcatQ[\gr]}
(\alpha_{d-1} S_\rho, \alpha_d S_{(d-2,2)})$ is 
\begin{eqnarray*}
 \left\{ 
\begin{array}{llll}
1 & \rho \in \{ (2,1), (1,1,1) \}  & d=4\\
1 & \rho \in \{ (d-2,1), (d-3,2), (d-3,1,1) \}  & d\geq 5\\
0 & \mbox{otherwise.}
\end{array}
\right. 
\end{eqnarray*}
\item 
For $d\geq 4$ and $\mu = (d-2,1,1)$, $\dim 
\ext^1_{\fcatQ[\gr]}
(\alpha_{d-1} S_\rho, \alpha_d S_{(d-2,1,1)}) $ is
\begin{eqnarray*}
\left\{ 
\begin{array}{lll}
2 & \rho = (d-2,1) \\
1 & \rho \in \{ (3), (1,1,1) \} & d=4\\
1 & \rho \in \{ (d-1), (d-3,2), (d-3,1,1) \}  & d\geq 5\\
0 & \mbox{otherwise.}
\end{array}
\right. 
\end{eqnarray*}
\item
For $d-k\geq 2$, $k \geq 3$ and $\mu = (d-k, 1^k)$, $\dim 
\ext^1_{\fcatQ[\gr]}
(\alpha_{d-1} S_\rho, \alpha_d S_{(d-k,1^k)})$ is
\begin{eqnarray*}
 \left\{ 
\begin{array}{ll}
2 & \rho = (d-k,1^{k-1}) \\
1 & \rho \in \{ (d-k+1, 1^{k-2}), (d-k-1, 1^k), (d-k,2, 1^{k-3}), (d-k-1,2,1^{k-2}) \} \\
0 & \mbox{otherwise.}
\end{array}
\right. 
\end{eqnarray*}
(The degenerate case $k=2$ is similar.)
\end{enumerate}
To indicate how these calculations are carried out, consider the last case. By Corollary \ref{cor:d-1_beta_S_mu}, for $\rho \vdash d-1$ we have: 
\[
\dim 
\ext^1_{\fcatQ[\gr]}
(\alpha_{d-1} S_\rho, \alpha_d S_{(d-k,1^k)}) 
= 
\sum_{\nu \vdash d-2 } c^\rho_{\nu, 1}c^{(d-k,1^k)}_{\nu,11}
\]
By Example \ref{exam:CR_1n}, $c^{(d-k,1^k)}_{\nu,11} \neq 0$ iff $\nu \in \{ (d-k,1^{k-2}), (d-k-1,1^{k-1}) \}$ and in these cases $c^{(d-k,1^k)}_{\nu,11}=1$ so 
\[
\dim 
\ext^1_{\fcatQ[\gr]}
(\alpha_{d-1} S_\rho, \alpha_d S_{(d-k,1^k)}) 
= 
c^\rho_{(d-k,1^{k-2}), 1}+c^\rho_{(d-k-1,1^{k-1}), 1}
\]
By Example \ref{exam:CR_1n}, $c^\rho_{(d-k,1^{k-2}), 1}\neq 0$ for $\rho \in \{ (d-k+1,1^{k-2}), (d-k,2, 1^{k-3}), (d-k,1^{k-1}) \}$ and $c^\rho_{(d-k-1,1^{k-1}), 1}\neq 0$ for $\rho \in \{ (d-k,1^{k-1}), (d-k-1,2, 1^{k-2}), (d-k-1,1^{k})\}$ and in these cases the Littlewood-Richardson coefficients are equal to $1$. The result follows.
\end{exam}

At the other extreme, one can consider composition factors of degree one:

\begin{prop}
\label{prop:except_linear}
For $\mu \vdash d $, the multiplicity of  $\alpha_1 S_{(1)} = \aQ$ in $\beta_{d}S_\mu$ is equal to that of $S_\mu$ in $\liemod(d)$.
\end{prop}

\begin{proof}
By Theorem \ref{thm:decomp_Psi_pcoalg},  the only way to acquire a composition factor of $\aQ$ in $\beta_d S_\mu$ is from 
the term corresponding to the sequence $\underline{b}$ with  $b_d =1$ and all other terms zero. This gives $\aQ \boxtimes \lieschur (d)(-)$, using the notation of Proposition \ref{prop:psiTV_lie}. The result follows by splitting into isotypical components.  
\end{proof}

This shows that the functors $\beta_d S_\mu$ are highly non-trivial in that, apart from certain exceptional cases, the socle filtration (or, equivalently, the polynomial filtration according to Corollary \ref{exam:beta_M_socle_filt})  has the maximal possible length:

\begin{cor}
\label{cor:max_length_beta_d}
For $\mu \vdash d $, $\beta_{d}S_\mu$  has socle length $d$, unless $\mu$ is one of the following partitions: $(d)$ for $d\geq 2$; $(1^d)$ for $d \geq 3$; $(2^2)$ for $d=4$; $(2^3)$ for $d=8$.
\end{cor}  

\begin{proof}
The functor $\beta_d S_\mu$ has polynomial degree exactly $d$ and, by Corollary \ref{exam:beta_M_socle_filt}, its polynomial filtration coincides with the socle filtration (up to reindexing). If $\beta_d S_\mu$ contains $\aQ$ as a composition factor, its polynomial filtration must have length $d$, hence its socle length is $d$.

Using Proposition \ref{prop:except_linear}, the result then follows from Reutenauer's result (stated here as Theorem \ref{thm:Reutenauer_exceptional}), which determines the partitions $\mu \vdash d$ that do not occur as a composition factor of $\liemod (d)$.
\end{proof}

\subsection{Isotypical components}
The calculation of the isotypical components $F^\mu$ of $\bigotimes_{n\geq 1} S^* (\aQ \boxtimes  \lieschur(n) (-) ) $ given in Corollary \ref{cor:isotypical_cpt} is accessible by standard methods of representation theory. For this, one exploits the fact that the Lie modules $\liemod(n)$ have an explicit, combinatorial description (see Theorem \ref{thm:Reutenauer_multiplicities}).

\begin{prop}
 \label{prop:psi_tcoalg} 
For $N \in \nat^*$, the functor $\big(\Psi T_\cog  (-)\big)^N$  in $\fcatQ[\gr\times \fmodq]$ 
admits a finite filtration with 
 associated graded:
 \[
 \bigoplus_{\substack{\underline{b}\\ \sum n b_n =N}}
 \Big( 
 \bigotimes_{\substack{n=1}}^N
 \big(
\bigoplus_{\lambda(n) \vdash b_n} \alpha_{b_n} S_{\lambda(n)} 
 \boxtimes 
 \big(\schur_{\lambda(n)} \circ \lieschur(n) (-)\big)
 \big)
 \Big),
\]
where the sum is over sequences $\underline{b} = (b_n | n \in \nat^*)$ of natural numbers such that $\sum_n nb_n=N$.
 
Moreover, for a given indexing sequence $\underline{b}$,   
\begin{eqnarray*}
\bigotimes_{\substack{n=1 }}^N 
 \big(
\bigoplus_{\lambda(n) \vdash b_n} \alpha_{b_n} S_{\lambda(n)} 
 \boxtimes 
 \big(\schur_{\lambda(n)} \circ \lieschur(n) (-)\big)
 \big)
\cong 
\quad \quad
\\
\quad \quad 
\bigoplus_{\underline{\lambda}\vdash \underline{b}}
\Big(
\big(
\bigotimes_{\substack{n}} \alpha_{b_n} S_{\lambda(n)} 
\big)
\ 
\boxtimes 
\ 
\bigotimes_{\substack{n}} \big( \schur_{\lambda(n) } \circ \lieschur(n)(-) \big)
\Big),
\end{eqnarray*}
where the sum is over sequences of partitions $\underline{\lambda} = \{\lambda (n) \}$ such that $\lambda (n) \vdash b_n$.
\end{prop}

\begin{proof}
Proposition \ref{prop:psiTV_lie} implies that 
$(\Psi 
T_\cog(-))^N$ has a finite filtration with associated  graded:
 \[
 \bigoplus_{\substack{\underline{b}\\ \sum n b_n =N}}
 \big( 
 \bigotimes_{n=1}^N
 S^{b_n} (\aQ \boxtimes  \lieschur(n) (-) ) 
 \big).
\]

The first statement follows by using the Cauchy identity (see Proposition \ref{prop:cauchy_identities}):
for $b\in 
\nat$, this  gives the 
isomorphism of functors:
\[
 S^{b} ( \aQ \boxtimes \lieschur(n)(-) )
 \cong
 \bigoplus_{\lambda \vdash b} \big( \schur_\lambda \circ \aQ\big)
 \boxtimes 
 \big(\schur_\lambda \circ \lieschur(n) (-)\big). 
\]
To conclude, we use that, for $\lambda \vdash b$, the functor $\schur_\lambda \circ \aQ $ identifies with $\alpha_b S_\lambda$.

The final isomorphism follows by rearranging the tensor and the exterior tensor products.
\end{proof}

To illustrate Proposition \ref{prop:psi_tcoalg}, we consider the cases  $N \leq 4$. The case $N=1$ is trivial, and $N=2$ is straightforward; the cases $N=3$ and $N=4$ are treated below.

\begin{exam}
\label{exam:D=3}
Consider  $(\Psi T_\cog(-))^3  \in \ob \fcatQ[\gr \times \fmodq]$, which  has a  filtration with associated graded
 \[
  \bigoplus_{\substack{\underline{b}\\ \sum n b_n =3}}
 \Big( 
 \bigotimes_{\substack{n=1}}^3 
 \big(
\bigoplus_{\lambda(n) \vdash b_n}   \alpha_{b_n} S_\lambda
 \boxtimes 
 \big(\schur_{\lambda(n)} \circ \lieschur(n)(-) \big)
 \Big).
\]
There are three sequences $\underline{b}$ to consider, $(b_1, b_2, b_3)\in \{(3,0,0), (1,1,0), (0,0,1)\}$, where $b_n =0$ for $n >3$.
\begin{enumerate}
\item 
$\underline{b}= (3,0,0)$; the sum is indexed by partitions $\lambda \vdash 3$ for $n=1$. Since $\lieschur (1)(-)$ is the identity functor, this gives:
\[
\bigoplus_{\lambda \vdash 3}  \alpha_3 S_\lambda \boxtimes \schur_\lambda(-).  
\] 
\item 
$\underline{b}= (1,1,0)$; there is a unique term, corresponding to $\underline{\lambda}= ((1),(1))$.
 Since  $\liemod (1) = S_{(1)}$ and $\liemod(2)=S_{(11)}$ this gives:
\[
(\aQ \otimes \aQ) \boxtimes (\schur_{(1)} (-)\otimes \schur_{(11)}(-) ) \cong \alpha (S_{(2)} \oplus S_{(11)}) \boxtimes (\schur_{(111)}(-) \oplus \schur_{(21)}(-)),
\] 
using  $\aQ \otimes \aQ \cong \alpha_2 (S_{(2)} \oplus S_{(11)})$ and $\schur_{(1)} \otimes \schur_{(11)}(-) \cong \schur_{(111)}(-) \oplus \schur_{(21)}(-)$ (cf. Example \ref{exam:LR}).
\item 
$\underline{b}= (0,0,1)$; there is a single term with $\lambda = (1) \vdash b_3 =1$; since $\liemod (3)= S_{(21)}$ this gives: 
\[
\alpha_1 S_{(1)} \boxtimes \schur_{(21)}(-).
\] 
\end{enumerate}
These give the following isotypical components, $\bigoplus_\mu  F^\mu \boxtimes \schur_\mu (-)$, as in Corollary \ref{cor:isotypical_cpt}: 
\[
\begin{array}{|l|l|}
\hline
\schur_\mu (-) & F^\mu \\
\hline 
\hline
\schur_{(111)}(-) & 
\alpha_3 S_{(111)}  \oplus \alpha_2 (S_{(2)} \oplus S_{(11)} )\\
\hline
\schur_{(21)}(-)  & \alpha_3 S_{(21)} \oplus \alpha_2 (S_{(2)} \oplus S_{(11)}) \oplus  \alpha_1 S_{(1)}
\\
\hline
\schur_{(3)}(-)& \alpha S_{(3)} 
\\
\hline 
\end{array}
\]
\end{exam}

\begin{exam}
\label{exam:D=4}
Consider  $(\Psi T_\cog(-))^4  \in \ob \fcatQ[\gr \times \fmodq]$, which  has a  filtration with associated graded
 \[
  \bigoplus_{\substack{\underline{b}\\ \sum n b_n =4}}
 \Big( 
 \bigotimes_{\substack{n=1}}^4 
 \big(
\bigoplus_{\lambda(n) \vdash b_n}   \alpha_{b_n} S_\lambda
 \boxtimes 
 \big(\schur_{\lambda(n)} \circ \lieschur(n)(-) \big)
 \Big).
\]
There are five sequences $\underline{b}$ to consider:
$$
(b_1 , b_2, b_3, b_4)
\in 
\{(4,0,0,0), (2,1,0,0),(0,2,0,0), (1,0,1,0), (0,0,0,1)\},
$$ where $b_n = 0$ for $n >4$. We proceed as in the case $N=3$.
\begin{enumerate}
\item 
$\underline{b}= (4,0,0,0)$;  this gives:
\[
\bigoplus_{\lambda \vdash 4}  \alpha_4 S_\lambda \boxtimes \schur_\lambda(-) .
\] 
\item 
$\underline{b}= (2,1,0,0)$; there are two terms given by $\underline{\lambda}\in \{ ((11), (1)), ((2),(1)) \}$, giving: 
\[
\alpha_3 (S_{(2)} \tenfb S_{(1)}) \boxtimes (\schur_{(2)}(-) \otimes \schur_{(1,1)}(-) ) 
\ 
\oplus 
\ \alpha_2 (S_{(1,1)} \tenfb S_{(1)}) \boxtimes (\schur_{(1,1)}(-) \otimes \schur_{(1,1)}(-) ),
\] 
whence (using Example \ref{exam:LR}) the contribution:
\begin{eqnarray*}
&&\big(\alpha_3 (S_{(3)} \oplus S_{(21)})\boxtimes  (\schur_{(31)}(-) \oplus \schur_{(211)}(-))\big)
\ \oplus \ 
\\
&& \quad \quad 
\big(\alpha_3 (S_{(111)} \oplus S_{(21)})\boxtimes  (\schur_{(1111)}(-) \oplus \schur_{(211)}(-) \oplus \schur_{(22)}(-))\big).
\end{eqnarray*}
\item 
$\underline{b}= (0,2,0,0)$; again the sum is indexed over $\lambda \vdash b_2 = 2$, giving:
\[
\big( \alpha_2 S_{(2)} \boxtimes \schur_{(2)} \circ \schur_{(11)}(-) \big) 
\oplus 
\big( \alpha_2 S_{(11)} \boxtimes \schur_{(11)} \circ \schur_{(11)}(-) \big).
\]
Hence, using Lemma \ref{lem:plethysm}, this gives:
\[
\big( \alpha_2 S_{(2)} \boxtimes (\schur_{(1111)}(-) \oplus \schur_{(22)}(-))  \big) 
\oplus 
\big ( \alpha_2 S_{(11)} \boxtimes \schur_{(211)}(-) \big).
\]
\item 
$\underline{b}= (1,0,1,0)$; there is a unique term corresponding to $\lambda_1 \vdash b_1 =1$ and $\lambda_3 \vdash b_3=1$. This gives:
\[
(\aQ \otimes \aQ) \boxtimes (\schur_{(1)}(-) \otimes \schur_{(21)}(-) ) 
\ \cong \  
\alpha_2(S_{(2)} \oplus S_{(11)}) \boxtimes (\schur_{(31)}(-) \oplus \schur_{(211)}(-) \oplus \schur_{(22)}(-)), 
\]
using Example \ref{exam:LR} for the isomorphism.
\item 
$\underline{b}= (0,0,0,1)$; there is a unique term corresponding to $\lambda_4 \vdash 1$, giving  $\aQ \boxtimes  \lieschur (4)(-) $ and  hence (using Example \ref{exam:liemod}):
\[
\alpha_1 S_{(1)} \boxtimes (\schur_{(31)} (-) \oplus \schur_{(211)}(-)).
\]
\end{enumerate}
These give the following isotypical components:
\[
\begin{array}{|l|l|}
\hline
\schur_\mu (-) & F^\mu \\
\hline 
\hline
\schur_{(1111)}(-) & 
\alpha_4 S_{(1111)} 
\oplus 
\alpha_3 (S_{(111)} \oplus S_{(21)} )
\oplus 
\alpha_2 S_{(2)} \\
\hline
\schur_{(211)}(-) & 
\alpha_4 S_{(211)}
\oplus 
\alpha_3 (S_{(3)} \oplus S_{(21)}^{\oplus 2} \oplus  S_{(111)})
\oplus
\alpha_2 (S_{(11)}^{\oplus 2} \oplus S_{(2)})
\oplus 
\alpha_1 S_{(1)}\\
\hline
\schur_{(22)} (-) &
\alpha_4 S_{(22)} 
\oplus 
\alpha_3 (S_{(111)} \oplus S_{(21)} )
\oplus 
\alpha_2 (S_{(2)}^{\oplus 2} \oplus S_{(11)})
\\
\hline
\schur_{(31)}(-)
&
\alpha_4 S_{(31)}
\oplus 
\alpha_3 (S_{(3)} \oplus  S_{(21)}) 
\oplus 
\alpha_2 (S_{(2)} \oplus S_{(11)}) 
\oplus 
\alpha_1 S_{(1)}
\\
\hline
\schur_{(4)}(-) & 
\alpha_4 S_{(4)}
\\
\hline
\end{array}
\]
\end{exam}

\ 
Having illustrated the method, we proceed to the general statement, for which we introduce the following notation.

\begin{nota}
\label{nota:sequence_partition_CR_pleth}
For $\underline{\lambda}: = \{ \lambda (i)\  | \  i \in \mathscr{I} \subset \nat \}$ a finite sequence of partitions and partitions $\mu, \nu$, 
\begin{enumerate}
\item 
 extending the notation for Littlewood-Richardson coefficients (cf. Definition \ref{def:LR-coefficients}), let $c^\nu_{\underline{\lambda}}$ be the multiplicity of $S_\nu$ in $\bigodot_n S_{\lambda (n)}$; equivalently, in terms of Schur functors, this is the multiplicity of $\schur_\nu (-)$ in $\bigotimes_n \schur_{\lambda (n)} (-)$;
\item 
extending the notation for the plethysm coefficients (cf. Definition \ref{def:plethysm-coefficients}), let $p_{\mu, \liemod (n)}^\nu$ denote the multiplicity of $S_\nu$ in $S_\mu \circ \liemod (n)$, where $n \in \nat$, equivalently, in terms of Schur functors, this is the multiplicity of $\schur _\nu (-) $ in $\schur _\mu \circ \lieschur(n) (-)$.
\end{enumerate}
\end{nota}

Proposition \ref{prop:psi_tcoalg} has the following Corollary, which should be viewed as an explicit form of Theorem \ref{thm:decomp_Psi_pcoalg}:

\begin{cor}
\label{cor:Grothendieck_ring_Psi_pcoalg}
There is an equality in the Grothendieck group of $\fcatQ[\gr \times \fb]$:
\[
\big[\Psi \pcoalg\big ]
= 
\sum_{\rho,\nu}
\sum_{\underline{b}}
\sum_{\underline{\lambda}\vdash \underline{b}}
\sum_{\underline{\mu}}
\Big(
c^\rho _{\underline{\lambda}}
c^\nu _{\underline{\mu}}
\prod_{\substack{n \\  b_n\neq 0}} p^{\mu(n)}_{\lambda(n), \liemod(n)}
\Big)
\ [\alpha_{|\rho|} S_\rho \boxtimes S_\nu ]
\]
where $\underline{b} = (b_n | n \in \nat^*)$ is a sequence of natural numbers, $\rho, \nu$ are partitions, $\underline{\lambda}$ is as in Notation \ref{nota:sequence_partition_CR_pleth},  and $\underline{\mu}$ ranges over sequences of partitions $\{ \mu (n) | 1 \leq n \in \nat\}$ such that $|\mu (n)| = n |\lambda (n)|$ for all $n$.

Hence, for $\nu \vdash d>0$, in the Grothendieck group of $\fcatQ[\gr]$:
\[
\big[\beta_d S_\nu]
= 
\sum_{\rho}
\sum_{\underline{b}}
\sum_{\underline{\lambda}\vdash \underline{b}}
\sum_{\underline{\mu}}
\Big(
c^\rho _{\underline{\lambda}}
c^\nu _{\underline{\mu}}
\prod_{\substack{n \\  b_n\neq 0}} p^{\mu (n)}_{\lambda (n), \liemod(n)}
\Big)
\ [\alpha_{|\rho|} S_\rho].
\]
\end{cor}

\subsection{Three infinite families of examples}
\label{subsect:three_infinite_families}

This section illustrates Theorem \ref{thm:decomp_Psi_pcoalg} by considering the three infinite families of partitions: 
$(n)$, $(1^n)$ and $(n-1, 1)$, for $n \in \nat^*$ (with $n >1$ in the final case). The results can be proved directly using Corollary \ref{cor:Grothendieck_ring_Psi_pcoalg}. To try and make the argument more conceptual, we proceed 
 by using Proposition \ref{prop:psi_tcoalg} (which is a reformulation of Theorem \ref{thm:decomp_Psi_pcoalg}).
 
Proposition \ref{prop:psi_tcoalg} implies that, for our chosen partition $\mu$, we require to calculate the isotypical component of $\schur_\mu(-)$ in:
\begin{eqnarray}
\label{eqn:isotypical_cpt}
\bigoplus_{\underline{b}}
\bigoplus_{\underline{\lambda}\vdash \underline{b}}
\Big(
\big(
\bigotimes_{\substack{n }} \alpha_{b_n} S_{\lambda(n)} 
\big)
\ 
\boxtimes 
\ 
\bigotimes_{\substack{n}} \big( \schur_{\lambda(n) } \circ \lieschur(n)(-) \big)
\Big) 
\end{eqnarray}
where the sum is over sequences $\underline{b} = (b_n | n \in \nat^*)$ of natural numbers such that $\sum_n nb_n = |\mu|$. Thus the argument boils down  to determining for which sequences of partitions $\underline{\lambda}$, $\schur_\mu (-)$ occurs as a composition factor of 
\begin{eqnarray}
\label{eqn:lambda_seq_schur}
\bigotimes_{\substack{n}} \big( \schur_{\lambda(n) } \circ \lieschur(n)(-) \big).
\end{eqnarray}
 
 The key fact that makes the cases considered here accessible is the following property of the Lie representations, which follows from Theorem \ref{thm:Reutenauer_multiplicities} (and can also be proved directly, by elementary means).

\begin{lem}
\label{lem:lie_easy} 
For $n \in \nat$:
\begin{enumerate}
\item 
$S_{(n)}$ is a composition factor of $\liemod(n)$ if and only if $n=1$;
\item 
$S_{(1^n)}$ is a composition factor of $\liemod(n)$ if and only if $n\in \{1,2 \}$;
\item 
$S_{(n-1,1)}$ is a composition  factor of $\liemod(n)$ if and only if $n \geq 3$.
\end{enumerate}
Moreover, in each case, the simple occurs with multiplicity one.
\end{lem}

We first treat the family of partitions $(n)$, which exhibits exceptional behaviour:

\begin{prop}
\label{prop:isotypical_trivial}
For $n \in \nat$,  
\[
\beta_n S_{(n)} \cong \alpha_n S_{(n)};
\]
in particular $\alpha_n S_{(n)}$ is injective in $\fpiQ{\gr}$.
\end{prop}

\begin{proof}
Using the first statement of Lemma \ref{lem:lie_easy}, it is clear that $\schur_{(n)}(-)$ occurs in (\ref{eqn:lambda_seq_schur}) for a sequence of partitions $\underline{\lambda}$ if and only if $\lambda (1)=n$ and all other $\lambda(i)$ are zero. Hence the corresponding isotypical component in equation (\ref{eqn:isotypical_cpt}) is $\alpha_n S_{(n)}$, as required. The final statement follows since $\beta_n S_{(n)}$ is injective in $\fpiQ{\gr}$, by Theorem \ref{thm:injectivity_beta}.
\end{proof}

\begin{rem}
Proposition \ref{prop:isotypical_trivial}  can also be deduced from \cite[Theorem 4.2]{V_ext}, where it is shown  that,  $\ext^*_{\fcatk[\gr]}(\aQ^{\otimes m}, \alpha S_{(n)})$ is $\rat$ for $m=n$ and $*=0$ and is $0$ otherwise. 
\end{rem}

The case of the family of partitions $(1^n)$ is slightly more complicated:

\begin{prop}
\label{prop:isotypical_sgn}
For $n \in \nat^*$,  
in the Grothendieck group of $\fcatQ[\gr]$:
\begin{eqnarray*}
\big[\beta_n S_{(1^n)}]
&=& 
\sum_{\substack{a, b \in \nat \\a + 2b = n}}
[\alpha_a S_{(1^a)} \otimes \alpha_b S_{(b)}] 
\\
&=& \sum_{\substack{a, b \in \nat, a + 2b = n}}
([\alpha_{a+b} S_{(b,1^a)}] + [ \alpha_{a+b} S_{(b+1,1^{a-1})}])
\end{eqnarray*}
where,  $S_{(0,1^a)}$ and $S_{(a+1,1^{-1})}$ (corresponding to $b=0$ and $a=0$ respectively) are understood to be zero.
\end{prop}

\begin{proof}
The proof is analogous to that of Proposition \ref{prop:isotypical_trivial}, this time using the second statement of Lemma \ref{lem:lie_easy}. In this case, $\schur_{(1^n)} (-)$ occurs as a composition factor of (\ref{eqn:lambda_seq_schur}) if and only if $\lambda (1) = (1^a)$, $\lambda (2) = (b)$ with all other $\lambda (i)$ zero, so that $a+2b=n$; in this case $\schur_{(1^n)} (-)$ occurs with multiplicity one. 

The result then follows by identifying the corresponding isotypical component in (\ref{eqn:isotypical_cpt}).
\end{proof}

\begin{rem}
In Theorem \ref{thm:omega_omega1_sign} of  Section \ref{subsect:omega_beta_1^n} we give more information on the structure of $\beta_n S_{(1^n)}$.
\end{rem}

Finally, we treat the case of the family of partitions $(n-1, 1)$:

\begin{prop}
\label{prop:beta_(n-1)1}
For $2 \leq n \in \nat$,  in the Grothendieck group of $\fcatQ[\gr]$:
\begin{eqnarray*}
[\beta_n S_{((n-1)1)}] &=&  \sum_{j=2}^n \Big ( [\alpha_j S_{((j-1)1)} ] +   [\alpha_{j-1} S_{(j-1)}]\Big).
\end{eqnarray*}
\end{prop}

\begin{proof}
As in Propositions \ref{prop:isotypical_trivial} and \ref{prop:isotypical_sgn}, using Lemma \ref{lem:lie_easy} we first identify the $\underline{\lambda}$ for which $\schur_{(n-1,1)}(-)$ can occur. 

One checks that there are two possible cases:
\begin{enumerate}
\item 
$\lambda (1) = (n-1,1)$, with all other terms zero; 
\item 
$\lambda (1) = (a)$, $\lambda (b) = (1)$, with all other terms zero, were $a, b \in \nat$ with $b \geq 2$ and $a+b =n$. 
\end{enumerate}
In each case,  $\schur_{(n-1,1)}(-)$ occurs with multiplicity one. 

In the corresponding isotypical component, using (\ref{eqn:isotypical_cpt}), the first case contributes $\alpha_n S_{(n-1,1)}$ and the second $\alpha_2 S_{(a)} \otimes \alpha_1 S_{(1)}$. Using Pieri's rule gives the result. 
\end{proof}

In the following Proposition and later in the paper, the structure of a functor $F \in \ob\fcatk[\gr]$ is given using Loewy diagrams.  Such diagram is associated to the socle filtration (see Definition \ref{defn:socle_length}) and  is arranged so that the simple factors of the socle appear at its base and the $i$-th row gives the composition factors of $\soc_i(F)/\soc_{i-1}(F)$. The Loewy diagram also indicates how the successive layers of the socle filtration are joined together: a line indicates the \textit{existence} of a non-trivial extension between the corresponding composition factors.

By Corollary \ref{exam:beta_M_socle_filt}, up to reindexing, the socle filtration of $\beta_d M$ coincides with the polynomial filtration of $\beta_d M$. So, in the Loewy diagram, the composition factors of $\beta_d M$ having the same polynomial degree appear on the same horizontal level. This allows the polynomial filtration to be read off from the Loewy diagram, with polynomial degree {\em decreasing} as one moves {\em up} from the socle.  

\begin{rem}
\label{rem:examples_Loewy}
In most of the examples given in this paper, it follows from Corollary \ref{cor:d-1_beta_S_mu} and Example \ref{exam:ext1}, that a non-trivial extension indicated by a line in the Loewy diagram is actually \textit{unique} (up to a scalar). In this case, the Loewy diagram essentially gives the complete structure of the functor.
\end{rem}

The following Proposition gives the complete structure of the functor $\beta_n S_{((n-1)1)}$.
 
\begin{prop}
\label{prop:Loewy_S(n-1)1}
The structure of $\beta_n S_{((n-1)1)}$ is represented by the following Loewy diagram, which has the form of a comb:
\[
\xymatrix@R-1.5pc@C-1.5pc{
&&& \alpha_1 S_{(1)} 
\\
&&\cdots &&\alpha_2 S_{(11)} \ar@{-}[ul]
\\
& \alpha_{n-2} S_{(n-2)}&& \cdots \ar@{-}[ur]\ar@{-}[ul] \\
 \alpha_{n-1} S_{(n-1)} && \alpha_{n-1}  S_{((n-2)1)} \ar@{-}[ur]\ar@{-}[ul] \\
& \alpha_n S_{((n-1)1)}.  \ar@{-}[ur]\ar@{-}[ul]
}
\] 
Namely, each possible extension between the simple composition factors occurs and the extensions are unique up to non-zero scalar multiple.
\end{prop}

\begin{proof}
Proposition \ref{prop:beta_(n-1)1} gives the composition factors of $\beta_n S_{((n-1)1)}$. In particular, each factor is of multiplicity one. 
For the extensions, by Corollary \ref{cor:d-1_beta_S_mu} we can have non-trivial extensions only between composition factors of degree $d$ and of degree $d-1$. By Example  \ref{exam:ext1}, there is a unique extension, up to non-zero scalar multiple, between $\alpha_k S_{(k-1,1)}$ and $\alpha_{k-1} S_{(k-1)}$ and there is no extension between $\alpha_k S_{(k)}$ and $\alpha_{k-1} S_\rho$ for $\rho \vdash k-1$. (The latter also follows from the fact that  the functors $\alpha_k S_{(k)}$ are injective in $\fpiQ{\gr}$, by Proposition \ref{prop:isotypical_trivial}.)

Since the socle of $\beta_n S_{((n-1)1)}$  is the simple functor $\alpha_n S_{((n-1)1)}$ (by Proposition \ref{prop:beta_socle}), the structure of $\beta_n S_{((n-1)1)}$ is determined by the  Loewy diagram given in the statement.
\end{proof}

\section{Calculating $\omega \Psi $}
\label{sec:omega_psi}

The purpose of this section is to address the problem of calculating $\omega \Psi$. More particularly, we are interested in understanding $\omega \Psi \pcoalg$ by exploiting the results on calculating $\omega$ on exponential functors given in Section \ref{sec:omega_exponential}. 

In Section \ref{subsect:gen_theory_omega_Psi} we apply these results to study $\omega \beta_d S_\lambda$ for $\lambda \vdash d$ (see Proposition \ref{prop:coadbar_es_isotypical}). The key player there is the map  $\coadbar_\lambda$ derived from the coadjoint coaction. We use this to give a first approximation to  $\omega \beta_d S_\lambda$ in Corollary \ref{cor:bounds_omega}.

In Section \ref{subsect:coadbar_non_triv}, we provide the other fundamental ingredient, which gives a quantitative statement ensuring that $\coadbar_\lambda$ is non-trivial, except in the case $\lambda = (d)$ (see Theorem \ref{thm:coadbar}).

Some cohomological consequences of Theorem \ref{thm:coadbar} are presented in Section \ref{subsect:cohom_consequences_coadbar}. For instance, in Corollary \ref{cor:ext1_gr_fout}, we calculate $\ext^1$ in $\foutQ[\gr]$ between simple polynomial functors.

The section concludes by analysing the examples covered in Section \ref{sec:psi}.

\subsection{The general theory}
\label{subsect:gen_theory_omega_Psi}

This section addresses the problem of calculating $\omega \Psi \pcoalg$ and, hence, the calculation of the functors $\omega \beta_d S_\lambda$, for $\lambda \vdash d$. By the Schur correspondence, this can be achieved by calculating $\omega \Psi T_\cog (V)$ as a functor of $V \in \ob \fmodq$. This is expressed in terms of the coadjoint coaction; by Proposition \ref{prop:Omega_tensor_algebra}, there is an exact sequence in $\fcatQ[\gr]$:
\begin{eqnarray}
\label{eqn:coadbar_es_schur}
\\
\nonumber
0
\rightarrow 
\omega \Psi T_\cog (V)
\rightarrow 
\Psi T_{\mathrm{coalg}}(V) 
\stackrel{\coadbar}{\longrightarrow}
\Psi T_{\mathrm{coalg}}(V) \otimes V
\rightarrow 
\coker \big( \coadbar_{\Psi T_{\mathrm{coalg}}} \big)(V)
\rightarrow 
0
\end{eqnarray}
that is natural in $V$.

By the Schur correspondence, this corresponds to 
the  exact sequence in $\fcatQ[\gr \times \fb]$:
\begin{eqnarray}
\label{eqn:coadbar_es_fb}
\\
\nonumber
0
\rightarrow 
\omega \Psi \pcoalg
\rightarrow 
\Psi \pcoalg
\stackrel{\coadbar}{\longrightarrow} 
\Psi \pcoalg \tenfb P^\fb_\mathbf{1}
\rightarrow 
\coker \big (\coadbar_{\Psi \pcoalg} \big)
\rightarrow 
0.
\end{eqnarray}

It is useful to pass to isotypical components, which gives the following:

\begin{prop}
\label{prop:coadbar_es_isotypical}
For $\lambda \vdash d$, applying $-\otimes_\fb S_\lambda$ to the exact sequence (\ref{eqn:coadbar_es_fb}) yields the exact sequence:
\begin{eqnarray}
\label{eqn:omega_beta_d_lambda}
\\
\nonumber
0
\rightarrow 
\omega \beta_d S_\lambda 
\rightarrow 
\beta_d S_\lambda
\stackrel{\coadbar_\lambda}{\longrightarrow} 
\bigoplus_{\substack{\mu \preceq \lambda \\ |\mu | = d-1} }
\beta_{d-1} S_\mu
\rightarrow 
\coker \big (\coadbar_\lambda \big)
\rightarrow 
0, 
\end{eqnarray}
where $\coadbar_\lambda$ is shorthand for $\coadbar \otimes_\fb S_\lambda$. 
\end{prop}

All of the terms appearing in the exact sequence (\ref{eqn:omega_beta_d_lambda}) are finite functors in $\fpiQ{\gr}$. Hence we can reason using the Grothendieck group of $ \fpiQ{\gr}$ (which is equivalent to working in terms of the multiplicity of composition factors).

\begin{nota}
\label{nota:grothendieck_group}
For $F$ a finite functor in $\fpiQ{\gr}$, write $[F]$ for its image in the Grothendieck group. 
For $F_1, F_2$ two such finite functors:
\begin{enumerate}
\item 
write $[F_1] \leq [F_2]$ if, for each partition  $\mu$, the multiplicity of $\alpha_{|\mu|} S_\mu$ in $F_1$ is less than or equal to its multiplicity in $F_2$; 
\item 
write $[F_1]\cap [F_2]$ for the element of the Grothendieck group represented by the maximal semi-simple functor $G$ such that $[G] \leq [F_i]$ for $i \in \{1, 2\}$. 
\end{enumerate}
\end{nota}

Using this notation, we have the following equalities and bounds for the objects appearing in (\ref{eqn:omega_beta_d_lambda}):

\begin{cor}
\label{cor:bounds_omega}
For $\lambda \vdash d$, there are equalities in the Grothendieck group of $\fpiQ{\gr}$:
\begin{eqnarray*}
[\omega \beta_d S_\lambda] -  
[\beta_d S_\lambda]
&=&
[\bigoplus_{\substack{\mu \preceq \lambda \\ |\mu | = d-1} }
\beta_{d-1} S_\mu]
-[\coker \big (\coadbar_\lambda \big)]
\\ \ 
[\omega \beta_d S_\lambda] & =& 
[\beta_d S_\lambda] - [\mathrm{image \ } (\coadbar_\lambda)] 
\\ \ 
[\coker \big (\coadbar_\lambda \big)] &=&
[\bigoplus_{\substack{\mu \preceq \lambda \\ |\mu | = d-1} }
\beta_{d-1} S_\mu]
- [\mathrm{image \ } (\coadbar_\lambda)].
\end{eqnarray*}
Moreover, there are inequalities:
\begin{eqnarray*}
[\mathrm{image \ } (\coadbar_\lambda)] &\leq & [\beta_d S_\lambda] \cap [\bigoplus_{\substack{\mu \preceq \lambda \\ |\mu | = d-1} }
\beta_{d-1} S_\mu], 
\\ \ 
[\omega \beta_d S_\lambda] & \geq & 
[\beta_d S_\lambda] - \Big( [\beta_d S_\lambda] \cap [\bigoplus_{\substack{\mu \preceq \lambda \\ |\mu | = d-1} }
\beta_{d-1} S_\mu] \Big) 
\\ \ 
[\coker \big (\coadbar_\lambda \big)] &\geq &
[\bigoplus_{\substack{\mu \preceq \lambda \\ |\mu | = d-1} }
\beta_{d-1} S_\mu] - \Big( [\beta_d S_\lambda] \cap [\bigoplus_{\substack{\mu \preceq \lambda \\ |\mu | = d-1} }
\beta_{d-1} S_\mu] \Big) .
\end{eqnarray*}
\end{cor}

\begin{proof}
This is a straightforward consequence of Proposition \ref{prop:coadbar_es_isotypical} .
\end{proof}

\begin{rem}
\label{rem:omega_beta_versus_coker_coad}
The composition factors of $\beta_n S_\lambda$ and of $\bigoplus_{\substack{\mu \preceq \lambda \\ |\mu | = n-1} }\beta_{n-1} S_\mu$ can be calculated by using Theorem \ref{thm:decomp_Psi_pcoalg} (or the more explicit form, Corollary \ref{cor:Grothendieck_ring_Psi_pcoalg}). Corollary \ref{cor:bounds_omega} thus has the following consequence:
\begin{enumerate}
\item 
knowing $[\omega \beta_d S_\lambda]$ is equivalent to knowing $[\coker \big (\coadbar_\lambda \big)]$; 
\item 
the upper bound for $[\mathrm{image \ } (\coadbar_\lambda)]$ and the lower bounds for 
$[\omega \beta_d S_\lambda]$ and $[\coker \big (\coadbar_\lambda \big)]$ can be calculated explicitly.
\end{enumerate}
It is expected that these bounds are close to the true values, since $\coadbar$ is highly non-trivial.
\end{rem}

\subsection{Non-triviality of $\coadbar$ in lowest possible degree}
\label{subsect:coadbar_non_triv}

The purpose of this section is to establish a non-triviality result for $\coadbar$. More precisely, for $\lambda \vdash d$, we consider $\coadbar_\lambda$ in polynomial degree $d-1$ (i.e., after applying the functor $\qhat{d-1}$). This gives the natural transformation
\[
\qhat{d-1} \big( \beta_d S_\lambda \big)
\stackrel{\qhat{d-1} (\coadbar_\lambda)}{\longrightarrow}
\bigoplus_{\substack{\mu \preceq \lambda \\ |\mu | = d-1}}
\qhat{d-1} \big( \beta_{d-1} S_\mu \big)
 \cong 
\bigoplus_{\substack{\mu \preceq \lambda \\ |\mu | = d-1} }
\alpha_{d-1} S_\mu.
\]

The main result of the section is the following, which establishes the non-triviality of $\coadbar_\lambda$, apart from in the exceptional case $\lambda = (d)$.

\begin{thm}
\label{thm:coadbar}
For $\lambda \vdash d$:
\begin{enumerate}
\item 
the morphism 
$$
\beta_d S_\lambda
\stackrel{\coadbar_\lambda}{\longrightarrow} 
\bigoplus_{\substack{\mu \preceq \lambda \\ |\mu | = d-1} }
\beta_{d-1} S_\mu
$$
 is zero if and only if $\lambda = (d)$;
\item 
if $d \geq 2$ and $\lambda \neq (d)$, the morphism 
\[
{\qhat{d-1} (\coadbar_\lambda)} \ : \ 
\qhat{d-1} \big( \beta_d S_\lambda \big)
\longrightarrow 
\bigoplus_{\substack{\mu \preceq \lambda \\ |\mu | = d-1} }
\alpha_{d-1} S_\mu
\]
is surjective.
\end{enumerate}
\end{thm}

\begin{rem}
For $d\in \{ 0,1 \}$ it is easy to see that $\coadbar_\lambda$ is zero. Hence we concentrate on the case $d\geq 2$ below.
\end{rem}

By the Schur correspondence, the result can be proved by working with  $$\Psi T_{\mathrm{coalg}}(V) \stackrel{\coadbar}{\longrightarrow} \Psi T_{\mathrm{coalg}}(V) \otimes V.$$
 Focussing on the $S_\lambda$-isotypical component then corresponds to picking out the $\schur_\lambda (-)$-isotypical component; in particular, we are interested in terms of polynomial degree exactly $d$ with respect to $V \in \ob \fmodq$.

To understand the polynomial degree $(d-1)$ part (with respect to $\gr$), we use Proposition \ref{prop:psiTV_lie}, which describes the associated graded of $\Psi T_\cog  (-)$  as
 \[
 \bigotimes _{n \geq 1} 
  S^* (\aQ \boxtimes  \lieschur(n) (-) ). 
 \]
The component of polynomial degree $d$ with respect to $V$ and $d-1$ with respect to $\gr$ is:
\[
 S^{d-2} (\aQ \boxtimes \schur_{(1)} (-)) \otimes (\aQ \boxtimes  \schur_{(1^2)} (-) ),
\] 
as in the proof of Corollary \ref{cor:d-1_beta_S_mu}. 

A similar argument applies to $\Psi T_{\mathrm{coalg}}(V) \otimes  V$, with the corresponding component being $
S^{d-1} (\aQ \boxtimes \schur_{(1)} (-)) \otimes \schur_{(1)}(-).
$ 
Hence, we are interested in the component of the coadjoint coaction:
\begin{eqnarray}
\label{eqn:coadbar_n_n-1}
\\
\nonumber
S^{d-2} (\aQ \boxtimes \schur_{(1)} (-)) \otimes (\aQ \boxtimes  \schur_{(1^2)} (-) )
\stackrel{\coadbar}{\longrightarrow}  
S^{d-1} (\aQ \boxtimes \schur_{(1)} (-)) \otimes \schur_{(1)}(-).
\end{eqnarray}
Here $\schur_{(1)}(-)$ is the identity functor and $\schur_{(1^2)} (-)$ identifies as $\Lambda^2(-)$, the second exterior power.

This is made more transparent by applying the cross-effect functor $\cre_{d-1}$. In the following statement, the actions of the symmetric groups arise from the place permutations of the tensor factors:

\begin{lem}
\label{lem:coadbar_n_n-1}
For $2\leq d  \in \nat$, applying the functor $\cre_{d-1}$ to (\ref{eqn:coadbar_n_n-1})  yields the  $\sym_{d-1}$-equivariant natural transformation 
\begin{eqnarray}
\label{eqn:explicit_coadbar_n_n-1}
\big(V^{\otimes d-2} \otimes \Lambda^2 (V) \big) \uparrow_{\sym_{d-2}}^{\sym_{d-1}} 
\rightarrow 
(V^{\otimes d-1}) \otimes V 
\cong V^{\otimes d}
\end{eqnarray}
induced up from $V^{\otimes d-2} \otimes \Lambda^2 (V) \subset V^{\otimes d-1} \otimes V$ given by the inclusion $\Lambda^2 (V) \subset V^{\otimes 2}$ applied to the last tensor factors.
\end{lem}

\begin{proof} 
We consider the map obtained by  applying the functor $\cre_{d-1}$ to (\ref{eqn:coadbar_n_n-1}).
 Evaluating on $V$ (and adjusting the side on which the symmetric group acts appropriately),  the domain can be rewritten as:
\[
\aQ^{\otimes d-1} \otimes_{\sym_{d-2}} \big( V^{\otimes d-2} \otimes \Lambda^2 V\big) ,
\] 
where $\sym_{d-2}$ acts by place permutations on the first $d-2$ tensor factors of $\aQ^{\otimes d-1}$ and $V^{\otimes d-2} \otimes \Lambda^2 V$ respectively. Likewise, the codomain can be written as:
\[
\aQ^{\otimes d-1 } \otimes_{\sym_{d-1}} (V^{\otimes d-1}) \otimes V.
\] 
The passage to cross-effects on objects is then clear. 

The identification of the map follows using the definition of $\coadbar$. 
\end{proof}

\begin{rem}
\label{rem:coadbar_restrict_alternative}
It is useful to compare this with the behaviour of the coadjoint map 
\[
\coadbar : 
T_\cog (V) ^{\otimes r} \rightarrow T_{\cog} (V)^{\otimes r} \otimes V.
\]

The cross effect argument used above allows us to replace $T_\cog (V)$ with the augmentation ideal $\overline{T_\cog (V)}$ and reduce to working with $r=d-1$ and the terms of polynomial degree $d$ with respect to $V$. 

This reduces us to studying the restriction 
\[
\coadbar : 
\big(V ^{\otimes d-2} \otimes T^2(V)\big)\uparrow_{\sym_{d-2}} ^{\sym_{d-1}}  \rightarrow V^{\otimes d-1} \otimes V
\]
where the codomain is the degree $d$ part of $\overline{T_\cog (V)}^{\otimes d-1}$ with respect to $V$, writing $T^2(V)$ for $V^{\otimes 2}$. 

Consider the case $d=2$ for simplicity. In this case, the above restriction of $\coadbar$ identifies (by the definition of $\coadbar$) as the composite 
\[
T^2 (V)
\twoheadrightarrow 
\Lambda^2 (V) 
\hookrightarrow 
V^{\otimes 2},
\]
where the first map is the canonical surjection and the second the canonical inclusion.

It follows that the restriction of $\coadbar$ considered above factors across the map of Lemma \ref{lem:coadbar_n_n-1} via the surjection 
\[
\big(V ^{\otimes d-2} \otimes T^2(V)\big)\uparrow_{\sym_{d-2}} ^{\sym_{d-1}} 
\twoheadrightarrow 
\big(V ^{\otimes d-2} \otimes \Lambda^2(V)\big)\uparrow_{\sym_{d-2}} ^{\sym_{d-1}} 
\]
induced by $T^2 (V) \twoheadrightarrow \Lambda^2 (V)$.

This shows that understanding the cokernel of $\coadbar$ after passage to cross-effects as in Lemma \ref{lem:coadbar_n_n-1} is equivalent to studying the more elementary restriction discussed in this remark.
\end{rem}

The key calculational input is then the following basic result:

\begin{prop}
\label{prop:coker_coadbar_n_n-1}
For $2\leq d \in \nat$, the map (\ref{eqn:explicit_coadbar_n_n-1}) fits into the natural exact sequence:
\[
\big(V^{\otimes d-2} \otimes \Lambda^2 (V) \big) \uparrow_{\sym_{d-2}}^{\sym_{d-1}} 
\rightarrow 
 V^{\otimes d}
\rightarrow 
S^d (V) 
\rightarrow 0,
\]
where $S^d (-)$ is the $d$th symmetric power functor, that identifies with $\schur_{(d)} (-)$.
\end{prop}

\begin{proof}
For $d=2$, this corresponds to the usual (split) short exact sequence $0\rightarrow \Lambda^2(V) \rightarrow V^{\otimes 2} \rightarrow  S^2 (V) \rightarrow  0$.

For $d>2$, the term with $\Lambda^2 (V)$ in the $i$th tensor factor (for $1\leq i \leq d-1$), imposes the `commutativity' of terms in the $i$th and $d$th tensor factors of $V^{\otimes d}$ similarly to above. Since the transpositions $(i, d)$ generate the symmetric group $\sym_d$, the result follows.
\end{proof}

\begin{proof}[Proof of Theorem \ref{thm:coadbar}]
The case $\lambda = (d)$ can be analysed directly, since $\coadbar _{(d)}$ reduces to 
\[
\beta_d S_{(d)} \cong \alpha_d S_{(d)}
\rightarrow 
\beta_{d-1} S_{(d-1) } \cong \alpha_{d-1} S_{(d-1)}
\] 
(if $d=0$ the codomain is understood to be zero), using Proposition \ref{prop:isotypical_trivial} for the identifications.  This map is zero, since $\alpha_d S_{(d)}$ and $\alpha_{d-1} S_{(d-1)}$ are non-isomorphic simple functors.

Now consider the case $\lambda\neq (d)$ (so that $d \geq 2$). Using the identification provided by Lemma \ref{lem:coadbar_n_n-1}, Proposition \ref{prop:coker_coadbar_n_n-1} implies that the $\schur_\lambda (-)$-isotypical component of ${\qhat{d-1} (\coadbar_\lambda)}$ is surjective.
 In particular, since $d \geq 2$ (in particular $d\geq 1$), it is non-trivial, since the set $\{ \mu \preceq \lambda \ | \ |\mu| = d-1 \}$ is non-empty. 
\end{proof}

\subsection{Consequences of Theorem \ref{thm:coadbar}}
\label{subsect:cohom_consequences_coadbar}

The non-triviality of $\coadbar$ as established in Theorem \ref{thm:coadbar} has some important consequences, as presented in this section.

\begin{cor}
\label{cor:beta_not_fout}
For $\lambda \vdash d$,  $\beta_{d} S_\lambda$ is an object of $\foutQ[\gr]$ if and only if $\lambda = (d)$.

For $\lambda = (d)$, one has the identifications:
\begin{eqnarray*}
\omega \beta_d S_{(d)} & \cong & \alpha_d S_{(d)} \\
\coker (\coadbar_{(d)}) & \cong &  \alpha_{d-1} S_{(d-1)},
\end{eqnarray*}
where $S_{(d-1)}$ is taken to be zero for $d=0$. In particular, $\alpha_d S_{(d)}$ is injective in $\fpoutgrQ[<\infty]$.
\end{cor}

\begin{proof}
By construction, $\beta_d S_\lambda$ belongs to $\fpoutgrQ[<\infty]$ if and only if $\omega \beta_d S_\lambda = \beta_d S_\lambda$. The latter condition is equivalent to the triviality of $\coadbar_\lambda$, by Proposition \ref{prop:coadbar_es_isotypical}, whence the first statement follows from Theorem \ref{thm:coadbar}. The analysis for $\lambda =(d)$ is straightforward, using Proposition \ref{prop:inj_cogenerators_Out_poly} for the injectivity statement.
\end{proof}

Theorem \ref{thm:coadbar} also gives information on the polynomial degree of $\coker (\coadbar_\lambda)$:

\begin{cor}
\label{cor:poly_deg_coker_coadbar}
For $2 \leq d \in \nat$ and $\lambda \vdash d$,  $\omega \beta_d S_\lambda$ has polynomial degree exactly $d$.

If $\lambda = (d)$, then $\coker (\coadbar_\lambda) $ has polynomial degree exactly $d-1$. If $\lambda \neq (d)$, then 
\[
\coker (\coadbar_\lambda) \in \f_{d-2} (\gr; \rat). 
\]
\end{cor}

\begin{proof}
If $\lambda \neq (d)$, Theorem \ref{thm:coadbar} implies that the cokernel of $\coadbar_\lambda$ is a quotient of 
\[
\bigoplus_{\substack{\mu \preceq \lambda \\ |\mu | = d-1} }
\big(\beta_{d-1} S_\mu
/ \alpha_{d-1} S_\mu\big)
\]
and each $\beta_{d-1} S_\mu
/ \alpha_{d-1} S_\mu$ has polynomial degree at most $d-2$. 
\end{proof} 

\begin{rem}
Theorem \ref{thm:omega_omega1_sign} below shows that $\coker (\coadbar _{(1^d)}) $ has polynomial degree exactly $d-2$ if $d>2$.
\end{rem}

Combined with Corollary \ref{cor:d-1_beta_S_mu}, Theorem \ref{thm:coadbar} implies the following description of $\ext^1_{\foutQ[\gr]}$ between simple functors:

\begin{cor}
\label{cor:ext1_gr_fout}
For $\lambda \vdash d$, where $d>0$, $\ext^1_{\foutQ[\gr]}
(\alpha_{|\rho|} S_\rho, \alpha_d  S_\lambda)=0$ if $|\rho | \neq d-1$. 

If $\rho \vdash d-1$ and $\lambda \neq (d)$:
\[
\dim 
\ext^1_{\foutQ[\gr]}
(\alpha_{d-1} S_\rho, \alpha_d  S_\lambda)
= 
\left\{
\begin{array}{ll}
\sum_{\nu \vdash d-2 } c^\rho_{\nu, 1}c^{\lambda}_{\nu,11} 
&\rho \not \preceq \lambda 
\\ 
\big(\sum_{\nu \vdash d-2 } c^\rho_{\nu, 1}c^{\lambda}_{\nu,11} \big) - 1 
& \rho \preceq \lambda. 
\end{array}
\right.
\]
\end{cor}

\begin{proof}
The vanishing if $|\rho | \neq d-1$ is immediate from Corollary \ref{cor:d-1_beta_S_mu}. 

If $\lambda \neq (d)$, Theorem \ref{thm:coadbar} determines $\qhat{d-1} (\omega \beta_d S_\lambda)$ as follows. Since since $\qhat{d-1}$ is exact (see Proposition \ref{prop:unimodularity_char_0}),  there is a short exact 
sequence 
\[
0
\rightarrow 
\qhat{d-1} (\omega \beta_d S_\lambda)
\rightarrow 
\qhat{d-1} (\beta_d S_\lambda)
\rightarrow 
\bigoplus_{\substack{\rho \preceq \lambda \\ |\rho | = d-1} }
\alpha_{d-1} S_\rho
\rightarrow 
0.
\]
Corollary \ref{cor:d-1_beta_S_mu} determines $\qhat{d-1} (\beta_d S_\lambda)$, from which one deduces $\qhat{d-1} (\omega \beta_d S_\lambda)$ and hence the  $\ext^1_{\foutQ[\gr]}$ by standard methods (see the proof of Corollary \ref{cor:d-1_beta_S_mu}). Explicitly, the cokernel removes one composition factor of $\alpha_{d-1} S_\rho$ for each $\rho \preceq \lambda$. 
\end{proof}

\begin{rem}
Note that, for any $\rho \preceq \lambda$ with $|\rho |= d-1$, if $\lambda \neq (d)$ there exists $\nu \preceq \rho$ with $|\nu |= d-2$ such that the  Young diagram of the skew partition $\lambda / \nu$ does not have two boxes in the same row. Thus the dimensions given in the statement are always non-negative.
\end{rem}

\begin{exam}
\label{exam:b_1a_Ext_Fout}
Consider a positive integer $a$ and the partitions $\rho:= (1^{a-1}) \preceq (1^a)=: \lambda$, taking $d= a$. 
Then Corollary \ref{cor:ext1_gr_fout} shows that 
\[
\ext^1_{\foutQ[\gr]}
(\alpha_{d-1} S_{(1^{a-1})} , \alpha_d  S_{(1^a))})=0,
\]
since there is a single $\nu \vdash d-2$ such that $\nu \preceq \rho$, namely $(1^{a-2})$.
\end{exam}

\begin{prop}
\label{prop:fout_injective_simples}
For $\lambda \vdash d$, the canonical inclusion $\alpha_d S_\lambda \hookrightarrow \omega \beta_d S_{\lambda}$ is an isomorphism if and only if either $\lambda = (d)$ or, for $d>1$, $\lambda = (d-1,1)$.

Equivalently, $\alpha_d S_\lambda$ is injective in $\fpoutgrQ[<\infty]$ if and only if either $\lambda = (d)$ or, for $d>1$, $\lambda = (d-1,1)$.
\end{prop}

\begin{proof}
That the two conditions are equivalent is clear. 

Corollary \ref{cor:beta_not_fout} implies that $\alpha_d S_{(d)}$ is injective in $\fpoutgrQ[<\infty]$. One can analyse $\alpha_d S_{(d-1,1)}$ similarly (for $d>1$). The only partition $\nu \vdash d-2$ such that $c_{\nu, 11}^{(d-1,1)}$ is non-zero is $(d-2)$ (and the Littlewood-Richardson coefficient is one). The partitions $\rho$ for which $c_{(d-2 ), (1)}^{\rho}$ is not zero are $(d-1)$ and $(d-2,1)$, with both coefficients one, by the Pieri rule. In particular, both satisfy $\rho \preceq (d-1,1)$. The $\ext^1_{\foutQ[\gr]}$-calculation in Corollary  \ref{cor:ext1_gr_fout} therefore gives the vanishing that ensures that $\alpha_d S_{(d-1,1)}$ is injective in $\fpoutgrQ[<\infty]$.

It remains to show that, if $\lambda \not \in \{ (d), (d-1,1) \}$, there exists $\rho \vdash d-1$ such that $\ext^1_{\foutQ[\gr]}
(\alpha_{d-1} S_\rho, \alpha_d  S_\lambda) \neq 0$. By Corollary  \ref{cor:ext1_gr_fout}, it suffices to show that there exists $\rho \not \preceq \lambda$ and $\nu \vdash d-2$ with $c^\rho_{\nu, 1}c^{\lambda}_{\nu,11} \neq 0$. 

Suppose that $\nu \vdash d-2$ such that $\nu \preceq \lambda$ and $c_{\nu,11}^\lambda \neq 0$, so that the skew diagram $\lambda/ \nu$ has two boxes not in the same row. Consider the following two conditions: 
\begin{enumerate}
\item 
If $\nu_1 = \lambda_1$ (i.e., the first row of $\nu$ coincides with that of $\lambda$), one can take $\rho$ such that $\rho_1= \lambda_1 +1$ and $\rho_i = \nu_i$ for $i>1$. Thus $\nu \preceq \rho \not \preceq \lambda$, as desired. 
\item 
The conjugate situation, arguing with the first column, namely $\nu_1 ^\dagger = \lambda_1^\dagger$. One constructs $\rho$ so that $\rho^\dagger_1 = \lambda^\dagger_1 +1$ and $\rho^\dagger_i = \nu^\dagger_i$ for $i>1$. Again $\nu \preceq \rho \not \preceq \lambda$, as desired. 
\end{enumerate}
Thus, if either of these conditions holds, one can construct a suitable $\rho$, since it is clear that $c^\rho_{\nu, 1}c^{\lambda}_{\nu,11} \neq 0$, by construction.

To conclude, one observes that the only partitions $\lambda$ for which no partition $\nu \vdash d-2$ exists with $c_{\nu, 11}^\lambda \neq 0$ and one of the above conditions  satisfied are the partitions $ (d)$ and $((d-1),1)$.
\end{proof}

This implies:

\begin{cor}
\label{cor:fpout_semisimple_dleq2}
The category  $\f_d^{\mathrm{Out}} (\gr; \rat)$ is semi-simple if and only if $d \leq 2$. 

In particular, any object of $\f_2^{\mathrm{Out}} (\gr; \rat)$ is injective in $\f_{<\infty}^{\mathrm{Out}} (\gr; \rat)$ and hence in $\f_d^{\mathrm{Out}} (\gr; \rat)$ for any $d \geq 2$.
\end{cor}

\begin{proof}
The first statement follows immediately from Proposition \ref{prop:fout_injective_simples}. 

For $d=2$, this implies that any object of $\f_2^{\mathrm{Out}} (\gr; \rat)$ is injective in that category. Since the functor $q_2^\gr : \f_{<\infty} (\gr; \rat) \rightarrow \f_2 (\gr; \rat)$ is exact (by Proposition \ref{prop:unimodularity_char_0}) and restricts to $q_2^\gr : \f_{<\infty}^{\mathrm{Out}} (\gr; \rat) \rightarrow \f_2^{\mathrm{Out}} (\gr; \rat)$, it follows by a standard argument that any such object is also injective in $\f_{<\infty}^{\mathrm{Out}} (\gr; \rat)$. This, in turn, implies injectivity in $\f_d^{\mathrm{Out}}(\gr; \rat)$, for any $d\geq 2$. 
\end{proof}

\begin{exam}
The simple objects of $\f_3^{\mathrm{Out}} (\gr; \rat)$  of polynomial degree exactly $3$  are 
$$\alpha_3 S_{(3)}, 
\quad \alpha_3 S_{(21)}, \quad \alpha_3 S_{(111)}.
$$ 
The first two are injective in $\f_3^{\mathrm{Out}} (\gr; \rat)$, whereas the last is not. The only partition $\rho \vdash 2$ for which $
\ext^1 _{\foutk[\gr]} (\alpha_2 S_{\rho}, \alpha_3 S_{(111)} ) $ is non-zero is $\rho = (2)$ and there is (up to isomorphism) a unique non-trivial extension in this case. 
The corresponding functor is the `smallest' object of $\fpoutgrQ[<\infty]$ not lying in the essential image of $\fcatQ[\ab]$.  

This is a conceptual,  functorial analogue of \cite[Theorem 1]{TW}. Turchin and Willwacher observed that the corresponding representation of $\out (\zed^{\star 3})$ (obtained from our viewpoint by evaluation of the functor) has dimension $7$ and that it  does not factor through $GL_3 (\zed)$. This is the smallest dimension in which such a representation can exist.
\end{exam}

As in Theorem \ref{thm:fcatk_ab_semisimple}, we consider the global (injective) dimension:

\begin{cor}
\label{cor:gl_dim_fpoutgr}
For $1<d \in \nat$, the category $\f_d^{\mathrm{Out}} (\gr; \rat)$ has global dimension at most $d-2$, with equality in the cases $d\in\{ 2,3\}$.

 In particular, for $d \geq 2$, 
 \[
 \mathrm{gl.dim} \f_d^{\mathrm{Out}} (\gr; \rat)< \mathrm{gl.dim} \fpolyQ{d}{\gr} = d-1.
 \]
and  $\mathrm{gl.dim} \f_d^{\mathrm{Out}} (\gr; \rat)>0$ for $d >2$.
\end{cor}

\begin{proof}
The case $d=2$ is clear, hence we suppose that $d\geq 3$. The general bound is proved as in \cite{DPV} for Theorem \ref{thm:fcatk_ab_semisimple}, taking into account that $\f_2^{\mathrm{Out}} (\gr; \rat)$ is semi-simple. 

Namely,  for any object $F$ of $\f_d^{\mathrm{Out}} (\gr; \rat)$, consider the construction of a minimal injective resolution $I^\bullet$ in $\f_d^{\mathrm{Out}} (\gr; \rat)$. This will have  the property that the term $I^t$ in cohomological degree $t$ has polynomial degree $d-t$. 

Consider the construction of $I^{d-2}$: this is defined to be  the injective envelope of the cokernel of $I^{d-4}\rightarrow I^{d-3}$ ($F \hookrightarrow I^0$ if $d=3$). The cokernel is an outer functor of polynomial degree $2$, hence is injective in $\f_d ^{\mathrm{Out}}(\gr; \rat)$, by Corollary \ref{cor:fpout_semisimple_dleq2}. It follows that the construction of the minimal injective resolution stops here. 

This gives the first statement, the equality for $d=3$ following since $\f_3^{\mathrm{Out}} (\gr; \rat)$ is not semisimple. Likewise for $d>2$, this gives $\mathrm{gl.dim}  \f_d^{\mathrm{Out}} (\gr; \rat)>0$ for $d>2$.

Clearly one has $\mathrm{gl.dim} \f_d^{\mathrm{Out}} (\gr; \rat)< \mathrm{gl.dim} \fpolyQ{d}{\gr} = d-1 $, where the equality is given by \cite{DPV} (stated as Theorem \ref{thm:fcatk_ab_semisimple} here).
\end{proof}

\begin{rem}
To resume, one has the inclusions of full sub-categories of polynomial functors:
\[
\fpiQ{\ab}
\subsetneq
\f_{<\infty}^{\mathrm{Out}} (\gr; \rat)
\subsetneq
\fpiQ{\gr}.
\]
The category $\fpiQ{\ab}$ is semi-simple, by Theorem \ref{thm:fcatk_ab_semisimple}. Corollary \ref{cor:ext1_gr_fout} together with Corollary \ref{cor:beta_not_fout} imply that the two inclusions are highly non-trivial. It is expected that the global dimension of $\f_{<\infty}^{\mathrm{Out}} (\gr; \rat)$ is unbounded.
\end{rem}

\subsection{The case $\lambda = ((n-1),1)$}
\label{subsect:(n1)_omega}

In this section we describe the exact sequence  of Proposition \ref{prop:coadbar_es_isotypical} in the case $\lambda = ((n-1),1)$, for $n>1$.

\begin{prop}
\label{prop:omega_beta_Sn1}
For 
$1<n \in \nat$, the exact sequence of Proposition \ref{prop:coadbar_es_isotypical} identifies as:
 \[
0 
\rightarrow 
\alpha_n S_{((n-1),1)}
\rightarrow 
 \beta_{n} S_{((n-1),1)}
\stackrel{\coadbar_{((n-1),1)}}{\longrightarrow}
\
\beta_{n-1} S_{((n-2),1)} 
\oplus 
\alpha_{n-1} S_{(n-1)}
\rightarrow 
0 
\]
($S_{((n-2)1)} $ is understood to be zero for $n=2$). In particular, $\coker (\coadbar_{((n-1),1)} ) =0$.
\end{prop}

\begin{proof}
Theorem \ref{thm:coadbar} implies that the components of $\coadbar_{((n-1),1)}$ to $\beta_{n-1} S_{((n-2),1)} $ and to $
\beta_{n-1} S_{(n-1)}$ are both non-trivial and the kernel of $\coadbar_{((n-1),1)}$ is $\alpha_n S_{(n-1),1}$, by Proposition \ref{prop:fout_injective_simples}.

It remains to show the surjectivity of $\coadbar_{((n-1),1)}$; this follows from   Proposition \ref{prop:Loewy_S(n-1)1}.  (In fact, it suffices to check this at the level of composition factors.)
\end{proof}

\begin{rem}
\label{rem-12.22}
Extending Proposition \ref{prop:Loewy_S(n-1)1}, Proposition \ref{prop:omega_beta_Sn1} can be illustrated by the 
 Loewy diagram:
\[
\xymatrix@R-1.5pc@C-1.5pc{
&&& \alpha_1 S_{(1)} 
\\
&&\cdots &&\alpha_2 S_{(11)} \ar@{-}[ul]
\\
& \alpha_{n-2} S_{(n-2)}&& \cdots \ar@{-}[ur]\ar@{-}[ul] \\
 \alpha_{n-1} S_{(n-1)} && \alpha_{n-1}  S_{((n-2)1)} \ar@{-}[ur]\ar@{-}[ul] \\
& \boxed{ \alpha_n S_{((n-1)1)}}.  \ar@{-}[ur]\ar@{-}[ul]
}
\] 
The boxed term corresponds to $\omega \beta_n S_{((n-1),1)}$, which identifies with the socle.
\end{rem}

\subsection{The case $\lambda =(1^n)$}
\label{subsect:omega_beta_1^n}
We now revisit the structure of $\beta_n S_{(1^n)}$ (cf. Proposition \ref{prop:isotypical_sgn}), identifying $\omega \beta_n S_{(1^n)}$ and $\coker (\coadbar_{(1^n)})$.

\begin{thm}
\label{thm:omega_omega1_sign}
For $2 \leq  n \in \nat$,  
 $
\mathrm{image\  }(\coadbar_{(1^n)})
\cong 
\omega \beta_{n-1} S_{(1^{n-1} )}$,
 so that there is a short exact sequence:
\[
0
\rightarrow 
\omega \beta_n S_{(1^n)}
\rightarrow 
\beta_n S_{(1^n)} 
\rightarrow 
\omega \beta_{n-1} S_{(1^{n-1} )}
\rightarrow 
0
\]
induced by $\coadbar_{(1^n)}$.

Hence, for $n \geq 3$, $\coker (\coadbar_{(1^n)})
\cong 
\omega \beta_{n-2} S_{(1^{n-2} )}$, so that the exact sequence of Proposition \ref{prop:coadbar_es_isotypical} has the form:
\[
0
\rightarrow 
\omega \beta_n S_{(1^n)} 
\rightarrow 
\beta_n S_{(1^n)} 
\stackrel{\coadbar_{(1^n)}}{\longrightarrow} 
\beta_{n-1} S_{(1^{n-1})}
\rightarrow 
\omega \beta_{n-2} S_{(1^{n-2} )}
\rightarrow 
0.
\]

In the Grothendieck group of $\fcatQ[\gr]$:
\[
[\omega \beta_n S_{(1^n)}]
= 
\sum_{\substack{a+2b=n+1\\b>0}}  [\alpha_{a+b} S_{(b1^a)}],
\]
$\omega \beta_n S_{(1^n)}$ is uniserial and has socle length $[\frac{n+1}{2}]$.
\end{thm}

\begin{rem}
\label{rem:complex_beta_sign}
Theorem \ref{thm:omega_omega1_sign} implies that the maps $\coadbar$ induce an 
exact complex in $\fcatQ[\gr]$:
\begin{eqnarray}
\label{eqn:cx_beta_sign}
\ldots 
\rightarrow 
\beta_n S_{(1^n)}
\rightarrow 
\beta_{n-1} S_{(1^{n-1})}
\rightarrow 
\beta_{n-2} S_{(1^{n-2})}
\rightarrow 
\ldots 
\rightarrow 
\beta_1 S_{(1)} = \alpha_1 S_{(1)}.
\end{eqnarray}
\end{rem}

\begin{proof}[Proof of Theorem \ref{thm:omega_omega1_sign}]
One way to prove Theorem \ref{thm:omega_omega1_sign} is by exploiting the relationship with higher Hochschild homology. This allows  an argument of Turchin and Willwacher  \cite{TW} to be used, as sketched below in Example \ref{exam:Theorem_1_TW}.
Here we sketch a direct approach, taking as input Proposition \ref{prop:isotypical_sgn}.

First we explain why (\ref{eqn:cx_beta_sign}) is a complex. The image of the composite 
$\beta_n S_{(1^n)}
\rightarrow 
\beta_{n-1} S_{(1^{n-1})}
\rightarrow 
\beta_{n-2} S_{(1^{n-2})}$ (assuming $n \geq 3$) is a subfunctor of $\beta_{n-2} S_{(1^{n-2})}$, which has socle $\alpha_{n-2} S_{(1^{n-2})}$, by Proposition \ref{prop:beta_socle}. Hence the image must contain a composition factor $\alpha_{n-2} S_{(1^{n-2})}$ if it is non-zero.
 However, by Proposition \ref{prop:isotypical_sgn}, $\beta_n S_{(1^n)}$ does not have a composition factor $\alpha_{n-2} S_{(1^{n-2})}$. It follows that the image is zero, as required. 

Theorem \ref{thm:omega_omega1_sign} is now proved by showing that the complex (\ref{eqn:cx_beta_sign})  is exact. This requires some information on the differential $\beta_n S_{(1^n)}
\rightarrow 
\beta_{n-1} S_{(1^{n-1})}$ ($n>1$). It suffices to study the associated graded of the polynomial filtration and use the identification given by  Proposition \ref{prop:isotypical_sgn}:
$$
\big[\beta_n S_{(1^n)}]
= 
\sum_{\substack{a, b \in \nat \\a + 2b = n}}
[\Lambda^a \circ \aQ  \otimes S^b \circ \aQ], 
$$
where we have identified 
$\alpha_a S_{(1^a)} \otimes \alpha_b S_{(b)} \cong \Lambda^a \circ \aQ  \otimes S^b \circ \aQ$.

On the associated graded of the polynomial filtration, the differential $\beta_n S_{(1^n)}
\rightarrow 
\beta_{n-1} S_{(1^{n-1})}$ has components of the form
\begin{eqnarray}
\label{eqn:ddR}
\Lambda^a \circ \aQ  \otimes S^b \circ \aQ
\rightarrow 
\Lambda^{a+1} \circ \aQ  \otimes S^{b-1} \circ \aQ
\end{eqnarray}
(understood to be zero if $b=0$), where $a+2b =n$. Here (\ref{eqn:ddR}) corresponds to the polynomial degree $a+b$ slice of the differential. 

Now, inspection of the composition factors shows that $\hom_{\fcatQ[\gr]} (\Lambda^a \circ \aQ  \otimes S^b \circ \aQ, 
\Lambda^{a+1} \circ \aQ  \otimes S^{b-1} \circ \aQ) =\rat$ if $b>0$, with generator provided by the dual de Rham differential. 
Moreover, one can check readily  that the  exactness of the complex is equivalent to the assertion that each of these maps (for $b>0$) is non-zero. (More precisely, if these maps are non-zero,  the layers of the polynomial filtration of the complex identify with dual de Rham complexes, which are exact.)

Hence, to prove the result, it suffices to show that the morphism $\coadbar$ induces a non-trivial map (\ref{eqn:ddR}), for $b>0$. This is achieved using the explicit definition of $\coadbar$ and unravelling the above identifications; indeed, this analysis shows why $\coadbar$ induces the 
dual de Rham differential on the associated graded of the polynomial filtration. 

That $\omega \beta_n S_{(1^n)}$ is uniserial follows from the fact that it has simple socle $\alpha_n S_{(1^n)}$ and at most one composition factor in each polynomial degree. Since $\ext^1_{\fcatQ[\gr]}$ between simple polynomial functors vanishes if the difference in polynomial degrees is not equal to $1$ (see Corollary \ref{cor:d-1_beta_S_mu} for the precise statement), one deduces uniseriality.
\end{proof}

\begin{rem}
The structure of  $ \omega \beta_n S_{(1^n)}$ is represented by the Loewy diagram:
\[
\xymatrix@R-1.5pc@C-1.5pc{
\alpha_{p+r} S_{(p 1^r)}
\ar@{-}[d]
\\
\alpha_{p+r+1} S_{((p-1) 1^{2+r})}
\ar@{-}[d]
\\
{\cdots}
\\
\alpha_{n-2} S_{(31^{n-5})}
\ar@{-}[u]
\\
\alpha_{n-1} S_{(21^{n-3})}
\ar@{-}[u]
\\
\alpha_n S_{(1^n)}
\ar@{-}[u]
}
\]
where $p$ and $r$ correspond to reduction modulo $2$: $n+1=2p+r$ with $r \in \{0,1\}$. 

More explicitly, if $n=2p$ is even, the top composition factor is $\alpha_{p+1}S_{(p1)}$; if $n=2p-1$ is odd, then the top composition factor is $\alpha_{p}S_{(p)}$.
\end{rem}

\begin{exam}
\label{exam:Loewy_diagrams_S1n}
To illustrate Theorem \ref{thm:omega_omega1_sign}, we consider the Loewy diagrams for $\omega \beta_n S_{(1^n)} \subset \beta_n S_{(1^n)}$ for $ n \leq 5$. In the Loewy diagrams,  the composition factors in  $\omega \beta_n S_{(1^n)}$ are boxed.

The composition factors of  $\beta_n S_{(1^n)}$  are given by Proposition \ref{prop:isotypical_sgn} and those of $\omega \beta_n S_{(1^n)}$ by Theorem \ref{thm:omega_omega1_sign}.  By Proposition \ref{prop:beta_socle}, the socle of $ \beta_n S_{(1^n)}$ is the simple functor $\alpha_n  S_{(1^n)}$. By Theorem \ref{thm:omega_omega1_sign}, the quotient of $\beta_n S_{(1^n)}$ by $\omega \beta_n S_{(1^n)}$  identifies with $\omega \beta_{n-1} S_{(1^{n-1})}$ if $n>1$, which appears as the boxed terms in the preceding diagram. In each case, $\omega \beta_n S_{(1^n)}$ is a uniserial functor with socle $\alpha_n S_{(1^n)}$.  

For the extensions, by Corollary \ref{cor:d-1_beta_S_mu} we can have non-trivial extensions only between composition factors of degree $d$ and of degree $d-1$. By Example  \ref{exam:ext1}, there is a unique extension, up to non-zero scalar multiple, between $\alpha_k S_{(1^k)}$ and $\alpha_{k-1} S_{(1^{k-1})}$, between $\alpha_k S_{(1^k)}$ and $\alpha_{k-1} S_{(2,1^{k-2})}$ and between $\alpha_4 S_{(211)}$ and $\alpha_{3} S_{(3)}$. The lines indicate these unique (up to non-zero scalar multiple) extensions.

\begin{enumerate}
\item 
$\beta_ 1 S_{(1)} =\omega \beta_1 S_{(1)}   = \alpha_1 S_{(1)}$:
\[
\xymatrix@R-1.5pc@C-1.5pc{
\boxed{\alpha_1 S_{(1)}}\ .
}
\]
\item 
$\beta_2 S_{(11)}$:
\[
\xymatrix@R-1.5pc@C-1.5pc{
&\alpha_1 S_{(1)} \\
\boxed{\alpha_2 S_{(11)}}\ .\ar@{-}[ur]
}
\]
\item 
$\beta_3 S_{(111)}$:
\[
\xymatrix@R-1.5pc@C-1.5pc{
\boxed{\alpha_2 S_{(2)}} && \alpha_2 S_{(11)} \\
&\boxed{\alpha_3 S_{(111)}}\ . \ar@{-}[ur]\ar@{-}[ul]
}
\]
\item 
$\beta_4 S_{(1^4)}$:
\[
\xymatrix@R-1.5pc@C-1.5pc{
&\alpha_2 S_{(2)}\\
\boxed{\alpha_3 S_{(21)}}
\ar@{--}[ur]
 && \alpha_3 S_{(111)} \ar@{-}[ul] \\
&\boxed{\alpha_4 S_{(1111)}} \  .\ar@{-}[ur]\ar@{-}[ul]
}
\]
The dotted line indicates a non-trivial extension that is believed to exist, due to a `symmetry' between the behaviour of the partitions $(2,1)$ and $(1^3)$ that results from the relation between the associated central primitive idempotents in $\rat [\sym_3]$.
\item 
$\beta_5 S_{(1^5)}$:
\[
\xymatrix@R-1.5pc@C-1.5pc{
\boxed{\alpha_3 S_{(3)}}
&&
\alpha_3 S_{(21)}
\\
&
\boxed{\alpha_4 S_{(211)}}
\ar@{-}[ul]
\ar@{.}[ur]
&
&
\alpha_4 S_{(1111)} 
\ar@{-}[ul]
\\
&
&
\boxed{\alpha_5 S_{(11111)}}\ .
\ar@{-}[ul]
\ar@{-}[ur]
}
\]
The dotted line indicates a non-trivial extension that is expected to exist. 
\end{enumerate}
\end{exam}

\subsection{Examples for $|\nu|=4$}

The structure of the functors 
\[
\omega \beta_4 S_\nu \subset \beta_4 S_\nu
\]
for $|\nu|=4$ has already been determined for $\nu \in \{ (4), (1111), (31) \}$, since these fit into the families of partitions $(n)$, $(1^n)$ and $((n-1),1)$ respectively. It remains to consider the cases $\nu \in \{ (22), (211) \}$.

\begin{exam}
\label{exam:S22}
For $\nu =(22)$, the composition factors are  given by Example \ref{exam:D=4}. 
The map $\coadbar_{(22)}$ in Proposition \ref{prop:coadbar_es_isotypical} 
 reduces to the unique (up to non-zero scalar) non-trivial map
\[
\beta_4 S_{(22)}
\rightarrow 
\beta_3 S_{(21)}. 
\]
The structure of $\beta_3 S_{(21)}$ is known by Proposition \ref{prop:Loewy_S(n-1)1}.  This leads to the following Loewy diagram:
\[
\xymatrix@R-1.5pc@C-1.5pc{
\boxed{\alpha_2 S_{(2)}} &&
\alpha_2 S_{(11)} &&
\alpha_2 S_{(2)}
\\
&
\boxed{\alpha_3 S_{(111)}}
 \ar@{-}[ul]
\ar@{--}[ur]
&&
\alpha_3 S_{(21)}
\ar@{-}[ul]
\ar@{-}[ur]
\\
&&
\boxed{\alpha_4 S_{(22)}}\ ,
\ar@{-}[ul]
\ar@{-}[ur]
}
\]
in which the boxed terms correspond to $\omega \beta_4 S_{(22)}$. As for the case of $\beta_4 S_{(1^4)}$, the dashed arrow represents a non-trivial extension, that is believed to exist, for the same reason.

In particular, $\omega \beta_4 S_{(22)}$ is uniserial, with composition factors $\alpha_4 S_{(22)}$,  
$\alpha_3 S_{(111)}$ and $\alpha_2 S_{(2)}$. The cokernel of $\coadbar_{(22)}$ is $\alpha_1 S_{(1)}$. 
\end{exam}

\begin{exam}
\label{exam:S211}
For $\nu =(211)$, the composition factors of $\beta_4 S_{(211)}$ are again given in Example \ref{exam:D=4}. 
The map $\coadbar_{(211)}$ is a morphism
\[
\beta_4 S_{(211)}
\rightarrow 
\beta_3 S_{(21)}
\oplus 
\beta_3 S_{(111)}
\]
in which both components are non-trivial. 

The functors $\omega \beta_4 S_{(211)} $ and $\coker (\coadbar_{(211)})$ can be determined 
 using our understanding of $\foutQ[\gr]$, in particular the fact that $\alpha_3 S_{(3)}$ and $\alpha_3 S_{(21)}$ are injective in $\fpoutgrQ[<\infty]$
 
It follows that $\omega \beta_4 S_{(211)} \subset \soc_2 \beta_4 S_{(211)} $ where $\soc_2 \beta_4 S_{(211)}$ has Loewy diagram:
\[
\xymatrix@R-1.5pc@C-1.5pc{
\boxed{\alpha_3 S_{(3)}} & \boxed{\alpha_3 S_{(21)}} & 
\alpha_3 S_{(21)}&
\alpha_3 S_{(111)}
\\
& \boxed{\alpha_4 S_{(211)}}  \ .
\ar@{-}[ul]
\ar@{-}[u]
\ar@{-}[ur]
\ar@{-}[urr]
}
\]
Moreover, $\coker (\coadbar_{(211)}) \cong \alpha_2 S_{(2)}$. In particular, this gives another example where the polynomial degree upper bound given by Corollary \ref{cor:poly_deg_coker_coadbar} is exact. 

However, this information does not uniquely determine the structure of $\beta_4 S_{(211)}$ (which is why we restricted to $\soc_2$ above).
\end{exam}

\part{Higher Hochschild homology}
\label{part:HHH}

\section{Higher Hochschild homology}
\label{sec:hhh}

The purpose of this section is to review the theory of higher Hochschild homology. Except for our functorial framework presented in Section \ref{subsect:hhh_gr}, the material is well-known and we make no claim to originality.

Higher Hochschild (co)homology is defined here in the pointed context, taking coefficients in $\Gamma$-modules (i.e., $\fcatk[\Gamma]$), where $\Gamma$ is the category of finite pointed sets\footnote{Warning: here we adopt the convention used by Pirashvili \cite{Phh} rather than that of  Segal.}.  To release the basepoint, 
one takes coefficients arising from $\fin$-modules, where $\fin$ is the category of finite sets. 
This is covered in Section \ref{subsect:defn_hhh}, following Pirashvili \cite{Phh}.

Section \ref{subsect:loday_A} recalls the Loday constructions, which provide the coefficients that are used  in this paper. In particular, we treat the multiplicative properties that are key in the applications. 

Section \ref{subsect:shuffle} takes up the thread by recalling the shuffle products in higher Hochschild homology, reviewing basic theory that will be exploited in the proofs of the main results. 

Finally, in Section \ref{subsect:hhh_gr}, we introduce the functors on $\gr$ that are derived from higher Hochschild homology. This corresponds to restricting along the nerve (or classifying  space functor) $B : \gr \rightarrow \Delta\op \setspt$.  When the coefficients are independent of the basepoint, this yields {\em outer functors}, i.e. functors in $\fout{\gr}{\kring}$.

That higher Hochschild homology for coefficients arising from $\fcatk[\fin]$ gives rise to representations of the outer automorphism groups $\out(\fr)$ as above was observed and exploited by Turchin and Willwacher in \cite{TW}. The result presented here, Proposition \ref{prop:HH_group_case}, insists upon the full functorial behaviour with respect to $\gr$, namely that one obtains objects of $\foutk[\gr]$.

\subsection{Defining Higher Hochschild homology}
\label{subsect:defn_hhh}
 Unless stated otherwise, $\kring$ is an arbitrary unital, commutative ring. 

\begin{nota}
\label{nota:fin_Famma}
\nomenclature{$\sets$}{category of sets\nomrefpage}
\nomenclature{$\fin$}{category of finite sets\nomrefpage}
\nomenclature{$\sets_*$}{category of pointed sets\nomrefpage}
\nomenclature{$\Gamma$}{category of finite pointed sets\nomrefpage}
\nomenclature{$\theta$}{forgetful functor $\Gamma \rightarrow \fin$\nomrefpage}
Denote by 
\begin{enumerate}
\item 
$\fin \subset \sets$ the full subcategory of  finite  sets;
\item 
$\Gamma \subset \sets_*$ the full subcategory of  finite pointed 
sets; 
\item 
$(-)_+ : \fin \rightarrow \Gamma$ the left adjoint to the 
functor $\theta : \Gamma \rightarrow \fin$ that forgets the basepoint.
\end{enumerate}
\end{nota}

The categories $\fin$ and $\Gamma$ have small 
skeleta with objects $\n:= \{ 1, \ldots , n \}\in \ob \fin$ (by convention, 
$\mathbf{0}=\emptyset$) and $\n_+ \in \ob 
\Gamma$, for $n \in \nat$.

By Proposition \ref{prop:precomp_adjunction}, one has the following  (which introduces the {\em ad hoc} notation $\theta^+$):
 
\begin{prop}
\label{prop:adjoints_theta}
\nomenclature{$\theta^*$}{restriction along $\theta$\nomrefpage}
\nomenclature{$\theta^+$}{restriction along $(-)_+$\nomrefpage}
 The adjunction $(-)_+ : \fin\rightleftarrows \Gamma : \theta$ 
induces an adjunction
 \[
  \theta^* :  \fcatk[\fin ] \rightleftarrows \fcatk[\Gamma] : \theta^+.
\]
The functors $\theta^*$ and $\theta^+$ are 
 exact and symmetric monoidal.
 \end{prop}

Left Kan extension gives $\fcatk[\Gamma] \rightarrow \fcatk[\sets_*]$ and 
right Kan extension gives $\fcatk[\Gamma\op] \rightarrow \fcatk[\sets_*\op]$.
Hence, composition defines  functors:
\begin{eqnarray*}
 \ssetpt \times \fcatk[\Gamma] &\rightarrow& \fcatk[\Delta \op] \\
 (\ssetpt) \op \times \fcatk[\Gamma \op] & \rightarrow & \fcatk [\Delta],
\end{eqnarray*}
where $\Delta$ is the category of ordinals and $\ssetpt$ is the category of pointed simplicial sets. Here,  $\fcatk[\Delta \op]$ and 
$\fcatk [\Delta]$ are respectively the categories of simplicial and cosimplicial $\kring$-modules.  In the following, as usual,  $\pi_*$ denotes the homotopy of a simplicial $\kring$-module and $\pi^*$ the cohomotopy of a cosimplicial $\kring$-module.

\begin{defn}
\label{defn:higher_HH}
\nomenclature{$HH_*(X; L)$}{higher Hochschild homology\nomrefpage}
\cite{Phh}
 For $X$ a pointed simplicial set,
\begin{enumerate}
\item 
the higher Hochschild homology of $X$ with coefficients  $L \in \ob \fcatk[\Gamma]$ 
is $HH_*(X; L):= \pi_* (L(X))$;
\item 
the higher Hochschild cohomology of $X$ with coefficients $R \in \ob \fcatk[\Gamma\op]$ 
is $HH^*(X; R):= \pi^* (R(X))$.
\end{enumerate}
\end{defn}

\begin{rem}
The {\em unpointed} version of higher Hochschild homology is given by taking coefficients in 
a $\Gamma$-module of the form $\theta^* L'$, for $L' \in \ob \fcatk[\fin]$. Likewise for higher Hochschild cohomology.
\end{rem}

If $\kring$ is a field, the duality functors $D_\Gamma$ and $D_{\Gamma \op}$  relate higher Hochschild homology and cohomology (for clarity, $(-)^\sharp$ is used to denote vector space duality):

\begin{prop}
\label{prop:duality_HHH}
Suppose that $\kring$ is a field. The following diagrams commute up to natural isomorphism:
\[
 \xymatrix{
  (\ssetpt)\op \times \fcatk[\Gamma] \op
  \ar[r]
  \ar[d]_{1_{(\ssetpt)\op} \times D_\Gamma}
  &
   \fcatk[\Delta \op]\op
   \ar[d]^{D_{\Delta\op}}   
   \\
    (\ssetpt)\op \times \fcatk[\Gamma \op ]
    \ar[r]
    &
    \fcatk[\Delta] 
 }
\]
and 
\[
 \xymatrix{
  \ssetpt \times \fcatk[\Gamma \op]\op
  \ar[r]
  \ar[d]_{1_{\ssetpt} \times D_{\Gamma \op}}
  &
   \fcatk[\Delta]\op
   \ar[d]^{D_{\Delta}}   
   \\
    \ssetpt \times \fcatk[\Gamma ]
    \ar[r]
    &
    \fcatk[\Delta \op]. 
 }
\]
In particular, for $X \in \ob \ssetpt$ and $L \in \ob \fcatk[\Gamma]$, 
$R \in \ob \fcatk[\Gamma \op]$, there are natural
isomorphisms:
\begin{eqnarray*}
HH^*(X; D_\Gamma L) &\cong& HH_* (X ; L)  ^\sharp \\
HH_* (X; D_{\Gamma \op} R ) & \cong & HH^* (X ;R ) 
^\sharp.  
\end{eqnarray*}
Thus, if $L$ takes finite-dimensional values and $X$ values in $\Gamma\subset  \setspt$,  $ HH_* (X; L ) $ is a graded $\kring$-vector space of finite type and 
there is a natural isomorphism:
\[
 HH_* (X; L )  \cong  HH^* (X ; D_\Gamma L ) ^\sharp.
\]
\end{prop}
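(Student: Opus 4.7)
The plan is to reduce all four assertions to two elementary observations: first, that pointwise linear duality of $\kring$-modules turns the simplicial $\kring$-module $L(X)$ into the cosimplicial $\kring$-module $(D_\Gamma L)(X)$, and vice versa; second, that over a field, the cohomology of the dual of a chain complex is canonically the dual of its homology.

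For the commutative diagrams, I would first treat the case where $X$ is levelwise finite, i.e.\ $X_t \in \ob \Gamma$ for every simplicial degree $t$. Here $L(X)$ is literally the simplicial $\kring$-module $t \mapsto L(X_t)$ with face and degeneracy maps given by functoriality of $L$. Applying $D_{\Delta\op}$ produces the cosimplicial $\kring$-module $t \mapsto L(X_t)^*$ with cofaces and codegeneracies obtained by dualizing. On the other hand, $(D_\Gamma L)(X)$ is, by unwinding definitions, the cosimplicial $\kring$-module $t \mapsto (D_\Gamma L)(X_t) = L(X_t)^*$ with exactly the same structure maps. Hence the two compositions agree on the nose. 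The second diagram is handled identically. For a general $X \in \ssetpt$, the extension to all pointed simplicial sets is via left Kan extension in the covariant case and right Kan extension in the contravariant case; these are exchanged by linear duality, since dualization converts the filtered colimits along finite pointed subsets of each $X_t$ into the corresponding cofiltered limits.

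The universal coefficient isomorphisms are now immediate: for any chain complex $C_\bullet$ of $\kring$-vector spaces, dualization is exact (every short exact sequence of vector spaces splits), whence the natural map $H^n(C_\bullet^*) \to (H_n C_\bullet)^*$ is an isomorphism. Applying this to $C_\bullet = L(X)$ together with the first commutative diagram yields $HH^*(X; D_\Gamma L) \cong (HH_*(X; L))^*$; the analogous isomorphism starting from $R \in \ob \fcatk[\Gamma\op]$ follows from the second commutative diagram.

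For the final statement, the finite-dimensionality hypothesis combined with $X_t$ finite ensures that each $L(X_t)$ is finite-dimensional, whence $HH_*(X; L)$ is a graded $\kring$-vector space of finite type. One then applies the universal coefficient isomorphism with $D_\Gamma L$ in place of $L$, and uses the canonical biduality $D_{\Gamma\op} D_\Gamma L \cong L$ (valid precisely because $L$ takes finite-dimensional values) to obtain $HH_*(X; L) \cong (HH^*(X; D_\Gamma L))^*$. The only bookkeeping issue, which I expect to be the main obstacle but is ultimately routine, is to verify that all identifications are natural in both $X \in \ssetpt$ and in the coefficient functor; this will follow from the naturality of linear duality and the explicit descriptions of the Kan extensions involved.
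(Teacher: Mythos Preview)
Your proposal is correct and follows essentially the same approach as the paper's proof, which is extremely terse: it simply says the diagrams commute because ``post and pre-composition commute,'' the universal coefficient isomorphisms are ``an immediate consequence'' of $\kring$ being a field, and the final statement follows from ``the necessary finiteness hypotheses.'' Your elaboration on the Kan extension step (exchanging left Kan extension of $L$ with right Kan extension of $D_\Gamma L$ via the colimit/limit compatibility of $\hom_\kring(-,\kring)$) makes explicit what the paper leaves implicit, but the underlying argument is the same.
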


\begin{proof}
 The first duality statements follow from the fact that post- and pre- compositions commute. 
 The remaining statements follow, since $\kring$ is  a field, 
 with the final statement providing the necessary finiteness hypotheses. 
\end{proof}

\begin{rem}
\label{rem:duality_HHH}
Proposition \ref{prop:duality_HHH} means that, in our applications, it will be
 sufficient to restrict to  considering higher Hochschild homology: results for cohomology are then recovered by duality.
\end{rem}

One of the key properties of higher Hochschild homology is its {\em homotopy invariance}, which is a Corollary of \cite[Theorems 2.4]{Phh} (which supposes that $\kring$ is a field).

\begin{thm}
\label{thm:hhh_homotopy_inv}
Let $\kring$ be a field.
\begin{enumerate}
\item 
For $L \in \ob \fcatk[\Gamma]$, higher Hochschild homology $HH_* (-; L)$ factors naturally across the 
homotopy category $\mathscr{H}_\bullet$ of pointed simplicial sets. 
\item 
For $L' \in \ob \fcatk[\fin]$, higher Hochschild homology $HH_* (-; \theta^* L')$ factors naturally across the homotopy category $\mathscr{H}$ of simplicial sets, forgetting the basepoint. 
\end{enumerate}
\end{thm}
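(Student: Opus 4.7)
The plan is to deduce both parts from Pirashvili's Theorem 2.4 of \cite{Phh}, which, given the assumption that $\kring$ is a field, asserts that for any $L \in \fcatk[\Gamma]$ the functor $X \mapsto L(X)$ from $\ssetpt$ to simplicial $\kring$-modules sends weak equivalences to weak equivalences. Since $\pi_* : \fcatk[\Delta\op] \to \text{graded }\kring\text{-modules}$ factors through the homotopy category of simplicial $\kring$-modules (which inverts weak equivalences), composition with $\pi_*$ immediately produces the required factorization of $HH_*(-;L) = \pi_*(L(-))$ through $\mathscr{H}_\bullet$. This settles part (1).

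For part (2), the key observation is that for $L' \in \fcatk[\fin]$ and any pointed simplicial set $X$, the simplicial $\kring$-module $(\theta^*L')(X)$ depends only on the underlying unpointed simplicial set $\theta X \in \sset$. Indeed, in each simplicial degree $n$ one has $(\theta^*L')(X)_n = L'(\theta(X_n))$ by the definition of $\theta^*$, and the face and degeneracy maps --- being induced from the pointed structure maps of $X$ but with $L'$ insensitive to the distinguished element --- coincide with $L'$ applied to the corresponding structure maps of $\theta X$. Hence $HH_*(-;\theta^*L')$ admits a natural factorization through the forgetful functor $\theta : \ssetpt \to \sset$.

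It remains to verify that the resulting functor on $\sset$ descends to $\mathscr{H}$. Given a weak equivalence $f : Y \to Y'$ in $\sset$ with $Y$ nonempty, pick any vertex $y \in Y_0$ and set $y' := f(y)$; then $f$ lifts to a weak equivalence $(Y, y) \to (Y', y')$ in $\ssetpt$. Applying part (1) to the $\Gamma$-module $\theta^*L'$ yields a weak equivalence $(\theta^*L')(Y, y) \to (\theta^*L')(Y', y')$, which by basepoint-independence identifies with the map induced by $\theta f$. Passing to $\pi_*$ and thence to homotopy classes of morphisms provides the required factorization through $\mathscr{H}$; empty and disconnected simplicial sets are handled componentwise.

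The substantive homotopy-theoretic content is packaged inside Pirashvili's theorem, so the remaining work is essentially bookkeeping. The mild technical pitfall to avoid is that $\theta^*L'$ evaluated on $Y_+ \in \ssetpt$ recovers $L'(Y \sqcup *)$, which need not coincide with $L'(Y)$ unless $L'$ is reduced; for this reason the argument above proceeds via direct basepoint-independence of $(\theta^*L')(X)$ rather than by invoking the $(-)_+ \dashv \theta$ adjunction of Proposition \ref{prop:adjoints_theta}.
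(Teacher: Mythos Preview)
Your proposal is correct and follows the same approach as the paper: both deduce the result from \cite[Theorem 2.4]{Phh}. The paper in fact gives no proof beyond that citation, whereas you supply the extra details needed for part (2), namely the basepoint-independence of $(\theta^*L')(X)$ and the lifting of unpointed weak equivalences to pointed ones; your closing remark about why one should not argue via $(-)_+$ is also well taken.
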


\subsection{The Loday constructions}
\label{subsect:loday_A}

The Loday constructions (or Loday functors) give an important source of $\Gamma$-modules. For this section, $\kring$ is taken to be a general unital, commutative ring; all tensor products are taken over $\kring$ unless indicated otherwise.

\begin{defn}
\cite[Section 1.7]{Phh}
\label{defn:loday_construct}
\nomenclature{$\loday (A,M)$}{Loday construction\nomrefpage}
\nomenclature{$\loday'(A)$}{unpointed Loday construction\nomrefpage}
 For $A$ a commutative, unital $\kring$-algebra and $M$ a right $A$-module, let 
 \begin{enumerate}
  \item 
  $\loday (A,M) \in \ob \fcatk[\Gamma]$ be the left $\Gamma$-module 
   $
   \n_+ \mapsto M \otimes A^{\otimes n};
  $
\item 
$\loday'(A) \in \ob \fcatk[\fin]$ be the left $\fin$-module 
$
 \n \mapsto A^{\otimes n}.
$ 
 \end{enumerate}
\end{defn}
 
\begin{rem}
\label{rem:hhh_Hochschild}
By considering the chain complex associated to the standard simplicial model $\Delta^1 / \partial \Delta^1$ for the circle $S^1$, one sees that  $HH_* (S^1; \loday (A,M))$ is the classical Hochschild homology $HH_* (A; M)$ \cite{L}. 
\end{rem}

The   Loday functors given in Definition \ref{defn:loday_construct} are related by the adjunction of Proposition \ref{prop:adjoints_theta}:

\begin{lem}
\label{lem:theta_T_T'}
\ 
Let $A$ be a commutative, unital $\kring$-algebra. 
\begin{enumerate}
 \item 
  There is a natural isomorphism $\theta^* \loday' (A) \cong \loday (A,A)$ in 
$\fcatk[\Gamma]$.
\item 
If $A$ is augmented, there is a natural isomorphism   $\theta^+ \loday (A, 
\kring) \cong \loday' (A) $ in $\fcatk[\fin]$, 
hence 
$
 \loday (A,A) \cong \theta^* \theta^+ \loday (A, \kring),
$
 where $\kring$ is considered as an $A$-module via the augmentation.
\end{enumerate}
\end{lem}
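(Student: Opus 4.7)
The plan is to verify both isomorphisms by direct computation on values and then on morphisms, using explicit descriptions of $\theta^*$ and $\theta^+$ as precomposition functors. First I would unpack Proposition~\ref{prop:adjoints_theta}: since $(-)_+ \dashv \theta$, the right adjoint $\theta^+$ of $\theta^*$ is obtained as precomposition with $(-)_+$ (via the standard fact that, for an adjunction $F \dashv G$ between small categories, $G^* \dashv F^*$ on functor categories). Hence $\theta^* L'(\mathbf{n}_+) = L'(\theta(\mathbf{n}_+))$, where $\theta(\mathbf{n}_+)$ has $n+1$ elements, while $\theta^+ L(\mathbf{n}) = L(\mathbf{n}_+)$.

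For part~(i), on values one has $\theta^* \loday'(A)(\mathbf{n}_+) = A^{\otimes(n+1)}$ and $\loday(A,A)(\mathbf{n}_+) = A \otimes A^{\otimes n} = A^{\otimes(n+1)}$; the natural candidate isomorphism matches the tensor factor indexed by the basepoint of $\theta(\mathbf{n}_+)$ on the left with the distinguished $M = A$ factor on the right. Naturality along a morphism $f: \mathbf{m}_+ \to \mathbf{n}_+$ of $\Gamma$ reduces to the observation that both $\loday(A,A)$ and $\theta^* \loday'(A)$ act by contracting via multiplication in $A$ along each fiber of $f$; the $\loday(A,A)$-side treats the basepoint fiber $f^{-1}(+)$ by combining the $M$-factor with the $A$-factors whose indices lie in $\mathbf{m} \cap f^{-1}(+)$, and setting $M = A$ makes this coincide with the uniform multiplication used on the $\loday'(A)$-side (which treats the basepoint simply as an extra element of $\theta(\mathbf{n}_+)$).

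For part~(ii), the computation is $\theta^+ \loday(A, \kring)(\mathbf{n}) = \loday(A, \kring)(\mathbf{n}_+) = \kring \otimes A^{\otimes n} \cong A^{\otimes n} = \loday'(A)(\mathbf{n})$. For naturality along $f: \mathbf{m} \to \mathbf{n}$ in $\fin$, one applies $\loday(A, \kring)$ to $f_+: \mathbf{m}_+ \to \mathbf{n}_+$; since $f_+^{-1}(+) = \{+\}$ by construction of $(-)_+$, the $\kring$-factor at the basepoint transports by the identity and the augmentation of $A$ plays no role, while on the non-basepoint factors the action is the usual contraction along the fibers of $f$ (with the unit of $A$ inserted for empty fibers), matching $\loday'(A)(f)$ exactly. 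The final isomorphism $\loday(A,A) \cong \theta^* \theta^+ \loday(A, \kring)$ follows immediately by applying $\theta^*$ to the isomorphism $\theta^+ \loday(A, \kring) \cong \loday'(A)$ and composing with~(i). The only real care required is keeping straight the bookkeeping between ``basepoint as an extra tensor slot'' on the $\Gamma$-side and ``basepoint as an extra element of the underlying set'' on the $\fin$-side; crucially, the augmentation of $A$ intervenes only for morphisms of $\Gamma$ that collapse non-basepoint elements onto the basepoint, which never happens for morphisms coming from $\fin$ via $(-)_+$.
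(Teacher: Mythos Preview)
Your proof is correct and follows exactly the approach the paper intends: the paper's own proof is simply ``A direct verification from the definitions,'' and you have carried out precisely that verification. Your identification of $\theta^+$ as precomposition with $(-)_+$ via Proposition~\ref{prop:precomp_adjunction}, and your careful bookkeeping of how the basepoint fiber behaves under morphisms (in particular that morphisms of the form $f_+$ never collapse non-basepoint elements onto the basepoint, so the augmentation does not intervene in part~(ii)), are exactly the points one needs to check.
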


\begin{proof}
By definition, for $n \in \nat$,  $(\theta^* \loday'(A)) (\mathbf{n}_+) = \loday'(A) (\mathbf{n}_+)$, forgetting that $+$ is the basepoint, 
hence identifies as $A \otimes A^{\otimes n}$, where the first factor corresponds to $+$. This identifies in turn with  $\loday (A, A) (\mathbf{n}_+)$ and it is straightforward to check that the $\Gamma$-module structures correspond.

Similarly, $(\theta^+ \loday (A, \kring))(\mathbf{n}) = \loday (A, \kring ) (\mathbf{n}_+)$, hence identifies with $A^{\otimes n} = \loday' (A)$. Again, one checks that the $\fin$-module structures correspond. The final statement follows by combining the previous two ones. 
\end{proof}

We will exploit the multiplicative structure of the Loday construction given by the following Proposition (compare Remark \ref{rem:loday_mult_revisit} below). This uses the fact that, for any unital, commutative $\kring$-algebra $A$ and $n \in \nat$, the $n$-fold tensor product is a unital commutative $\kring$-algebra for the component-wise multiplication.

\begin{prop}
\label{prop:loday_mult}
For a unital, commutative $\kring$-algebra $A$, the $\Gamma$-module $\loday(A, A)$ takes values in unital  commutative $A$-algebras. Hence, forgetting structure, $\loday (A,A)$ defines a $\Gamma$-module with values in $A$-modules. 

If $A$ is augmented, the $\Gamma$-module $\loday(A, \kring)$ takes values in unital  commutative $\kring$-algebras such that there is an isomorphism of $\Gamma$-modules with values in $\kring$-algebras:
\[
\kring \otimes _A \loday (A,A) \cong \loday (A, \kring).
\]
\end{prop} 

\begin{proof}
By definition, $\loday (A,A) (\mathbf{n}_+)$ is equal to $ A \otimes A^{\otimes n}$, considered as an algebra as above. Since $A$ is commutative, it is clear that the $\Gamma$-module structure morphisms act by morphisms of algebras. Moreover, this defines a $\Gamma$-module in $A$-algebras by using multiplication on the distinguished tensor factor corresponding to the basepoint. 

If $A$ is augmented, the multiplicative structure on $\loday (A, \kring)$ is defined similarly, using $ \loday (A,\kring) (\mathbf{n}_+) = \kring  \otimes A^{\otimes n} \cong A^{\otimes n} $. This clearly identifies with the base change $\kring \otimes _A \loday (A,A)$,  using the $A$-algebra structure on $\loday (A,A)$ introduced in the first part of the Proposition.
\end{proof}

\begin{lem}
 \label{lem:loday_AA_augmented}
 For $A$ an augmented, unital, commutative $\kring$-algebra with augmentation ideal $\overline{A}$, the projection  $\loday (A,A) \twoheadrightarrow \loday (A, \kring) 
\cong \kring \otimes _A \loday (A, A)$ induced by the augmentation fits into a short exact sequence of $\Gamma$-modules with values in $A$-modules:
  \[
   0
   \rightarrow
  \loday (A ,\overline{A})
  \rightarrow 
  \loday (A, A) 
  \rightarrow 
  \loday (A , \kring) 
  \rightarrow 
  0,
  \]
  where $\loday (A , \kring) $ is considered as an $A$-module via the augmentation $A \rightarrow \kring$.
\end{lem}

\begin{proof}
 The short exact sequence is obtained by applying $\loday (A, -) $ to the exact sequence of $A$-modules 
$
 0
 \rightarrow 
 \overline{A}
 \rightarrow 
 A
 \rightarrow 
 \kring 
 \rightarrow 
 0
$, which splits as a sequence of $\kring$-modules. That one obtains an exact sequence of $A$-modules as stated is clear.
 \end{proof}

The following straightforward result is the origin of the exponential property (in the sense of Section \ref{sec:expo})  which arises when considering higher Hochschild homology.

\begin{prop}
\label{prop:expo_loday_AA}
 For $X, Y \in \ob \Gamma$ and $A$ a commutative, unital $\kring$-algebra, there 
is a natural isomorphism 
 \[
  \loday (A, A)(X \vee Y) 
  \cong 
   \loday (A, A)(X) \otimes_{A} 
  \loday (A, A)(Y)
 \]
 that makes $\loday (A,A)$ symmetric monoidal with respect to $\vee$ and 
$\otimes_{A}$.
 
 If $A$ is augmented, then base change using $\kring \otimes _A - $ induces the 
natural isomorphism:
  \[
  \loday (A, \kring)(X \vee Y) 
  \cong 
   \loday (A, \kring )(X) \otimes_\kring
  \loday (A, \kring )(Y)
 \]
 that makes $\loday (A, \kring)$ symmetric monoidal with respect to $\vee$ and 
$\otimes_\kring$.
\end{prop}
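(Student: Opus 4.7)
\smallskip

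\noindent\textbf{Proof proposal.} The plan is to construct the isomorphism first on the skeleton $\{\n_+\mid n\in \nat\}$ of $\Gamma$, then exploit naturality and the universal property of the wedge (as coproduct in $\Gamma$) to extend to arbitrary $X, Y$ and to verify monoidal compatibility. For the skeletal version, recall that $\loday(A,A)(\n_+) = A \otimes A^{\otimes n}$, where the leading factor $A$ corresponds to the basepoint and carries the $A$-module structure used to form $\otimes_A$. Since $\mathbf{n}_+ \vee \mathbf{m}_+ \cong (\mathbf{n+m})_+$ with the single basepoint arising from the identified basepoints of the two factors, the canonical rearrangement of tensor factors
\[
 (A \otimes A^{\otimes n}) \otimes_A (A \otimes A^{\otimes m})
 \stackrel{\cong}{\longrightarrow}
 A \otimes A^{\otimes (n+m)}
\]
(using the $A$-bimodule structure of $A$ on the two basepoint factors being identified) supplies the required map. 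Its inverse can be written down directly, so this step is purely bookkeeping.

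Next I would check naturality in both variables. A morphism $X_1 \to X_2$ of $\Gamma$ induces, on each tensor factor of $\loday(A,A)(X_i)$ indexed by a non-basepoint element, either the identity, multiplication into another factor, or multiplication into the basepoint factor (the latter using the $A$-module structure of the coefficient module). Each of these operations commutes with the tensor-factor rearrangement above, since the rearrangement only reorders factors and collapses the two privileged basepoint tensor factors into one via the multiplication of $A$. The compatibility with the symmetric monoidal structure $(\Gamma, \vee, 0_+)$ versus $(A\dash\modules, \otimes_A, A)$ —associativity and symmetry— reduces analogously to verifying that the standard associativity and symmetry isomorphisms of $\otimes_A$ (which are well-defined because $A$ is commutative) correspond under the iterated rearrangements to the associativity and symmetry of $\vee$.

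For the augmented case, the first claim, Lemma \ref{lem:loday_AA_augmented}, identifies $\loday(A,\kring) \cong \kring \otimes_A \loday(A,A)$. Since $\kring \otimes_A (M \otimes_A N) \cong (\kring \otimes_A M) \otimes_\kring (\kring \otimes_A N)$ for $A$-modules $M,N$ (a formal consequence of $\kring \otimes_A \kring \cong \kring$, valid because the $A$-action on $\kring$ factors through the augmentation), applying $\kring \otimes_A -$ to the first isomorphism yields the second. Naturality and monoidal compatibility are then inherited from the unaugmented case.

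The main obstacle is nothing more than keeping track of the bookkeeping for the identification of tensor factors in a functorial manner; in particular, one must be careful to use that the $A$-module structure of $\loday(A,A)(X)$ is defined via the basepoint tensor factor, since this is what makes the collapse $A \otimes_A A \cong A$ compatible with the gluing of basepoints in the wedge. No deeper difficulty arises.
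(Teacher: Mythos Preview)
Your proposal is correct and is precisely the kind of elementary tensor-factor bookkeeping the paper has in mind: the paper's own proof consists of the single word ``Straightforward.'' Your expansion of this---the explicit rearrangement on the skeleton, the naturality check, and the base-change argument for the augmented case---is exactly what lies behind that word, so there is nothing to compare.
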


\begin{rem}
\label{rem:loday_mult_revisit}
Proposition \ref{prop:expo_loday_AA} gives an alternative viewpoint on the multiplicative structures of Proposition \ref{prop:loday_mult}. For instance, using the above isomorphism, the fold map $X \vee X \rightarrow X$ induces
\[
   \loday (A, A)(X) \otimes_{A} 
  \loday (A, A)(X)
   \cong 
\loday (A, A)(X \vee X) 
\rightarrow
\loday (A,A) (X)
\]
that is natural with respect to $X$. This induces  the product on $\loday (A,A)(X)$. 
\end{rem}

\begin{rem}
\label{rem:generalize_loday}
The Loday constructions can be formed in any symmetric monoidal category. For example, the above  results have analogues in the graded setting (with or without Koszul signs).
\end{rem}

\subsection{The shuffle product in higher Hochschild homology}
\label{subsect:shuffle} 

The classical Hochschild homology $HH_* (A,A)$ of a commutative algebra $A$ is graded commutative for the shuffle product (see \cite[Section 4.2]{L} for example). 
 Underlying this construction is the shuffle map. We will present this using the normalized chain complex $NC$ associated to a simplicial abelian group $C$ together with the shuffle map \cite[Chapter 8]{MR0349792}, following the presentation in  \cite[Section 2]{MR1997322}. For two simplicial abelian groups $C_1, C_2$, the shuffle map induces 
\[
\nabla :
N (C_1)  \otimes N ( C_2) \rightarrow N (C_1 \otimes C_2)
\]
that is lax monoidal and symmetric. (Here the tensor product in the domain is of chain complexes and, in the codomain, of simplicial abelian groups, with the diagonal structure.) The shuffle map is a quasi-isomorphism.

The shuffle product generalizes to higher Hochschild homology; this is well-known, the following statement is included simply to introduce the background. 

\begin{prop}
\label{prop:shuffle_product}
For $A$ a unital, commutative $\kring$-algebra and $X$ a pointed simplicial set, 
\begin{enumerate}
\item 
$HH_* (X; \loday (A,A)) $ is a graded commutative $A$-algebra for the shuffle product; 
\item 
if $A$ is augmented, then $HH_* (X; \loday (A,\kring)) $ is a graded commutative $\kring$-algebra for the shuffle product and the morphism induced by the base change map $\loday(A,A) \twoheadrightarrow \kring \otimes_A \loday(A,A) \cong \loday (A,\kring)$ induces a morphism of graded commutative $A$-algebras;
\[
HH_* (X; \loday (A,A))
\rightarrow 
HH_* (X; \loday (A,\kring)),
\]
where the codomain is considered as an $A$-algebra via the augmentation.
\end{enumerate}
\end{prop}

\begin{proof}
For the first statement, by Proposition \ref{prop:loday_mult}, $\loday (A,A) (X)$ is a simplicial commutative $A$-algebra; passing to homotopy, this gives a graded-commutative $A$-algebra. Explicitly, $HH_* (X; \loday (A,A))$  
 is the homology of the chain complex $N (\loday (A,A)(X))$ and the product is given by the composite:
 \[
 \resizebox{\hsize}{!}{%
 $
 N (\loday (A,A)(X) )\otimes N (\loday (A,A)(X))
 \stackrel{\nabla}{\rightarrow} 
 N \big(\loday (A,A)(X) \otimes \loday (A,A)(X)\big) 
 \rightarrow 
 N (\loday (A,A)(X) ),
$ %
}
 \]
 where the second map is induced by the simplicial algebra structure of $\loday (A,A)(X)$. This makes $N (\loday (A,A)(X))$ into a commutative differential graded algebra.

The proof for the second statement is similar and the naturality statement follows from considering the morphism of simplicial commutative $A$-algebras $\loday (A,A)(X) \rightarrow \loday (A,\kring ) (X)$. 
\end{proof}

\begin{exam}
\label{exam:norm_ch_cx_structure}
To illustrate the above, we consider the  classical Hochschild homology for a commutative $\kring$-algebra $A$, as in \cite[Section 4.2]{L}, but using the general presentation. This corresponds to taking  `higher' Hochschild homology for   $S^1$; we take the simplicial set $\Delta^1 /\partial \Delta^1$ as  our model for the circle. The  simplicial set $\Delta^1 /\partial \Delta^1$ has $|(\Delta^1 /\partial \Delta^1)_n|=n+1$; there is one non-degenerate $0$-simplex and one non-degenerate $1$-simplex; all others are degenerate. 

The underlying simplicial $\kring$-module of $\loday (A,A) (\Delta^1 /\partial \Delta^1)$ identifies in degree $n$ as $A \otimes A^{\otimes n}$; the $i$th face map $d_i$ is the $\kring$-linear map 
\[
A \otimes (A^{\otimes n}) \rightarrow A \otimes (A^{\otimes n-1})
\]
that multiplies the $(i+1)$st and $(i+2)$st tensor factors for $i<n$; for $i=n$, it multiplies the first and last tensor factors, with image the first tensor factor. The associated (unnormalized) chain complex is the cyclic bar complex. 

The associated normalized chain complex $N (\loday (A,A) (\Delta^1 /\partial \Delta^1))$ is $A \otimes \overline{A}^{\otimes n}$ in degree $n$, with differential induced by $\sum_{i=0}^n (-1)^i d_i$. This is a commutative differential graded algebra for the inner shuffle product map.

Now suppose that $A$ is augmented, so that we may consider  $\loday (A,\kring)$. The underlying simplicial $\kring$-module of $\loday (A,\kring) (\Delta^1 /\partial \Delta^1)$ identifies  in degree $n$ as $\kring \otimes A^{\otimes n} \cong A^{\otimes n}$; the $i$th face map $d_i$, for $0\leq i \leq n$, is the $\kring$-linear map 
\[
A^{\otimes n} \rightarrow A^{\otimes n-1}
\]
that multiplies the $i$th and $(i+1)$st tensor factors for $0<i<n$; for $i \in \{0, n\}$ it is induced by the augmentation $A \rightarrow \kring$ applied to the first and last factors respectively. 

The associated normalized chain complex $N (\loday (A,\kring) (\Delta^1 /\partial \Delta^1))$ is $ \overline{A}^{\otimes n}$ in degree $n$, with differential induced by $\sum_{i=1}^{n-1} (-1)^i d_i$. This is a commutative differential graded algebra with respect to the corresponding inner shuffle product map. 

The comparison map is induced by the morphism of commutative differential graded algebras:
\[
A \otimes \overline{A} ^{\otimes *}
\rightarrow 
\overline{A}^{\otimes *}
\]
defined by applying the augmentation to the first tensor factor.
\end{exam}

The shuffle map is key in  showing that $HH_* (-; \loday (A, \kring))$ is monoidal with respect to $\vee$: 

\begin{prop}
\label{prop:shuffle_vee}
For  a unital, augmented, commutative $\kring$-algebra $A$ and pointed simplicial sets $X, Y \in \Delta \op \setspt$, the shuffle map induces:
\[
\resizebox{\hsize}{!}{%
 $
N (\loday (A, \kring ) (X)) \otimes _\kring N(\loday (A, \kring) (Y) ) 
\rightarrow 
N \big(\loday (A, \kring ) (X) \otimes _\kring \loday (A, \kring) (Y)\big)
\cong 
N (\loday (A, \kring ) (X \vee Y) )
$%
}
\]
that is symmetric and compatible with the monoidal structures of $\otimes _\kring$ on chain complexes and $\vee$ on pointed simplicial sets. This is a quasi-isomorphism. 

In particular, if $\kring$ is a field, in homology this induces an isomorphism:
\[
HH_* (X; \loday (A, \kring) ) 
\otimes_\kring 
HH_* (Y; \loday (A, \kring) )
\cong 
HH_*(X \vee Y;  \loday (A, \kring) )
\]
that is symmetric monoidal, using the Koszul-signed symmetry on graded $\kring$-vector spaces.
\end{prop}

\begin{rem}
This result is at the origin of the fact that higher Hochschild homology yields an exponential functor in Theorem \ref{thm:hhh_Loday_AV_k}.
\end{rem}

\subsection{Restricting along the classifying space functor}
\label{subsect:hhh_gr}

The simplicial nerve  gives 
$
  B : \gr \rightarrow \sset. 
  $ 
  By composition with the higher Hochschild homology functor, one obtains  the functor 
  $$\zed^{\star r} \mapsto HH_* (B \zed^{\star r} ; L),$$ for any $L \in \ob \fcatk[\Gamma]$.

\begin{prop}
\label{prop:HH_group_case}
Higher Hochschild homology induces 
 $$HH_* (B(-); -) : \fcatk[\Gamma] \rightarrow  \fcatk[\nat \times \gr].
 $$
  If $\kring$ is a field, this induces  
\[
HH_* (B(-); \theta^* (-)) :\fcatk[\fin]  \rightarrow   \foutk[\nat \times \gr].
\]

If $L \in \ob \fcatk[\Gamma]$  takes finite-dimensional values, $HH_n (B (-) ; L)$ is finite-dimensional, for all $  n \in \nat$. In particular, as functors in $\fcatk[\nat \times \gr \op]$,  higher Hochschild cohomology satisfies:
\[
HH^* (B (-); D_\Gamma L) 
\cong 
D_\gr HH_* (B(-); L).
\]
\end{prop}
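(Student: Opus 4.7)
The plan is to treat the three assertions in turn, each reducing to a tool already available.

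First, functoriality in both the group $G$ and the coefficient $L$. Composing the simplicial nerve $B : \gr \to \ssetpt$ (with its canonical basepoint) with the assignment $X \mapsto L(X)$ gives, for each $n$, a bifunctor $\gr \times \fcatk[\Gamma] \to \kring\dash\modules$ via $(G, L) \mapsto \pi_n(L(BG))$. This is the required $HH_*(B(-); -)$, regarded as an object of $\fcatk[\nat \times \gr]$ depending functorially on $L$.

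Second, the outer-functor refinement when coefficients are pulled back from $\fin$. By Theorem \ref{thm:hhh_homotopy_inv}(2), for $L' \in \ob \fcatk[\fin]$ the functor $HH_*(-; \theta^* L')$ factors through the \emph{unpointed} homotopy category $\mathscr{H}$. For $G = \zed^{\star r}$ and $h \in G$, the inner automorphism $\ad(h) : G \to G$ induces a map $B\ad(h) : BG \to BG$ that is freely homotopic to the identity (the free homotopy is implemented by the loop $h \in G = \pi_1(BG, *)$). Thus $B\ad(h) = \mathrm{id}$ in $\mathscr{H}$, so $HH_*(B\ad(h); \theta^* L') = \mathrm{id}$. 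Consequently the $\aut(\zed^{\star r})$-action on $HH_*(B\zed^{\star r}; \theta^* L')$ factors across $\out(\zed^{\star r})$, which means the functor lies in $\foutk[\gr]$ as required.

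Third, the finite-dimensionality and duality. Here we invoke homotopy invariance (Theorem \ref{thm:hhh_homotopy_inv}(1)): since $\kring$ is a field, $HH_*(BG; L)$ depends only on the pointed homotopy type of $BG$. For $G = \zed^{\star r} \in \ob \gr$, $BG$ is homotopy equivalent to $(S^1)^{\bigvee r}$, which admits a finite pointed simplicial model $X$ (take the standard model $\Delta^1/\partial \Delta^1$ for $S^1$ and wedge $r$ copies), so that $X_t$ is a finite set for every $t$. If $L$ takes finite-dimensional values, then $L(X_t)$ is finite-dimensional for each $t$, hence so is the simplicial homology $HH_n(BG; L) = \pi_n(L(X))$. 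The isomorphism $HH^*(B(-); D_\Gamma L) \cong D_\gr HH_*(B(-); L)$ then follows by applying the final statement of Proposition \ref{prop:duality_HHH} to $X$, since both finiteness hypotheses are satisfied; the compatibility with the functorial structure in $G$ is automatic from the naturality of the universal coefficient isomorphism.

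The main subtlety is the second step: one must check that inner automorphisms $B\ad(h)$ are unpointed-homotopic to the identity (not pointed-homotopic, which is false in general), and that this is precisely the flexibility gained by passing from $\fcatk[\Gamma]$ to $\theta^*\fcatk[\fin]$. The remaining steps are routine, provided one is careful to choose a finite simplicial model of $BG$ before invoking Proposition \ref{prop:duality_HHH}.
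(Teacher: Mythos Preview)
Your proposal is correct and follows essentially the same approach as the paper's proof: both use Theorem~\ref{thm:hhh_homotopy_inv} for the outer-functor statement, the finite simplicial model $(S^1)^{\bigvee r}$ for finite-dimensionality, and Proposition~\ref{prop:duality_HHH} for duality. The only difference is cosmetic: the paper cites \cite[Theorem II.1.9]{AM} for the fact that $B\ad(g)$ is unpointed-homotopic to the identity, whereas you argue this directly via the free homotopy implemented by the loop $h$.
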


\begin{proof}
The first statement is clear. 

For coefficients arising from $\fcatk[\fin]$,  
we require to show that the conjugation action is trivial. This follows from Theorem \ref{thm:hhh_homotopy_inv} (where the hypothesis that $\kring$ is a field is required) since, for $G$ a discrete group  
and $g \in G$ an element, the adjoint action  
 $
  B \ad (g) : BG 
  \rightarrow 
  BG
 $ is (unpointed) homotopic to the identity (see  \cite[Theorem II.1.9]{AM}, for example).

For $\fr \in \ob \gr$, the classifying space is equivalent to, $\bigvee_r S^1$, the wedge of $r$ circles 
in $\mathscr{H}_\bullet$ and the latter is represented by a finite simplicial set. It follows that 
$HH_n (B (-) ; L)$ takes finite-dimensional values.  The final statement then follows from the duality result Proposition \ref{prop:duality_HHH}.
\end{proof}

\section{The Loday construction on square-zero extensions}
\label{sec:loday}

This section investigates the coefficients for higher Hochschild homology that we will exploit,  namely the $\Gamma$-modules given by  the Loday constructions $\loday(A_V; \rat)$ and $\loday (A_V, A_V)$ where $A_V$ is the square-zero extension $A_V := \rat \oplus V$, considered as a functor of  $V\in \ob \fmodq$.

These coefficients have precursors under the Schur correspondence of Section \ref{sec:fb}, exploiting the naturality in $\fmodq$, which gives the dictionary (see Proposition \ref{prop:loday_AV} for the precise statement):
\begin{eqnarray*}
\inj ^\Gamma & \leftrightarrow & \loday (A_V; \rat) \\
\theta^* \inj ^\fin & \leftrightarrow & \loday (A_V, A_V) 
\end{eqnarray*}
for $\inj^\Gamma$ the $\Gamma$-module introduced in Definition \ref{defn:inj_Gamma_inj_Fin} and $\theta^*\inj^\fin$ the $\Gamma$-module derived from the $\fin$-module $\inj^\fin$, also introduced in Definition \ref{defn:inj_Gamma_inj_Fin}.

The significance of the $\Gamma$-module $\inj^\Gamma$ is that it encodes the simple $\Gamma$-modules, by Proposition \ref{prop:properties_injgamma_otimes_Sigma}. The $\Gamma$-module $\theta^* \inj ^\fin$ plays a similar role when working without basepoints. This explains the interest of the Loday constructions $\loday(A_V; \rat)$ and $\loday (A_V, A_V)$ considered as functors of $V \in \ob \fmodq$.

Section \ref{subsect:further_inj} outlines the fundamental structure on the $\Gamma$-modules $\inj^\Gamma$ and $\theta^* \inj^\fin$ that is analogous to that on the Loday functors presented in Section \ref{subsect:loday_A}. For convenience, so as to be able to exploit the Schur correspondence, we work over $\rat$; the theory works in much greater generality.

\subsection{$\Gamma$ and $\fin$-modules from injections}
\label{subsect:inj}

We introduce the fundamental objects of this section:

 \begin{defn}
 \label{defn:inj_Gamma_inj_Fin}
 \nomenclature{$\inj^\Gamma$}{coefficient $\Gamma$-module from injections of finite pointed sets\nomrefpage}
 \nomenclature{$\inj^\fin$}{coefficient $\Gamma$-module from injections of finite sets\nomrefpage}
 \ 
 \begin{enumerate}
 \item 
 Let $\inj^\Gamma \in \ob \fcatQ[\fb\op \times \Gamma]$  be the functor 
$$
\inj ^\Gamma  : \ 
(\mathbf{m}, X)\mapsto \rat[\mathrm{inj}_{\Gamma} (\mathbf{m}_+, X)],
$$ 
where the right hand side is the quotient of $ \rat [\hom_{\Gamma} (\mathbf{m}_+,X ) 
]$ by the subspace generated by the non-injective maps.
\item 
Let $\inj^\fin  \in \ob \fcatQ[\fb\op \times \fin]$ be the functor 
$$
\inj ^\fin  : \ 
(\mathbf{m}, U) 
\mapsto  
\rat [\mathrm{inj}_\fin (\mathbf{m}, U)],
$$
where the right hand side is the quotient of $ \rat [\hom_{\fin} (\mathbf{m},U ) 
]$ by the subspace generated by the non-injective maps.
\end{enumerate}
 \end{defn}

These are related by a counterpart of the isomorphism  $\theta^+ \loday (A, 
\kring) \cong \loday' (A) $ given in  Lemma \ref{lem:theta_T_T'}:

\begin{lem}
\label{lem:inj^Fin}
There is an isomorphism $\inj^\fin \cong \theta^+ \inj^\Gamma$  in $\ob \fcatQ[\fb \op \times \fin]$.
\end{lem} 
 
\begin{proof}
By definition, $\theta^+ \inj^\Gamma$ sends  $(\mathbf{m}, U)$, for a finite set $U$, to $\inj^\Gamma (\mathbf{m}, U_+) = \rat[\mathrm{inj}_{\Gamma} (\mathbf{m}_+, U_+)]$. The latter is isomorphic to $\rat [\mathrm{inj}_\fin (\mathbf{m}, U)]$, using the injectivity criterion and the fact that the basepoint is preserved. Moreover, this identification is natural with respect to $\fb\op \times \fin$. 
\end{proof}
 
 The $\Gamma$-module $\inj^\Gamma$  encodes the simple $\Gamma$-modules, by the following result. To state this, we use the tensor product $\otimes_\fb$ (see Section \ref{subsect:tensor_over_C}), which  yields the functor 
$
\inj^\Gamma \otimes_\fb - : \fcatQ[\fb] \rightarrow \fcatQ[\Gamma]$.

\begin{prop}
\label{prop:properties_injgamma_otimes_Sigma}
\ 
\begin{enumerate}
\item 
The functor $\inj^\Gamma \otimes_\fb - : \fcatQ[\fb] \rightarrow \fcatQ[\Gamma]$ is exact and the image of $\rat [\sym_m]$, considered as a $\fb$-module supported on $\mathbf{m}$ for $m\in \nat$, is the $\Gamma$-module $\inj^\Gamma (\mathbf{m}, -) = \rat [\inj_\Gamma (\mathbf{m}_+, -) ]$.
\item 
The functor $\inj^\Gamma \otimes_\fb - $ induces a bijection between the isomorphism classes of simple objects of the categories $\fcatQ[\fb]$ and $\fcatQ[\Gamma]$. Explicitly, 
 $$
\{ \inj^\Gamma (\mathbf{m}, - ) \otimes_{\sym_m}S_\lambda \ | \ m \in \nat, \lambda \vdash m\}
$$
 is a set of isomorphism class representatives of the  simple $\Gamma$-modules, where $S_\lambda$ is the simple $\sym_m$-representation indexed by $\lambda$. 
\item 
For $m\in\nat$, $\inj^\Gamma (\mathbf{m}, - )$ has direct sum decomposition into simple objects in  $\fcatQ[\Gamma]$:
\[
\inj^\Gamma (\mathbf{m}, - ) 
\cong 
\bigoplus_{\lambda \vdash m} 
\Big(\inj^\Gamma (\mathbf{m}, - ) \otimes_{\sym_m}S_\lambda \Big) ^{\oplus \dim S_\lambda}.
\]
\end{enumerate}
\end{prop}

\begin{proof}
The first statement is immediate in characteristic zero.

The second statement is best understood by using  Pirashvili's Dold-Kan theorem \cite{Pdk}: this gives an equivalence of categories 
\[
\fcatQ[\fs] \stackrel{\simeq}{\rightarrow} \fcatQ[\Gamma],
\]
where $\fs$ is the category of finite sets and surjections. There is an inclusion $\fb \subset \fs$ and extension by zero gives the exact functor $\fcatQ[\fb]\rightarrow \fcatQ[\fs]$; for example, any $\sym_m$-module can be considered as an $\fs$-module supported on $\mathbf{m}$. In particular, this functor induces a bijection between the respective sets of isomorphism classes of simple objects. 

Using the explicit form of Pirashvili's equivalence, the composite $\fcatQ[\fb]\rightarrow \fcatQ[\fs]\stackrel{\simeq}{\rightarrow} \fcatQ[\Gamma]$ can be seen to identify (up to natural isomorphism) with the functor $\inj^\Gamma \otimes_\fb - $.  The second statement thus follows
 from the relation between simples given by $\fcatQ[\fb]\rightarrow \fcatQ[\fs]$.

The third statement follows using the isomorphism of $\sym_m$-modules $\rat [\sym_m] \cong \bigoplus _{\lambda \vdash m} S_\lambda ^{\oplus \dim S_\lambda}$.
\end{proof}

\subsection{The associated Schur functors}

In this section, we apply the  Schur functor construction to  $\inj^\Gamma$ and $\theta^*\inj^\fin$ (adjusting variance using $\fb \cong \fb\op$, where appropriate). Recall from Section \ref{subsect:schur_correspond} that the Schur functor construction is 
$$\talg \otimes_\fb - : \f(\fb; \rat) \rightarrow \f(\fmodq; \rat),$$
the main properties of which   are given in Proposition \ref{prop:schur_functor}.

\begin{nota}
 \label{nota:AV}
For $V \in \ob \fmodq$,  denote by $A_V:= \rat \oplus 
V$,  the square-zero extension, augmented $\rat$-algebra, with 
augmentation ideal $V$. 
\end{nota}

The association $V \mapsto A_V$ defines a functor from $\fmodq$ to augmented $\rat$-algebras, hence the functors $V \mapsto \loday (A_V, \rat)$ and $V \mapsto \loday (A_V, A_V)$ define objects in $\fcatQ[\fmodq \times \Gamma]$, denoted $\loday (A_{(-)}, \rat)$ and $\loday (A_{(-)}, A_{(-)})$ respectively. The following statement gives the dictionary of the beginning of this section:

\begin{prop}
 \label{prop:loday_AV}
There are isomorphisms in $\fcatQ[\fmodq \times \Gamma]$:
\begin{eqnarray*}
 \loday (A_{(-)}, \rat)& \cong & \talg \otimes_{\fb} \inj ^\Gamma
  \\
  \loday (A_{(-)}, A_{(-)}) &\cong& \talg  \otimes_{\fb} \theta^* \theta^+ \inj ^\Gamma \cong \talg  \otimes_{\fb}  \theta^* \inj^\fin. 
\end{eqnarray*}
\end{prop}

\begin{proof}
The first isomorphism is most easily proved by using Pirashvili's Dold-Kan theorem, which is applied to the case of a Loday functor in \cite[Section 1.10]{Phh}. The case of a square-zero extension is especially simple; the associated $\Gamma$-module identifies as stated. 

The second isomorphism can be established similarly. It also follows from  the first  by applying Lemma \ref{lem:theta_T_T'}, which provides the natural isomorphism  $\loday (A_V, A_V) \cong \theta^* \theta^+ 
\loday (A_V, \kring)$.
\end{proof}

We note the following consequence of Lemma \ref{lem:loday_AA_augmented}:

\begin{lem}
\label{lem:ses_AV_loday}
There is a short exact sequence of $\Gamma$-modules:
  \begin{eqnarray*}
   \label{eqn:loday_ses}
  0
   \rightarrow
  \loday (A_V ,\rat) \otimes V 
  \rightarrow 
  \loday (A_V, A_V) 
  \rightarrow 
  \loday (A_V , \rat) 
  \rightarrow 
  0,
  \end{eqnarray*} 
that is natural with respect to $V \in \ob \fmodq$.

This is naturally a short exact sequence of $A_V$-modules, where $\loday (A_V, \rat)$ is considered as an $A_V$-module by restriction along the augmentation $A_V \twoheadrightarrow \rat$.
\end{lem}

\begin{proof}
This follows from Lemma \ref{lem:loday_AA_augmented} by taking $A=A_V$ and using the identification $\loday (A_V, \overline{A_V}) \cong \loday (A_V ,\rat) \otimes V$ of $\Gamma$-modules, naturally in $V$. 
\end{proof}

\begin{rem}
\label{rem:loday_splitting}
The Schur functor constructions appearing in Proposition \ref{prop:loday_AV} encode the splittings into homogeneous components (as functors of $V$):
\begin{eqnarray*}
 \loday (A_V, \rat) 
 &\cong &
 \bigoplus_{d \geq 0} 
 \loday (A_V, \rat)^{(d)}
\\
 \loday (A_V, A_V) 
 &\cong & 
 \bigoplus_{d \geq 0} 
 \loday (A_V, A_V)^{(d)}
 \\
 \loday (A_V, \rat)  \otimes V
 &\cong &
 \bigoplus_{d \geq 1} 
 (\loday (A_V, \rat) \otimes V)^{(d)}
\cong 
 \big( \bigoplus_{d \geq 1} 
 \loday (A_V, \rat)^{(d-1)}\big) \otimes V.
 \end{eqnarray*}
 Explicitly,  for $d \in \nat$: 
\begin{eqnarray*}
\loday (A_{(-)}, \rat)^{(d)}& \cong &\talg^d (-) \otimes_{\sym_d} \inj ^\Gamma (\mathbf{d}_+, -)
\\
\loday (A_{(-)}, A_{(-)})^{(d)} &\cong &   \talg^d (-) \otimes_{\sym_d} \theta^* \theta^+ \inj ^\Gamma (\mathbf{d}_+, -)
\cong \talg^d (-) \otimes_{\sym_d} \theta^* \inj ^\fin (\mathbf{d}, -).
\end{eqnarray*}
\end{rem}

\subsection{Further structure}
\label{subsect:further_inj}
In this section, we exhibit the structure on $\inj^\Gamma$  and $\theta^*\inj ^\fin$ that corresponds to that on $\loday (A_{(-)}, \rat)$ and $\loday (A_{(-)}, A_{(-)})$ presented in Section \ref{subsect:loday_A}. The fact that these correspond via the Schur correspondence can be deduced using the properties given in Proposition \ref{prop:schur_functor}.

We consider $\inj^\Gamma$ (respectively $\inj ^\fin$) as taking values in $\fcatQ[\fb]$ and we exploit the convolution product $\tenfb$. Throughout, we work with the symmetric monoidal structure $(\fcatQ[\fb], \tenfb, \unit , \tau)$ on $\fb$-modules.

The following result is the counterpart of Proposition \ref{prop:expo_loday_AA}:

\begin{prop}
\label{prop:wedge_tenfb}
\
\begin{enumerate}
\item
For $X, Y \in \ob \Gamma$, there is a natural isomorphism in $\fcatQ[\fb]$:
\[
\inj^\Gamma ( X \vee Y)
 \cong 
\inj^{\Gamma} (X)
\tenfb
\inj^{\Gamma} ( Y)  
 \]
making $\inj^\Gamma$ symmetric monoidal with respect to $(\Gamma, 
\vee)$ and $(\fcatQ[\fb], \tenfb)$.
\item 
For $U, V \in \ob \fin$, there is a natural isomorphism in $\fcatQ[\fb]$:
\[
\inj^\fin( U  \amalg V)
 \cong 
\inj^{\fin} (U)
\tenfb
\inj^{\fin} (V)  
 \]
so that  $\inj^\fin$ is symmetric monoidal with respect to $(\fin, 
\amalg)$.
\end{enumerate}
\end{prop}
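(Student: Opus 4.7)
The plan is to construct both isomorphisms by unwinding the combinatorics of injections and then to verify the symmetric monoidal compatibilities. The essential observation is that a based injection $\mathbf{m}_+ \to X \vee Y$ is uniquely determined by the preimages of $X \setminus \{*\}$ and $Y \setminus \{*\}$: this gives a canonical ordered decomposition $\mathbf{m} = S_1 \amalg S_2$ together with based injections $(S_1)_+ \hookrightarrow X$ and $(S_2)_+ \hookrightarrow Y$. Conversely, such data clearly assembles to a based injection $\mathbf{m}_+ \hookrightarrow X \vee Y$, and the two constructions are mutually inverse. Linearizing over $\kring$ and reorganizing this bijection yields
\[
\inj^\Gamma(\mathbf{m}, X \vee Y)
\cong
\bigoplus_{\mathbf{m} = S_1 \amalg S_2} \inj^\Gamma(S_1, X) \otimes_\kring \inj^\Gamma(S_2, Y),
\]
which is exactly the value on $\mathbf{m}$ of $\inj^\Gamma(-, X) \tenfb \inj^\Gamma(-, Y)$ by Definition \ref{defn:tenfb}.

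Next I would check naturality. In the variable $\mathbf{m}$ (i.e., with respect to morphisms in $\fb$), a bijection $\sigma : \mathbf{m} \to \mathbf{m}'$ acts by precomposition and permutes the partitions accordingly, which is precisely the $\fb$-action on the right-hand side defined from the $\amalg$-structure of $\fb$; this matches via the isomorphisms of Proposition \ref{prop:proj_smod_tenfb}. In the $\Gamma$-variables, a morphism $X \to X'$ acts on injections by postcomposition followed by quotienting out non-injective representatives, and this visibly respects the decomposition according to the two wedge summands. Both naturality statements are thus immediate.

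For the symmetric monoidal compatibility, I would verify three axioms in turn. The unit axiom holds because $\inj^\Gamma(\mathbf{m}, S^0) = 0$ unless $\mathbf{m} = \mathbf{0}$, in which case it equals $\kring$; this identifies $\inj^\Gamma(S^0)$ with the unit $\unit$ of Notation \ref{nota:unit}. Associativity reduces, after unpacking the triple wedge on the left and the triple $\tenfb$ on the right, to the canonical associativity of ordered decompositions of $\mathbf{m}$, which is built into both the $\vee$ and $\tenfb$ structures. The symmetry is the one delicate point: the twist $X \vee Y \cong Y \vee X$ interchanges the two wedge summands and hence swaps $S_1$ with $S_2$; one must verify that this matches the symmetry $\tau$ of $(\smod, \tenfb, \unit, \tau)$ rather than the signed symmetry $\sigma$. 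Since permuting factors in our bijection of injection sets introduces no sign whatsoever, and $\tau$ is likewise defined without any Koszul sign (Definition \ref{defn:tenfb_symms}), the compatibility is immediate. This is the only step requiring genuine attention, and it is where the fact that we land in $(\smod, \tenfb, \unit, \tau)$ rather than in its signed counterpart is forced by the proof.

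Finally, the $\fin$-statement follows by applying the exact symmetric monoidal functor $\theta^+ : \fcatk[\Gamma] \rightarrow \fcatk[\fin]$ of Proposition \ref{prop:adjoints_theta}, using the identification $\theta^+ \inj^\Gamma \cong \inj^\fin$ of Proposition-Definition \ref{prop:inj^Fin} and the fact that $\theta^+$ sends the symmetric monoidal structure $(\Gamma, \vee)$-on-coefficients to $(\fin, \amalg)$-on-coefficients. Alternatively, one repeats the argument above verbatim, noting that an injection $\mathbf{m} \hookrightarrow U \amalg V$ of finite sets is the same datum as an ordered decomposition $\mathbf{m} = S_1 \amalg S_2$ together with injections $S_1 \hookrightarrow U$ and $S_2 \hookrightarrow V$; the absence of a basepoint removes the (entirely cosmetic) complication of the $\Gamma$-case.
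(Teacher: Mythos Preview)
Your proposal is correct and follows essentially the same combinatorial decomposition as the paper: decompose a based injection $\mathbf{m}_+ \hookrightarrow X \vee Y$ according to preimages of the two summands, and deduce the $\fin$-statement by applying $\theta^+$. The paper also notes in passing that part (1) can alternatively be deduced from Proposition~\ref{prop:proj_smod_tenfb} via the equivalence $\fb \cong \fb\op$, but the direct argument you give is exactly the one the paper calls ``instructive'' and spells out.
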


\begin{proof}
The proof of the first statement is analogous to that of 
Proposition \ref{prop:proj_smod_tenfb}. Namely, an injection $f : \mathbf{n}_+ \hookrightarrow X \vee Y$ of pointed sets 
is equivalent to the pair of restrictions $f' : \mathbf{n'}_+ \hookrightarrow X$ and $f'' : \mathbf{n''}_+ 
\hookrightarrow Y$, where $\mathbf{n'}_+ = f^{-1} (X)$ and $\mathbf{n''}_+ = f^{-1} (Y)$, so that 
 $\mathbf{n} = \mathbf{n'} \amalg 
\mathbf{n''}$. This is encoded in $\tenfb$ and is compatible with the respective symmetric monoidal structures. 

The second statement follows from the first by applying the functor $\theta^+$, using the identification of Lemma \ref{lem:inj^Fin}. 
 \end{proof}

As in Remark \ref{rem:loday_mult_revisit}, this induces multiplicative structures, leading to the following counterpart of Proposition \ref{prop:loday_mult}:

\begin{cor}
\label{cor:cmon_inj_Gamma_Fin}
\ 
\begin{enumerate}
\item 
The functor $\inj ^\Gamma$ takes values in unital, commutative monoids in $\fcatQ[\fb]$.
\item 
The functor $\inj^\fin$ takes values in unital, commutative  monoids in $\fcatQ[\fb]$.
\end{enumerate}

Moreover,  $\theta^* \inj^\fin \cong \theta^* \theta^+  \inj ^\Gamma$ takes values in unital, commutative monoids  and 
the adjunction counit $\theta^* \theta^+  \inj ^\Gamma \rightarrow \inj^\Gamma$ is a morphism of commutative monoids. 
 Evaluated on $X\in\ob \Gamma$ and $\mathbf{m} \in \ob \fb$, this map identifies as the surjection 
\[
\rat [\mathrm{inj}_\fin (\mathbf{m}, X)]
\rightarrow 
\rat [\mathrm{inj}_\Gamma (\mathbf{m}_+, X)]
\]
induced by extending a map by  sending $+$ to the basepoint of $X$.
\end{cor}

We now present the analogue of the natural $A$-module structure on $\loday (A,A)$ 
 given by restricting the above multiplicative structure. 
 
In the following statement, we consider $\unit \oplus P^\fb_{\mathbf{1}}$ as a unital commutative monoid in $\fcatQ[\fb]$ with respect to the convolution product, using the square-zero extension structure. (In fact, this is the {\em unique} unital commutative monoid structure on $\unit \oplus P^\fb_{\mathbf{1}}$.)

\begin{rem}
The Schur functor associated to $\unit \oplus P^\fb_{\mathbf{1}}$  is $V \mapsto \rat \oplus V$, considered as a square zero extension.
\end{rem}

\begin{lem}
\label{lem:square_zero_module}
\ 
\begin{enumerate}
\item
The unital commutative monoid structure of  $\inj^\fin (-, \mathbf{1})$ given by Corollary \ref{cor:cmon_inj_Gamma_Fin}, where $\mathbf{1}$ is considered as an object of $\fin$, identifies with the square zero extension $\unit \oplus P^\fb_{\mathbf{1}}$ in $\fcatQ[\fb]$.
\item 
There is a natural inclusion on commutative monoids $\unit \oplus P^\fb_{\mathbf{1}} \hookrightarrow \theta^* \theta^+  \inj ^\Gamma $ in $\f (\Gamma; \f(\fb; \rat))$,  where $\unit \oplus P^\fb_{\mathbf{1}}$ is considered as constant with respect to $\Gamma$. This is induced by the inclusion of the basepoint.
\end{enumerate}
In particular,  $ \theta^* \theta^+  \inj ^\Gamma$ (which is isomorphic to $\theta^* \inj ^\fin$) takes values in  $(\unit \oplus P^\fb_{\mathbf{1}})$-modules in $\f(\fb; \rat)$.
\end{lem}

The $(\unit \oplus P^\fb_{\mathbf{1}})$-module structure on  $\theta^* \theta^+  \inj ^\Gamma$ restricts to 
$(\theta^* \theta^+ \inj^\Gamma) \tenfb P^\fb _\mathbf{1}  \rightarrow \theta^* \theta^+ \inj^\Gamma$. One checks that this factors across the surjection $\theta^* \theta^+ \inj^\Gamma \tenfb P^\fb _\mathbf{1} \twoheadrightarrow \inj^\Gamma  \tenfb P^\fb _\mathbf{1} $ (induced by the adjunction unit) to give 
\begin{eqnarray}
\label{eqn:kernel_adj_counit}
\inj^\Gamma  \tenfb P^\fb _\mathbf{1}
\rightarrow \theta^* \theta^+ \inj^\Gamma
\cong \theta^* \inj^\fin.
\end{eqnarray}

For $X \in \ob \Gamma$ and $\mathbf{m}\in \ob \fb$, by the definition of $\tenfb$,  this identifies with 
\[
\bigoplus_{\substack{S \subset \mathbf{m} \\|S|= m-1}} \rat [\mathrm{inj}_\Gamma (S_+, X) ] 
\rightarrow 
\rat[\mathrm{inj}_\fin (\mathbf{m}, X)]
\]
that sends a generator $[i :S_+ \hookrightarrow X]$ to the corresponding generator $[i : \mathbf{m} \hookrightarrow X]$,  using the obvious identification $S_+ \cong \mathbf{m}$.

The following statement gives the analogue of the short exact sequence of Lemma \ref{lem:ses_AV_loday}.

\begin{prop}
\label{prop:inj_Gamma_X+_decomp}
The  adjunction counit $\theta^* \inj^\fin \cong \theta^* \theta^+ \inj^\Gamma
\rightarrow \inj^\Gamma $ and the map (\ref{eqn:kernel_adj_counit}) fit into 
 a short exact sequence in $(\unit \oplus P^\fb_{\mathbf{1}})$-modules in $\f(\Gamma; \f(\fb; \rat))$:
\begin{eqnarray}
\label{eqn:ses_inj}
0
\rightarrow 
\inj^\Gamma \tenfb P^\fb _\mathbf{1} 
\rightarrow 
\theta^* \inj^\fin 
\rightarrow 
\inj^\Gamma 
\rightarrow 
0,
\end{eqnarray}
where $\inj^\Gamma$ is considered as a module via restriction along the augmentation $\unit \oplus P^\fb_{\mathbf{1}} \twoheadrightarrow \unit$.
\end{prop}

\begin{proof}
The adjunction counit is clearly surjective and a morphism of $(\unit \oplus P^\fb_{\mathbf{1}})$-modules.

The injectivity of (\ref{eqn:kernel_adj_counit}) is clear from the explicit description given above which also shows that it maps to the kernel of the adjunction counit.  Hence, it suffices to show exactness in the middle.

It suffices to show that, when evaluated on $X\in\ob \Gamma$ and $\mathbf{m} \in \ob \fb$, the kernel of the adjunction counit is the image of the map (\ref{eqn:kernel_adj_counit}). The adjunction counit is described explicitly in Corollary \ref{cor:cmon_inj_Gamma_Fin};  the kernel is  the subspace generated by maps $\mathbf{m} \hookrightarrow X$ that contain the basepoint of $X$ in their image; this is the image of (\ref{eqn:kernel_adj_counit}), by the explicit description of this map given above.
\end{proof}

\section{Higher Hochschild homology with coefficients $\inj^\Gamma$}
\label{sec:hhh_triv}

In this section we treat higher Hochschild homology with coefficients in $\loday(A_V; \rat)$, where $A_V$ is the square-zero extension $\rat \oplus V$,  restricted to a functor on $\gr$:
$$
\zed^{\star r} \mapsto 
HH_* (B \zed^{\star r} ; \loday ( A_V, \rat))
.
$$
This is considered as a functor of $V \in \ob \fmodq$ and takes values in commutative graded $\rat$-algebras, using the shuffle product, by Proposition \ref{prop:shuffle_product}. 

The main result of this section is Theorem \ref{thm:hhh_Loday_AV_k}, which establishes that this is an exponential functor; explicitly:
\[
HH_* (B (-); \loday (A_{V} , \rat) ) 
\cong 
\Psi (T_\mathrm{coalg}(sV)).
\]

Using the Schur correspondence, since we are working over $\rat$,  it is equivalent to study the functor 
$$
\zed^{\star r} \mapsto 
HH_* (B \zed^{\star r} ; \inj^\Gamma),
$$
where the coefficients $\inj^\Gamma \in \fcatQ[\fb\op \times \Gamma]$ are as in Section \ref{sec:loday}. By construction,  this functor takes  values in graded $\fb$-modules.

The main result of the section is equivalent to Theorem \ref{thm:hhh_inj_Gamma}, which gives the  isomorphism:
$$
HH_* (B (-) ; \inj ^\Gamma)\cong
\Psi ({\pcoalg} ^\dagger).
$$ 

\subsection{Coefficients $\loday (A_V, \rat)$}

Recall that  $T_{\mathrm{coalg}}(V)$ denotes the tensor coalgebra Hopf algebra (with deconcatenation coproduct and shuffle product). Koszul signs are introduced using the $(-)^\dagger$ construction (see Notation \ref{nota:dagger_schur}). For the purposes of this section, it is convenient to treat $(-)^\dagger$  via 
\[
V \mapsto T_{\mathrm{coalg}}(sV),
\]
mapping to graded $\rat$-vector spaces, using $sV$ placed in homological degree one (cf. Remark \ref{rem:heuristic_dagger}).

This is a cocommutative Hopf algebra in graded vector spaces, with deconcatenation coproduct and shuffle product with the appropriate Koszul signs. 

The exponential functor $\Psi$ is applied in this context by working in graded $\rat$-vector spaces with symmetry involving Koszul signs.

\begin{thm}
\label{thm:hhh_Loday_AV_k} 
There is a natural isomorphism of functors with values in graded-commutative $\rat$-algebras:
\[
HH_* (B (-); \loday (A_{V} , \rat) ) 
\cong 
\Psi (T_\mathrm{coalg}(sV)),
\]
where $HH_* (B (-); \loday (A_{V} , \rat) ) $ is equipped with the shuffle product. This is natural with respect to $V \in \ob \fmodq$.
\end{thm}

\begin{proof}
Proposition \ref{prop:shuffle_vee} implies that $HH_* (B (-); \loday (A_{V} , \rat) ) $ is an exponential functor on $\gr$ with values in graded $\rat$-vector spaces. Hence, by Theorem \ref{thm:expo_Hopf_general}, it suffices to identify the Hopf algebra $HH_* (B \zed; \loday (A_{V} , \rat) ) = HH_* (S^1; \loday (A_{V} , \rat) )$.

The identification of the underlying  augmented graded-commutative algebra is standard (cf.  \cite[Section 4]{L}).
 This can be seen by using the structure reviewed in Example \ref{exam:norm_ch_cx_structure}. Namely, the normalized chain complex $N (\loday (A, \rat) (\Delta^1/ \partial \Delta^1))$ is isomorphic to $V^{\otimes n}$ in degree $n$ and the product is the shuffle product (with Koszul signs). This is the underlying  augmented graded  commutative algebra structure of $T_{\mathrm{coalg}}(sV)$. 

For the coproduct one can proceed as in \cite[Section 3]{AR} (this is written for  higher Hochschild homology with coefficients in $\loday (A, A)$ for $A$ a commutative ring -- the arguments transpose {\em mutatis mutandis} to the current situation). This identifies the coproduct as the deconcatenation coproduct, as required.
\end{proof}

\begin{rem}
\label{rem:purity_schur}
The functor $V \mapsto HH_* (B (-); \loday (A_{V} , \rat) ) $ can be considered as  a functor on $\gr$ with values in $\f(\nat \times \fmodq; \rat)$. This  takes values in {\em pure} objects of $\f(\nat \times \fmodq; \rat)$, defined as follows: 

An object $F$ of $\f(\nat \times \fmodq; \rat)$ is {\em pure} if, for each $n \in \nat$,  the $n$th graded component (corresponding to functoriality with respect to $\nat$)  is  a homogeneous polynomial functor of degree $n$ (as a functor from $\fmodq$ to $\rat$-vector spaces), in the terminology of Definition \ref{defn:homog_poly_functor}.

The significance of {\em purity} is that the homological grading is determined by the polynomial degree of the underlying functor. In the next section  this will be carried over across the Schur correspondence (see Definition \ref{defn:pure} and Proposition \ref{prop:pure_symmetric_monoidal}), where the notion is essential so as to recover the homological grading.
\end{rem}

\subsection{Coefficients $\inj^\Gamma$}
\label{subsect:coeff_inj_gamma}

The coefficients $\inj ^\Gamma$ were introduced in Definition \ref{defn:inj_Gamma_inj_Fin}; adjusting the variance using $\fb\cong \fb\op$, $\inj^\Gamma$ can be considered as a $\Gamma$-module with values in $\f(\fb; \rat)$. By Proposition  \ref{prop:loday_AV}, passing to the associated Schur functor gives:
\[
 \loday (A_{(-)}, \rat) \cong \talg \otimes_{\fb} \inj ^\Gamma. 
\]

We deduce:

\begin{prop}
\label{prop:HH_hom_int_Gamma}
For $X \in \ob \Delta \op \setspt$, there is a natural isomorphism 
\[
HH_* (X; \loday (A_{(-)}, \rat)) \cong 
\talg \otimes _{\fb} HH_* (X; \inj^\Gamma).
\]
\end{prop}

\begin{proof}
This follows since the Schur functor construction $\talg \otimes_\fb -$ is exact, hence commutes with the passage to homology.
\end{proof}

By the Schur correspondence this implies that, to understand $HH_* (X; \loday (A_{V}, \rat))$ naturally with respect to $V$, it is equivalent to understanding $HH_* (X; \inj^\Gamma)$  taking  values in $\fb$-modules. This leads  to the consideration of homologically graded $\fb$-modules; these are the object of $\f(\nat \times \fb; \rat)$.

The following is the analogue of purity in the Schur functor context (cf. Remark \ref{rem:purity_schur}):

\begin{defn}
\label{defn:pure}
A graded $\fb$-module $F \in \ob \fcatQ[\nat \times \fb]$ is pure if $F (d, \mathbf{n})=0$ whenever $d\neq n$. 
Write $\f^{\mathrm{pure}}(\nat \times \fb; \rat)$ for the full subcategory of pure functors.
\end{defn}

The convolution product $\tenfb$ on $\fb$-modules passes to $\nat$-graded $\fb$-modules, using the tensor product of graded vector spaces. The symmetry $\tau$ has a graded analogue $\tau_\nat$, defined using the symmetry of graded $\rat$-vector spaces with Koszul signs, thus defining a symmetric monoidal structure $(\f(\nat \times \fb; \rat), \tenfb, \unit, \tau_\nat)$. 

\begin{rem}
The notation $\tau_\nat$ was chosen so as to indicate that the signs arise from the underlying symmetric monoidal structure of graded vector spaces (with Koszul signs).  This choice affects the symmetric monoidal behaviour exhibited in Proposition \ref{prop:pure_symmetric_monoidal}.
\end{rem}

It is clear that, if $F_1, F_2 \in \ob \f^{\mathrm{pure}}(\nat \times \fb; \rat)$, then so is $F_1 \tenfb F_2$, hence $\tenfb$ restricts to a symmetric monoidal structure on $\f^{\mathrm{pure}}(\nat \times \fb; \rat)$. In fact, one has the following:

\begin{prop}
\label{prop:pure_symmetric_monoidal}
The functor $\fcatQ[\fb] \rightarrow \f^{\mathrm{pure}}(\nat \times \fb; \rat)$ that sends $G \in \ob \smod$ to the pure graded functor with $G (n, \mathbf{n}) = G(\mathbf{n})$  for $n \in \nat$, is an isomorphism of symmetric monoidal categories with respect to  $(\fcatQ[\fb], \tenfb, \unit, \sigma)$ and  $(\f^{\mathrm{pure}}(\nat \times \fb; \rat), \tenfb ,\unit, \tau_\nat)$.

Hence, the composite:
\[
\fcatQ[\fb]
\stackrel{(-)^\dagger}{\rightarrow}
\fcatQ[\fb] \rightarrow \f^{\mathrm{pure}}(\nat \times \fb; \rat)
\]
is an isomorphism of symmetric monoidal categories, where the domain is equipped with the symmetric monoidal structure $(\fcatQ[\fb], \tenfb, \unit, \tau)$.
\end{prop}

Using this, together with the Schur functor correspondence, Theorem \ref{thm:hhh_Loday_AV_k} implies:

\begin{thm}
\label{thm:hhh_inj_Gamma}
There is a natural isomorphism of $\nat$-graded functors with values in $\fb$-modules:
\[
HH_* (B (-) ; \inj ^\Gamma)
\cong 
\Psi ({\pcoalg} ^\dagger),
\]
where the right hand side is considered as a pure functor via Proposition \ref{prop:pure_symmetric_monoidal}.
\end{thm}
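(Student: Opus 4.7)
The plan is to combine exponentiality with the classification of exponential functors of Theorem \ref{thm:expo_Hopf_general}, thereby reducing the theorem to the identification of the single commutative Hopf algebra $H := HH_*(B\zed; \inj^\Gamma)$.

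First I establish exponentiality. Proposition \ref{prop:wedge_tenfb} gives a symmetric monoidal natural isomorphism $\inj^\Gamma(X \vee Y) \cong \inj^\Gamma(X) \tenfb \inj^\Gamma(Y)$. Composing with the nerve functor $B$ (which sends $\star$ to $\vee$ up to homotopy) and applying an Eilenberg--Zilber shuffle argument (which requires no hypothesis on $\kring$, since $\inj^\Gamma$ takes $\kring$-free values on the relevant simplicial sets) yields natural isomorphisms
\[
HH_*(B(G_1 \star G_2); \inj^\Gamma) \cong HH_*(BG_1; \inj^\Gamma) \tenfb HH_*(BG_2; \inj^\Gamma).
\]
Koszul signs enter through the shuffle, so this isomorphism is symmetric monoidal with respect to the symmetry $\sigma_\nat$ on $\f^{\mathrm{pure}}(\nat \times \fb; \kring)$, equivalently $\sigma$ on $\smod$ via Proposition \ref{prop:pure_symmetric_monoidal}. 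Thus $HH_*(B(-); \inj^\Gamma)$ is a coproduct-preserving functor into commutative monoids in $(\smod, \tenfb, \sigma)$, and Theorem \ref{thm:expo_Hopf_general} provides a canonical isomorphism $HH_*(B(-); \inj^\Gamma) \cong \Psi(H)$. The Hopf algebra structure on $H$ has commutative multiplication arising from the codiagonal $\zed \star \zed \to \zed$ (Proposition \ref{prop:expo_(co)algebra}) and cogroup structure inherited from that of $\zed \in \ob \gr$ (Lemma \ref{lem:zed_cogroup}).

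It remains to identify $H \cong \pcoalg^\dagger$. For the underlying $\fb$-module, an explicit computation with the simplicial model $S^1 = \Delta^1 / \partial \Delta^1$ shows that $\inj^\Gamma(\mathbf{m}, S^1_\bullet)$ has homology concentrated in degree $m$, equal there to the regular representation $\kring[\sym_m]$; this identifies the underlying pure $\fb$-module as $\neck$, matching $\pcoalg^\dagger$ via Proposition \ref{prop:dagger_talg_tcoalg}. The commutative product on $H$ is induced by the fold map $B\nabla : S^1 \vee S^1 \to S^1$, which on chains translates into the Eilenberg--Zilber shuffle product on $\neck$; dually, the coproduct comes from the pinch map $B\Delta : S^1 \to S^1 \vee S^1$ and corresponds to deconcatenation. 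Together with the Koszul signs introduced by the shuffle, these match the structure of $(\neck, \prodshuff, \coproddecon, \chi)^\dagger = \pcoalg^\dagger$ from Definition \ref{defn:palg_pcoalg_ dagger}.

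The main obstacle is the precise verification that the shuffle product together with its Koszul signs on $H$ matches $\prodshuff$ in the definition of $\pcoalg^\dagger$; this requires careful tracking of the Eilenberg--Zilber map through the composition with the symmetric monoidal isomorphism of Proposition \ref{prop:wedge_tenfb}. The combinatorial nature of $\neck$ (Remark \ref{rem:hopf_neck_combinatorial}) is essential here: the Hopf algebra structures on $\neck$ are determined by natural maps on generating representables $P^\fb_{\mathbf{n}}$ via the isomorphisms of Proposition \ref{prop:proj_smod_tenfb}, which allows the identification to be carried out integrally without recourse to the Schur correspondence, thus handling an arbitrary commutative ring $\kring$.
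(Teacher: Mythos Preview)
Your proposal is correct and follows essentially the same approach as the paper: reduce via exponentiality (Eilenberg--Zilber together with Proposition \ref{prop:wedge_tenfb}) to identifying the single Hopf algebra $HH_*(S^1; \inj^\Gamma)$, then compute this directly from the simplicial circle $\Delta^1/\partial\Delta^1$, obtaining the shuffle product and deconcatenation coproduct. The one place the paper adds detail beyond your sketch is in handling the pinch-map coproduct, where an explicit simplicial zigzag through the two-cell model $\Delta^1 \cup_{\partial\Delta^1} \Delta^1$ of $S^1$ is used (arguing as in \cite{AR}).
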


\begin{proof}
This follows by applying the Schur functor construction, which is symmetric monoidal, by Proposition \ref{prop:schur_functor}. This property allows the naturality result, Proposition \ref{prop:naturality_Phi_Psi}, for the exponential functor construction $\Psi$ to be applied.

Using the identifications of Section \ref{sec:pcoalg}, in the ungraded setting, it follows that the Schur functor associated to $\Psi \pcoalg$ is isomorphic to $\Psi T_\mathrm{coalg}(-)$, since the Schur functor associated to $\pcoalg$ identifies as $T_\mathrm{coalg}(-)$ as a commutative Hopf algebra.

The passage to the graded setting is encoded in the functor $(-)^\dagger$ and its compatibility with the Schur functor construction.  
\end{proof}

\begin{rem}
Our viewpoint is that $  HH_* (B (-) ; \inj ^\Gamma)$ is the most natural object to consider in this context, rather than the associated Schur functor $V  \mapsto HH_* (B (-); \loday (A_{V} , \rat) )$.  
\end{rem}

\section{Higher Hochschild homology with coefficients $\theta^* \inj^\fin$}
\label{sec:hhh_calc}

In this section, we free up the base point, by considering higher Hochschild homology 
$$HH_* (B (-);\loday (A_V, A_V))$$ 
with the coefficients in $\loday (A_V, A_V)$ replacing $\loday (A_V , \rat)$ (as considered in Section \ref{sec:hhh_triv}). Once again, it is essential that these structures be  considered functorially  with respect to $V \in \ob \fmodq$.  By Proposition \ref{prop:HH_group_case}, the functor $HH_* (B (-);\loday (A_V, A_V))$ on $\gr$  takes values in $\foutQ[\nat \times \gr]$, where $\nat$ corresponds to the homological  grading.  

The functor $HH_* (B (-);\loday (A_V, A_V))$ is described in Theorem \ref{thm:HH_lodayAVAV_car0}, which also treats the multiplicative structure. 
 By the Schur correspondence, this is equivalent to studying 
$$HH_* (B (-); \theta^* \inj^\fin)$$
 in place of $HH_* (B (-);  \inj^\Gamma)$ (as considered in Section \ref{sec:hhh_triv}).
The description of $HH_* (B (-); \theta^* \inj^\fin)$ is then deduced in Theorem \ref{thm:HH_theta_inj_Fin}.

In Section \ref{subsect:isotypical_components}, we consider the decomposition of $HH_* (B (-); \theta^* \inj^\fin)$ into isotypical components, using Theorem \ref{thm:HH_theta_inj_Fin}.
The main result, Theorem \ref{thm:beta_hhh}, establishes the close relationship between higher Hochschild homology with coefficients $\theta^* \inj^\fin$ and the structure of $\fpiQ {\gr}$, notably of the injective cogenerators of this category. Explicitly,  
for $n \in \nat^*$ and $\lambda \vdash n$, there  are isomorphisms in $\foutQ[\gr]$:
\[
HH_* \big(B (-);\theta^* \inj^\fin) \otimes_\fb S_\lambda
\cong 
\left\{
\begin{array}{ll}
\omega \beta_n S_{\lambda^\dagger} & *=n \\
\coker (\coadbar_{\lambda^\dagger}) & *= n-1\\
0  & \mbox{otherwise.}
\end{array}
\right.
\]
As explained in Remark \ref{rem:TW_question}, this gives a conceptual, theoretical answer to a question of Turchin and Willwacher, although it does not yield the full calculation that they sought.

Finally, in Section \ref{subsect:DT}, we show how understanding the structure of the polynomial functors on $\gr$ that arise can be used to deduce information on the behaviour of higher Hochschild homology. Namely, using Proposition \ref{prop:abelianization_dual_numbers}, we give a new proof of the result of Dundas and Tenti \cite{DT} showing that higher Hochschild homology is not a stable invariant (see Remark \ref{rem:Dundas_Tenti}).

\subsection{Coefficients  $\loday (A_V, A_V)$}

The analysis of higher Hochschild homology with coefficients  $\loday (A_V, A_V)$ is based on the short exact sequence of $\Gamma$-modules given by Lemma \ref{lem:ses_AV_loday}:
\begin{eqnarray}
\label{eqn:ses_loday}
0
\rightarrow 
V \otimes \loday (A_V, \rat) 
\rightarrow 
\loday (A_V, A_V) 
\rightarrow 
\loday (A_V, \rat) 
\rightarrow 
0
\end{eqnarray}
that is natural with respect to $V \in \ob \fmodq$. 

We also consider the multiplicative structures; Proposition \ref{prop:loday_mult} gives that $\loday(A_V, A_V)$ and $\loday (A_V, \rat)$ take values in unital commutative $\rat$-algebras and the projection $\loday (A_V, A_V) 
\rightarrow 
\loday (A_V, \rat) $ respects these structures. 

Evaluated on a fixed finite pointed set, the multiplicative structure of $\loday (A_V, A_V)$ is given in terms of that of $\loday(A_V, \rat)$ by:

\begin{lem}
\label{lem:loday_square_zero}
For $Y \in \ob \Gamma$,  (\ref{eqn:ses_loday}) 
 exhibits $\loday (A_V,A_V) (Y)$ as the square zero extension of the commutative algebra $\loday (A_V, \rat) (Y)$ by the module $V \otimes \loday (A_V, \rat) (Y)$.  This is natural with respect to $V \in \ob \fmodq$.
\end{lem}

\begin{proof}
This is immediate from the definition of the respective algebra structures, using that $A_V = \rat \oplus V$ is a square zero extension of $\rat$.
\end{proof}

\begin{rem}
\label{rem:not_split}
Lemma \ref{lem:loday_square_zero} provides the  splitting 
\[
\loday (A_V, A_V) (Y) \cong \loday (A_V, \rat) (Y) \oplus \big( V \otimes \loday (A_V, \rat) (Y) \big).
\]
This splitting is natural with  respect to $V \in \ob \fmodq$ but  is {\em not} natural with respect to $Y \in \ob \Gamma$.
\end{rem}

From this we deduce the following general result:

\begin{prop}
\label{prop:HH_loday_AV_AV}
For $X \in \ob \Delta\op \setspt$, the shuffle product gives natural graded-commutative $\rat$-algebra structures on $HH_*(X; \loday (A_V, A_V))$ and $HH_*(X; \loday(A_V, \rat))$.

The short exact sequence (\ref{eqn:ses_loday}) induces a long exact sequence 
\[
 \resizebox{\hsize}{!}{%
 $
\ldots 
\rightarrow 
V \otimes HH_*(X; \loday(A_V, \rat))
\rightarrow
HH_*(X; \loday (A_V, A_V))
\rightarrow 
HH_*(X; \loday(A_V, \rat))
\rightarrow 
\ldots
$%
}
\]
such that 
\begin{enumerate}
\item the map 
$HH_*(X; \loday (A_V, A_V))
\rightarrow 
HH_*(X; \loday(A_V, \rat))$ is a morphism of graded-commutative algebras;
\item  
the image of $V \otimes HH_*(X; \loday(A_V, \rat))
\rightarrow
HH_*(X; \loday (A_V, A_V))$ is an ideal with trivial multiplication.
\end{enumerate}

These structures are natural with respect to $V\in \ob \fmodq$.
\end{prop}

\begin{proof}
The multiplicative structures of $\loday (A_V, A_V)$ and $\loday (A_V, \rat)$ induce the graded-commutative algebra structure on higher Hochschild homology via the shuffle product (see Proposition \ref{prop:shuffle_product}).

The long exact sequence in homology is induced by the short exact sequence of coefficients, using the obvious isomorphism $HH_*(X; V \otimes  \loday(A_V, \rat))\cong
V \otimes HH_*(X; \loday(A_V, \rat))$. The multiplicative properties follow from Lemma \ref{lem:loday_square_zero} and Proposition \ref{prop:shuffle_product}.
\end{proof}

In general, the connecting morphism is non-trivial, as will be seen below. More precisely, we specialize and consider the functor on $\gr$ given by exploiting the classifying space functor $\zed^{\star r} \mapsto B \zed^{\star r} $. This gives the long exact sequence of functors on $\gr$:
\begin{eqnarray}
\label{eqn:les_gr}
&&
\\
\nonumber
&& \resizebox{.95\hsize}{!}{%
$
\rightarrow 
V \otimes HH_*(B(-); \loday(A_V, \rat))
\rightarrow
HH_*(B(-); \loday (A_V, A_V))
\rightarrow 
HH_*(B(-); \loday(A_V, \rat))
\stackrel{\mathfrak{d}}{\rightarrow } 
,
$
} 
\end{eqnarray}
where the connecting morphism is denoted $\mathfrak{d}$ and the sequence is natural with respect to $V \in \ob \fmodq$.

Theorem \ref{thm:hhh_Loday_AV_k} gives the isomorphism of functors on $\gr$:
\[
HH_* (B (-); \loday (A_{V} , \rat) ) 
\cong 
\Psi (T_\mathrm{coalg}(sV)), 
\]
where the homological grading is encoded by the homological suspension $sV$ (this corresponds to {\em purity})  and $\Psi$ is defined using the symmetry on graded $\rat$-vector spaces with Koszul signs. Again, this is natural with respect to $V \in \ob \fmodq$.

Using this isomorphism and taking into account the degree shift, the connecting morphism of the long exact sequence has the form:
\[
\mathfrak{d} : 
\Psi (T_\mathrm{coalg}(sV)) \rightarrow sV \otimes \Psi ( T_\mathrm{coalg}(sV)).
\]
The key input to understanding $HH_*(B(-); \loday (A_V, A_V))$ is the following:

\begin{prop}
\label{prop:connecting}
The connecting morphism $\mathfrak{d}$ of the long exact sequence (\ref{eqn:les_gr}) identifies as
\[
\coadbar : \Psi (T_\mathrm{coalg}(sV)) \rightarrow sV \otimes \Psi ( T_\mathrm{coalg}(sV))
\]
\end{prop}

\begin{proof}
First consider the map obtained by  evaluating on $\zed \in \ob \gr$ (this is equivalent to considering higher Hochschild homology of $S^1$). The connecting morphism is thus of the form:
\[
T_\mathrm{coalg}(sV) \rightarrow sV \otimes   T_\mathrm{coalg}(sV).
\]
This is calculated by using the differential on the normalized chain complex 
$$N (\loday (A_V, A_V)(\Delta^1/\partial \Delta^1)).$$ As in Example \ref{exam:norm_ch_cx_structure}, in degree $n$ this is isomorphism to 
\[
(\rat \oplus V)  \otimes (sV)^{\otimes n}. 
\]
The differential is induced by $d_0 + (-1)^n d_n$; in this case it is only non-trivial on the  summand $\rat   \otimes (sV)^{\otimes n} \cong (sV)^{\otimes n}$, since the multiplication on $V$ is trivial. From this one deduces that the connecting map in this case  is the graded coadjoint coaction $\coadbar$. 

This extends to the case of $\bigvee_{i=1}^r \Delta^1/\partial \Delta^1$ by exploiting the shuffle maps.  This uses the analogue of Proposition \ref{prop:shuffle_vee}, with $\loday (A_V, A_V)$ in place of $\loday (A,\kring)$, which provides the quasi-isomorphism: 
\[
N (\loday (A_V, A_V ) (X)) \otimes _{A_V} N(\loday (A_V, A_V) (Y) ) 
\rightarrow 
N (\loday (A_V, A_V ) (X \vee Y) ).
\]

This implies that the normalized chain complex $N (\loday (A_V, A_V)(\bigvee_{i=1}^r \Delta^1/\partial \Delta^1))$ is quasi-isomorphic to 
\[
(\rat \oplus V) \otimes \big( T_{\mathrm{coalg}}(sV))^{\otimes r},
\]
with differential that is identified as above. Proceeding as in the case $r=1$, one identifies the connecting map as required.
\end{proof}

This leads to the main result of this section:

\begin{thm}
\label{thm:HH_lodayAVAV_car0}
For  $V \in \ob \fmodq$, there is 
a natural isomorphism in $\fcatQ[\nat \times \gr]$:
\begin{eqnarray}
\label{eqn:HH_loday_AVAV}
\\
\nonumber
HH_* \big(B (-); \loday (A_V, A_V))
\cong 
\omega \Psi (T_{\mathrm{coalg}}(sV) )
\oplus 
\coker( \coadbar_{T_{\mathrm{coalg}}(sV)})[-1],
\end{eqnarray}
where $[-1]$ denotes the shift in homological degree. 

Moreover, 
\begin{enumerate}
\item 
$\omega \Psi (T_{\mathrm{coalg}}(sV) )$ is a graded-commutative algebra;
\item 
$\coker( \coadbar_{T_{\mathrm{coalg}}(sV)})$ is a graded $\omega \Psi (T_{\mathrm{coalg}}(sV) )$-module. 
\end{enumerate}
With respect to these structures, the isomorphism (\ref{eqn:HH_loday_AVAV}) is an isomorphism of graded-commutative algebras, where the right hand side is considered as a square-zero extension.

These identifications are natural with respect to $V$.
\end{thm}

\begin{proof}
Combining the long exact sequence (\ref{eqn:les_gr}) with the identification of the connecting morphism $\mathfrak{d}$ given in Proposition \ref{prop:connecting}, one obtains the short exact sequence in $\fcatQ[\nat \times \gr]$:
\[
0
\rightarrow 
\coker( \coadbar_{T_{\mathrm{coalg}}(sV)})[-1]
\rightarrow 
HH_* \big(B (-); \loday (A_V, A_V))
\rightarrow 
\omega \Psi (T_{\mathrm{coalg}}(sV) )
\rightarrow 
0
\]
that is natural with respect to $V$, using the construction of $\omega$ (see Proposition \ref{prop:Omega_tensor_algebra}, which is stated for the ungraded case).

In homological degree $d$, considered as a functor of $V$, $\coker( \coadbar_{T_{\mathrm{coalg}}(sV)})[-1]$ is homogeneous polynomial of degree $d+1$ and $\omega \Psi (T_{\mathrm{coalg}}(sV) )$ is homogeneous polynomial of degree $d$. The fact that the category of polynomial functors on $\fmodq$ splits as the direct sum of its homogeneous components (cf. Remark \ref{rem:homog_poly_fmodq}) implies that the short exact sequence splits as a functor on $\nat \times \gr$. This gives the first statement.

The multiplicative properties then follow from the general considerations of Proposition \ref{prop:HH_loday_AV_AV}.
\end{proof}

\subsection{Coefficients $\theta^* \inj^\fin$}
 There is a counterpart for $HH_* \big(B (-);\theta^* \inj^\fin)$ for Theorem \ref{thm:HH_lodayAVAV_car0} 
analogous to Theorem \ref{thm:hhh_inj_Gamma} for the coefficients $\inj^\Gamma$. This can be deduced using the Schur correspondence.

Recall from Lemma \ref{lem:inj^Fin} that $\inj^\fin$ is isomorphic to $\theta^+ \inj ^\Gamma$ and hence $\theta^* \inj^\fin$ is isomorphic to $\theta^* \theta^+ \inj ^\Gamma$. Proposition \ref{prop:loday_AV} identifies the associated Schur functor:
\[
  \loday (A_{(-)}, A_{(-)}) \cong \talg  \otimes_{\fb} \theta^* \theta^+ \inj ^\Gamma \cong \talg  \otimes_{\fb} \theta^* \inj^\fin. 
  \]

Under the Schur correspondence, calculating $  HH_* \big(B (-); \loday (A_V, A_V))$ naturally with respect to $V \in \ob \fmodq$ is equivalent to studying 
$
HH_* \big(B (-);\theta^* \inj^\fin).
$ 
Theorem \ref{thm:HH_lodayAVAV_car0} thus implies:

\begin{thm}
\label{thm:HH_theta_inj_Fin}
There is a natural isomorphism in $\f(\nat \times \gr \times \fb; \rat)$:
\[
HH_* \big(B (-);\theta^* \inj^\fin)
\cong 
\omega \Psi (\pcoalg^\dagger) 
\oplus 
\coker (\coadbar_{\pcoalg^\dagger} ) [-1]
\]
where $\omega \Psi (\pcoalg^\dagger)$ and $\coker (\coadbar_{\pcoalg^\dagger} )$ are considered as pure graded functors.
\end{thm}

\begin{rem}
One can also consider the multiplicative structures. However, since $HH_* \big(B (-);\theta^* \inj^\fin)$ is not a pure graded functor, this cannot be encoded entirely in terms of the symmetry $\tau_\nat$ as defined in Section \ref{subsect:coeff_inj_gamma}. 

However, one does have:
\begin{enumerate}
\item 
$\omega \Psi (\pcoalg^\dagger)$ has the structure of a unital commutative monoid (with respect to the symmetry $\tau_\nat$);
\item 
$\coker (\coadbar_{\pcoalg^\dagger} )$ that of a $\omega \Psi (\pcoalg^\dagger)$-module.
\end{enumerate}
The isomorphism of Theorem \ref{thm:HH_theta_inj_Fin} is compatible with these structures if the shift $[-1]$ in homological degree is taken into account. 
\end{rem}

\begin{rem}
Theorem \ref{thm:HH_theta_inj_Fin} can also be proved directly, mirroring the proof of Theorem \ref{thm:HH_lodayAVAV_car0}. For this, the natural short exact sequence (\ref{eqn:ses_loday}) is replaced by the short exact sequence (\ref{eqn:ses_inj}) of Proposition \ref{prop:inj_Gamma_X+_decomp}:
$$
0
\rightarrow 
\inj^\Gamma \tenfb P^\fb _\mathbf{1} 
\rightarrow 
\theta^* \theta^+ \inj^\Gamma
\rightarrow 
\inj^\Gamma 
\rightarrow 
0.
$$

The analysis of the associated long exact sequence then leads to the exact sequence (considered in $\f (\gr \times \fb; \rat)$)  by neglecting the homological grading:
\begin{eqnarray} 
\label{eqn:complex_Psi_pcoalg_coadbar}
\\
\nonumber
0
\rightarrow 
\omega \Psi ({\pcoalg}^\dagger)
\rightarrow 
\Psi ({\pcoalg}^\dagger) 
\stackrel{\coadbar}{\rightarrow} 
\Psi ({\pcoalg}^\dagger) \tenfb P^\fb_\mathbf{1}
\rightarrow 
\coker (\coadbar_{\pcoalg^\dagger} ) 
\rightarrow 
0.
\end{eqnarray}
\end{rem}

\subsection{Isotypical components}
\label{subsect:isotypical_components}

In this section, we consider $HH_* \big(B (-);\inj^\Gamma)$ and $HH_* \big(B (-);\theta^* \inj^\fin)$ as  objects of $\f (\nat \times \gr; \f(\fb\op; \rat))$. This allows us to exploit the functor $\otimes _\fb$ to form isotypical components.

Explicitly, for $\lambda\vdash n$ and the associated simple $\rat [\sym_n]$-module, $S_\lambda$, considered as an $\fb$-module supported on $\mathbf{n} \in \ob \fb$, applying the functor $- \otimes _\fb S_\lambda$ gives the respective isotypical components indexed by $\lambda$:
\begin{eqnarray*}
&& HH_* \big(B (-);\inj^\Gamma)\otimes_\fb S_\lambda
\\
&& HH_* \big(B (-);\theta^* \inj^\fin) \otimes_\fb S_\lambda.
\end{eqnarray*}

There are the corresponding decompositions into isotypical components: 
\begin{eqnarray*}
HH_* \big(B (-);\inj^\Gamma )(\mathbf{n})
&\cong &
\bigoplus_{\lambda\vdash n}
\big(
HH_* \big(B (-);\inj^\Gamma ) \otimes_\fb S_\lambda
\big)^{\oplus \dim S_\lambda}
\\
HH_* \big(B (-);\theta^* \inj^\fin)(\mathbf{n})
&\cong &
\bigoplus_{\lambda\vdash n}
\big(
HH_* \big(B (-);\theta^* \inj^\fin) \otimes_\fb S_\lambda
\big)^{\oplus \dim S_\lambda},
\end{eqnarray*}
where $HH_* \big(B (-);\inj^\Gamma )(\mathbf{n})$ and $HH_* \big(B (-);\theta^* \inj^\fin)(\mathbf{n})$ correspond to evaluating on $\mathbf{n} \in \ob \fb$.

Theorems \ref{thm:hhh_inj_Gamma} and \ref{thm:HH_theta_inj_Fin} have the following Corollary:

\begin{cor}
\label{cor:isotypical_components}
For $n \in \nat^*$ and $\lambda \vdash n$, there is an isomorphism in $\f(\gr; \rat)$:
\[
HH_* \big(B (-);\inj^\Gamma ) \otimes_\fb S_\lambda
\cong 
\left\{
\begin{array}{ll}
\big( \Psi ({\pcoalg}^\dagger) \big)\otimes _\fb S_\lambda & *=n \\
0  & \mbox{otherwise.}
\end{array}
\right.
\]

There are isomorphisms in $\foutQ[\gr]$:
\[
HH_* \big(B (-);\theta^* \inj^\fin) \otimes_\fb S_\lambda
\cong 
\left\{
\begin{array}{ll}
\big(\omega \Psi ({\pcoalg}^\dagger) \big)\otimes _\fb S_\lambda & *=n \\
\big(\coker (\coadbar_{\pcoalg^\dagger} ) \big)\otimes _\fb S_\lambda & *= n-1\\
0  & \mbox{otherwise.}
\end{array}
\right.
\]
\end{cor}

This result can be reformulated using the fundamental functors $\beta_d$ and $\omega \beta_d$ ($d\in \nat$) of the theory of polynomial functors on $\gr$. In the statement, $\preceq$ is the partial order on partitions of Notation \ref{nota:preceq};  for a partition $\lambda$, $\lambda^\dagger$ denotes the conjugate partition.

\begin{prop}
\label{prop:Psi_to_beta}
For $n \in \nat^*$, there are isomorphisms  in $\fpiQ{\gr}$:
\begin{eqnarray*}
\big( \Psi ({\pcoalg}^\dagger) \big)\otimes _\fb S_\lambda 
& \cong &
\beta_n S_{\lambda^\dagger} 
\\
\big( \Psi ({\pcoalg}^\dagger) \tenfb P^\fb_\mathbf{1} \big)\otimes _\fb S_\lambda 
& \cong &
\bigoplus_{\substack{\mu \preceq \lambda \\ |\mu|= n-1}} \beta_{n-1} S_{\mu^\dagger} 
\\
\big(\omega \Psi ({\pcoalg}^\dagger) \big)\otimes _\fb S_\lambda 
& \cong &
\omega \beta_n S_{\lambda^\dagger}. 
\end{eqnarray*}

Hence $\big(\coker (\coadbar_{\pcoalg^\dagger} ) \big)\otimes _\fb S_\lambda $ is the cokernel of the map:
\[
\beta_n S_{\lambda^\dagger} 
\rightarrow 
\bigoplus_{\substack{\mu \preceq \lambda \\ |\mu|= n-1}} \beta_{n-1} S_{\mu^\dagger} 
\]
corresponding to $\coadbar$. This identifies with $\coadbar_{\lambda^\dagger}$, using the notation introduced in Proposition \ref{prop:coadbar_es_isotypical}.
\end{prop}

\begin{proof}
By Proposition \ref{prop:omega_pcoalg_dagger}, $\Psi ({\pcoalg}^\dagger)$ is isomorphic to $\big(\Psi \pcoalg\big)^\dagger$, where the first functor $\Psi$ is calculated with respect to the signed symmetry $\sigma$ and the second with respect to $\tau$. 

From the construction of the functor $(-)^\dagger$ on $\fb$-modules, one deduces the isomorphisms:
\[
\big(\Psi \pcoalg\big)^\dagger \otimes_\fb S_\lambda 
\cong 
\Psi \pcoalg \otimes_\fb (S_\lambda)^\dagger
\cong 
\Psi \pcoalg \otimes_\fb S_{\lambda^\dagger}.
\]
Theorem \ref{thm:beta_d} provides the isomorphism $\Psi \pcoalg \otimes_\fb S_{\lambda^\dagger} \cong \beta_n S_{\lambda^\dagger}$. This gives the first statement. 

The second statement follows from the first by using the general behaviour of the functor $-\tenfb P_{\mathbf{1}}^\fb$ given in Example \ref{exam:tenfb_P1}. This gives the isomorphism:
$$
\big( \Psi ({\pcoalg}^\dagger) \tenfb P^\fb_\mathbf{1} \big)\otimes _\fb S_\lambda 
\cong 
\big( \Psi ({\pcoalg}^\dagger) \big)\otimes_\fb (S_\lambda)\downarrow^{\sym_n}_{\sym_{n-1}},
$$
where $(S_\lambda)\downarrow^{\sym_n}_{\sym_{n-1}}$ is the restriction of $S_\lambda$ as a $\sym_{n-1}$-module. By the Pieri rule, this is isomorphic to $\bigoplus_{\substack{\mu \preceq \lambda \\ |\mu|= n-1}}  S_{\mu}$. The result then follows from the first statement.  

The third statement follows similarly by using Corollary \ref{cor:omega_boldbeta_dagger}.

The final statement follows by applying the exact functor $\otimes _\fb S_\lambda$ to the  exact sequence (\ref{eqn:complex_Psi_pcoalg_coadbar}) and using the previous identifications; the identification with $\coadbar_{\lambda^\dagger}$ is straightforward, noting that $\mu \preceq \lambda$ if and only if $\mu^\dagger \preceq \lambda^\dagger$.
\end{proof}

Putting together Corollary \ref{cor:isotypical_components} with Proposition \ref{prop:Psi_to_beta} one obtains the main result of this section:

\begin{thm}
\label{thm:beta_hhh}
For $n \in \nat^*$ and $\lambda \vdash n$, there  are isomorphisms in $\fpoutgrQ[<\infty]$:
\[
HH_* \big(B (-);\theta^* \inj^\fin) \otimes_\fb S_\lambda
\cong 
\left\{
\begin{array}{ll}
\omega \beta_n S_{\lambda^\dagger} & *=n \\
\coker (\coadbar_{\lambda^\dagger}) & *= n-1\\
0  & \mbox{otherwise.}
\end{array}
\right.
\]
\end{thm}

\begin{rem}
\label{rem:TW_question}
Theorem \ref{thm:beta_hhh} gives our interpretation of the splitting given by Turchin and Willwacher in \cite{TW} into isotypical components, as we explain below.
 Turchin and Willwacher work with higher Hochschild {\em cohomology} (dealt with below by using vector space duality) and give their splitting by working at the level of the associated Schur functors. 

This leads to the modules $U^I_\lambda$ and $U^{II}_\lambda$ introduced in \cite[Section 2.5]{TW}. With our viewpoint, these arise from {\em functors} on $\gr\op$ (the variance resulting from their usage of cohomology).

The corresponding dual functors on $\gr$ identify as
\begin{eqnarray*}
D_{\gr\op} U^I_\lambda &=&  \omega \beta_n S_{\lambda^\dagger}  \\
D_{\gr\op}U^{II}_\lambda &=& \coker (\coadbar_{\lambda^\dagger})
\end{eqnarray*}

Remark \ref{rem:omega_beta_versus_coker_coad} implies that  knowledge of the composition factors of $\omega \beta_n S_{\lambda^\dagger}$ determines those of $\coker (\coadbar_{\lambda^\dagger})$.
In particular,  the Turchin and Willwacher problem is equivalent to calculating the composition factors of $\omega \beta_n S_{\lambda^\dagger}$, for each partition $\lambda$. 

We stress that  the functor $\omega \beta_n S_{\lambda^\dagger}$ has a conceptual interpretation in $\fpoutgrQ[<\infty]$ as the injective envelope of the simple $\alpha_n S_{\lambda^\dagger}$, by Theorem \ref{thm:injectivity_beta} in conjunction with Proposition \ref{prop:beta_socle}.
\end{rem}

\subsection{Splitting as Schur functors} 
 
The splitting of $HH_* \big(B (-);\theta^* \inj^\fin)$ into isotypical components yields that of $V \mapsto HH_* (\big( B(-); \loday (A_V, A_V))$ as a Schur functor:

\begin{prop}
\label{prop:isotypical_schur}
There is an isomorphism in $\f (\nat \times \gr ; \f (\fmodq ; \rat))$:
\[
HH_* (\big( B(-); \loday (A_{(-)}, A_{(-)}))
\cong 
\bigoplus _{n\in \nat} \bigoplus_{\lambda \vdash n}
\Big(
HH_* \big(B (-);\theta^* \inj^\fin) \otimes_\fb S_\lambda 
\Big) \otimes \schur_\lambda (-),
\]
where $\schur_\lambda (-)$ is the Schur functor associated to $S_\lambda$.
\end{prop}

\begin{exam}
\label{exam:Theorem_1_TW}
\cite[Theorem 1]{TW} treats the case $V= \rat$, so that $A_V$ is the ring of dual numbers $\rat[\epsilon]$ (ungraded). The only partitions $\lambda$ for which $\schur_\lambda (\rat) \neq 0$ are the partitions $(n)$, for $n \in \nat$. Hence, Proposition \ref{prop:isotypical_schur} gives the splitting:
\begin{eqnarray*}
 HH_* \big(B (-); \loday (\rat[\epsilon],\rat[\epsilon]))
 &\cong &
 \Big( \bigoplus_{n \in \nat} HH_* (B (-); \theta^* \inj^\fin )\otimes_\fb S_{(n)}\Big) \otimes S^{n}(\rat)
 \\
 &\cong & \bigoplus_{n \in \nat} HH_* (B (-); \theta^* \inj^\fin )\otimes_\fb S_{(n)},
\end{eqnarray*}
where we have used that the $n$th symmetric power $S^n (\rat)$ evaluated on $\rat$ is isomorphic to $\rat$.

Corollary \ref{cor:isotypical_components} and Proposition \ref{prop:Psi_to_beta} can be applied to  $HH_* (B (-); \theta^* \inj^\fin )\otimes_\fb S_{(n)}$. In degree $n$  this identifies as $\omega \beta_n S_{(1^n)}$, using that the conjugate partition of $(n)$ is $(1^n)$. In degree $n-1$ it is the cokernel of the map:
\[
\coadbar_{(1^n)} : 
 \beta_n S_{(1^n)}
 \rightarrow 
 \beta_{n-1} S_{(1^{n-1})}. 
\]
This can be analysed as in Theorem \ref{thm:omega_omega1_sign}, for example.

An alternative approach is that adopted by Turchin and Willwacher.  Theorem \ref{thm:HH_lodayAVAV_car0} gives the isomorphism in $\fcatQ[\nat \times \gr]$:
$$
HH_* \big(B (-); \loday (A_\rat, A_\rat)
\cong 
\omega \Psi (T_{\mathrm{coalg}}^\dagger(\rat) )
\oplus 
\coker( \coadbar_{T_{\mathrm{coalg}}^\dagger(\rat)})[-1],
$$
and the Hopf algebra $T_{\mathrm{coalg}}^\dagger(\rat)$ identifies as the free graded-commutative algebra $\gsym (x, y  )$, where $|x|=1$ and $|y|=2$; here $x$ is primitive and the reduced diagonal of $y$ is $\overline{\Delta} y = x \otimes x$. 

Turchin and Willwacher observed in \cite{TW} that  the connecting morphism $\mathfrak{d}$ (corresponding to $\coadbar$) identifies with 
$
\Psi \gsym (x, y  )
\rightarrow 
\Psi \gsym (x, y  ),
$
 the derivation induced by the de Rham differential $d$ given by $x \mapsto 0$, $y \mapsto x$.

In particular acyclicity of $(\gsym (x, y  ), d)$ relates $\ker d $ and $\coker d$, with an appropriate degree shift. As in the proof of Theorem \ref{thm:omega_omega1_sign}, this determines the full structure.
 \end{exam} 

\subsection{Exploiting the functoriality with respect to $\gr$}
\label{subsect:DT}

To illustrate the interest of these structural results, we consider the relationship with the  higher Hochschild homology of the classifying space of free abelian groups.  

For $r \in \nat$, consider the natural surjection 
$ 
\zed^{\star r}\rightarrow \zed^r \cong \A (\zed^{\star r})$   
,  viewed as a natural transformation of functors from $\gr$ to groups. Composing with the functor  on groups $HH_* (B(-);\loday (\rat [\epsilon], \rat [\epsilon]) )$ gives the natural transformation 
\[
HH_* (B \zed^{\star r}; \loday (\rat [\epsilon], \rat [\epsilon])  ) 
\rightarrow 
HH_* (B \A ( \zed^{\star r});\loday (\rat [\epsilon], \rat [\epsilon]) ),
\]
where the codomain lies  in the essential image of $\fcatQ[\ab]$ in  $\fcatQ[\gr]$.

\begin{prop}
\label{prop:abelianization_dual_numbers}
For $n \in \nat^*$, the kernel of the natural transformation
\[
HH_n (B \zed^{\star r}; \loday (\rat [\epsilon], \rat [\epsilon]))
\rightarrow 
HH_n (B \A (\zed^{\star r}); \loday (\rat [\epsilon], \rat [\epsilon]))
\]
contains the uniserial functor $\soc_{[\frac{n+1}{2}] -1} \omega \beta_n S_{(1^n)}$. 
\end{prop}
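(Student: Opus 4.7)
The plan is to combine three ingredients: the direct summand decomposition of $HH_*(B(-); \loday(\kring[\epsilon], \kring[\epsilon]))$ coming from the isotypical splitting of the coefficient $\Gamma$-module, the uniseriality of $\omega \beta_n S_{(1^n)}$ established in Corollary~\ref{cor:omega_omega1_sign}, and the fact that $HH_n(B\A(-); \loday(\kring[\epsilon], \kring[\epsilon]))$ is a semi-simple polynomial functor factoring through the abelianization.

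Concretely, Example~\ref{EXAM:1} gives a natural decomposition
\[
HH_*(B(-); \loday(\kring[\epsilon], \kring[\epsilon])) \cong \bigoplus_{m \in \nat} HH_*(B(-); \theta^* \inj^\fin_{(m)}),
\]
and Corollary~\ref{cor:omega_omega1_sign} identifies the $m=n$ summand in homological degree $n$ with $\omega \beta_n S_{(1^n)}$. Since this decomposition is intrinsic to the coefficient system, it is respected by the natural transformation induced by the abelianization $\zed^{\star r} \twoheadrightarrow \A(\zed^{\star r})$, and it therefore suffices to show that $\soc_{\ell - 1}\omega \beta_n S_{(1^n)}$, where $\ell = [\frac{n+1}{2}]$ is the socle length, is contained in the kernel of the restricted natural transformation $\omega \beta_n S_{(1^n)} \to HH_n(B\A(-); \loday(\kring[\epsilon], \kring[\epsilon]))$.

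The target of this restricted transformation factors through $\A : \gr \to \ab$ and is polynomial of degree at most $n$ by Theorem~\ref{thm:Hodge_filt_groups} (applied to the functor on $\ab$ and composed with abelianization via Proposition~\ref{prop:abelianization_poly}); hence it lies in $\fpoly{n}{\ab}$, which is semi-simple by Theorem~\ref{thm:fcatk_ab_semisimple}. Since the inclusion $\fcatk[\ab] \subset \fcatk[\gr]$ is closed under sub-objects (Proposition~\ref{prop:abelianization}) and simples of $\fpi{\ab}$ remain simple in $\fcatk[\gr]$ (Proposition~\ref{prop:poly_filt_gr_char_zero}), the image of any morphism from $\omega \beta_n S_{(1^n)}$ to the target is semi-simple as a sub-object in $\fcatk[\gr]$. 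Because $\omega \beta_n S_{(1^n)}$ is uniserial, its only non-zero semi-simple quotient is its simple head; the morphism therefore factors through this head, and by uniseriality the kernel of the projection onto the head equals $\soc_{\ell-1}\omega \beta_n S_{(1^n)}$, which is itself uniserial as a subfunctor of a uniserial functor.

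The argument is largely formal once the inputs are assembled. The main point requiring some care is the polynomiality of $HH_n(B\A(-); L)$ as a functor on $\gr$, which follows by adapting the Hodge filtration argument of Theorem~\ref{thm:Hodge_filt_groups} to the abelian base category $\ab$ and then restricting along $\A$; this is the closest thing to a potential obstacle, and it is essentially routine.
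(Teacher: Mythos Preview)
Your proof is correct and follows essentially the same strategy as the paper. One simplification worth noting: the paper avoids your detour through the Hodge filtration for the target by observing directly that the \emph{image} of the map, being a quotient of the polynomial source and a subobject of a functor in $\fcatk[\ab]$, automatically lies in the semi-simple category $\fpi{\ab}$---so polynomiality of the full target $HH_n(B\A(-);L)$ is never needed.
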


\begin{proof}
The functor $\zed^{\star r} \mapsto HH_* (B \zed^{\star r}; \loday (\rat [\epsilon], \rat [\epsilon]))$ is given by  Example \ref{exam:Theorem_1_TW} in conjunction with Theorem \ref{thm:omega_omega1_sign}. Explicitly, by Theorem \ref{thm:beta_hhh}, this gives in degree $n \geq 1$:
\[
HH_n (B \zed^{\star r}; \loday (\rat [\epsilon], \rat [\epsilon]))
\cong 
\omega \beta_n S_{(1^n)} \oplus \omega \beta_{n-1} S_{(1^{n-1})},
\]
where the term $\omega \beta_{n-1} S_{(1^{n-1})}$ is the contribution from $\coadbar_{(1^{n+1})}$. By Theorem  \ref{thm:omega_omega1_sign},  the functor $\omega \beta_n S_{(1^n)}$  is uniserial, with socle length $[\frac{n+1}{2}]$. 

Now, by construction, the functor $\zed^{\star r} \mapsto HH_n (B \A(\zed^{\star r}); \loday (\rat [\epsilon], \rat [\epsilon]))$ lies in the essential image of $\fpiQ{\ab} $ in $\fpiQ{\gr}$. 
In particular, this functor is semi-simple.

The composite map 
$$\omega \beta_n S_{(1^n)} \subset HH_n (B \zed^{\star r}; \loday (\rat [\epsilon], \rat [\epsilon])) \rightarrow HH_n (B \A(\zed^{\star r}); \loday (\rat [\epsilon], \rat [\epsilon]))$$
 is a morphism from the uniserial functor $\omega \beta_n S_{(1^n)}$ of socle length $[\frac{n+1}{2}]$ to a semi-simple object.
The socle filtration of a finite uniserial object identifies (up to reindexing) with its radical filtration.  
 It follows that the kernel of the composite map  contains $\soc_{[\frac{n+1}{2}] -1} \omega \beta_n S_{(1^n)}$, by the defining properties of the radical filtration. 
\end{proof}

\begin{rem}
\label{rem:Dundas_Tenti}
Proposition \ref{prop:abelianization_dual_numbers} illustrates how dramatically higher Hochschild homology 
$$X \mapsto HH_* (X; \loday (\rat [\epsilon], \rat [\epsilon]))$$
 fails to be a stable invariant, i.e., does not depend only upon the stable homotopy type of $X$. (This was  shown by Dundas and Tenti \cite{DT}, by a different argument.)

If higher Hochschild homology were a stable invariant, then
\[
HH_* (B \zed^{\star r}; \loday (\rat [\epsilon], \rat [\epsilon])) 
\rightarrow 
HH_* (B \zed^r; \loday (\rat [\epsilon], \rat [\epsilon])),
\]
would be a monomorphism, since stably $B \zed^r$ contains $B \zed^{\star r}$ as a wedge summand (cf. \cite{DT}). Proposition \ref{prop:abelianization_dual_numbers} shows that this is not the case in homological degree $n \geq 3$.
\end{rem}

\section{Pirashvili's Hodge filtration}
\label{sect:hodge_filt}

In this section,  we explain how Pirashvili's Hodge filtration \cite{Phh} is related to the functorial picture  considered here.

The Hodge filtration is that associated to Pirashvili's spectral sequence (recalled in Section \ref{subsect:Hodge_filt_ss} below) which calculates the higher Hochschild homology $HH_* (X; L)$ of a pointed simplicial space $X$  with coefficients in a $\Gamma$-module $L$, starting from information given by certain $\tor$-groups for $\Gamma$-modules.

Sections \ref{subsect:Hodge_filt_ss} and  \ref{subsect:Hodge_filt_B} do not use the results of Sections \ref{sec:hhh_triv} and \ref{sec:hhh_calc}. Then Section \ref{subsect:calculate_Gamma_cohomology} applies the results of Sections \ref{subsect:Hodge_filt_ss} and  \ref{subsect:Hodge_filt_B} in conjunction with Theorem \ref{thm:hhh_inj_Gamma} to calculate some fundamental $\ext$-groups in $\Gamma$-modules.

In Section \ref{subsect:Hodge_filt_B}, 
 we restrict to functors on $\gr$ by considering
\[
HH_* (B(-); L).
\]
Theorem \ref{thm:Hodge_filt_groups} gives a  functorial description of the Hodge filtration of this functor and relates this to the canonical polynomial filtration with respect to $\gr$ (cf. Section \ref{sec:recoll}). In particular, this result shows that, after passing to the associated graded of the Hodge filtration,  higher Hochschild homology is given in terms of $\Gamma$-module functor homology, as explained in Remark \ref{rem:HH_versus_Gamma_module_homology}.

Theorem \ref{thm:Hodge_filt_groups} is used in Section \ref{subsect:calculate_Gamma_cohomology} to calculate 
\[
\ext^*_{\fcatQ[\Gamma]} (\inj^\Gamma, {\inj^\Gamma})
\]
where $\inj^\Gamma$ is the $\Gamma$-module with values in $\fb$-modules introduced in Section \ref{sec:loday}.

The result is stated in Theorem \ref{thm:Koszul_duality} and is a shadow of the Koszul duality that underlies this story.


\subsection{The Hodge filtration}
\label{subsect:Hodge_filt_ss}

For $X$ a pointed simplicial set and $L\in \ob \fcatQ[\Gamma]$, Pirashvili \cite[Theorem 2.4]{Phh} constructed the Hodge filtration spectral sequence 
\begin{eqnarray}
\label{eqn:hodge_ss_E2}
E_{pq}^2 
= 
\tor^\Gamma _p (\mathcal{T}_q H_* (X; \rat)  , L) 
\Rightarrow 
HH_* (X;L). 
\end{eqnarray}

Here: 
\nomenclature{$\mathcal{T}$}{dual Loday construction\nomrefpage}
\begin{enumerate}
\item
the functors $\tor^\Gamma$ are the derived functors of the  tensor product $\otimes_\Gamma$ (cf. Section \ref{subsect:tensor_over_C});
\item 
$H_*(X; \rat)$ is considered as a coaugmented coalgebra, as usual;
\item 
$\mathcal{T}_q H_* (X; \rat)$ is the degree $q$ part of Pirashvili's dual Loday construction $\mathcal{T}H_* (X; \rat)$ on $H_* (X; \rat)$. (For $C$ a  graded cocommutative, coaugmented $\rat$-coalgebra,  $\mathcal{T} C $ is the $\Gamma\op$-module  $\mathbf{n}_+ \mapsto C^{\otimes n}$, which is `dual' to the Loday construction $\loday (A, \rat)$ for augmented algebras of Section \ref{subsect:loday_A} -- see \cite[Section 1.7]{Phh}.) 
\end{enumerate}

The Hodge filtration of $ HH_* (X;L)$ is the filtration associated to Pirashvili's spectral sequence (\ref{eqn:hodge_ss_E2}).

\begin{rem}
\ 
\begin{enumerate}
\item 
The Hodge filtration and the associated spectral sequence are natural with respect to both $X$ and $L$.
\item 
In \cite{TW}, in the case $X= B \zed^{\star r}$, Turchin and Willwacher also work with a `Hodge filtration' (in cohomology); the reader is warned that their filtration is not the dual  of Pirashvili's Hodge filtration considered here.
\end{enumerate}
\end{rem}

We shall apply vector space duality to various functor categories, corresponding to post-composition with $V \mapsto V^\sharp$. To simplify notation, in this section duality will be denoted simply by $D$ (as opposed to Notation \ref{nota:duality}). 
 For instance, duality relates $\Gamma$-modules and $\Gamma\op$-modules:
\begin{eqnarray*}
 D&:& \fcatQ[\Gamma]\op 
 \rightarrow 
 \fcatQ[\Gamma\op].
\\
 D&:& \fcatQ[\Gamma\op]\op 
 \rightarrow 
 \fcatQ[\Gamma].
\end{eqnarray*}
Duality also relates $\tor^\Gamma$  to  $\ext^*_{\fcatQ[\Gamma]}$, under the appropriate finiteness hypotheses. The following is standard (recalling that an object of an abelian category is finite if it has a finite composition series):

\begin{prop}
\label{prop:ext_tor}
For $F, G$ finite objects of $\fcatQ[\Gamma]$,  there is a natural isomorphism 
\[
\tor^\Gamma_* (D F,  G) 
^\sharp 
\cong 
\ext^* _{\fcatQ[\Gamma]} (G, F).
\]
\end{prop}

This gives the following reinterpretation of the $E^2$-page of Pirashvili's spectral sequence:

\begin{cor}
\label{cor:tor_to_ext}
If $L$ is a finite $\Gamma$-module, $X$ is connected and $H_*(X; \rat)$ has finite type as a graded $\rat$-vector space, the $E^2$-page of Pirashvili's spectral sequence is 
\[
E^2_{pq}
=
\ext^p _{\fcatQ[\Gamma]} (L,  \loday^q (H^* (X; \rat), \rat))^\sharp.
\]
Here, the Loday construction  $\loday (-, \rat)$ is applied in the graded context, with Koszul signs and  $\loday^q (H^* (X; \rat), \rat)$ is the component of cohomological degree $q$.
\end{cor}

\begin{proof}
The hypothesis on $H_*(X; \rat)$ ensures that $\mathcal{T}_q H_* (X; \rat)$ is a finite $\Gamma$-module and is dual to $\loday^q (H^* (X; \rat), \rat)$. The result thus follows from Proposition \ref{prop:ext_tor}.
\end{proof}

\subsection{Specializing to classifying spaces of free groups}
\label{subsect:Hodge_filt_B}
We take $X$ to be the classifying space $B \zed^{\star r}$, considered as a functor of the free group $\zed^{\star r} $. The following is clear:

\begin{lem}
\label{lem:cohomology_BFr}
The functor from $\gr\op$ to graded commutative, augmented algebras,  
$ 
\fr  \mapsto H^* (B \fr  ; \rat),
$
is isomorphic to the functor $\rat \oplus s \aQ^\sharp $, considered as the square-zero extension algebra of $\rat$ by the functorial $\rat$-module $\aQ^\sharp$ concentrated  in cohomological degree one.
\end{lem}

This allows the identification of $\loday (H^*(B(-); \rat), \rat)$:

\begin{lem}
\label{lem:loday_H^*B}
The functor $\fr \mapsto \loday (H^*(B \fr; \rat), \rat)$, considered as an object of $\fcatQ[\nat \times \gr\op \times \Gamma]$, is isomorphic to 
\[
\talg (s \aQ^\sharp) \otimes _\fb \inj^\Gamma.
\]

In particular, in cohomological degree $q\in \nat$, the associated object of $\fcatQ[\gr\op \times \Gamma]$ is 
\[
(\mathrm{sgn}_q \otimes(\aQ^\sharp)^{\otimes q}) \otimes_{\sym_q}  \inj ^\Gamma (\mathbf{q}, -)
\cong 
(\aQ^\sharp)^{\otimes q} \otimes_{\sym_q} (\mathrm{sgn}_q \otimes \inj ^\Gamma (\mathbf{q}, -)),
\]
in which $\sym_q$ acts diagonally on the terms $\mathrm{sgn}_q \otimes -$.
\end{lem}

\begin{proof}
The first statement is a variant of Proposition \ref{prop:loday_AV}, passing to the graded context.  The second statement simply identifies the degree $q$ component, using that  the cohomological suspension introduces the  Koszul signs corresponding to the signature representation $\mathrm{sgn}_q$. 
\end{proof}

\begin{nota}
\label{nota:ing_Gamma_dagger}
Write  ${\inj^\Gamma}^\dagger$ for the $\fb \times \Gamma$-module $ 
(\mathbf{q}, Y) \mapsto \mathrm{sgn}_q \otimes \inj ^\Gamma (\mathbf{q}, Y).
$ 
\end{nota}

\begin{rem}
\ 
\begin{enumerate}
\item
 ${\inj^\Gamma}^\dagger$ encodes the functor $\loday (H^*(B(-); \rat), \rat)$, by 
Lemma \ref{lem:loday_H^*B}.
\item 
Lemma \ref{lem:loday_H^*B} dualizes to give a description of $\mathcal{T} H_* (B(-); \rat)$,  expressed in terms of the dual, $D {\inj^\Gamma}^\dagger$.
\end{enumerate}
\end{rem}

For any $\Gamma$-module, the Hodge filtration for $HH_* (B(-); L)$ is an increasing filtration (corresponding to filtering by $p \leq t$ in the spectral sequence, for $t \in \nat$ indexing the term of the filtration). In a fixed homological degree $d$, for current purposes it is useful to index this by $q$, which gives the {\em decreasing} filtration  
of functors on $\gr$:
\[
 \resizebox{\hsize}{!}{%
 $
\ldots \subset  
 \filt^{q+1} HH_d (B(-); L)
\subset  
 \filt^q HH_d (B(-); L)
 \subset  
 \ldots 
 \subset
 \filt^0 HH_d (B(-); L)
 =HH_d(B(-); L)
$
}
\]
such that $\filt^{d+1} HH_d (B(-); L)=0$.

The filtration quotients of this filtration are described by the following, in which $(\aQ^{\otimes q})^\dagger$ denotes $(\mathrm{sgn}_q \otimes (\aQ^{\otimes q}))$ with diagonal action of $\sym_q$, and $\alpha_q$ is the functor appearing in Theorem \ref{thm:recollement_poly_d}:

\begin{thm}
\label{thm:Hodge_filt_groups}
For $L \in \ob \fcatQ[\Gamma]$, the Hodge filtration of $
 HH_d (B(-); L)$ has  filtration quotients in homological degree $d$: 
\begin{eqnarray*}
 \filt^q HH_d (B(-); L) / \filt^{q+1} HH_{d} (B(-); L) 
 &\cong &
 (\aQ^{\otimes q})^\dagger  \otimes_{\sym_q} \tor^\Gamma _{d-q} ( D \inj ^\Gamma (\mathbf{q}, -) , L)
 \\
 &\cong &
 \alpha_q \Big( 
 \tor^\Gamma _{d-q} ( D {\inj ^\Gamma}^\dagger (\mathbf{q}, -) , L)
 \Big).
\end{eqnarray*}
In particular, 
\begin{enumerate}
\item 
the functor $HH_d (B(-); L)$ has polynomial degree $d$; 
\item 
the Hodge filtration of $ HH_d (B(-); L) \in \ob \fcatQ[\gr]$ coincides with the polynomial filtration (up to indexing).
\end{enumerate}
\end{thm}

\begin{proof}
We use Pirashvili's Hodge filtration spectral sequence \cite[Theorem 2.4]{Phh}. This has  $E^2$-page 
\[
E^2_{pq}=\tor^\Gamma _p (\mathcal{T}_q H_* (B(-); \rat)  , L). 
\]
Using the dual to Lemma \ref{lem:loday_H^*B}, this is isomorphic to 
\[
(\aQ^{\otimes q})^\dagger  \otimes_{\sym_q} \tor^\Gamma _{d-q} ( D \inj ^\Gamma (\mathbf{q}, -),L)
\]
where $d=p+q$ and we have used that $(\aQ^{\otimes q})^\dagger  \otimes_{\sym_q} -$ is exact, hence commutes with the formation of $\tor^\Gamma$. This term can be rewritten using the functor $\alpha_q$ as:
\[
\alpha_q \Big( 
 \tor^\Gamma _{d-q} ( D {\inj ^\Gamma}^\dagger (\mathbf{q}, -) , L)
 \Big),
 \]
where the twist by $\mathrm{sgn}_q$ has been incorporated as ${\inj^\Gamma}^\dagger$.

This implies that the $q$th row in the spectral sequence consists of functors on $\gr$ that are polynomial, homogeneous of degree exactly $q$ (more precisely, they lie in the image of the functor $\alpha_q$ of Section \ref{sec:recoll}).
 
Since  the spectral sequence is natural with respect to $\gr$, the differentials are morphisms in $\fcatQ[\gr]$. We deduce that all differentials are zero, since there are no non-trivial morphisms between an object of the image of $\alpha_q$  and an object of the image of $\alpha_{q'}$ if $q\neq q'$ (this is a consequence of the equality $\hom_{\fcatQ[\gr]}(\aQ^{\otimes q}, \aQ ^{\otimes q'}) =0$, if $q\neq q'$, as exploited in Section \ref{sec:recoll}). It follows that the spectral sequence degenerates at the $E^2$-page, giving the stated identification of the associated graded of the Hodge filtration.

This shows that the Hodge filtration of $HH_d (B(-); L) $ is a finite length, decreasing filtration, with filtration quotient $\filt^q/\filt^{q+1}$ a  polynomial functor of degree $q$ that lies in the image of $\alpha_q$. In particular,  $HH_d (B(-); L) $ is polynomial of degree at most $d$ and Proposition \ref{prop:canon_poly_filt} implies that this filtration coincides with the polynomial filtration (up to indexing).
\end{proof}

\begin{rem}
\ 
\begin{enumerate}
\item
In \cite[Theorem 2.6]{Phh} Pirashvili showed that the Hodge filtration spectral sequence for spheres degenerates, by using an {\em ad hoc} argument. In our setting, we can exploit the naturality with respect to $\gr$ to prove degeneracy. 
\item 
The Hodge filtration of Theorem \ref{thm:Hodge_filt_groups} does  not split functorially. This can be seen by considering the structure of $HH_* (B(-); \inj^\Gamma )$, using 
Theorem \ref{thm:hhh_inj_Gamma}  and the identifications in Section \ref{subsect:isotypical_components}; for an explicit example, consider the isotypical component of $\lambda = (1^n)$ for $n >1$.
\end{enumerate}
\end{rem}

\begin{rem}
\label{rem:HH_versus_Gamma_module_homology}
Theorem \ref{thm:Hodge_filt_groups} shows that the associated graded of the  Hodge filtration
is determined by the $\nat$-graded $\fb$-module: 
 \[
\tor^\Gamma_* (D \inj^\Gamma, L) .
\]
These homology groups generalize the usual definition of functor homology for $\Gamma$-modules.

If $L$ is a finite $\Gamma$-module, the groups can be expressed in terms of $\ext^*_{\fcatQ[\Gamma]}$ using Proposition \ref{prop:ext_tor} as 
\[
\ext^*_{\fcatQ[\Gamma]} (L, \inj^\Gamma)^\sharp.
\]
\end{rem}
 
\subsection{Calculating (co)homology of $\Gamma$-modules}
\label{subsect:calculate_Gamma_cohomology}

In this section we indicate how to use Theorem \ref{thm:Hodge_filt_groups} to calculate certain $\ext$-groups in $\Gamma$-modules, exploiting our analysis of higher Hochschild homology with coefficients  $L= \inj^\Gamma$ (which takes values in $\fb$-modules). 

Theorem \ref{thm:hhh_inj_Gamma} identifies:
 \begin{eqnarray}
 \label{eqn:HH_inj_Gamma}
 HH_* (B (-); \inj^\Gamma) \cong \Psi (\pcoalg^\dagger)
 \end{eqnarray}
 as functors in $\f(\nat \times \gr \times \fb; \rat)$, where $\nat$ corresponds to the homological grading. 
 
Theorem \ref{thm:Hodge_filt_groups} gives that the associated graded of the polynomial filtration of  $\Psi (\pcoalg^\dagger)$ determines the family (for $m,n\in \nat$) of $\sym_m\op \times \sym_n$-representations:
\[
\ext^* _{\fcatQ[\Gamma]}
(\inj ^\Gamma (\mathbf{n}, -),\inj^\Gamma (\mathbf{m},-)),
\] 
where the $\sym_m$- and $\sym_n$-actions come from the $\fb$-module structure of $\inj^\Gamma$. The rest of this section is devoted to outlining how this is done; the result is stated in Theorem \ref{thm:Koszul_duality}.

We first extract the dagger by using the isomorphism given by Proposition \ref{prop:omega_pcoalg_dagger}:
\[
\Psi (\pcoalg^\dagger)
 \cong 
\big( \Psi \pcoalg \big)^\dagger,
\]
which allows us to focus upon $\Psi \pcoalg$.

Theorem \ref{thm:beta_description} provides the isomorphism 
\begin{eqnarray}
\label{eqn:boldbeta}
\boldbeta
\cong
 \Psi \pcoalg,
 \end{eqnarray}
  where $\boldbeta \in \ob \fcatQ[\gr \times \fb]$ such that, for $d \in \nat$, $\boldbeta (-, \mathbf{d})$ is the functor $\beta_d \rat [\sym_d]$ on $\gr$. In particular, this is an injective in the category $\fpiQ{\gr}$ of polynomial functors on $\gr$, by the results of Section \ref{sec:beta}.
 
 The functor represented by $\boldbeta (-, \mathbf{d})$  is described succinctly below by using the functor  $\fbcr : \f(\gr;\rat) \rightarrow \f (\fb ; \rat)$ introduced in Definition \ref{defn:fbcr}. (Recall that, for $F$ a polynomial functor on $\rat$, $\fbcr F$ encodes the associated graded of the polynomial filtration of $F$.) 
 
 \begin{prop}
 \label{prop:fbcr_boldbeta}
 For $F \in \fpiQ{\gr}$ and $d\in \nat$, there is a natural isomorphism of $\sym_d$-modules
 \[
\big( \fbcr F (\mathbf{d}) \big) ^\sharp \cong \hom_{\fcatQ[\gr]} (F, \boldbeta (-, \mathbf{d})).
 \]
 \end{prop}
 
\begin{proof}
This follows from the adjunctions for the functors $\cre_d$, $\qhat{d}$, $\beta_d$, using the methods of section \ref{subsect:poly_filt_revisit}.
\end{proof} 
 
This is the main input into:

\begin{thm}
\label{thm:Koszul_duality}
For  $m,n \in \nat^*$, there is a natural isomorphism of $\sym_m \times \sym_n\op$-modules:
\begin{eqnarray*}
&&\ext^* _{\fcatQ[\Gamma]}
(\inj ^\Gamma (\mathbf{n}, -),\inj^\Gamma (\mathbf{m},-))
\\
&&
\quad 
\cong
\left\{
\begin{array}{ll}
 \mathrm{sgn}_m \otimes \hom_{\fcatQ[\gr]} ( \boldbeta (-, \mathbf{n}), \boldbeta (-, \mathbf{m}) )
\otimes \mathrm{sgn}_n
 & *= n-m \\
 0 & \mbox{otherwise.}
\end{array}
\right.
\end{eqnarray*}
\end{thm}

\begin{proof}
Theorem \ref{thm:Hodge_filt_groups}  leads to the isomorphism
\[
\ext^*_{\fcatQ[\Gamma]} (\inj^\Gamma, {\inj^\Gamma}^\dagger)
\cong 
\big(\fbcr HH_*(B(-); \inj^\Gamma)\big)^\sharp. 
\]
The right hand side is calculated using Proposition \ref{prop:fbcr_boldbeta}, applied by using the isomorphisms (\ref{eqn:HH_inj_Gamma}) and (\ref{eqn:boldbeta}). The details are left to the reader.
\end{proof}

\begin{rem}
\ 
\begin{enumerate}
\item 
This theorem has a conceptual interpretation in terms of Koszul duality. This is treated (working with functors on $\gr\op$) in \cite{2021arXiv211001934P} by using operadic Koszul duality.
\item 
The modules $\hom_{\fcatQ[\gr]} ( \boldbeta (-, \mathbf{n}), \boldbeta (-, \mathbf{m}) )$ can be calculated using standard methods of representation theory in terms of the family of representations $\mathrm{Lie} (t)$, $t\in \nat$, appearing in the Lie operad. (This is encoded in the theory developed in \cite{2021arXiv211001934P}.)
\end{enumerate}
\end{rem}

\appendix
\part{Appendices}
\label{part:app}

\section{Background on functor categories}
\label{sec:background}

This appendix reviews the basic theory of functor categories, with the aim of making the paper reasonably self-contained. 

\subsection{Functor categories}
\label{subsect:functor}

 For $\calc$ a small category and $\cala$ an abelian category, let 
$\fcat{\calc}{\cala}$ denote the category  of functors from $\calc$ to $\cala$. This is sometimes termed the category of $\calc$-modules (with values in $\cala$); it is an abelian category.

Evaluation on an object $C$ of $\calc$ gives the exact functor 
 $
 \eval_C : \fcat{\calc}{\cala} \rightarrow \cala.
$ 
If $(\cala, \otimes, \unit, \tau)$ is a symmetric monoidal structure, then $\fcat{\calc}{\cala}$ 
inherits a symmetric monoidal structure  with tensor product defined objectwise in $\cala$. 

\nomenclature{$\fcatk$}{functors from $\calc$ to $\kring\dash\modules$\nomrefpage}
\nomenclature{$P^\calc_C$}{standard projective associated to object $C$ of $\calc$\nomrefpage}
The shorthand $\fcatk$ is used for  $\fcat{\calc}{\kring\dash\modules}$. The category is 
equipped with tensor product $\otimes _\kring$ and has sufficiently many 
projectives and  injectives. For example, the standard projective $P^{\calc}_C$ is $\kring [\hom _\calc (C, -)]$; Yoneda's lemma gives that $P^{\calc}_C$ corepresents evaluation on $C$. 

Duality is a useful tool, especially when $\kring$ is a field:

\begin{nota}
\label{nota:duality}
\nomenclature{$D_\calc$}{duality functor for $\fcatk$\nomrefpage}
For $\kring$ a field, denote by 
$
  D_{\calc} : \fcatk \op \rightarrow \fcatk[\calc\op]
 $
 the functor given by $(D_\calc F) (X):= F(X)^\sharp$, where $\sharp$ denotes vector space duality.
 \end{nota}

When $\kring$ is a field,  the full subcategory of functors taking finite-dimensional values is denoted  $\fcatkfd \subset \fcatk$.

\begin{prop}
\label{prop:duality}
(Cf. \cite[Proposition C.2.7]{D}.)
 If $\kring$ is a field, then the duality functors $D_{\calc}$ and $D_{\calc 
\op}$ are adjoint,
 \[
  D_{\calc} : \fcatk \op \rightleftarrows \fcatk[\calc \op] : D_{\calc \op} \op,
 \]
and the adjunction restricts to an equivalence of categories 
$$\fcatkfd \op 
\cong 
\fcatkfd[\calc \op].$$
 In particular, for $F \in \ob \fcatkfd$, the adjunction 
unit induces an isomorphism $F \cong D_{\calc \op} D_\calc F$.
\end{prop}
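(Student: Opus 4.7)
The plan is to derive the adjunction from the elementary duality adjunction for $\kring$-vector spaces, applied naturally in the variable $C \in \ob \calc$. Recall that for $\kring$ a field and $V, W$ arbitrary $\kring$-vector spaces, there is a canonical natural isomorphism
\[
\hom_\kring (V, W^\sharp) \cong \hom_\kring (V \otimes W, \kring) \cong \hom_\kring (W, V^\sharp),
\]
obtained by identifying both sides with bilinear pairings $V \times W \to \kring$.

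First I would fix $F \in \ob \fcatk$ and $G \in \ob \fcatk[\calc\op]$, and note that a natural transformation $\zeta : D_\calc F \to G$ in $\fcatk[\calc\op]$ consists of $\kring$-linear maps $\zeta_C : F(C)^\sharp \to G(C)$ for each $C \in \ob \calc$, compatible with the (contravariant) functorial structures. Applying the elementary adjunction pointwise produces linear maps $\tilde\zeta_C : G(C) \to F(C)^\sharp$, or equivalently $\tilde{\tilde\zeta}_C : (D_{\calc\op} G)(C) = G(C)^\sharp \to F(C)^{\sharp\sharp}$, and composing with the canonical evaluation $F(C) \to F(C)^{\sharp\sharp}$ on the other side yields a natural transformation $D_{\calc\op} G \to F$ in $\fcatk$. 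The key verification is that the naturality of the pointwise assignments in $C$ is preserved: this is immediate from the functoriality of $V \mapsto V^\sharp$, since the elementary duality adjunction is itself natural in $V$ and $W$. This bijection is the claimed adjunction
\[
\hom_{\fcatk[\calc\op]} (D_\calc F, G) \cong \hom_{\fcatk}(D_{\calc\op} G, F).
\]

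Next I would restrict to functors with finite-dimensional values. The functor $D_\calc$ clearly sends $\fcatkfd$ to $\fcatkfd[\calc\op]$, and symmetrically for $D_{\calc\op}$. It suffices to show that both the unit $\eta_F : F \to D_{\calc\op} D_\calc F$ and the counit $\epsilon_G : G \to D_\calc D_{\calc\op} G$ of the adjunction become isomorphisms on these subcategories. By construction, $\eta_F$ has components the canonical evaluation map $F(C) \to F(C)^{\sharp\sharp}$, which is an isomorphism precisely when $F(C)$ is finite-dimensional; likewise for $\epsilon_G$. Hence the adjunction restricts to an equivalence $\fcatkfd \op \cong \fcatkfd[\calc\op]$, and for $F \in \ob \fcatkfd$ the unit yields the natural isomorphism $F \cong D_{\calc\op} D_\calc F$.

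The only subtle point is confirming that the pointwise bijection really assembles into a bijection of sets of natural transformations — this reduces to chasing the naturality squares, which commute because the elementary duality adjunction is natural in both arguments. No genuine obstacle arises; the result is essentially a reformulation of the reflexivity of finite-dimensional vector spaces in the functor category setting.
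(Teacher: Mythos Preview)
The paper does not supply its own proof of this proposition; it merely cites \cite[Proposition C.2.7]{D}. So there is no argument in the paper to compare against, and your proposal must be assessed on its own merits.

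Your overall strategy --- reduce to the vector-space duality adjunction pointwise, then observe that the double-dual map is an isomorphism on finite-dimensional values --- is the correct one, and the second paragraph (identifying the unit and counit with the canonical evaluation maps and deducing the equivalence on $\fcatkfd$) is fine. The first paragraph, however, contains a genuine error. Starting from $\zeta_C : F(C)^\sharp \to G(C)$, you cannot produce a map $G(C)^\sharp \to F(C)$ in the way you describe: dualising gives $(\zeta_C)^\sharp : G(C)^\sharp \to F(C)^{\sharp\sharp}$, but the canonical evaluation $F(C) \to F(C)^{\sharp\sharp}$ points the \emph{wrong way} to be composed with this, and inverting it already requires $F(C)$ finite-dimensional. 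Indeed, the pointwise bijection $\hom_\kring(F(C)^\sharp, G(C)) \cong \hom_\kring(G(C)^\sharp, F(C))$ is simply false for infinite-dimensional spaces (take $F(C)=\kring$ and $G(C)$ of countably infinite dimension).

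The elementary adjunction you quote, $\hom_\kring(V, W^\sharp) \cong \hom_\kring(W, V^\sharp)$, does give a pointwise bijection, but for the \emph{other} pairing: a natural transformation $F \to D_{\calc\op} G$ has components $F(C) \to G(C)^\sharp$, and these correspond naturally to maps $G(C) \to F(C)^\sharp$, assembling into a natural transformation $G \to D_\calc F$. Thus the adjunction one obtains is
\[
\hom_{\fcatk}(F, D_{\calc\op} G) \;\cong\; \hom_{\fcatk[\calc\op]}(G, D_\calc F),
\]
i.e.\ $D_\calc : \fcatk\op \to \fcatk[\calc\op]$ is the \emph{right} adjoint of $D_{\calc\op}\op$. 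With this correction the unit and counit are precisely the double-dual maps, and the remainder of your argument for the equivalence on finite-dimensional values goes through unchanged.
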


An adjunction between small categories induces an adjunction between the 
associated functor categories:

\begin{prop}
\label{prop:precomp_adjunction}
 Let $\cala$ be an abelian category and $L : \calc \rightleftarrows \cald : R$ 
be an adjunction between 
 small categories. Precomposition induces an adjunction
 \[
  R^* : \fcat{\calc}{\cala} \rightleftarrows \fcat{\cald}{\cala} : L^* 
 \]
of exact functors between abelian categories. If $\cala$ is tensor abelian, then the functors $R^*$ and $L^*$ are monoidal. 
\end{prop}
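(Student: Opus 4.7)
The plan is to verify the three assertions in sequence: (i) $R^*$ and $L^*$ are well-defined functors by precomposition, (ii) they are exact, and, when $\cala$ is tensor abelian, monoidal, and (iii) they form an adjunction $R^* \dashv L^*$ induced by the unit and counit of $L \dashv R$.

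First I would simply set $R^* F := F \circ R$ for $F \in \ob \fcat{\calc}{\cala}$ and $L^* G := G \circ L$ for $G \in \ob \fcat{\cald}{\cala}$, extended to natural transformations by whiskering. Exactness is immediate because limits and colimits in functor categories to an abelian target are computed pointwise: for a short exact sequence $0 \to F' \to F \to F'' \to 0$ in $\fcat{\calc}{\cala}$, evaluation of $0 \to R^* F' \to R^* F \to R^* F'' \to 0$ at any $d \in \ob \cald$ is the original sequence evaluated at $R(d)$, hence exact in $\cala$. The same argument applies to $L^*$. When $\cala$ is tensor abelian and functor categories are equipped with the objectwise tensor product, the identities
\[
R^*(F_1 \otimes F_2)(d) = F_1(R(d)) \otimes F_2(R(d)) = (R^* F_1 \otimes R^* F_2)(d)
\]
assemble into a natural isomorphism $R^*(F_1 \otimes F_2) \cong R^* F_1 \otimes R^* F_2$, with an analogous identification for units; coherence of the monoidal structure on $R^*$ then reduces to the pointwise coherence in $\cala$. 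The argument for $L^*$ is identical.

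For the adjunction, let $\eta : 1_\calc \to R \circ L$ and $\epsilon : L \circ R \to 1_\cald$ be the unit and counit of $L \dashv R$. Whiskering on the left with $F \in \ob \fcat{\calc}{\cala}$ gives a natural transformation
\[
\tilde\eta_F := F \circ \eta : F \longrightarrow F \circ R \circ L = L^* R^* F,
\]
and whiskering on the left with $G \in \ob \fcat{\cald}{\cala}$ gives
\[
\tilde\epsilon_G := G \circ \epsilon : R^* L^* G = G \circ L \circ R \longrightarrow G.
\]
These are natural in $F$ and $G$, hence assemble into natural transformations $\tilde\eta : \mathrm{Id} \to L^* R^*$ and $\tilde\epsilon : R^* L^* \to \mathrm{Id}$. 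The two triangle identities for $(\tilde\eta, \tilde\epsilon)$ are obtained by whiskering the corresponding triangle identities for $(\eta, \epsilon)$ on the left with $F$ (respectively with $G$), so they hold automatically. By the standard characterization of adjunctions via unit and counit, this establishes $R^* \dashv L^*$.

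There is no genuine difficulty in this argument; it is a routine verification modulo bookkeeping of variance. The only point requiring care is checking that the direction of the adjunction is the one asserted: the unit $\eta : 1_\calc \to R L$ of $L \dashv R$ indeed yields a map $F \to F \circ R \circ L$, i.e.\ a map $F \to L^* R^* F$, confirming that $R^*$ is the left adjoint of $L^*$, in agreement with the convention recalled in the paper's notation.
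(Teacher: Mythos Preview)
Your proof is correct. The paper itself states this proposition in its appendix as a standard background fact without proof, so there is nothing to compare against; your verification via whiskering the unit and counit of $L \dashv R$ and checking the triangle identities pointwise is exactly the routine argument one would supply, and your check of the adjunction direction is accurate.
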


\begin{rem}
The unit $\mathrm{Id}_\calc \rightarrow R L $   of the adjunction of Proposition \ref{prop:precomp_adjunction} induces, by precomposition, $  \mathrm{Id}_{\fcat{\calc}{\cala}}\rightarrow (RL)^* = L^* R^*$. This is the unit of the induced adjunction at the level of the functor categories. In particular, this explains why $L^*$ is the {\em right} adjoint. 

Moreover, developing this leads to a proof of the Proposition, using the formulation of an adjunction in terms of the unit and counit natural transformations.
\end{rem}

\subsection{Polynomial functors}
\label{sec:poly}

Throughout this section, suppose:

\begin{hyp}
\label{hyp:C_monoidal}
 The category $\calc$ is pointed by $*$ and is equipped with monoidal 
structure $\vee$ for which $*$ is the unit
\end{hyp}

The Eilenberg-Mac Lane notion of polynomial functor \cite{EM} generalizes to 
the setting of $\fcat{\calc}{\cala}$, where $\cala$ is abelian, as in 
\cite[Section 3]{HPV}.

\begin{nota}
\label{nota:poly_functor}
 For $(\calc, \vee, *)$ as above, let $\f_d(\calc ; \cala) \subset 
\fcat{\calc}{\cala}$ denote the full 
subcategory of polynomial functors of degree at most $d \in \nat$. 
\end{nota}

\begin{prop}(Cf. \cite{HPV}.)
\label{prop:poly_postcompose_additive}
Suppose that $\calc$ satisfies Hypothesis \ref{hyp:C_monoidal}. For  
$\alpha : \cala \rightarrow \calb$ an additive 
functor between abelian categories and $d \in \nat$, the functor 
$\fcat{\calc}{\cala}\rightarrow 
\fcat{\calc}{\calb}$ induced by   composition with $\alpha$ restricts to 
  $
   \f_d (\calc ; \cala)
   \rightarrow 
   \f_d (\calc ; \calb).
  $
\end{prop}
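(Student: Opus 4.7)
The plan is to characterize the condition of being polynomial of degree at most $d$ via cross-effects and then to exploit the fact that additive functors preserve direct summands. Recall that for $F\in\fcat{\calc}{\cala}$ the cross-effect $\cre_n F(X_1,\ldots,X_n)$ arises as the kernel of the map
$$F(X_1\vee\cdots\vee X_n)\;\longrightarrow\;\bigoplus_{i=1}^n F(X_1\vee\cdots\widehat{X_i}\cdots\vee X_n)$$
induced by the retractions that collapse the $i$th summand to the basepoint via $X_i\to\ast$. These retractions admit canonical sections coming from the monoidal inclusions, so the map is split surjective and one obtains a natural direct sum decomposition
$$F(X_1\vee\cdots\vee X_n)\;\cong\;\bigoplus_{S\subseteq\{1,\ldots,n\}}\cre_{|S|}F\bigl((X_i)_{i\in S}\bigr),$$
with the convention $\cre_0 F:=F(\ast)$. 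In this language, $F$ is polynomial of degree at most $d$ precisely when $\cre_{d+1}F=0$.

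First I would invoke this cross-effect decomposition as in \cite[Section 3]{HPV}. The key step is then a one-line observation: an additive functor $\alpha$ preserves finite biproducts, hence applying $\alpha$ objectwise to the splitting above and using uniqueness of the canonical decomposition yields a natural isomorphism
$$\cre_n(\alpha\circ F)\;\cong\;\alpha\circ\cre_n F$$
of functors $\calc^{\times n}\to\calb$, valid for every $n\geq 1$. Taking $n=d+1$, vanishing of $\cre_{d+1}F$ forces vanishing of $\cre_{d+1}(\alpha\circ F)$, which is precisely what is needed to conclude that $\alpha\circ F\in\f_d(\calc;\calb)$.

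The hard part is essentially non-existent: additivity is exactly tailored to preserve the biproduct data on which cross-effects depend, and no exactness hypothesis on $\alpha$ is required since $\cre_n F$ appears as a direct summand rather than as an arbitrary kernel. The only point meriting a brief check is the naturality of the identification $\cre_n(\alpha\circ F)\cong\alpha\circ\cre_n F$ in the arguments $X_1,\ldots,X_n$, which is immediate from the naturality of the underlying splitting over $\calc^{\times n}$.
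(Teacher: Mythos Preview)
Your argument is correct: the cross-effect $\cre_n F$ arises as a natural direct summand of $F(X_1\vee\cdots\vee X_n)$, additive functors preserve finite biproducts and hence this splitting, giving $\cre_n(\alpha\circ F)\cong\alpha\circ\cre_n F$, and the vanishing of $\cre_{d+1}$ characterizes degree $\leq d$. The paper itself does not supply a proof but simply refers to \cite{HPV}; your argument is precisely the standard one underlying that reference, so there is nothing to compare.
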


For the remainder of the section, suppose that $\cala = \kring\dash\modules$.

\begin{nota}
\label{nota:p_q_poly} For $d \in \nat$, 
let $q_d^\calc : \fcatk[\calc] \rightarrow  \fpoly{d}{\calc}$ denote the left 
adjoint  to the inclusion $\fpoly{d}{\calc} \hookrightarrow \fcatk[\calc]$ and 
$p_d^\calc : \fcatk[\calc] \rightarrow  \fpoly{d}{\calc}$  the right adjoint. 
(Where no confusion can result, these will be denoted simply by $q_d$ and 
$p_d$.)
\end{nota}

The following is standard:

\begin{prop}
\label{prop:poly_standard}
Suppose that $\calc$ satisfies Hypothesis \ref{hyp:C_monoidal} and let $d\in \nat$. 
\begin{enumerate}
 \item 
 The category $\fpoly{d}{\calc}$ is a thick subcategory of $\fcatk$ and is 
stable under  limits and colimits.
\item 
 The category $\fpoly{d}{\calc}$ has enough projectives and 
enough injectives.
\item 
The tensor product of $\fcatk$ restricts to 
\[
 \otimes : 
\fpoly{d}{\calc} 
\times 
\fpoly{e}{\calc}
\rightarrow 
\fpoly{d+e}{\calc}.
\]
\item 
There are  canonical inclusions $\fpoly{d}{\calc} \subset
\fpoly{d+1}{\calc} \subset \fcatk$  which induce natural transformations:
$
 p^\calc_d \hookrightarrow p^\calc_{d+1} \hookrightarrow 1_{\fcatk}$
and $1_{\fcatk} \twoheadrightarrow q^\calc_{d+1} \twoheadrightarrow q^\calc_d$.
 \end{enumerate}
\end{prop}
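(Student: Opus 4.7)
The plan is to treat all four claims using cross-effect functors as the principal tool. For $F \in \ob \fcatk$, the $n$th cross-effect $\cre_n F$ is a natural direct summand of $F(X_1 \vee \cdots \vee X_n)$, cut out by the idempotents arising from the retractions associated with the canonical projections $X_1 \vee \cdots \vee X_n \to X_1 \vee \cdots \widehat{X}_i \cdots \vee X_n$. By construction, $F$ is polynomial of degree at most $d$ exactly when $\cre_{d+1} F = 0$. Since these idempotents act pointwise, $\cre_{d+1}$ is exact and commutes with all small limits and colimits; hence $\fpoly{d}{\calc}$, being the kernel of $\cre_{d+1}$, is automatically a thick (indeed Serre) subcategory of $\fcatk$ closed under arbitrary limits and colimits, proving (1).

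For (4), I would note that the inclusion $\fpoly{d}{\calc} \subset \fpoly{d+1}{\calc}$ is immediate from the fact that $\cre_{d+1} F = 0$ implies $\cre_{d+2} F = 0$, which one checks directly from the inductive description of the cross-effect idempotents. The induced natural transformations $p^\calc_d \hookrightarrow p^\calc_{d+1}$ and $q^\calc_{d+1} \twoheadrightarrow q^\calc_d$ then follow formally by composing the units and counits of the adjunctions defining $p^\calc_d$, $q^\calc_d$, and the two inclusion functors.

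For (2), the strategy is to transport projectives and injectives through the adjoints $q^\calc_d$ and $p^\calc_d$. Given a projective $P$ of $\fcatk$, the adjunction isomorphism
\[
\hom_{\fpoly{d}{\calc}}(q^\calc_d P, -) \cong \hom_{\fcatk}(P, -)
\]
restricted to $\fpoly{d}{\calc}$ exhibits $q^\calc_d P$ as projective (since the right-hand side is an exact functor on $\fpoly{d}{\calc}$). Letting $P$ range over the standard projective generators $P^\calc_C$ ($C \in \ob \calc$) yields a projective generating family of $\fpoly{d}{\calc}$. The categorically dual argument with $p^\calc_d$ and the injective cogenerators $I^\calc_{C;M}$ produces enough injectives.

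The main obstacle is (3), for which the key ingredient is a Künneth-type decomposition
\[
\cre_n(F \otimes G)(X_1, \dots, X_n)
\cong
\bigoplus_{I \amalg J = \{1,\dots,n\}}
\cre_{|I|} F\bigl((X_i)_{i \in I}\bigr) \otimes \cre_{|J|} G\bigl((X_j)_{j \in J}\bigr),
\]
which I would establish by comparing the idempotents on each side, using that $(F \otimes G)(X_1 \vee \cdots \vee X_n)$ equals $F(X_1 \vee \cdots \vee X_n) \otimes G(X_1 \vee \cdots \vee X_n)$ and therefore decomposes under both families of retractions simultaneously. Granted this formula, if $\cre_{d+1} F = 0$ and $\cre_{e+1} G = 0$, every summand on the right-hand side of $\cre_{d+e+1}(F \otimes G)$ vanishes, because $|I| + |J| = d + e + 1$ forces $|I| \geq d+1$ or $|J| \geq e+1$; hence $F \otimes G \in \ob \fpoly{d+e}{\calc}$. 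Verifying the Künneth decomposition is the only combinatorially substantive step; the remainder of the argument is formal.
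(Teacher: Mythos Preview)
The paper does not prove this proposition: it is stated as background in Appendix~B with the preface ``The following is standard'' and no argument given, so there is nothing to compare against at the level of proof strategy. Your approach via cross-effects is the expected one and is essentially correct.

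There is one slip in part~(3). The K\"unneth decomposition for cross-effects of a tensor product is
\[
\cre_n(F \otimes G)(X_1,\dots,X_n)\;\cong\;
\bigoplus_{\substack{I,J\subseteq\{1,\dots,n\}\\ I\cup J=\{1,\dots,n\}}}
\cre_{|I|}F\bigl((X_i)_{i\in I}\bigr)\otimes\cre_{|J|}G\bigl((X_j)_{j\in J}\bigr),
\]
where $I$ and $J$ are allowed to overlap; your formula with $I\amalg J=\{1,\dots,n\}$ (a partition) is not correct. Fortunately your pigeonhole argument survives: if $I\cup J$ has $d+e+1$ elements then $|I|+|J|\geq d+e+1$, forcing $|I|\geq d+1$ or $|J|\geq e+1$, and one then uses that $\cre_{d+1}F=0$ implies $\cre_kF=0$ for all $k\geq d+1$. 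With the formula corrected, the proof goes through.
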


\begin{rem}
The category $(\calc \op , \vee, *)$ satisfies Hypothesis \ref{hyp:C_monoidal}, 
so that polynomial functors 
are defined for contravariant functors;  the covariant and contravariant notions of polynomial functor are related 
\cite[Section 3]{HPV}. 
\end{rem}

\begin{prop} (Cf. \cite{HPV}.)
\label{prop:polynomial_duality}
For $\calc$  satisfying Hypothesis \ref{hyp:C_monoidal} and $\kring$ a field, the duality adjunction restricts to 
 $
 D_\calc : \fpoly{d}{\calc} \op \rightleftarrows \fpoly{d}{\calc \op} : 
D_{\calc 
\op}.
$ 
\end{prop}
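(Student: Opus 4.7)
The plan is to show that the contravariant duality functor $D_\calc$ sends $\fpoly{d}{\calc}$ into $\fpoly{d}{\calc\op}$; the analogous statement for $D_{\calc\op}$ then follows by applying the same argument to the pointed monoidal category $(\calc\op, \vee, *)$, and the adjunction of Proposition \ref{prop:duality} automatically restricts once both functors land in the required full subcategories.

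First I would reduce to reduced functors via Lemma \ref{lem:C_pointed_mod_reduced}, since $D_\calc$ respects the splitting $\fcatk[\calc] \cong \overline{\fcatk[\calc]} \times \kring\dash\modules$ and constant functors are polynomial of degree $0$. For $C \in \ob \calc$, define the endofunctors $\tau_C, \taubar_C$ on $\fcatk[\calc]$ (and analogously $\tau_C^{\calc\op}, \taubar_C^{\calc\op}$ on $\fcatk[\calc\op]$) by direct analogy with Notation \ref{nota:shift_functors}, using $\vee$ and the base point $*$; the standard characterization then reads: $F \in \fpoly{d}{\calc}$ if and only if $\taubar_{C_0} \cdots \taubar_{C_d} F = 0$ for all $C_0, \ldots, C_d \in \ob \calc$, and similarly for $\calc\op$.

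The key step is a natural isomorphism
\begin{equation*}
D_\calc \taubar_C F \ \cong \ \taubar_C^{\calc\op} D_\calc F.
\end{equation*}
Since $(\tau_C F)(X) = F(X \vee C)$ involves only the object $X \vee C$, one checks immediately that $D_\calc \tau_C F \cong \tau_C^{\calc\op} D_\calc F$. Applying the exact contravariant functor $D_\calc$ (exact because $\kring$ is a field) to the split short exact sequence $0 \to \taubar_C F \to \tau_C F \to F \to 0$ yields a split short exact sequence $0 \to D_\calc F \to \tau_C^{\calc\op} D_\calc F \to D_\calc \taubar_C F \to 0$ in $\fcatk[\calc\op]$; matching this against the canonical splitting that defines $\taubar_C^{\calc\op} D_\calc F$ (and verifying that the projection and inclusion of the monoidal structure go over to the correct morphisms under duality) identifies the cokernel as $\taubar_C^{\calc\op} D_\calc F$. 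Iterating gives $D_\calc(\taubar_{C_0} \cdots \taubar_{C_d} F) \cong \taubar_{C_d}^{\calc\op} \cdots \taubar_{C_0}^{\calc\op} D_\calc F$.

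Finally, since $\kring$ is a field, $D_\kring$ is faithful on non-zero vector spaces, hence $D_\calc$ detects vanishing of functors. Combining this with the iterated isomorphism above, $\taubar_{C_0} \cdots \taubar_{C_d} F = 0$ for all $C_i$ if and only if $\taubar_{C_d}^{\calc\op} \cdots \taubar_{C_0}^{\calc\op} D_\calc F = 0$ for all $C_i$, i.e.\ $F \in \fpoly{d}{\calc}$ iff $D_\calc F \in \fpoly{d}{\calc\op}$. The principal obstacle in this plan is the careful bookkeeping required for the commutation $D_\calc \taubar_C \cong \taubar_C^{\calc\op} D_\calc$: one must track both the inclusion $X \to X \vee C$ and the projection $X \vee C \to X$ (and the induced splittings on $\tau_C$) and verify that they dualize to the corresponding morphisms used to define $\taubar_C^{\calc\op}$; this is routine but requires some diagram chasing.
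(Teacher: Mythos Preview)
The paper does not actually give a proof of this proposition; it is stated with a citation to \cite{HPV} and no argument follows. Your approach is correct and is the standard one: establish $D_\calc \taubar_C \cong \taubar_C^{\calc\op} D_\calc$ and then invoke the vanishing-of-iterated-$\taubar$ characterisation of polynomial degree, together with the fact that $D_\calc$ reflects the vanishing of functors over a field.

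The bookkeeping you flag as the main obstacle does resolve cleanly: when one passes from $\calc$ to $\calc\op$ the roles of the inclusion $X \to X \vee C$ and the projection $X \vee C \to X$ are interchanged, so the canonical splittings of $\tau_C$ match after dualising. Alternatively, one can sidestep this entirely by observing that $D_\calc$ is additive (since $\kring$ is a field) and commutes with $\tau_C$, hence carries the direct-sum decomposition $\tau_C \cong 1 \oplus \taubar_C$ to the corresponding decomposition of $\tau_C^{\calc\op} D_\calc$; this identifies the complementary summands without tracking the individual structure maps. The preliminary reduction to reduced functors via Lemma~\ref{lem:C_pointed_mod_reduced} is harmless but unnecessary.
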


\subsection{Bifunctors}
\label{subsect:bifunctors}

Let $\calc$ and $\cald$ be small categories; $\fcatk[\calc \times \cald]$ 
is considered as the category of bifunctors on $\calc$ and $\cald$. 

\begin{prop}
\label{prop:bifunctor_exponential}
 For $\calc$ and $\cald$ small categories, there are equivalences of tensor 
abelian categories 
 $
  \fcatk[\calc \times \cald] 
  \cong 
  \fcat{\calc}{\fcatk[\cald]}
  \cong 
  \fcat{\cald}{\fcatk[\calc]}.
 $
\end{prop}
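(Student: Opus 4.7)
The plan is to exhibit the classical currying (or ``exponential law'') equivalences explicitly and then check that they are compatible with the pointwise tensor structures. It suffices to construct the first equivalence, since the second follows by the symmetric argument (using the canonical isomorphism $\calc \times \cald \cong \cald \times \calc$).

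First I would define a functor
\[
\Phi : \fcatk[\calc \times \cald] \longrightarrow \fcat{\calc}{\fcatk[\cald]}
\]
by currying in the covariant variable: for $F \in \ob \fcatk[\calc \times \cald]$, set $\Phi(F)(C) := F(C, -) \in \ob \fcatk[\cald]$, and for a morphism $f : C \to C'$ in $\calc$ let $\Phi(F)(f)$ be the natural transformation $F(C, -) \to F(C', -)$ with component $F(f, \mathrm{id}_D)$ at $D \in \ob \cald$. Bifunctoriality of $F$ ensures this is a well-defined object of $\fcatk[\cald]$, and naturality of $\Phi(F)(f)$ in $D$ follows from the commutativity of $F(f, \mathrm{id}_D) \circ F(\mathrm{id}_C, g) = F(\mathrm{id}_{C'}, g) \circ F(f, \mathrm{id}_D)$ for $g : D \to D'$. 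On morphisms of bifunctors (natural transformations), $\Phi$ is defined by restriction. In the opposite direction, define
\[
\Psi : \fcat{\calc}{\fcatk[\cald]} \longrightarrow \fcatk[\calc \times \cald]
\]
by uncurrying: $\Psi(G)(C, D) := G(C)(D)$, with $\Psi(G)(f, g) := G(C')(g) \circ G(f)_D = G(f)_{D'} \circ G(C)(g)$, the two composites agreeing by naturality of $G(f)$.

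Next I would verify that $\Phi$ and $\Psi$ are mutually inverse. On objects this is immediate: $\Psi \Phi (F)(C,D) = F(C,D)$ with the same action on morphisms, and $\Phi \Psi (G)(C) = G(C)$ as an object of $\fcatk[\cald]$, with $\Phi\Psi(G)(f)_D = G(f)_D$. On morphisms both composites act as the identity by construction. Exactness and additivity of $\Phi$ and $\Psi$ follow because (co)limits and short exact sequences in both $\fcatk[\calc \times \cald]$ and $\fcat{\calc}{\fcatk[\cald]}$ are computed objectwise, in each case reducing to (co)limits and exactness in $\kring\dash\modules$.

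Finally I would check compatibility with the tensor structures. Both $\fcatk[\calc \times \cald]$ and $\fcatk[\cald]$ are tensor abelian with tensor product defined pointwise using $\otimes_\kring$, and the tensor product on $\fcat{\calc}{\fcatk[\cald]}$ is likewise defined pointwise (in $\calc$) using the pointwise (in $\cald$) tensor product on $\fcatk[\cald]$. Hence for $F_1, F_2 \in \ob \fcatk[\calc \times \cald]$ and $(C,D) \in \ob(\calc \times \cald)$,
\[
\bigl(\Phi(F_1) \otimes \Phi(F_2)\bigr)(C)(D) = F_1(C,D) \otimes_\kring F_2(C,D) = \Phi(F_1 \otimes F_2)(C)(D),
\]
and these identifications are natural in $(C,D)$ and compatible with the associator, unitor and symmetry of $\otimes_\kring$; the unit object $\kring$ (constant bifunctor) corresponds under $\Phi$ to the constant functor with value the unit of $\fcatk[\cald]$. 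Thus $\Phi$ is a symmetric monoidal equivalence, and $\Psi$ inherits this structure. There is no real obstacle here: the statement is a purely formal currying argument, and the only point requiring mild vigilance is the naturality check that makes the uncurrying $\Psi(G)(f,g)$ well-defined, which is precisely the naturality of $G(f)$ in $\calc$ applied to the morphism $g$ in $\cald$.
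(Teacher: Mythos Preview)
Your proof is correct. The paper itself states this proposition without proof, treating it as a standard result; your argument supplies exactly the expected currying/uncurrying equivalence together with the routine pointwise check of compatibility with the tensor structure, which is the canonical way to verify it.
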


\begin{lem}
\label{lem:bifunctor_eval}
For $\calc$ and $\cald$ small categories, evaluation on $D \in \ob \cald$ 
induces an exact, monoidal functor $
  \eval_D : \fcatk[\calc \times \cald] \rightarrow \fcatk[\calc]$, 
$ F(-,-) \mapsto F(- , D)$.
\end{lem}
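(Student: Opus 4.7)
The plan is to verify the three claims of the lemma—functoriality, exactness, and monoidality—directly from the fact that all the relevant structures on functor categories with abelian target are defined pointwise.

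First I would define $\eval_D$ explicitly: on a bifunctor $F \in \ob \fcatk[\calc \times \cald]$, set $(\eval_D F)(C) := F(C, D)$ for $C \in \ob \calc$, and on a morphism $f : C \to C'$ of $\calc$, set $(\eval_D F)(f) := F(f, \mathrm{id}_D)$. On a natural transformation $\eta : F \to G$ of bifunctors, set $(\eval_D \eta)_C := \eta_{(C, D)}$. Functoriality is immediate from the functoriality of $F$ in the second variable (which makes $(\eval_D F)(f)$ well-defined) and from the naturality of $\eta$ (which ensures $\eval_D \eta$ is a natural transformation).

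For exactness, recall that a short exact sequence $0 \to F_1 \to F_2 \to F_3 \to 0$ in $\fcatk[\calc \times \cald]$ is exact if and only if it is pointwise exact at every object $(C, D) \in \ob (\calc \times \cald)$, since kernels and cokernels (and more generally limits and colimits) in functor categories with target an abelian category are computed pointwise. Evaluating at $D$ and then at any $C \in \ob \calc$ gives the value $F_i(C, D)$, so pointwise exactness of the image sequence in $\fcatk[\calc]$ reduces to the hypothesis, and $\eval_D$ is exact.

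For monoidality, the tensor product in both $\fcatk[\calc \times \cald]$ and $\fcatk[\calc]$ is given pointwise by $\otimes_\kring$ on values. Hence, for bifunctors $F, G$ and any $C \in \ob \calc$,
\[
\eval_D(F \otimes G)(C) = F(C, D) \otimes_\kring G(C, D) = \big((\eval_D F) \otimes (\eval_D G)\big)(C),
\]
and this identification is natural in $C$, yielding a natural isomorphism $\eval_D(F \otimes G) \cong (\eval_D F) \otimes (\eval_D G)$. The tensor unit in each category is the constant functor with value $\kring$, which is clearly preserved. The required coherence with associativity, unit, and symmetry constraints follows because these are all inherited pointwise from $(\kring\dash\modules, \otimes_\kring)$.

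There is no real obstacle: the lemma is formal, since evaluation at an object in a product category is itself an evaluation functor, and evaluation functors on categories of functors with abelian (or tensor abelian) target are automatically exact (respectively symmetric monoidal) by pointwise inspection.
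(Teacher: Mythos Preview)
Your proof is correct. The paper itself gives no proof of this lemma, treating it as immediate from the pointwise nature of the abelian and tensor structures on functor categories; your explicit verification of functoriality, exactness, and monoidality spells out exactly what makes the statement formal.
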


\begin{nota}
\label{nota:d[C]}
Suppose  that $\calc$ satisfies Hypothesis \ref{hyp:C_monoidal}.  For $d \in \nat$, let
$\f_{d[\calc]} (\calc \times \cald; \kring)$ denote the full subcategory of 
$\fcatk[\calc\times \cald]$ that corresponds to $\f_d (\calc ; \fcatk[\cald])$ 
under the equivalence of 
Proposition \ref{prop:bifunctor_exponential}.
\end{nota}

\begin{prop}
\label{prop:poly_bifunctor_equivalent}
 Let $\calc$ and $\cald$ be small categories such that $\calc$ satisfies Hypothesis \ref{hyp:C_monoidal}   and let $d \in \nat$.  
 \begin{enumerate}
  \item 
  For $D \in \ob \cald$, the evaluation functor $\eval_D$ restricts to 
  \[
   \eval_D : \fpoly{d[\calc]}{\calc \times \cald} \rightarrow \fpoly{d}{\calc}.
  \]
\item 
A bifunctor $F \in \ob \fcatk[\calc \times \cald]$ lies in 
$\fpoly{d[\calc]}{\calc \times \cald}$ if and only 
if $\eval_D F$ lies in $\fpoly{d}{\calc}$ for all $D \in \ob \cald$.
  \end{enumerate}
\end{prop}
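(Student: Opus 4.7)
\smallskip
\noindent\textbf{Proof plan for Proposition \ref{prop:poly_bifunctor_equivalent}.}

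The plan is to reduce everything to the statement that cross-effects commute with exact additive functors on the target. Via Proposition \ref{prop:bifunctor_exponential}, a bifunctor $F \in \ob\fcatk[\calc\times\cald]$ is identified with a functor $\widetilde{F} : \calc \rightarrow \fcatk[\cald]$, and by definition of Notation \ref{nota:d[C]}, $F$ lies in $\fpoly{d[\calc]}{\calc\times\cald}$ iff $\widetilde{F}$ lies in $\fpoly{d}{\calc;\fcatk[\cald]}$, i.e.\ the $(d+1)$st cross-effect (computed with respect to $(\calc,\vee,*)$ in the abelian category $\fcatk[\cald]$) vanishes. On the other hand, $\eval_D F$ is the composite $\eval_D \circ \widetilde{F} : \calc \rightarrow \kring\dash\modules$.

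First I would handle (1): since $\eval_D : \fcatk[\cald] \rightarrow \kring\dash\modules$ is exact (hence additive), Proposition \ref{prop:poly_postcompose_additive} applied to $\alpha = \eval_D$ shows that $\eval_D F = \eval_D \circ \widetilde{F}$ is polynomial of degree at most $d$ whenever $\widetilde{F}$ is; this gives the restriction of $\eval_D$ to polynomial subcategories.

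For (2), one direction is already (1). For the converse, the key step is the natural isomorphism
\[
\eval_D \bigl(\cre_{d+1}\widetilde{F}\bigr) \;\cong\; \cre_{d+1}(\eval_D \circ \widetilde{F}).
\]
This follows because cross-effects in an abelian category are defined by iterated kernels (or equivalently, by an intersection/direct-sum decomposition of $\widetilde{F}(c_1\vee\cdots\vee c_{d+1})$ indexed by subsets), and $\eval_D$, being exact and additive, commutes with these constructions as well as with the evaluation on the tuple $(c_1,\ldots,c_{d+1})$. Assuming each $\eval_D F$ is polynomial of degree at most $d$, the right-hand side vanishes for every $D \in \ob\cald$. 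But a functor in $\fcatk[\cald]$ is zero iff it takes the value $0$ on every object of $\cald$, so $\cre_{d+1}\widetilde{F} = 0$, as required.

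The only genuine point to verify carefully is the compatibility isomorphism $\eval_D \circ \cre_{d+1} \cong \cre_{d+1} \circ (\eval_D)_*$; this is routine but is where the hypothesis that $\eval_D$ is \emph{exact} (and not merely additive) is actually used, since the standard definition of the cross-effect involves a kernel. Everything else is formal bookkeeping via Proposition \ref{prop:bifunctor_exponential}.
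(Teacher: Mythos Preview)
Your proposal is correct and is the standard argument. The paper does not actually supply a proof of this proposition: it is stated in the background appendix without a \texttt{proof} environment, so there is nothing to compare against beyond noting that the result is treated as routine. Your reduction to Proposition~\ref{prop:poly_postcompose_additive} for part~(1), and the observation that $\eval_D$ is exact so commutes with the kernel/direct-summand construction defining $\cre_{d+1}$, together with the fact that an object of $\fcatk[\cald]$ vanishes iff all its evaluations do, is exactly the expected justification.
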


\subsection{Tensor products over $\calc$}
\label{subsect:tensor_over_C}

The notion of external tensor product is familiar:

\begin{nota}
\label{nota:boxtimes}
\nomenclature{$\otimes_\calc$}{tensor product over category $\calc$\nomrefpage}
\nomenclature{$\boxtimes$}{exterior tensor product\nomrefpage}
For $\calc$, $\cald$ small categories and $\kring$ a commutative ring, let $\boxtimes_\kring$ be the external tensor product:
$\boxtimes_\kring :    \fcatk[\calc ] \times \fcatk[\cald] \rightarrow 
\fcatk[\calc  \times \cald]$, $(F, G) \mapsto F (-) \otimes G(-)$.
 Where no confusion can result, this is written simply as $\boxtimes$.
\end{nota}

\begin{nota}
\label{nota:otimes_calc}
For $\calc$ a small category, let 
 $
 \otimes_\calc : \fcatk[\calc \op] \times \fcatk[\calc] \rightarrow \kring 
\dash \modules
 $
 denote the biadditive functor given by composing the external tensor product $\boxtimes$
  with the coend $\fcatk[\calc 
\op \times \calc] \rightarrow \kring 
\dash \modules$.
\end{nota}

\begin{rem}
For $F \in \ob \fcatk[\calc \op]$ and $G \in \ob \fcatk[\calc]$, where $\calc$ is a small category, $F \otimes_\calc G$ is the coequalizer of the diagram:
\[
\bigoplus _{f \in \hom_\calc (X,Y)}
F(Y) \otimes G(X)
\rightrightarrows 
\bigoplus _{Z \in \ob \calc} 
F(Z) \otimes G(Z),
\]
where the arrows are determined by $F(X) \otimes G(X) \stackrel{F(f) \otimes \id} \leftarrow F(Y) \otimes G(X) \stackrel{\id \otimes G(f)}{\rightarrow} F(Y) \otimes G(Y)$.  This corresponds to a generalization to `rings with many objects' of the usual tensor product of right and left modules over a unital, associative ring.
\end{rem}

The naturality of $\otimes_\calc$ gives:

\begin{lem}
 \label{lem:bifunctor}
For small categories $\calc, \cald$, the functor $\otimes_\calc$ 
induces a biadditive functor:
$
\otimes _\calc  :  \fcatk[\calc \op \times \cald] \times \fcatk[\calc]  
\rightarrow \fcatk[\cald].
$
\end{lem}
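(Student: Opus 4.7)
The plan is to construct the extended bifunctor pointwise in $\cald$, reducing the problem to the already-defined $\otimes_\calc : \fcatk[\calc\op] \times \fcatk[\calc] \rightarrow \kring\dash\modules$ of Notation \ref{nota:otimes_calc} via the bifunctor exponential equivalence.

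First, I would invoke Proposition \ref{prop:bifunctor_exponential} (applied with $\calc\op$ in place of $\calc$) to identify $\fcatk[\calc\op \times \cald]$ with $\fcat{\cald}{\fcatk[\calc\op]}$. Under this equivalence, an object $F \in \ob \fcatk[\calc\op \times \cald]$ corresponds to a functor $\widetilde{F} : \cald \rightarrow \fcatk[\calc\op]$, $D \mapsto F(-, D)$, with each morphism in $\cald$ inducing a natural transformation in $\fcatk[\calc\op]$. Given $G \in \ob \fcatk[\calc]$, postcomposing $\widetilde{F}$ with the additive functor $- \otimes_\calc G : \fcatk[\calc\op] \rightarrow \kring\dash\modules$ yields a functor $D \mapsto F(-, D) \otimes_\calc G$, which I would take as the definition of $F \otimes_\calc G \in \ob \fcatk[\cald]$.

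Functoriality of the construction in $F$ is immediate, since $F \mapsto \widetilde{F}$ and the postcomposition with $-\otimes_\calc G$ are both functorial. For functoriality in $G$, as well as for biadditivity in the pair $(F,G)$, I would apply the corresponding properties of the ungraded $\otimes_\calc$ pointwise in $\cald$, noting that a natural transformation of functors landing in an abelian category is additive as soon as it is additive objectwise.

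There is essentially no technical obstacle to overcome: the only substantive content is that the coend computing $\otimes_\calc$ commutes with the objectwise evaluation on $\cald$, which is an instance of the general fact that colimits in a functor category are computed pointwise. Beyond Proposition \ref{prop:bifunctor_exponential} and the definition of $\otimes_\calc$ from Notation \ref{nota:otimes_calc}, no further machinery is required.
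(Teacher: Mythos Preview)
Your proposal is correct and aligns with the paper's approach: the paper does not give an explicit proof but simply introduces the lemma with the phrase ``The naturality of $\otimes_\calc$ gives,'' which is precisely what your pointwise construction via Proposition \ref{prop:bifunctor_exponential} makes concrete. Nothing further is needed.
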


\section{Representations of the symmetric groups in characteristic zero}
\label{app:rep_sym_car0}

This appendix provides a brief survey of the representation theory of the symmetric groups over $\rat$ that is required here; the reader can consult standard references for this material, including \cite{F,FH}. 

Recall the following standard definition:

\begin{defn}
\label{def:partitions}
\nomenclature{$\lambda \vdash n$}{partition of $n$\nomrefpage}
A partition $\lambda$ is a weakly decreasing sequence of natural numbers $\lambda_i$, $i \in \nat^*$ such that $\lambda_i =0 $ for $i \gg 0$. The size $|\lambda|$ of $\lambda$ is $\sum_i \lambda_i$ and, if $|\lambda |= n$, then $\lambda$ is said to be a partition of $n$, denoted $\lambda \vdash n$. 
The length $l(\lambda)$ of $\lambda$ is the number of non-zero entries and a partition of length $l$ 
 is written simply as $(\lambda_1, \ldots , \lambda_l)$. 
 
 The transpose partition $\lambda^\dagger$ is defined by $\lambda^\dagger_i = \sharp \{ j | \lambda_j \geq i \}$. (Geometrically this corresponds to reflecting the Young diagram  representing $\lambda$ in its diagonal.)
\end{defn}

\begin{nota}
\label{nota:preceq}
\nomenclature{$\preceq$}{partial order on the set of partitions\nomrefpage}
For partitions $\lambda$, $\mu$ write $\mu \preceq \lambda$ if $\mu_i \leq \lambda_i$ for all terms of the partition (unspecified terms being taken as zero); equivalently, the Young diagram of $\mu$ is contained within that of $\lambda$.
\end{nota}

The simple  $\rat [\sym_n]$-modules (for $n \in \nat$) are indexed by the partitions $\lambda$ of $n$, the simple associated to $\lambda \vdash n$ is written $S_\lambda$. 

\begin{exam}
\label{exam:example_representations}
To fix conventions, for $0< n \in \nat$:
\begin{enumerate}
\item 
$S_{(n)}$ is the trivial representation $\rat$; 
\item 
$S_{(1^n)}$ is the sign representation $\mathrm{sgn}$; 
\item 
$S_{(n-1, 1)}$ is the standard representation of dimension $n-1$, given by the quotient of the permutation representation $\rat[\mathbf{n}]$ by the trivial representation (given by the trace). 
\end{enumerate}
Here and elsewhere, where no confusion can result, commas are omitted from the notation for partitions.
\end{exam}

\begin{rem}
\label{rem:dim_irreducibles}
The dimension of $S_\lambda$ can be calculated combinatorially from the partition $\lambda$. 
For instance, it is the number of {\em standard tableaux} of type $\lambda$. This dimension can also be calculated by the {\em hook-length formula}.
\end{rem}

\begin{prop}
\label{prop:self-duality}
Finite-dimensional representations of $\rat[\sym_n]$ are self-dual under vector space duality $(-)^*$; in particular, $(S_\lambda) ^*\cong S_\lambda$. 
\end{prop}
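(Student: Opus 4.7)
The plan is to reduce first to the case of simple representations and then to exploit the specific character theory of symmetric groups. Since $\kring$ is a field of characteristic zero, Maschke's theorem tells us that $\kring[\sym_n]$ is semi-simple, so every finite-dimensional representation decomposes as a finite direct sum of simples $S_\lambda$ with $\lambda \vdash n$. Vector space duality is exact and additive, and commutes with finite direct sums, so the general statement reduces to proving the second assertion: $(S_\lambda)^* \cong S_\lambda$ for every partition $\lambda \vdash n$.

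For the simple case, I would work through characters. For any finite-dimensional representation $V$, the character of the dual representation satisfies $\chi_{V^*}(g) = \chi_V(g^{-1})$. The key classical fact about $\sym_n$ is that every element is conjugate to its inverse: two permutations are conjugate in $\sym_n$ precisely when they have the same cycle type, and a permutation and its inverse clearly share the same cycle type. Consequently $\chi_V(g^{-1}) = \chi_V(g)$ for every $V$, so $\chi_{V^*} = \chi_V$. Since in characteristic zero characters determine the isomorphism class of a finite-dimensional representation of a finite group (orthogonality of characters, together with Maschke), this gives $V^* \cong V$, and in particular $(S_\lambda)^* \cong S_\lambda$.

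A small point to address concerns the ground field: the argument above works verbatim over any field $\kring$ of characteristic zero, since the Specht modules $S_\lambda$ are in fact defined over $\mathbb{Z}$, hence over $\kring$, and character theory over $\kring$ still distinguishes simple modules once $\kring[\sym_n]$ is split semi-simple (which it is, for any field of characteristic zero, because $\sym_n$ has a complete set of rational irreducibles). No extension of scalars is required.

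The main obstacle is essentially non-existent here: this is a classical statement, and the only care needed is in invoking the correct form of ``characters determine representations'' for the chosen field $\kring$. Once the conjugacy of a permutation with its inverse is granted, the rest is a short formal argument.
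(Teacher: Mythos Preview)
Your argument is correct and is the standard classical proof. The paper states this proposition without proof, treating it as a well-known fact in the background appendix on representations of symmetric groups; your character-theoretic argument (every permutation is conjugate to its inverse, so characters are real-valued, hence $V^* \cong V$) is exactly what one would supply if a proof were required.
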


\begin{nota}
\label{nota:dagger_sym}
For $M$ a representation of $\rat[\sym_n]$, let $M^\dagger$ denote the representation $ M \otimes \mathrm{sgn}_n$ (with diagonal action), where $\mathrm{sgn}_n$ is the signature representation.
\end{nota}

\begin{exam}
If $\lambda \vdash n$, then $(S_{\lambda})^\dagger \cong S_{\lambda^\dagger}$, so that the two usages of the notation $^\dagger$ are compatible.
\end{exam}

\begin{nota}
\label{nota:Schur_functors}
\nomenclature{$S_\lambda$}{simple $\sym_n$-module indexed by $\lambda \vdash n$\nomrefpage}
\nomenclature{$\schur_\lambda$}{Schur functor associated to $S_\lambda$\nomrefpage}
For $n \in \nat$ and $S_\lambda$ the simple $\rat[\sym_n]$-module induced by the partition $\lambda$ of $n$, let  $\schur_\lambda$ denote the Schur functor $V \mapsto S_\lambda \otimes_{\sym_n} V^{\otimes n}$. (See Section \ref{sec:fb} for Schur functors.)
\end{nota}

\begin{exam}
\label{exam:schur_symm_ext}
For $n \in \nat$, 
\begin{enumerate}
\item 
$\schur_{(n)}$ is the $n$th symmetric power functor, $V \mapsto S^n (V)$; 
\item 
$\schur_{(1^n)}$ is the $n$-th exterior power functor, $V \mapsto \Lambda^n (V)$.
\end{enumerate}
\end{exam}

The Pieri rule is important for understanding the induction of representations of symmetric groups:

\begin{prop}
\label{prop:pieri}
For $S_\lambda$ a simple representation of $\rat[\sym_n]$,
$
S_\lambda \uparrow_{\sym_n}^{\sym_{n+1}} 
\cong
\bigoplus 
S_\mu, 
$
where the sum is taken over partitions $\mu$ such that $|\mu| = 1 + |\lambda|$ and $\lambda \subset \mu$.
\end{prop}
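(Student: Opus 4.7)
The plan is to derive this from the branching rule for restriction from $\sym_{n+1}$ to $\sym_n$ via Frobenius reciprocity. Since $\kring$ has characteristic zero, both $\kring[\sym_n]$ and $\kring[\sym_{n+1}]$-modules are semisimple, so it suffices to compute the multiplicity
\[
[S_\lambda \uparrow_{\sym_n}^{\sym_{n+1}} : S_\mu] = \dim \hom_{\sym_{n+1}}\bigl(S_\lambda \uparrow_{\sym_n}^{\sym_{n+1}}, S_\mu\bigr)
\]
for each partition $\mu$ of $n+1$. First, I would apply Frobenius reciprocity to rewrite this as $\dim \hom_{\sym_n}(S_\lambda, S_\mu \downarrow_{\sym_n}^{\sym_{n+1}})$, reducing the problem to understanding the restriction $S_\mu \downarrow$. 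Self-duality of irreducibles (Proposition \ref{prop:self-duality}) and semisimplicity then ensure that the multiplicity in the induction equals the multiplicity in the restriction.

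The core of the argument is therefore the classical Young branching rule, which asserts that $S_\mu \downarrow_{\sym_n}^{\sym_{n+1}} \cong \bigoplus_\lambda S_\lambda$, the sum running over partitions $\lambda \subset \mu$ with $|\lambda| = |\mu| - 1$ (equivalently, $\lambda$ is obtained from $\mu$ by removing a single removable box), each with multiplicity one. I would establish this either via Young's seminormal construction---which produces an explicit basis of $S_\mu$ compatible with the $\sym_n$-action and indexed by standard tableaux, and hence a direct sum decomposition by the shape obtained after deleting the box containing $|\mu|$---or via character theory using the Frobenius character formula together with the identity $s_\mu \cdot p_1 = \sum_\nu s_\nu$ (sum over $\nu \supset \mu$ with $|\nu| = |\mu|+1$) for Schur polynomials.

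The main obstacle is not the deduction from the branching rule, which is immediate, but the branching rule itself; a self-contained proof requires nontrivial combinatorial input (either Young symmetrizers or Schur polynomial identities). Since this proposition appears in the appendix on background material and the branching rule is entirely standard, the most efficient approach is to cite it from a standard reference such as \cite{F} or \cite{FH} rather than reprove it here.
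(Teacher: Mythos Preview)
Your proposal is correct. In the paper this proposition is stated in the appendix as standard background material and is given no proof at all; your sketch via Frobenius reciprocity and the Young branching rule (ultimately deferring to \cite{F} or \cite{FH}) is the natural argument and in fact supplies more detail than the paper does.
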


\begin{exam}
\label{exam:pieri}
Let  $\lambda$ be a partition with $|\lambda|= n$. 
\begin{enumerate}
\item 
$S_{(1^{n+1})}$ is a factor of $S_\lambda \uparrow_{\sym_n}^{\sym_{n+1}}$ if and only if $\lambda= (1^n)$, in which case $S_{(1^n)}\uparrow_{\sym_n}^{\sym_{n+1}} = S_{(1^{n+1})} \oplus S_{(2 1^{n-1})}$; 
\item 
$S_{(n+1)}$ is a factor of $S_\lambda \uparrow_{\sym_n}^{\sym_{n+1}}$ if and only if $\lambda= (n)$, in which case $S_{(n)}\uparrow_{\sym_n}^{\sym_{n+1}} = S_{(n+1)} \oplus S_{(n1)}$.
\end{enumerate}
\end{exam}

If $M$ is a representation of $\sym_m$ and $N$ of $\sym_n$, then $M \tenfb N$  is the representation of $\sym_{m+n}$ given by
\[
(M \boxtimes N)\uparrow_{\sym_m \times \sym_n}^{\sym_{m+n}}. 
\]
For instance, for $\lambda \vdash m$ and $\mu \vdash n$, the Schur functor associate to $S_\lambda \tenfb S_\mu$ is $\schur_\lambda \otimes \schur_\mu$.

Since the category $\rat[\sym_{m+n}]\dash\modules$ is semi-simple, this decomposes as a direct sum of simple modules.

\begin{defn}
\label{def:LR-coefficients}
\nomenclature{$c_{\lambda , \mu}^\nu$}{Littlewood-Richardson coefficient\nomrefpage}
For partitions $\lambda , \mu, \nu$, the Littlewood-Richardson coefficient $c_{\lambda , \mu}^\nu$ is defined by the isomorphism
$
S_\lambda \tenfb S_\mu \cong \bigoplus_\nu S_\nu ^{\oplus c_{\lambda, \mu}^\nu}.
$ 
\end{defn}

\begin{rem}
From the definition, it is clear that $c^{\nu}_{\lambda, \mu}=0$ if $|\nu|\neq |\lambda|+ |\mu|$ and $c_{\lambda, \mu}^\nu =  c_{\mu , \lambda}^\nu$, 
$ c_{\lambda^\dagger, \mu^\dagger}^{\nu^\dagger} =  c_{\lambda, \mu}^\nu .$
\end{rem}

More importantly, the Littlewood-Richardson coefficients can be calculated combinatorially.

\begin{thm}
\label{thm:LR-coeff}
For partitions $\lambda, \mu, \nu$, the Littlewood–Richardson coefficient $c_{\lambda,\mu}^\nu$ is zero unless $\lambda \subset \nu$. If $\lambda \subset \nu$, $c_{\lambda,\mu}^\nu$ is the number of semi-standard skew tableaux of shape $\nu \backslash \lambda$ and content $\mu$ (that is $i$ appears $\mu_i$ times) which satisfy the following lattice word condition: the sequence obtained by concatenating its reversed rows is a lattice word (for each initial segment of the sequence and for each $i$, $i$ occurs at least as many times as $i+1$). 
\end{thm}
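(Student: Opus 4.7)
The plan is to translate the statement into the ring $\Lambda$ of symmetric functions via the Frobenius characteristic map $\mathrm{ch}$, which sends $S_\lambda$ to the Schur function $s_\lambda$ and carries the induction product $\tenfb$ to the usual product of symmetric functions (this is the standard correspondence between $\bigoplus_n \kring[\sym_n]\dash\modules$ and $\Lambda$, using that $\kring$ is of characteristic zero, cf.\ Remark \ref{rem:smod_reps}). Under this isomorphism Definition \ref{def:LR-coefficients} becomes the identity $s_\lambda s_\mu = \sum_\nu c^\nu_{\lambda,\mu}\, s_\nu$ in $\Lambda$, and equivalently $c^\nu_{\lambda,\mu}=\langle s_\lambda s_\mu,s_\nu\rangle$ for the Hall inner product on $\Lambda$.

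First I would establish the vanishing $c^\nu_{\lambda,\mu}=0$ when $\lambda\not\subset\nu$. Introducing the skew Schur function $s_{\nu/\lambda}$, the adjunction between multiplication by $s_\lambda$ and the skewing operator gives $\langle s_\lambda s_\mu,s_\nu\rangle=\langle s_\mu,s_{\nu/\lambda}\rangle$ and hence the dual-basis identity
\[
s_{\nu/\lambda}=\sum_\mu c^\nu_{\lambda,\mu}\, s_\mu.
\]
The skew shape $\nu/\lambda$ is empty (and so $s_{\nu/\lambda}=0$) precisely when $\lambda\not\subset\nu$, which yields the first half of the theorem.

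Next I would use the combinatorial description of skew Schur functions, $s_{\nu/\lambda}=\sum_T x^T$ summed over semi-standard Young tableaux $T$ of shape $\nu/\lambda$ (proved by comparison with the Jacobi–Trudi formula, or directly from the Pieri rule and induction on $|\lambda|$). Extracting the coefficient of $s_\mu$ from this sum is the heart of the argument: one must identify the number of SSYT of shape $\nu/\lambda$ and content $\mu$ satisfying the lattice word (Yamanouchi) condition with $\langle s_{\nu/\lambda},s_\mu\rangle$. I would do this via Schützenberger's jeu de taquin: every SSYT $T$ of shape $\nu/\lambda$ rectifies to a unique SSYT $\mathrm{rect}(T)$ of some straight shape, and the rectification map is weight-preserving and its fibers have equal cardinality over tableaux of a fixed shape. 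The key combinatorial lemma is that $\mathrm{rect}(T)$ equals the super-standard tableau $T_\mu^{\mathrm{std}}$ of shape $\mu$ (the one whose $i$-th row consists entirely of $i$'s) if and only if the reverse reading word of $T$ is a lattice (Yamanouchi) word of content $\mu$. Summing over fibers then yields
\[
\langle s_{\nu/\lambda},s_\mu\rangle=\#\{\text{LR tableaux of shape }\nu/\lambda,\text{ content }\mu\},
\]
which is the desired formula.

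The main obstacle is the combinatorial lemma identifying Yamanouchi tableaux with the fiber of the rectification map over $T_\mu^{\mathrm{std}}$. This requires showing that jeu de taquin slides preserve the lattice-word property of the reverse reading word and that only $T_\mu^{\mathrm{std}}$ has reverse reading word equal to the canonical Yamanouchi word $1^{\mu_1}2^{\mu_2}\cdots$; both are standard but genuinely nontrivial facts about jeu de taquin, typically proved by a careful case analysis on single slides. Once these are in hand the theorem follows by assembling the steps above.
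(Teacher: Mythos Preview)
Your outline is a correct sketch of one of the standard proofs of the Littlewood--Richardson rule (the jeu de taquin / Sch\"utzenberger approach, as in Fulton's book). However, the paper does not give a proof of this statement at all: it is stated in the appendix as a known result, with the reader directed to the references \cite{F,FH} at the beginning of Appendix~\ref{app:rep_sym_car0}. So there is nothing to compare against; your proposal simply supplies what the paper deliberately omits.
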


\begin{exam}
\label{exam:CR_1n}
The calculation of $c^\nu_{\lambda,\mu}$ is especially simple when $\mu = (1^n)$. In this case, the lattice word condition implies :
\begin{enumerate}
\item 
$c^\nu_{\lambda,1^n} =0$ if $\exists i$ such that $\nu _i - \lambda_i >1$ (that is, if the skew tableau $\nu \backslash \lambda$ has more than one box in any row); 
\item 
in the remaining case, there is a unique semi-standard skew-tableau $\nu \backslash \lambda$ with content $1^n$ satisfying the lattice word condition (with increasing entries). 
\end{enumerate}
Thus, for example, $c^\nu _{\lambda, 1^n}=0$ if $n > l (\nu)$.
\end{exam}

\begin{exam}
\label{exam:LR}
For convenience, the following Littlewood-Richardson decompositions are given:
\begin{eqnarray*}
S_{(1)} \tenfb S_{(1)} &=& S_{(2)} \oplus S_{(11)} \\
S_{(2)} \tenfb S_{(1)} &=& S_{(3)} \oplus S_{(21)}\\
S_{(11)} \tenfb S_{(1)} &=& S_{(111)} \oplus S_{(21)}\\
S_{(3)} \tenfb S_{(1)} &=& S_{(4)} \oplus S_{(31)} \\
S_{(21)} \tenfb S_{(1)} &=& S_{(31)} \oplus S_{(22)} \oplus S_{(211)} \\
S_{(111)} \tenfb S_{(1)} & =& S_{(1111)} \oplus S_{(211)} \\
S_{(2)} \tenfb S_{(2)} &=& S_{(4)} \oplus S_{(31)} \oplus S_{(22)}\\
S_{(2)}\tenfb S_{(11)} &=& S_{(31)} \oplus S_{(211)} \\
S_{(11)}\tenfb S_{(11)} &=& S_{(1111)} \oplus S_{(211)} \oplus S_{(22)}
\end{eqnarray*}
\end{exam}

Similarly, the plethysm operation for representations of symmetric groups are described by coefficients:

\begin{defn}
\label{def:plethysm-coefficients}
\nomenclature{$p_{\lambda , \mu}^\nu$}{plethysm coefficient\nomrefpage}
For partitions $\lambda , \mu, \nu$, the plethysm coefficient $p_{\lambda , \mu}^\nu$ is defined by the isomorphism
$
S_\lambda \circ S_\mu \cong \bigoplus_\nu S_\nu ^{\oplus p_{\lambda, \mu}^\nu}, 
$ 
where $S_\lambda \circ S_\mu $ is the representation associated to the composite functor $\schur_\lambda \circ \schur_\mu$. 
(These coefficients are zero unless $|\nu|= |\lambda|\ |\mu|$.)
\end{defn}

\begin{rem}
Unlike the Littlewood-Richardson coefficients, there is no known general combinatorial description of the plethysm coefficients.
\end{rem}

For reference we record the following:

\begin{lem}
\label{lem:plethysm}
There are isomorphisms:
\begin{eqnarray*}
S_{(11)} \circ S_{(11)} &\cong & S_{(211)} \\
S_{(2)} \circ S_{(11)} & \cong & S_{(1111)} \oplus S_{(22)} .
\end{eqnarray*}
\end{lem}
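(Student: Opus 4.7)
The plan is to decompose $\Lambda^2 V \otimes \Lambda^2 V$ in two ways and match simple polynomial-functor summands. First I would invoke Example \ref{exam:LR} to get $S_{(11)} \tenfb S_{(11)} \cong S_{(1111)} \oplus S_{(211)} \oplus S_{(22)}$, and transport this across the Schur correspondence (Proposition \ref{prop:equiv_smod_polyf} together with Example \ref{exam:schur_symm_ext}) to obtain
\[
\Lambda^2 \otimes \Lambda^2 \;\cong\; \schur_{(1111)} \oplus \schur_{(211)} \oplus \schur_{(22)}
\]
as polynomial functors of degree $4$. On the other hand, the $\sym_2$-action swapping the two $\Lambda^2$-factors yields
\[
\Lambda^2 V \otimes \Lambda^2 V \;\cong\; S^2(\Lambda^2 V) \oplus \Lambda^2(\Lambda^2 V),
\]
so that $S_{(2)} \circ S_{(11)}$ and $S_{(11)} \circ S_{(11)}$ between them partition the three simple summands listed above.

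To pin down the partition, I would evaluate at $V = \kring^4$. The hook-length formula (Remark \ref{rem:dim_irreducibles}) gives
\[
\dim \schur_{(1111)}(\kring^4) = 1,\quad \dim \schur_{(211)}(\kring^4) = 15,\quad \dim \schur_{(22)}(\kring^4) = 20,
\]
while direct counting gives $\dim \Lambda^2(\Lambda^2 \kring^4) = \binom{6}{2} = 15$ and $\dim S^2(\Lambda^2 \kring^4) = \binom{7}{2} = 21$. The set $\{1,15,20\}$ admits exactly one splitting into subsets summing to $15$ and $21$, namely $\{15\}$ and $\{1,20\}$, which forces $S_{(11)} \circ S_{(11)} \cong S_{(211)}$ and $S_{(2)} \circ S_{(11)} \cong S_{(1111)} \oplus S_{(22)}$.

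The only delicate point is ensuring that this dimension count really determines the isomorphism class of functors and not merely of $GL_4$-representations; this is immediate because the three $\schur_\lambda$ correspond to pairwise non-isomorphic simple objects of $\smod$ and the Schur correspondence is an equivalence of categories (Proposition \ref{prop:equiv_smod_polyf}). As an alternative bypassing dimensions altogether, one could apply the twisting equivalence $(-)^\dagger$ of Theorem \ref{thm:equiv_dagger} together with Proposition \ref{prop:dagger_compose}: since $|S_{(11)}|=2$ is even, this converts the two plethysms into $S^2 \circ S^2$ and $\Lambda^2 \circ S^2$, whose decompositions $\schur_{(4)} \oplus \schur_{(22)}$ and $\schur_{(31)}$ are classical and whose conjugates under $(-)^\dagger$ recover precisely the claimed identifications.
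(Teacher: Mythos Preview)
Your argument is correct and shares its skeleton with the paper's proof: both split $\Lambda^2 \otimes \Lambda^2$ into its symmetric and alternating parts, identify the total via $S_{(11)} \tenfb S_{(11)} \cong S_{(1111)} \oplus S_{(211)} \oplus S_{(22)}$, and then distribute the three summands. The difference is in this last step. The paper argues more structurally: the exterior multiplication $\Lambda^2 \otimes \Lambda^2 \to \Lambda^4$ is symmetric in the two factors, so $S_{(1111)}$ sits in $S_{(2)} \circ S_{(11)}$; then evaluation at $\kring^{\oplus 2}$ (where $\Lambda^2(\Lambda^2) = 0$ and $\schur_{(211)} = 0$, but $\schur_{(22)}$ is one-dimensional) forces $S_{(22)}$ into $S_{(2)} \circ S_{(11)}$ as well. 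Your dimension count at $\kring^{\oplus 4}$ reaches the same conclusion with less conceptual input but a larger computation; your $(-)^\dagger$ alternative is a clean third route not in the paper. One minor slip: Remark~\ref{rem:dim_irreducibles} concerns $\dim S_\lambda$ as a $\sym_n$-module, not $\dim \schur_\lambda(\kring^{\oplus 4})$; the numbers you quote are correct, but they come from the hook-content (Weyl) formula rather than the hook-length formula.
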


\begin{proof}
This can be given an elementary proof. Since $S_{(1)} \tenfb S_{(1)} \cong S_{(11)} \oplus S_{(2)}$, 
it follows easily that 
\[
\big(S_{(11)} \circ S_{(11)}\big) 
\oplus 
\big( S_{(2)} \circ S_{(11)}\big)
\cong 
S_{(11)} \tenfb S_{(11)}
\]
and the latter identifies with $ S_{(1111)} \oplus S_{(211)} \oplus S_{(22)}$ (see Example \ref{exam:LR}). It remains to check how these factors are distributed. 

The product of exterior powers induces $S_{(11)} \tenfb S_{(11)} \rightarrow S_{(1111)}$  which is symmetric, which shows that $S_{(1111)}$ lies in $S_{(2)} \circ S_{(11)}$. Moreover, evaluating the associated Schur functors on $\rat^{\oplus 2}$ shows that $S_{(22)} $ must also lie in $S_{(2)} \circ S_{(11)}$. 
\end{proof}

\subsection{The Cauchy identities}
\label{subsect:cauchy}

The Cauchy identities in characteristic zero give a fundamental tool for  considering symmetric (respectively exterior) powers applied to tensor products.

Use Notation \ref{nota:Schur_functors} for the Schur functors associated to simple representations.

\begin{prop}
\label{prop:cauchy_identities}
For $0< n \in \nat$, there are natural isomorphisms for $V, W \in \ob \fmod$:
 \begin{eqnarray*}
  S^n (V \otimes W) &\cong & \bigoplus_{\lambda \vdash n} \schur_\lambda (V) 
\otimes \schur_\lambda (W) \\
  \Lambda^n (V \otimes W) &\cong & \bigoplus_{\lambda \vdash n} \schur_\lambda 
(V) \otimes \schur_{\lambda^\dagger} (W)
 \end{eqnarray*}
where the sum is taken  over all partitions of $n$.
\end{prop}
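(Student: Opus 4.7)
The strategy is to exploit the Schur--Weyl decomposition together with the familiar identification of symmetric and exterior powers as coinvariants of the symmetric group action on tensor powers. Since $\kring$ has characteristic zero, invariants and coinvariants coincide, which is what makes the argument clean.

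First I would establish the natural isomorphism of $\sym_n$-modules
\[
(V\otimes W)^{\otimes n}\;\cong\;V^{\otimes n}\otimes W^{\otimes n},
\]
under which the place-permutation action of $\sym_n$ on the left becomes the \emph{diagonal} place-permutation action on the right. Then I would write
\[
S^n(V\otimes W)\;\cong\;(V\otimes W)^{\otimes n}\otimes_{\sym_n}\kring,
\qquad
\Lambda^n(V\otimes W)\;\cong\;(V\otimes W)^{\otimes n}\otimes_{\sym_n}\mathrm{sgn}_n,
\]
so that both computations reduce to taking coinvariants of $V^{\otimes n}\otimes W^{\otimes n}$ for the diagonal action, twisted by the trivial or signature representation respectively.

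Next I would use the Schur correspondence (Definition \ref{defn:alpha_cre} and Proposition \ref{prop:alpha_cre_properties}) to decompose, naturally in $V$ and $W$ as $\sym_n$-modules,
\[
V^{\otimes n}\;\cong\;\bigoplus_{\lambda\vdash n}S_\lambda\otimes\schur_\lambda(V),
\qquad
W^{\otimes n}\;\cong\;\bigoplus_{\mu\vdash n}S_\mu\otimes\schur_\mu(W),
\]
where $\sym_n$ acts only on the $S_\lambda$ and $S_\mu$ factors respectively. Substituting, one gets the decomposition of the diagonal $\sym_n$-module
\[
V^{\otimes n}\otimes W^{\otimes n}
\;\cong\;
\bigoplus_{\lambda,\mu\vdash n}
\bigl(S_\lambda\otimes S_\mu\bigr)\otimes\schur_\lambda(V)\otimes\schur_\mu(W),
\]
with $\sym_n$ acting diagonally on $S_\lambda\otimes S_\mu$.

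The final step is the representation-theoretic core: compute the diagonal $\sym_n$-coinvariants of each summand. Using that $\kring$ has characteristic zero, coinvariants equal invariants, and
\[
(S_\lambda\otimes S_\mu)^{\sym_n}
\;\cong\;\hom_{\sym_n}(S_\mu^{\ast},S_\lambda)
\;\cong\;\hom_{\sym_n}(S_\mu,S_\lambda),
\]
where the second isomorphism uses the self-duality of the simple $\sym_n$-modules recorded in Proposition \ref{prop:self-duality}. By Schur's lemma this is $\kring$ when $\lambda=\mu$ and $0$ otherwise, yielding the first identity. For the exterior case, replace the trivial coefficients by $\mathrm{sgn}_n$; the same computation gives
\[
(S_\lambda\otimes S_\mu\otimes\mathrm{sgn}_n)^{\sym_n}
\;\cong\;\hom_{\sym_n}(S_\mu,S_\lambda\otimes\mathrm{sgn}_n)
\;\cong\;\hom_{\sym_n}(S_\mu,S_{\lambda^\dagger}),
\]
using $S_\lambda\otimes\mathrm{sgn}_n\cong S_{\lambda^\dagger}$, so the surviving summands are exactly those with $\mu=\lambda^\dagger$.

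There is no serious obstacle here: the only subtle points are making sure the natural isomorphism $(V\otimes W)^{\otimes n}\cong V^{\otimes n}\otimes W^{\otimes n}$ transports the place-permutation action to the diagonal one, and invoking characteristic zero at the single place where it is needed (to identify coinvariants with invariants and to apply Schur's lemma to simple $\sym_n$-modules).
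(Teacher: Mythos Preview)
Your argument is correct and is essentially the same as the paper's: both reduce $S^n(V\otimes W)$ to coinvariants of $V^{\otimes n}\otimes W^{\otimes n}$ under the diagonal $\sym_n$-action, decompose into isotypical pieces, and apply Schur's lemma together with self-duality of the simples. The only packaging difference is that the paper phrases the decomposition via the bimodule structure $\kring[\sym_n]\cong\bigoplus_{\lambda}S_\lambda\boxtimes S_\lambda$, whereas you invoke the equivalent Schur--Weyl splitting $V^{\otimes n}\cong\bigoplus_\lambda S_\lambda\otimes\schur_\lambda(V)$ directly; note that the latter is not literally the content of Definition~\ref{defn:alpha_cre} or Proposition~\ref{prop:alpha_cre_properties}, but follows from the semisimple decomposition of $\kring[\sym_n]$ that the paper also uses.
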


\begin{proof}
This result is contained in \cite[(6.2.8)]{SS}.

Since the functorial nature is essential here, a proof is sketched. Namely, the result concerns bi-Schur functors and therefore reduces to a result about bimodules over the symmetric group $\sym_n$. The heart of the proof is the structure of $\rat [\sym_n]$ as a bimodule over itself. The decomposition into isotypical components say for the left action gives a decomposition as bimodules. Using the fact that the simple representations of $\sym_n$ are self-dual, there is a bimodule decomposition 
$
\rat [\sym_n] \cong \bigoplus _{\lambda \vdash n} S_\lambda \boxtimes S_\lambda.
$  
The Cauchy identities follow by using the fact that $S^n (V \otimes W)$ is the bi-Schur functor associated to the bimodule 
$ 
\rat \otimes_{\sym_n} (\rat [\sym_n] \otimes \rat [\sym_n]), 
$ 
where the left action is diagonal upon the tensor product. The result follows by the Schur Lemma. 
\end{proof}

\subsection{The Lie modules}
\label{subsect:lie}

\begin{nota}
\label{nota:lie_n}
\nomenclature{$\liemod(n)$}{$n$th Lie module\nomrefpage}
Let  $\liemod (n) \in \ob \rat [\sym_n]\dash\modules$, $n \in \nat$, denote the $n$th Lie module; these modules assemble to define $\liemod(-) \in \ob \f(\fb;\rat) $, the $\fb$-module underlying the Lie operad (cf. \cite[Section 1.3.3]{LV}, \cite{Reu}).
\end{nota}

\begin{nota}
\label{nota:Schur_Lie}
\nomenclature{$\lieschur(n)$}{$n$th Lie (Schur) functor\nomrefpage}
Denote the associated Schur functors of degree $n \in \nat$ by:
  \begin{enumerate}
   \item 
   $ \lieschur(n) (V) := \liemod (n)\otimes_{\sym_n} 
  V^{\otimes n}$; 
  \item 
  $ \lieschur(n)^{\dagger} (V) := \liemod (n)^{\dagger}\otimes_{\sym_n} 
  V^{\otimes n}$.
  \end{enumerate}
 \end{nota}

\begin{rem}
\label{free-Lie}
By definition,  the free Lie algebra $\lie(V)$ on a $\rat$-vector space  $V$ is:
\[
\lie (V) 
\cong 
\bigoplus_{n \in \nat} \liemod(n) \otimes _{\sym_n} V^{\otimes n}.
\]
\end{rem}

\begin{exam}
\label{exam:liemod}
From \cite[Table 8.1, page 208]{Reu}, one has the description of the first $\liemod (n)$:
\begin{eqnarray*}
\liemod(1) &=& S_{(1)}\\
\liemod(2) &=&S_{(11)}\\
\liemod(3) & =& S_{(21)}\\
\liemod(4) &=& S_{(31)}\oplus S_{(211)}\\
\liemod(5) &=& S_{(41)} \oplus S_{(32)}\oplus S_{(311)} \oplus S_{(221)} \oplus S_{(2111)}\\
\liemod(6) &=& S_{(51)} \oplus S_{(42)} \oplus S_{(411)}^{\oplus 2} \oplus S_{(33)} \oplus  S_{(321)}^{\oplus 3} \oplus S_{(3111)} \oplus S_{(2211)}^{\oplus 2} \oplus S_{(21111)}.
\end{eqnarray*}
\end{exam}

The following underlines both the complexity of the modules $\liemod (n)$ and the special role played by the trivial and sign representations.

\begin{thm}
\label{thm:Reutenauer_exceptional}
\cite[Theorem 8.12]{Reu}
For $n \in \nat^*$ and $\lambda \vdash n$, $S_\lambda$ is a composition factor of $\liemod (n)$ if and only if $\lambda$ is not one of the following:
$(s)$ for $s\geq 2$; $(1^t)$ for $t \geq 3$; $(2^2)$; $(2^3)$.
\end{thm}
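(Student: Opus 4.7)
The plan is to reduce the statement to a character-theoretic calculation via Klyachko's theorem: as a $\sym_n$-representation,
\[
\liemod(n) \;\cong\; \mathrm{Ind}_{C_n}^{\sym_n}(\omega),
\]
where $C_n = \langle (1\,2\,\cdots\,n) \rangle \subset \sym_n$ and $\omega : C_n \to \kring^\times$ is a primitive (i.e.\ faithful) $1$-dimensional character, $\omega(\sigma) = \zeta$ for some primitive $n$-th root of unity $\zeta$. Frobenius reciprocity then gives
\[
\sharp_\lambda \liemod(n) \;=\; \langle \mathrm{Res}^{\sym_n}_{C_n}\chi_\lambda,\; \omega \rangle_{C_n},
\]
so $S_\lambda$ is absent from $\liemod(n)$ precisely when no primitive eigenvalue of $\sigma$ occurs in $\mathrm{Res}^{\sym_n}_{C_n}\chi_\lambda$. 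Equivalently, Möbius inversion (Witt's formula) yields the Frobenius characteristic $\mathrm{ch}(\liemod(n)) = \tfrac{1}{n}\sum_{d \mid n}\mu(d)\,p_d^{n/d}$, whence
\[
\sharp_\lambda \liemod(n) \;=\; \frac{1}{n}\sum_{d \mid n}\mu(d)\,\chi_\lambda(\sigma_d),
\]
where $\sigma_d$ is the class of cycle type $d^{n/d}$.

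Next I would dispose of the four excluded families. For $\lambda = (s)$, $s \geq 2$, the trivial character restricts to the trivial character of $C_s$, which is not primitive. For $\lambda = (1^t)$, $t \geq 3$, the sign character restricts to $\sigma \mapsto (-1)^{t-1}$, which has order $1$ or $2$ and hence is not primitive. The two sporadic cases are then verified directly: using $\chi_{(22)}$ values $2, 2, -1, 0, 0$ on cycle types $1^4, 2^2, 3\cdot 1, 4, 2\cdot 1^2$,
\[
\sharp_{(2^2)}\liemod(4) \;=\; \tfrac{1}{4}\bigl(\chi_{(22)}(1^4) - \chi_{(22)}(2^2)\bigr) \;=\; \tfrac{1}{4}(2-2) \;=\; 0,
\]
and using $\chi_{(222)}$ values $5, 3, 2, 0$ on cycle types $1^6, 2^3, 3^2, 6$ (computed via Murnaghan--Nakayama, noting that $(222)$ contains a $2\times 2$ block and so is not itself a rim hook),
\[
\sharp_{(2^3)}\liemod(6) \;=\; \tfrac{1}{6}\bigl(5 - 3 - 2 + 0\bigr) \;=\; 0.
\]

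The main obstacle is the converse direction: showing that the multiplicity is \emph{strictly positive} for every $\lambda$ outside this list. Here I would follow Reutenauer's strategy (cf.\ \cite[Ch.~8]{Reu}): expand $\chi_\lambda(\sigma_d)$ via Murnaghan--Nakayama as a signed count of border-strip tableaux of shape $\lambda$ with strips all of length $d$, and analyse the resulting alternating sum over $d \mid n$. The combinatorial heart of the argument --- matching positive and negative contributions to prove non-cancellation outside the four exceptional families --- is the technically demanding step, and is best handled by a case analysis based on the $2\times 2$-block pattern and hook structure of $\lambda$. An alternative approach, which circumvents some of this combinatorics, is to expand $\mathrm{Res}^{\sym_n}_{C_n}\chi_\lambda$ directly in terms of the characters $\omega_k : \sigma \mapsto \zeta^k$ of $C_n$ and establish, via a generating-function identity, that the Fourier coefficient at some primitive $\zeta$ is nonzero whenever $\lambda$ lies outside the exceptional list.
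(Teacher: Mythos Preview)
The paper does not provide a proof of this statement: it is simply cited from Reutenauer's book \cite[Theorem 8.12]{Reu} as a background result in the appendix, with no argument given. So there is no ``paper's own proof'' to compare against.

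Your proposal is a faithful outline of the standard proof as found in Reutenauer. The identification $\liemod(n) \cong \mathrm{Ind}_{C_n}^{\sym_n}(\omega)$ (Klyachko) and the Frobenius-reciprocity reduction are correct, and your handling of the four excluded families is fine. You are also right that the converse --- positivity of the multiplicity outside the exceptional list --- is the substantive part, and that it requires a genuine combinatorial argument via Murnaghan--Nakayama (or an equivalent generating-function analysis). Your proposal correctly identifies this as the crux but does not carry it out; if you were writing this up as a self-contained proof rather than a sketch, that step would need to be filled in. As it stands, what you have is an accurate roadmap of Reutenauer's argument rather than a complete proof.
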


More precisely, one has the following determination of $\liemod(n)$:

\begin{thm}
\label{thm:Reutenauer_multiplicities}
\cite[Corollary 8.10]{Reu} 
Let $(i,n ) =1$ for $i,n \in \nat$. For $\lambda \vdash n$, the multiplicity of $S_\lambda$ as a composition factor of  $\liemod(n)$ is the number of standard tableaux of shape $\lambda $ and of major index congruent to $i$ mod $n$. 
\end{thm}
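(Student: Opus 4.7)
The plan is to reduce the computation of $\sharp_\lambda \liemod(n)$ to a branching question along the cyclic subgroup $C_n = \langle \zeta \rangle \subset \sym_n$ generated by an $n$-cycle $\zeta$, and then to invoke the classical identity expressing the restriction of an irreducible character to $C_n$ in terms of the major index statistic on standard tableaux. The key intermediate result, due to Klyachko (with an algebraic repackaging by Kraskiewicz--Weyman), is the isomorphism of $\sym_n$-modules
\[
\liemod(n) \ \cong\ \mathrm{Ind}_{C_n}^{\sym_n} (\chi_i),
\]
valid for any primitive character $\chi_i : C_n \to \kring^\times$, $\chi_i(\zeta) = \zeta_n^i$, with $(i,n)=1$, where $\zeta_n$ is a fixed primitive $n$-th root of unity. (In particular, the right-hand side does not depend on the choice of such $i$.)

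First I would establish this induced-character formula. One concrete route is via the Witt/necklace identity: the graded character of the free Lie algebra $\lie(V)$ on a finite-dimensional $\kring$-vector space $V$ may be computed from the isomorphism $\talg(V) \cong U\lie(V)$ together with the PBW theorem, yielding $\chi_{\liemod(n)}(\sigma) = \frac{1}{n}\sum_{d\mid n}\mu(n/d)\,p_d(\sigma)^{n/d}$ for cycle-type data $p_d(\sigma)$. On the other hand, Frobenius reciprocity and an explicit cycle-index calculation show that $\mathrm{Ind}_{C_n}^{\sym_n}(\chi_i)$ has the same character, for $(i,n)=1$. Alternatively, one can argue via the Dynkin/Klyachko idempotent in $\kring[\sym_n]$, identifying its image as both $\liemod(n)$ and the induced representation directly; this is the approach of \cite{Reu}.

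With this in hand, Frobenius reciprocity gives
\[
\sharp_\lambda \liemod(n) \;=\; \langle S_\lambda,\, \mathrm{Ind}_{C_n}^{\sym_n}\chi_i \rangle_{\sym_n} \;=\; \langle \mathrm{Res}_{C_n}^{\sym_n} S_\lambda,\, \chi_i \rangle_{C_n} .
\]
The remaining step is to identify the right-hand side with the number of standard tableaux of shape $\lambda$ whose major index is congruent to $i$ modulo $n$. This is the content of Springer's regular-element theorem specialised to the $n$-cycle in $\sym_n$ (equivalently, a theorem of Kraskiewicz--Weyman): expanding the Frobenius characteristic $\mathrm{ch}\, S_\lambda = s_\lambda$ in the plethystic variables adapted to $\zeta$ gives
\[
\mathrm{Res}_{C_n}^{\sym_n} S_\lambda \;\cong\; \bigoplus_{T \in \mathrm{SYT}(\lambda)} \chi_{\mathrm{maj}(T)\bmod n},
\]
where $\mathrm{SYT}(\lambda)$ is the set of standard Young tableaux of shape $\lambda$. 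Combining the two displays yields the stated equality.

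The main obstacle is the combinatorial identity in the last step: the major-index generating function $f^\lambda_q(i) := \#\{T \in \mathrm{SYT}(\lambda) : \mathrm{maj}(T) \equiv i \pmod n\}$ must be matched with the coefficient extraction of $\chi_i$ from $\mathrm{Res}_{C_n} S_\lambda$. I would treat this as the input coming from \cite{Reu} (where it is proved via the $q$-hook length formula and Lusztig--Stanley theory of fake degrees at roots of unity); there is no short proof avoiding this machinery. The rest of the argument — the reduction via Klyachko and Frobenius reciprocity — is then formal.
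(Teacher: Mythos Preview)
Your outline is correct and follows the classical route to this result: Klyachko's identification $\liemod(n)\cong \mathrm{Ind}_{C_n}^{\sym_n}\chi_i$ for $(i,n)=1$, Frobenius reciprocity, and the Kra\'skiewicz--Weyman description of $\mathrm{Res}_{C_n}^{\sym_n} S_\lambda$ via the major index. Note, however, that the paper does not give its own proof of this statement; it is quoted as \cite[Corollary 8.10]{Reu} in the appendix and used as a black box. So there is no ``paper's proof'' to compare against --- your sketch is essentially the argument one finds in the cited reference.
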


\begin{rem}
For a standard tableau $T$ of shape $\lambda\vdash n$, the descent set $D(T)$  of $T$ is the number of $i \in \{ 1, \ldots , n-1 \}$ such that $i+1$ appears in a lower row than $i$ in $T$. The major index of $T$ is $\mathrm{maj} (T) := \sum_{i \in D(T)} i$. 
\end{rem}

\newpage

\bibliographystyle{smfalpha}
\bibliography{gr_out.bib}

\newpage

\printnomenclature[2.5cm]
\vfill 
\end{document}